\newtheorem{thm}{Theorem}
\newtheorem{cor}[thm]{Corollary}
\newtheorem{defi}[thm]{Definition}
\newtheorem{claim}[thm]{Claim}
\newtheorem{rem}[thm]{Remark}
\newtheorem{nota}[thm]{Notation}
\newtheorem{princ}[thm]{Principle}
\newtheorem{ack}[thm]{Acknowledgement}
\newtheorem*{tempo*}{Template}
\newtheorem{theorem}[thm]{Theorem}
\newtheorem{lemma}[thm]{Lemma}
\newtheorem{definition}[thm]{Definition}
\newtheorem{corollary}[thm]{Corollary}
\newtheorem{remark}[thm]{Remark}
\newcommand\be{\begin{equation}}
\newcommand\ee{\end{equation}} 
\def\bdefi{\begin{defi}\rm}
\def\edefi{\end{defi}}
\def\bnota{\begin{nota}\rm}
\def\enota{\end{nota}}
\def\ZFC{\textup{\textsf{ZFC}}}
 \def\r{\mathbb{r}}
\def\SUP{\textup{\textsf{sup}}}
\def\({\textup{(}}
\def\){\textup{)}}
\def\bye{\end{document}}
\def\P{\textup{\textsf{P}}}
\def\N{{\mathbb  N}}
\def\Q{{\mathbb  Q}}
\def\R{{\mathbb  R}}
\def\SS{\textup{\textsf{S}}}
\def\J{\mathcal{J}}
\def\D{{\mathcal  D}}
\def\di{\rightarrow}
\def\asa{\leftrightarrow}
\def\ACA{\textup{\textsf{ACA}}}
\def\fix{\textup{\textsf{fix}}}
\def\SUP{\textup{\textsf{sup}}}
\def\ii{\textup{\textsf{i}}}
\def\KL{\textup{\textsf{KL}}}
\def\w{\textup{\textsf{w}}}
\def\BW{\textup{\textsf{BW}}}
\def\BV{\textup{\textsf{BV}}}
\def\fin{\textup{\textsf{fin}}}
\def\eps{\varepsilon}
\newcommand{\F}{{\bf F}}
\newcommand{\rec}{\mathsf{Rec}}
\newcommand{\ra}{{\bf r}}
\newcommand{\Ty}{{\bf Ty}}
\newcommand{\Pa}{{\bf P}}
\newcommand{\suc}{\mathsf {suc}}
\newcommand{\pd}{\mathsf {pred}}
\newcommand{\case}{\mathsf {case}}
\numberwithin{equation}{section}
\numberwithin{thm}{section}
\begin{document}
\title[]{On the computational properties of basic mathematical notions}
\author{Dag Normann}
\address{Department of Mathematics, The University 
of Oslo, P.O. Box 1053, Blindern N-0316 Oslo, Norway}
\email{dnormann@math.uio.no}
\author{Sam Sanders}
\address{Department of Philosophy II, RUB Bochum, Germany}
\email{sasander@me.com}
\keywords{Kleene S1-S9, higher-order computability theory, bounded variation, partiality, regulated functions}
%\keywords{Finite sets, representations, computability theory, Kleene S1-S9, bounded variation, piecewise continuous}
%\subjclass[2010]{03B30, 03F35, 03D55, 03D30}
\begin{abstract}
We investigate the computational properties of basic mathematical notions pertaining to $\R\di \R$-functions and subsets of $\R$, like \emph{finiteness}, \emph{countability}, \emph{\(absolute\) continuity}, \emph{bounded variation}, \emph{suprema}, and \emph{regularity}.  We work in higher-order computability theory based on Kleene's S1-S9 schemes.
We show that the aforementioned italicised properties give rise to two huge and robust classes of computationally equivalent operations, the latter based on well-known theorems from the mainstream mathematics literature.  
As part of this endeavour, we develop an equivalent $\lambda$-calculus formulation of S1-S9 that accommodates \emph{partial} objects.  We show that the latter are essential to our enterprise via the study of countably based and partial functionals of type $3$.  We also exhibit a connection to \emph{infinite time} Turing machines.
\end{abstract}

%\setcounter{page}{0}
%\tableofcontents
%\thispagestyle{empty}
%\newpage

\maketitle
\thispagestyle{empty}

\section*{CORRIGENDUM}
The lambda calculus introduced in Section \ref{maink} unfortunately suffers from a technical error.  The latter was communicated to us in a private communication by John Longley.  
In particular, the restriction in the fifth bullet point of Definition \ref{intdef} is too strong. As a consequence, Theorem~\ref{prize} is not correct.  A corrected version may be found in \cite{dagsamXV}*{Section 5}.
The computability theoretic results in this paper remain unaffected.  

\section{Introduction}\label{intro}
Given a finite set, perhaps the most basic questions are \emph{how many} elements it has, and \emph{which ones}?  
We study this question in Kleene's \emph{higher-order computability theory}, based on his computation schemes S1-S9 (see \cites{kleeneS1S9, longmann}).  
In particular, a central object of study is the higher-order functional $\Omega$ which on input a finite set of real numbers, list the elements as a finite sequence. 

\smallskip

Perhaps surprisingly, the `finiteness' functional $\Omega$ give rise to a \emph{huge and robust} class of computationally equivalent operations, called the $\Omega$-cluster, as explored in Section \ref{maink2}.  
For instance, many basic operations on functions of \emph{bounded variation} (often abbreviated to `$BV$' in the below) are part of the $\Omega$-cluster, including those stemming from the well-known \emph{Jordan decomposition theorem}  (see Theorem \ref{drd}).  
We have studied the computational properties of the latter theorem in \cite{dagsamXII} and this paper greatly extends the results in the latter. 
In addition, we identify a second cluster of computationally equivalent objects, called the $\Omega_{1}$-cluster, based on $\Omega_{1}$, the restriction of $\Omega$ to singletons.  
We also show that both clusters include basic operations on \emph{regulated} and \emph{Sobolev space} functions, respectively a well-known super- and sub-class of the class of $BV$-functions. 

\smallskip

As will become clear, our objects of study are fundamentally \emph{partial} in nature, and we formulate an elegant and equivalent $\lambda$-calculus formulation of S1-S9 to accommodate partial objects (Section~\ref{maink}).    
The advantages of this approach are three-fold: proofs are more transparent in our $\lambda$-calculus approach, all (previously hand-waved) technical details can be settled easily, and we can show that $\Omega_{1}$ and $\Omega$ are not computationally equivalent to any \emph{total} functional.
%The same holds for piecewise continuous functions, as used by e.g.\ Dirichlet in the study of Fourier series (\cite{didi1}).  As an aside, our results establish the huge differences between the \emph{Turing and Kleene approaches} to computability theory for these topics. 

\smallskip

As to the broader context of our endeavour, Jordan introduces the notion of $BV$-function around 1881 in \cite{jordel}, while Lakatos claims in \cite{laktose} that Dirichlet's original 1829 proof from \cite{didi3} already contains this notion.  
Moreover, we also study the class of \emph{rectifiable functions}, i.e.\ the largest class for which the notion of `arc length of the graph' makes sense, which coincides with the class of $BV$-functions, as shown in \cite{voordedorst}*{Ch.\ 1}.  
However, the modern notion of arc length goes back to 1833-1866, as discussed in Section \ref{flukkkkq}.
Thus, our study of the $\Omega$-cluster has a clear historical angle.      
Moreover, the following quote motivates that functions of bounded variation  constitute `ordinary' or `mainstream' mathematics.
\begin{quote}
The space $BV$, consisting of functions with bounded variation, is of particular interest for applications to data compression and statistical estimation. It is often chosen as a model for piecewise smooth signals such as geometric images. (\cite{cohenn}*{p.\ 236})
\end{quote}
Similar claims can be made for Sobolev spaces and regulated functions, prominent/essential as they are in the study of PDEs or Riemann integration.  

\smallskip

%For the rest of this section, we sketch the contents of this paper in more detail.  
Finally, all technical notions are introduced in Section \ref{prek} while our main results are in Section \ref{maink} and \ref{maink2}.  
The informed reader will know that the computational properties of $BV$-functions have been studied via second-order representations, as also mentioned in Section~\ref{defki}.
%We discuss how our (third-order) results are a refinement of the second-order paradigm in Section \ref{2DFOM}, perhaps the most speculative section of this paper.  

\section{Preliminaries}\label{prek}
We introduce some required background, including Kleene's computational framework (Section \ref{kle9}) and some higher-order definitions (Section \ref{klonkio}), like the notion of $BV$-function and related concepts.  
Before all that, we remind the reader that higher-order computability theory is generally formulated \emph{in the language of type theory}.  
The well-known notion of `order' of an object corresponds to that of `type rank', but with difference $1$ (see Definition \ref{def1}). 
We do not always distinguish between the `type' of an object and its `type rank', to avoid cumbersome notations.  

\subsection{Kleene's higher-order computability theory}\label{kle9}
%Our main results are part of computability theory.
We first make our notion of `computability' precise as follows.  
\begin{enumerate}
\item[(I)] We adopt $\ZFC$, i.e.\ Zermelo-Fraenkel set theory with the Axiom of Choice, as the official metatheory for all results, unless explicitly stated otherwise.
\item[(II)] We adopt Kleene's notion of \emph{higher-order computation} as given by his nine schemes S1-S9 (see \cite{longmann}*{Ch.\ 5} or \cite{kleeneS1S9}) as our official notion of `computable'.
\end{enumerate}
%Similar to \cites{dagsam,dagsamII, dagsamIII, dagsamV, dagsamVI, dagsamVII}, one main aim of this paper is the study of functionals of type 3 that are \emph{natural} from the perspective of mathematical practise. 
We mention that S1-S8 are rather basic and merely introduce a kind of higher-order primitive recursion with higher-order parameters. 
%\marginpar{\footnotesize{Better: a kind of primitive recursion with higher order parameters. (Not to confuse it with T)}}  
The real power comes from S9, which essentially hard-codes the \emph{recursion theorem} for S1-S9-computability in an ad hoc way.  
By contrast, the recursion theorem for Turing machines is derived from first principles in \cite{zweer}.

\smallskip

On a historical note, it is part of the folklore of the subject of computability theory that many have tried (and failed) to formulate models of computation for objects of all finite types in which one derives the recursion theorem in a natural way.  For this reason, Kleene ultimately introduced S1-S9, which 
were initially criticised for their ad hoc nature, but eventually received general acceptance nonetheless.  

\smallskip

We refer to \cite{longmann} for a (more) thorough overview of higher-order computability theory.
We do mention the distinction between `normal' and `non-normal' functionals  based on the following definition from \cite{longmann}*{\S5.4}. 
%\marginpar{\footnotesize{A defintion should not be in \\emph-mode like a lemma or theorem?}}
%Correct observation. This was an odd behaviour of the defi environment, 
\bdefi\label{norma}
For $n\geq 2$, a functional of type $n$ is called \emph{normal} if it computes Kleene's $\exists^{n}$ following S1-S9, and \emph{non-normal} otherwise.  
\edefi
\noindent
We only make use of $\exists^{n}$ for $n=2,3$, as defined in Section \ref{prelim1}.

\smallskip

It is a historical fact that higher-order computability theory, based on Kleene's S1-S9 schemes, has focused primarily on the world of \emph{normal} functionals; this opinion can be found \cite{longmann}*{\S5.4}.  
Nonetheless, we have previously studied the computational properties of new \emph{non-normal} functionals, namely those that compute the objects claimed to exist by:
\begin{itemize}
\item covering theorems due to Heine-Borel, Vitali, and Lindel\"of (\cites{dagsam, dagsamII, dagsamVI}),
\item the Baire category theorem (\cite{dagsamVII}),
\item local-global principles like \emph{Pincherle's theorem} (\cite{dagsamV}),
\item the uncountability of $\R$ and the Bolzano-Weierstrass theorem for countable sets in Cantor space (\cites{dagsamX, dagsamXI}),
\item weak fragments of the Axiom of (countable) Choice (\cite{dagsamIX}),
\item the Jordan decomposition theorem and other results on $BV$-functions (\cite{dagsamXII}).
\end{itemize}
In this paper, we greatly extend the study mentioned in the final item.  
%We note that other frameworks, like \emph{infinite time Turing machines} are not suitable for our project, as e.g.\ Jordan realisers as in Definition \ref{JDR} would be outright computable.  
Next, we introduce some required higher-order notions in Section~\ref{klonkio}.

\subsection{Some higher-order notions}\label{klonkio}
We introduce some comprehension functionals (Section \ref{prelim1}) and definitions (Section \ref{defki}) that are essential to the below.
We first introduce the usual notations for real numbers and finite sequences. 

\subsubsection{Notations and the like}
We introduce the usual notations for common mathematical notions, like real numbers, as also introduced in \cite{kohlenbach2}.  
\begin{defi}[Real numbers and related notions]\label{keepintireal}\rm~
\begin{enumerate}
 \renewcommand{\theenumi}{\alph{enumi}}
\item Natural numbers correspond to type zero objects, and we use `$n^{0}$' and `$n\in \N$' interchangeably.  Rational numbers are defined as signed quotients of natural numbers, and `$q\in \Q$' and `$<_{\Q}$' have their usual meaning.    
\item Real numbers are coded by fast-converging Cauchy sequences $q_{(\cdot)}:\N\di \Q$, i.e.\  such that $(\forall n^{0}, i^{0})(|q_{n}-q_{n+i}|<_{\Q} \frac{1}{2^{n}})$.  
We use Kohlenbach's `hat function' from \cite{kohlenbach2}*{p.\ 289} to guarantee that every $q^{1}$ defines a real number.  
\item We write `$x\in \R$' to express that $x^{1}:=(q^{1}_{(\cdot)})$ represents a real as in the previous item and write $[x](k):=q_{k}$ for the $k$-th approximation of $x$.    
\item Two reals $x, y$ represented by $q_{(\cdot)}$ and $r_{(\cdot)}$ are \emph{equal}, denoted $x=_{\R}y$, if $(\forall n^{0})(|q_{n}-r_{n}|\leq {2^{-n+1}})$. Inequality `$<_{\R}$' is defined similarly.  
We sometimes omit the subscript `$\R$' if it is clear from context.           
\item Functions $F:\R\di \R$ are represented by $\Phi^{1\di 1}$ mapping equal reals to equal reals, i.e.\ extensionality as in $(\forall x , y\in \R)(x=_{\R}y\di \Phi(x)=_{\R}\Phi(y))$.\label{EXTEN}
\item Equality at higher types is defined in terms of equality between naturals `$=_{0}$' as follows: for any objects $x^{\tau}, y^{\tau}$, we have
\be\label{aparth}
[x=_{\tau}y] \equiv (\forall z_{1}^{\tau_{1}}\dots z_{k}^{\tau_{k}})[xz_{1}\dots z_{k}=_{0}yz_{1}\dots z_{k}],
\ee
if the type $\tau$ is composed as $\tau\equiv(\tau_{1}\di \dots\di \tau_{k}\di 0)$.  
\item The relation `$x\leq_{\tau}y$' is defined as in \eqref{aparth} but with `$\leq_{0}$' instead of `$=_{0}$'.  Binary sequences are denoted `$f^{1}, g^{1}\leq_{1}1$', but also `$f,g\in C$' or `$f, g\in 2^{\N}$'.  Elements of Baire space are given by $f^{1}, g^{1}$, but also denoted `$f, g\in \N^{\N}$'.
\item For a binary sequence $f^{1}$, the associated real in $[0,1]$ is $\r(f):=\sum_{n=0}^{\infty}\frac{f(n)}{2^{n+1}}$.\label{detrippe}
%\item An object $\textbf{Y}^{0\di \rho}$ is called \emph{a sequence of type $\rho$ objects} and also denoted $\textbf{Y}=(Y_{n})_{n\in \N}$ or $\textbf{Y}=\lambda n. Y_{n}$ where $Y_{n}:=\textbf{Y}(n)$ for all $n^{0}$.
%\item Sets of type $\rho$ objects $X^{\rho\di 0}, Y^{\rho\di 0}, \dots$ are given by their characteristic functions $F^{\rho\di 0}_{X}\leq_{\rho\di 0}1$, i.e.\ we write `$x\in X$' for $ F_{X}(x)=_{0}1$. \label{koer} 
\end{enumerate}
\end{defi}
For completeness, we list the following notational convention for finite sequences.  
\begin{nota}[Finite sequences]\label{skim}\rm
The type for `finite sequences of objects of type $\rho$' is denoted $\rho^{*}$, which we shall only use for $\rho=0,1$.  
Since the usual coding of pairs of numbers is rather elementary, we shall not always distinguish between $0$ and $0^{*}$. 
Similarly, we assume a fixed coding for finite sequences of type $1$ and shall make use of the type `$1^{*}$'.  
In general, we do not always distinguish between `$s^{\rho}$' and `$\langle s^{\rho}\rangle$', where the former is `the object $s$ of type $\rho$', and the latter is `the sequence of type $\rho^{*}$ with only element $s^{\rho}$'.  The empty sequence for the type $\rho^{*}$ is denoted by `$\langle \rangle_{\rho}$', usually with the typing omitted.  

\smallskip

Furthermore, we denote by `$|s|=n$' the length of the finite sequence $s^{\rho^{*}}=\langle s_{0}^{\rho},s_{1}^{\rho},\dots,s_{n-1}^{\rho}\rangle$, where $|\langle\rangle|=0$, i.e.\ the empty sequence has length zero.
For sequences $s^{\rho^{*}}, t^{\rho^{*}}$, we denote by `$s*t$' the concatenation of $s$ and $t$, i.e.\ $(s*t)(i)=s(i)$ for $i<|s|$ and $(s*t)(j)=t(|s|-j)$ for $|s|\leq j< |s|+|t|$. For a sequence $s^{\rho^{*}}$, we define $\overline{s}N:=\langle s(0), s(1), \dots,  s(N-1)\rangle $ for $N^{0}<|s|$.  
For a sequence $\alpha^{0\di \rho}$, we also write $\overline{\alpha}N=\langle \alpha(0), \alpha(1),\dots, \alpha(N-1)\rangle$ for \emph{any} $N^{0}$.  By way of shorthand, 
$(\forall q^{\rho}\in Q^{\rho^{*}})A(q)$ abbreviates $(\forall i^{0}<|Q|)A(Q(i))$, which is (equivalent to) quantifier-free if $A$ is.   
\end{nota}

\subsubsection{Higher-order comprehension functionals}\label{prelim1}
We introduce a number of well-known `comprehension functionals' from the literature.  
%This generally means that the higher-order system proves the same $\L_{2}$-sentences as the associated second-order system, or at least a large subclass like $\Pi_{1}^{1}$.  
%We shall reserve the expression `higher-order counterpart' for the new systems from Section \ref{lll}, for reasons discussed in Remarks \ref{LEM}, \ref{ube2}, and \ref{unbeliever}.
We are dealing with \emph{conventional} comprehension, i.e.\ only parameters over $\N$ and $\N^{\N}$ are allowed in formula classes like $\Pi_{k}^{1}$ and $\Sigma_{k}^{1}$.  

\smallskip

First of all, the third-order functional $\varphi$ as in $(\exists^{2})$ is often called `Kleene's quantifier $\exists^{2}$' and we use the same naming convention for other functionals.
%in fact, $(\exists^{2})$ is equivalent to the existence of $F:\R\di\R$ such that $F(x)=1$ if $x>0$, and $0$ otherwise over weak logical systems (see \cite{kohlenbach2}*{\S3}).  %This fact shall be repeated often.  
\be\label{muk}\tag{$\exists^{2}$}
(\exists \varphi^{2}\leq_{2}1)(\forall f^{1})\big[(\exists n^{0})(f(n)=0) \asa \varphi(f)=0    \big]
\ee
Intuitively speaking, Kleene's $\exists^{2}$ can decide the truth of any $\Sigma_{1}^{0}$-formula in its (Kleene) normal form.  % results in Theorem \ref{nietnormaal}.
Related to $(\exists^{2})$, the third-order functional in $(\mu^{2})$ is also called \emph{Feferman's $\mu$} (\cite{avi2}), defined as follows:
\begin{align}\label{mu}
(\exists \mu^{2})(\forall f^{1})\big[ (\exists n)(f(n)=0) \di &[f(\mu(f))=0\wedge (\forall i<\mu(f))(f(i)\ne 0) ]\notag\\
& \wedge [ (\forall n)(f(n)\ne0)\di   \mu(f)=0]    \big]. \tag{$\mu^{2}$}
\end{align}
We have $(\exists^{2})\asa (\mu^{2})$ over a weak system by \cite{kooltje}*{Prop.\ 3.4 and Cor.~3.5}) while $\mu^{2}$ is readily computed from Kleene's $\exists^{2}$.
The operator $\mu^{2}$, with that symbol, plays a central role in Hilbert-Bernays' \emph{Grundlagen} (\cites{hillebilly, hillebilly2}).  Now, the same symbol `$\mu$' is also often used for fixed point operators, a convention we also adopt (see Section~\ref{kleedef}).  
This dual use of `$\mu$' shall not lead to confusion, especially since Feferman's $\mu$ is essentially\footnote{Define $\mu_{\fix}(f):= \mu(\lambda n. (f(n)-n))$ and note that $f(\mu_{\fix}(f))=\mu_{\fix}(f)$ in case $f\in \N^{\N}$ has a fixed point.  The leastness follows by the definition of Feferman's $\mu^{2}$} a (least) fixed point operator.  
%Moreover, $\ACAo\equiv\RCAo+(\exists^{2})$ proves the same sentences as $\ACA_{0}$ by \cite{hunterphd}*{Theorem~2.5}. 

\smallskip

Secondly, $\SS^{2}$ as in $(\SS^{2})$ is called \emph{the Suslin functional} (\cites{kohlenbach2, avi2}):
\be\tag{$\SS^{2}$}
(\exists\SS^{2}\leq_{2}1)(\forall f^{1})\big[  (\exists g^{1})(\forall n^{0})(f(\overline{g}n)=0)\asa \SS(f)=0  \big].
\ee
%The system $\FIVE^{\omega}\equiv \RCAo+(\SS^{2})$ proves the same $\Pi_{3}^{1}$-sentences as $\FIVE$ by \cite{yamayamaharehare}*{Theorem 2.2}.   
Intuitively, the Suslin functional $\SS^{2}$ can decide the truth of any $\Sigma_{1}^{1}$-formula in its normal form.  % results in Theorem \ref{nietnormaal}.  
We similarly define the functional $\SS_{k}^{2}$ which decides the truth or falsity of $\Sigma_{k}^{1}$-formulas (again in normal form).  
% we also define the system $\SIXK$ as $\RCAo+(\SS_{k}^{2})$, where  $(\SS_{k}^{2})$ expresses that $\SS_{k}^{2}$ exists.  
We note that the operators $\nu_{n}$ from \cite{boekskeopendoen}*{p.\ 129} are essentially $\SS_{n}^{2}$ strengthened to return a witness to the $\Sigma_{n}^{1}$-formula at hand.  %  if it exists. 
As suggested by its name, $\nu_{k}$ is the restriction of Hilbert-Bernays' $\nu$ from \cite{hillebilly2}*{p.\ 495} to $\Sigma_{k}^{1}$-formulas. 
We sometimes use $\SS_{0}^{2}$ and $\SS_{1}^{2}$ to denote $\exists^{2}$ and $\SS^{2}$.  

\smallskip

\noindent
Thirdly, second-order arithmetic is readily derived from the following:
\be\tag{$\exists^{3}$}
(\exists E^{3}\leq_{3}1)(\forall Y^{2})\big[  (\exists f^{1})(Y(f)=0)\asa E(Y)=0  \big].
\ee
%and we therefore define $\Z_{2}^{\Omega}\equiv \RCAo+(\exists^{3})$ and $\Z_{2}^\omega\equiv \cup_{k}\SIXK$, which are conservative over $\Z_{2}$ by \cite{hunterphd}*{Cor.\ 2.6}. 
%Despite this close connection, $\Z_{2}^{\omega}$ and $\Z_{2}^{\Omega}$ can behave quite differently, as discussed in e.g.\ \cite{dagsamIII}*{\S2.2}.   
The functional from $(\exists^{3})$ is also called `Kleene's quantifier $\exists^{3}$'.  % and we use the same convention for other functionals.  
Hilbert-Bernays' $\nu$ from \cite{hillebilly2}*{p.\ 495} trivially computes $\exists^{3}$.

\smallskip

Finally, the functionals  $\SS_{k}^{2}$ are defined using the usual formula class $\Pi_{k}^{1}$, i.e.\ only allowing first- and second-order parameters.  
We have dubbed this the \emph{conventional approach} and the associated functionals are captured by the umbrella term \emph{conventional comprehension}. 
Comprehension involving third-order parameters has previously (only) been studied in \cites{littlefef, kohlenbach4}, to the best of our knowledge.

\subsubsection{Some higher-order definitions}\label{defki}
We introduce some standard definitions required for introducing the many inhabitants of the $\Omega$ and $\Omega_{1}$-clusters. 

\smallskip

First of all, a fruitful and faithful approach is the representation of sets by characteristic functions (see e.g.\ \cites{samnetspilot, samcie19,samwollic19,dagsamVI,dagsamVII, kruisje, dagsamIX}), well-known from e.g.\ measure and probability theory and going back to Dirichlet (1830; \cite{didi1}).   We shall use this approach, assuming $\exists^{2}$ to make sure e.g.\ countable unions make sense.  

\smallskip

Secondly, we now turn to \emph{countable sets}.
Of course, this notion can be formalised in various ways, as follows.  % namely via Definitions \ref{eni} and \ref{standard} in $\RCAo$.
\bdefi[Enumerable set]\label{eni}
A set $A\subset \R$ is \emph{enumerable} if there is a sequence $(x_{n})_{n\in \N}$ such that $(\forall x\in \R)(x\in A\di (\exists n\in \N)(x=x_{n}))$.  
\edefi
%As a technicality, the empty set $\emptyset\subset \R$ is enumerated by $(x_{n})_{n\in \N}$ where each $x_{n}$ is the empty sequence `$\langle \rangle$'.
We note that Definition \ref{eni} reflects the notion of `countable set' from second-order reverse mathematics (\cite{simpson2}*{V.4.2}).  
\bnota[Enumeration]\label{defornota}
Given Feferman's $\mu^{2}$, we can remove all elements from a sequence $(x_{n})_{n\in \N}$ that are not in a given set $A$, i.e.\ obtain an equivalence in Definition \ref{eni} in case $A$ is enumerable;
the resulting sequence is called an \emph{enumeration} of $A$.  As a technicality, we shall refer to the `null sequence'  $\langle \rangle_{1}*\langle \rangle_{1}*\dots$ as an enumeration of the empty set $\emptyset\subset \R$.
\enota
%The `excluded middle trick', to be introduced in Remark~\ref{LEM}, provides $(\mu^{2})$ in many cases.  
%
%\smallskip
Our definition of `countable set' is now as follows.  % in $\RCAo$. 
\bdefi[Countable set]\label{standard}~
Any $A\subset \R$ is \emph{countable} if there is $Y:\R\di \N$ with 
\be\label{polki}
(\forall x, y\in A)(Y(x)=_{0}Y(y)\di x=y).
\ee 
The functional $Y$ as in \eqref{polki} is called \emph{injective} on $A$ or \emph{an injection} on $A$.
If $Y:\R\di \N$ is also \emph{surjective}, i.e.\ $(\forall n\in \N)(\exists x\in A)(Y(x)=n)$, we call $A$ \emph{strongly countable}.
The functional $Y$ is then called \emph{bijective} on $A$ or \emph{a bijection} on $A$.
\edefi
The first part of Definition \ref{standard} is from Kunen's set theory textbook (\cite{kunen}*{p.~63}) and the second part is taken from Hrbacek-Jech's set theory textbook \cite{hrbacekjech}, where the term `countable' is used instead of `strongly countable'.  According to Veldman (\cite{veldje2}*{p.\ 292}), Brouwer studies set theory based on injections in \cite{brouwke}.
In this paper, `strongly countable' and `countable' shall exclusively refer to Definition~\ref{standard}.  % \emph{except when explicitly stated otherwise}. 
%\marginpar{\footnotesize{Do we explicitly state it otherwise anywhere?}}
%Probably not!

\smallskip

Thirdly, the notion of \emph{bounded variation} (abbreviated $BV$) was first explicitly\footnote{Lakatos in \cite{laktose}*{p.\ 148} claims that Jordan did not invent or introduce the notion of bounded variation in \cite{jordel}, but rather discovered it in Dirichlet's 1829 paper \cite{didi3}.} introduced by Jordan around 1881 (\cite{jordel}) yielding a generalisation of Dirichlet's convergence theorems for Fourier series.  
Indeed, Dirichlet's convergence results are restricted to functions that are continuous except at a finite number of points, while functions of bounded variation can have (at most) countable many points of discontinuity, as already studied by Jordan, namely in \cite{jordel}*{p.\ 230}.
Nowadays, the \emph{total variation} of $f:[a, b]\di \R$ is defined as:
\be\label{tomb}\textstyle
V_{a}^{b}(f):=\sup_{a\leq x_{0}< \dots< x_{n}\leq b}\sum_{i=0}^{n} |f(x_{i})-f(x_{i+1})|.
\ee
If this quantity exists and is finite, one says that $f$ has bounded variation on $[a,b]$.
Now, the notion of bounded variation is defined in \cite{nieyo} \emph{without} mentioning the supremum in \eqref{tomb}; this approach can also be found in \cites{kreupel, briva, brima}.  
Hence, we shall distinguish between the following two notions.  As it happens, Jordan seems to use item \eqref{donp} of Definition \ref{varvar} in \cite{jordel}*{p.\ 228-229}, providing further motivation for the functionals introduced in Definition \ref{JDR}.
\bdefi[Variations on variation]\label{varvar}
\begin{enumerate}  
\renewcommand{\theenumi}{\alph{enumi}}
\item The function $f:[a,b]\di \R$ \emph{has bounded variation} on $[a,b]$ if there is $k_{0}\in \N$ such that $k_{0}\geq \sum_{i=0}^{n} |f(x_{i})-f(x_{i+1})|$ 
for any partition $x_{0}=a <x_{1}< \dots< x_{n-1}<x_{n}=b  $.\label{donp}
\item The function $f:[a,b]\di \R$ \emph{has {a} variation} on $[a,b]$ if the supremum in \eqref{tomb} exists and is finite.\label{donp2}
\end{enumerate}
\edefi
This definition suggests a three-fold variation for any operation on functions of bounded variation, namely depending on whether this operation has access to the supremum \eqref{tomb}, an upper bound on the supremum \eqref{tomb}, or none of these. 
%Secondly, the notion of \emph{bounded variation} was first introduced by Jordan around 1881 (\cite{jordel}) yielding a generalisation of Dirichlet's convergence theorems for Fourier series.  
%Indeed, Dirichlet's convergence results are restricted to functions that are continuous except at a finite number of points, while functions of bounded variation can have (at most) countable many points of discontinuity, as also shown by Jordan, namely in \cite{jordel}*{p.\ 230}.
The fundamental theorem about $BV$-functions (see e.g.\  \cite{jordel}*{p.\ 229}) is as follows.
\begin{thm}[Jordan decomposition theorem]\label{drd}
A function $f : [0, 1] \di \R$ of bounded variation is the difference of two non-decreasing functions $g, h:[0,1]\di \R$.
\end{thm}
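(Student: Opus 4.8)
The plan is to use the classical construction via the total variation function. Working in $\ZFC$, the first step is to observe that the hypothesis that $f$ has bounded variation in the sense of item~\eqref{donp} of Definition~\ref{varvar} guarantees, for each $x\in[0,1]$, that the set of partition sums $\sum_{i=0}^{n-1}|f(x_{i})-f(x_{i+1})|$ over partitions $0=x_{0}<\dots<x_{n}=x$ is a nonempty subset of $[0,k_{0}]$, hence has a supremum in $\R$ by the completeness of the reals; denote this supremum $V_{0}^{x}(f)$, as in \eqref{tomb}. (This is precisely the step that is non-effective, but as a statement about $\R\di\R$-functions in $\ZFC$ it causes no difficulty.)

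Next I would set $g(x):=V_{0}^{x}(f)$ and $h(x):=V_{0}^{x}(f)-f(x)$, so that $f=g-h$ on $[0,1]$ is immediate, and it only remains to check that $g$ and $h$ are non-decreasing. The crucial lemma is the additivity of total variation over adjacent intervals: for $0\le x\le y\le 1$ one has $V_{0}^{y}(f)=V_{0}^{x}(f)+V_{x}^{y}(f)$. The inequality ``$\ge$'' follows by concatenating an arbitrary partition of $[0,x]$ with one of $[x,y]$ into a partition of $[0,y]$; the inequality ``$\le$'' follows because inserting the extra point $x$ into any partition of $[0,y]$ can only increase the corresponding sum, by the triangle inequality, and the resulting refined sum splits as a partition sum for $[0,x]$ plus one for $[x,y]$.

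From this lemma, $g(y)-g(x)=V_{x}^{y}(f)\ge 0$ whenever $x\le y$, so $g$ is non-decreasing. For $h$, I would use that the two-point partition $x<y$ of $[x,y]$ already witnesses $V_{x}^{y}(f)\ge|f(y)-f(x)|\ge f(y)-f(x)$, whence $h(y)-h(x)=V_{x}^{y}(f)-\big(f(y)-f(x)\big)\ge 0$, so $h$ is non-decreasing as well. Combined with $f=g-h$, this yields the theorem.

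I do not anticipate a real obstacle here: the only place needing care is the bookkeeping in the additivity lemma (the trivial partition of $[0,0]$, and that refinement never decreases a partition sum), and the single conceptual point — existence of the supremum defining $V_{0}^{x}(f)$ — is automatic in $\ZFC$. The genuinely interesting phenomenon, developed in the rest of the paper, is that the witnesses $g,h$ produced by this argument cannot in general be computed from $f$ by any functional below Kleene's $\exists^{3}$.
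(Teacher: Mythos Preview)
Your proof is correct and is the standard classical argument. The paper does not give a dedicated proof of Theorem~\ref{drd} at all---it is stated as a classical background result with a reference to Jordan's original paper---but your construction $g(x)=V_{0}^{x}(f)$, $h(x)=V_{0}^{x}(f)-f(x)$ is precisely the one the paper invokes later (in the proof of the second cluster theorem, item~\eqref{waard}$\rightarrow$\eqref{jordje}) when showing that a functional computing the variation yields a Jordan realiser.
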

The computational properties of Theorem \ref{drd} have been studied extensively via second-order representations in e.g.\ \cites{groeneberg, kreupel, nieyo, verzengend}.
The same holds for constructive analysis by \cites{briva, varijo,brima, baathetniet}, involving different (but related) constructive enrichments.  
Now, arithmetical comprehension suffices to derive Theorem \ref{drd} for various kinds of second-order \emph{representations} of $BV$-functions in \cite{kreupel, nieyo}, i.e.\ finite iterations of the Turing jump suffice to compute the associated Jordan decomposition.  
By contrast, the results in \cite{dagsamXII} imply that no functional $\SS_{k}^{2}$ can compute the Jordan decomposition $g, h$ from $f$ in Theorem~\ref{drd} in general.  %We explain this discrepancy in Section \ref{2DFOM}.

\smallskip

Fourth, $BV$-functions are \emph{regulated} (called `regular' in \cite{voordedorst}), i.e.\ for every $x_{0}$ in the domain, the `left' and `right' limit $f(x_{0}-)=\lim_{x\di x_{0}-}f(x)$ and $f(x_{0}+)=\lim_{x\di x_{0}+}f(x)$ exist.  
Scheeffer studies regulated functions in \cite{scheeffer} (without using the term `regulated'), while Bourbaki develops Riemann integration based on regulated functions in \cite{boerbakies}.  
Weierstrass' `monster' function is a natural example of a regulated function not of bounded variation.  
As a testimony to its robustness, many functionals from the $\Omega$-cluster
remain in this cluster if we extend their input domain from `bounded variation' to `regulated'; the same holds for any intermediate class, as discussed in more detail in Remark \ref{essenti}.

\smallskip

Fifth, a somewhat unexpected result regarding $BV$-functions is the \emph{Banach indicatrix theorem} from \cite{banach1}, implying that for a continuous $BV$-function $f:[a,b]\di \R$, we may compute \eqref{tomb} as follows:
\be\label{indix}\textstyle
V_{a}^{b}(f)=\int_{\R} N(f)(y) dy, \textup{ where $N(f)(y)=\# \{x\in [a,b ]: f(x)=y\}$}.
\ee
The function $N(f)$ is called the \emph{Banach indicatrix} of $f$ on $[0,1]$.
By the (Russian language) results in \cite{lovely, lovely2}, \eqref{indix} also holds for regulated functions, as explained in \cite{voordedorst}*{p.\ 44} in English.

\smallskip

Sixth, the {Jordan decomposition theorem} as in Theorem \ref{drd} shows that a $BV$-function can be `decomposed' as the difference of monotone functions. 
This is however not the only result of its kind: Sierpi\'{n}ski's establishes in \cite{voordesier} that for a regulated function $f:[0,1]\di \R$, there are $g, h$ such that $f=g\circ h$ with $g$ continuous and $h$ strictly increasing on their respective domains. 
There are a number of similar results, to be found in \cite{voordedorst}*{p.\ 91}.

\smallskip

Seventh, an important sub-class of $BV$ is the Sobolev space $W^{1, 1}$, which consists of those $L_{1}$-functions with \emph{weak derivative} in $L_{1}$ (see Section \ref{flukkkk} for definitions).
As a testimony to its robustness, many functionals from the $\Omega$-cluster
remain in this cluster if we \emph{restrict} their input domain from `bounded variation' to `in $W^{1, 1}$'.

\smallskip

Eighth, we shall study the following notions of weak continuity.  % (see e.g.\ \cites{nieuwebron, kowalski}).
\bdefi\label{flung} For $f:[0,1]\di \R$, we have the following definitions:
\begin{itemize}
\item $f$ is \emph{upper semi-continuous} at $x_{0}\in [0,1]$ if $f(x_{0})\geq_{\R}\lim\sup_{x\di x_{0}} f(x)$,
\item $f$ is \emph{lower semi-continuous} at $x_{0}\in [0,1]$ if $f(x_{0})\leq_{\R}\lim\inf_{x\di x_{0}} f(x)$,
\item $f$ is \emph{quasi-continuous} (resp.\ \emph{cliquish}) at $x_{0}\in [0, 1]$ if for $ \epsilon > 0$ and an open neighbourhood $U$ of $x_{0}$, 
there is a non-empty open ${ G\subset U}$ with $(\forall x\in G) (|f(x_{0})-f(x)|<\eps)$ (resp.\ $(\forall x, y\in G) (|f(x)-f(y)|<\eps)$).
\end{itemize}
\edefi
Semi-continuity goes back to Baire and quasi-continuity goes back to Volterra (see \cite{beren2}).  
It is known that the sum of two cliquish functions is cliquish, while the sum of quasi-continuous functions is only cliquish in general (\cite{quasibor2}).
We note that regulated functions are cliquish everywhere but not necessarily quasi-continuous everywhere, i.e.\ continuity notions weaker than quasi-continuity
do not seem to provide extra information about regulated functions.  

\smallskip

Finally, related to semi-continuity, we can define the \emph{lower and upper envelope} $\underline{f}$ and $\overline{f}$, say for any bounded function $f:\R\di \R$, as follows:
\be\label{confaged}
\underline{f}(y):= \sup_{\delta>0}\inf_{|x-y|<\delta}f(x) \textup{ and } \overline{f}(y):= \inf_{\delta>0}\sup_{|x-y|<\delta}f(x),
\ee
which satisfy $\underline{f}\leq f\leq \overline{f}$ and are respectively lower and upper semi-continuous.
Intuitively, $\underline{f}$ (resp.\ $\overline{f}$) is the supremum (resp.\ infimum) of all lower (resp.\ upper) semi-continuous dominated by $f$ (resp.\ that dominate $f$). 
One readily shows that finding \eqref{confaged} in general is computationally equivalent to $\exists^{3}$.

\section{A $\lambda$-calculus formulation of Kleene's computation schemes}\label{maink}
\subsection{Introduction: total versus partial objects}
Turing's `machine' model (\cite{tur37}) captures \emph{computing with real numbers} and -by its very nature- involves partially defined objects.  
By contrast, Kleene's S1-S9 (\cite{kleeneS1S9}) is an extension of the Turing approach meant to capture \emph{computing with objects of any finite type}, where
everything is always assumed to be \emph{total}.  In this section, we introduce an equivalent $\lambda$-calculus formulation of S1-S9 (see Section \ref{kleedef}) that accommodates 
partial objects.  Our motivation for this new construct is as follows.
\begin{itemize}
\item We have previously hand-waved the extension of S1-S9 to partial objects, leading to confusion among our readers regarding certain technical details.  
\item Proofs are a lot more transparent based on fixed points (from the $\lambda$-calculus) instead of the recursion theorem (hardcoded by S9; see Section~\ref{kle9}).
\item The functional $\Omega_{1}$ from Definition \ref{defomega1} is a natural object of study that is \emph{fundamentally partial} in nature.  
Indeed, we show that $\Omega_{1}$ is not computationally equivalent to any \emph{total} functional (Section \ref{kepl}).  
To this end, we extend the notion of \emph{countably based functional} to partial objects.  
\item The functional $\Omega$ from Definition \ref{defomega} is a natural object of study that is \emph{fundamentally partial} in nature.  
Indeed, we show that $\Omega$ is not computationally equivalent to any \emph{total} functional (Section \ref{kepl2}).  
To this end, we introduce a general way of approximating computations in our new model.  
\end{itemize}
Finally, as an application of our newly minted framework, we can show that $\Omega_{1}$ 
and the Suslin functional $\SS^{2}$ compute the halting problem for \emph{infinite time} Turing machines (Theorem \ref{jable}) in Section \ref{cross}.
\subsection{Definition of the $\lambda$-calculus formulation of S1-S9}\label{kleedef}
%\section{Types of rank 3 or less}\label{sec2}
In this section, we introduce a new version of the $\lambda$-calculus (Section \ref{subseq2.3}) and show that this computational model is equivalent to S1-S9 (Section \ref{sec3}).  
We first introduce some basic definitions (Section \ref{bade}) and the language of our new model (Section \ref{subseq2.2}).

\subsubsection{Some basic definitions}\label{bade}
We introduce the usual notions of rank and type.
\begin{definition}\label{def1}{\em
The \emph{types of finite rank} are inductively defined as follows:
\begin{itemize}
\item $0$ is a type with rank $\ra(0) = 0$,
\item if $\sigma_1, \ldots,\sigma_n$  are types, then $\sigma = (\sigma_1 , \ldots , \sigma_n \rightarrow 0)$ is a type, with rank $\ra(\sigma) = \max\{\ra(\sigma_i) +  1: i = 1 , \ldots , n\}$
\end{itemize}
We let $\Ty(k)$ be the set of types of rank $\leq k$}
\end{definition}
We will mainly be concerned with $\Ty(3)$, but develop some concepts for the full type hierarchy $\Ty=\cup_{k}\Ty(k)$. We adopt the following standard convention.
\begin{nota}[Brackets and arrows]\rm
For $n\geq 2$, we (may) rewrite $(\sigma_1 , \ldots , \sigma_n \rightarrow 0)$ to $(\sigma_1 \rightarrow (\sigma_2 , \ldots , \sigma_n\rightarrow 0))$ and also iterate this typing convention.  
We (may) also drop the brackets `$($' and `$)$' when this does not create too much ambiguity.
\end{nota}
%\subsection{The set-theoretical interpretation}\label{subsec2.1}
As detailed in Definition \ref{def2}, we interpret $\sigma\in \Ty$ as two sets $\F(\sigma)$ and $\Pa(\sigma)$, where the former is a subset of the latter. 
This definition reflects our aim, namely to find a conceptually simpler approach to S1-S9.   
Indeed, our definition allows computations relative to a partial functional, but we never apply (even a partial) functional to a non-total functional; we also do not restrict to the pure types. 

\smallskip

Now, the extension from pure to finite types is in general just a matter of convenience.  By contrast, we show in  Section \ref{kepl} that the extension to partial functionals (only accepting total inputs) is \emph{essential}. 
 Indeed, we identify important functionals based on basic theorems from mainstream mathematics, including $\Omega_{1}$ and $\Omega$ from Definitions \ref{defomega1} and \ref{defomega}, that cannot be represented by total functionals.  % functionals that grow out of the analysis of theorems in basic mathematics.

\begin{definition}[Types]\label{def2} 
{Define $\N_\bot = \N \cup  \{\bot\}$ where $\bot$ is a symbol for \emph{undefined}.  
\begin{itemize}
\item For $\sigma\in \Ty$, define the set $\F(\sigma)$ as follows:
\begin{itemize}
\item $\F(0) = \N$,
\item for $\sigma = (\sigma_1 , \ldots , \sigma_n \rightarrow 0)$, $\F(\sigma)$ is the set of all functionals of type $\F(\sigma_1) \times \cdots \times \F(\sigma_n) \rightarrow \N$.
\end{itemize}
\item For $\sigma\in \Ty$, define the set $\Pa(\sigma)$ as follows:
\begin{itemize}
\item $\Pa(0) = \N_\bot $,
\item for $\sigma = (\sigma_1 , \ldots , \sigma_n \rightarrow 0)$, $\Pa(\sigma)$ is the set of all functionals of type $\F(\sigma_1) \times \cdots \times \F(\sigma_n) \rightarrow \N_\bot$.
\end{itemize}
\end{itemize}
}\end{definition}
Since we are mainly interested in functionals originating from ordinary mathematics, we generally restrict our attention to statements of the form
\be\label{zrux}
(\forall x^\sigma)( \exists y^\tau)\big[{\Gamma}(x) \rightarrow {\Delta}(x,y)\big],
\ee
where $\ra(\sigma) = 2$, $\ra(\tau) \leq 2$ and where $\Delta$ is -in some intuitive sense- simple but $\Gamma$ can be complicated or complex.  

\smallskip

A \emph{realiser} for \eqref{zrux} is in general a partial functional $\Phi$ with domain $\{x \in \F(\sigma) : {\Gamma}(x)\}$ and image in $\{y\in \F(\tau):{\Delta}(x,\Phi(y))\}$. 
Thus, we only study total objects of type $\sigma$ in case $\ra(\sigma) \leq 2$, and only certain kinds of partial objects at level 3 are needed for analysing realisers.

\smallskip

As suggested above, we wish to replace Kleene's S9 by a $\lambda$-calculus construct namely a \emph{least fixed point} operator.  For the latter, we also need partial objects when describing the actual computations, which is one motivation for introducing the sets $\Pa(\sigma)$ for all types $\sigma\in \Ty$ in Definition \ref{def2}. 
\subsubsection{The language of calculations}\label{subseq2.2}
%\subsubsection{The symbols and their intended interpretations}
In this section, we introduce the term language used to express computable functions and functionals. 

\smallskip

Our language $\mathcal L$ is modelled on Plotkin's PCF (\cite{plotje}) with few and basic modifications. Plotkin's PCF is based on Scott's LCF (\cite{scottfree}), which in turn is inspired by Platek's PhD thesis (\cite{pphd}).
The use of least fixed points as the core concept of general computability theory has also been advocated by e.g.\ Moschovakis in \cite{YannisII}.
For some background from the perspective of the foundations of computer science, we refer to Streicher's monograph \cite{streikersc}*{\S1-3}.

\smallskip

First of all, we define the constants (Definition \ref{definition8}) and typed terms (Definition~\ref{cringe}) of our language $\mathcal L$.
\begin{definition}\label{definition8}
{\em
The \emph{primary} constants of the language $\mathcal L$ are:
\begin{multicols}{2}
\begin{itemize}
\item $\hat 0$ of type 0,
\item ${\suc}$ of type $0 \rightarrow 0$,
\item${\pd}$ of type $0 \rightarrow 0$,
\item ${\case}$ of type $0,0,0 \rightarrow 0$.
\end{itemize}
\end{multicols}
}\end{definition}
These constants will be interpreted as total objects, $\hat 0$ as 0, ${\suc}$ as the successor-function on $\N$, ${\pd}$ as the predecessor function (with ${\pd}(0) = 0$) on $\N$ and ${\case}(0,x,y) = x$ while ${\case }(z+1,x,y) = y$ for all $x,y,z$ in $\N$. For $\case(z,x,y)$ to take a defined value, we require that $z$ is total, while only one of the $x$ and $y$ needs to have a defined value.
\begin{definition}\label{cringe}
{\em The typed terms in $\mathcal L$ are defined by recursion as follows.
\begin{itemize}
\item All primary constants are terms in $\mathcal L$.
\item For $\sigma \in \Ty(3)$, there is an infinite list $x_i^\sigma$ of variables of type $\sigma$, which are terms in $\mathcal L$.
\item For $\sigma \in \Ty(3)$ and $\phi \in \Pa(\sigma)$, there is a \emph{secondary} constant $\hat \phi$, which we often just denote as $\phi$. 
The object $\hat \phi$ is a term in $\mathcal L$ and is interpreted as $\phi$.
\item If $t$ is a term of type $\sigma \rightarrow \tau$ and $s$ is a term of type $\sigma$, then $(ts)$ is a term of type $\tau$. This is interpreted as the application of the interpretation of $t$ to the interpretation of $s$.
\item If $\ra(\sigma \rightarrow \tau) \leq 3$ (and hence $\sigma \in \Ty(2)$) and $t$ is a term of type $\tau$, then $(\lambda x_i^\sigma t)$ is a term of type $\sigma \rightarrow \tau$. This is interpreted as standard $\lambda$-abstraction.  %Note that $\sigma \in \Ty(2)$ here.
\item If $\ra(\sigma) \leq 3$ and $t$ is a term of type $\sigma$, then $(\mu x_i^\sigma t)$ is a term of type $\sigma$. 
The interpretation of this case is discussed below.
\end{itemize}}\end{definition}
%
%\subsubsection{The full denotational semantics}
The set $\Pa(\sigma)$ has a canonical partial ordering `$\preceq_\sigma$' representing extensions of partial functionals, i.e.\ `$\phi^{\sigma}\preceq_{\sigma}\psi^{\sigma}$' means that the domain of $\psi$ includes that of $\phi$.  
The reason for introducing the partial orders $(\Pa(\sigma),\preceq_\sigma)$ is that they can be used for interpreting self-referential programs, like we intend to do with our $(\mu x_i^\sigma t)$-terms.
An essential notion here is \emph{monotonocity}, defined as follows.
\bdefi\label{floeperpoepie}
Any $\phi^{\sigma\di \tau}$ is \emph{monotone} if $(\forall x^{\sigma}, y^{\sigma} )(x\preceq_{\sigma} y \di \phi(x)\preceq_{\tau}\phi(y)  )$.
\edefi
We now introduce the interpretation of all typed terms, as monotone functions, by recursion on the term. 
We adopt the standard approach, i.e.\ we consider a term $t$ of type $\sigma$ and a set of variables $x_1^{\sigma_1}, \ldots , x_n^{\sigma_n}$ that contains all variables free or bounded in $t$, and then interpret $t$ as an increasing map 
\[
[[t]]:\Pa(\sigma_1) \times \cdots \times \Pa(\sigma_n) \rightarrow \Pa(\sigma). 
\]
In Definition \ref{intdef}, we let $\phi_i$ be the argument in $\Pa(\sigma_i)$ and we drop the upper (type) index when not needed.
Note that `monotone' always refers to Definition \ref{floeperpoepie}.
\begin{definition}[Interpretation of terms]\label{intdef}{\em ~
\begin{itemize}
\item For variables $x_{i}$, we put $[[x_i]](\phi_1 , \ldots , \phi_n) = \phi_i$.
\item If $\hat \phi$ is a secondary constant for $\phi$, we let $[[\hat \phi]](\phi_1 , \ldots , \phi_n) = \phi$.
\item The basic objects $\hat 0$, ${\suc}$, ${\pd}$, and ${\case}$ are treated in the same way as secondary constants (see Definition \ref{cringe}).
\item If $t = (uv)$ and $v$ has type $\tau$, we let
\[
[[t]](\phi_1 , \ldots , \phi_n) = [[u]](\phi_1 , \ldots , \phi_n)([[v]](\phi_1 , \ldots , \phi_n)).
\]
which is \emph{everywhere} undefined \textbf{unless} $[[v]](\phi_1 , \ldots , \phi_n) \in \F(\tau)$.
\item 
If $t = (\lambda x_i s)$ and $\phi' \in \F(\sigma_i)$, we let 
\[ 
[[t]](\phi_1 , \ldots , \phi_n)(\phi') = [[s]](\phi_1 , \ldots , \phi_{i-1} , \phi',\phi_{i+1} , \ldots , \phi_n).
\]
If $\phi'$ is (only) partial, we let the value be undefined.
\item If $t = (\mu x_i s)$, and $i = 1$, fix $\phi_2 , \ldots , \phi_n$ and note that $\Phi(\psi) = [[s]](\psi , \phi_2 , \ldots , \phi_n)$ is monotone in $\psi$ and has a least fixed point $\psi_\infty$.
We put $[[t]](\phi_1 , \ldots , \phi_n) = \psi_\infty$.  The general case is treated in the same way, but is much more tedious. 
\end{itemize}
Note that in the cases where a variable is bounded, the corresponding argument will remain an argument of the interpretation, now as a dummy one.
The existence of a (least) fixed point in the final item of Definition \ref{intdef} is guaranteed by the well-known Knaster-Tarski theorem.
}\end{definition}
Next, we come to the crucial notion of `computable in', where we recall the ordering `$\preceq_{\sigma}$' on $\Pa(\sigma)$ introduced after Definition \ref{cringe}.
\begin{definition}[Computability]\label{def.comp}{\em For $\sigma_1 , \ldots , \sigma_n , \delta\in \Ty(3)$, let $\phi_1 , \ldots , \phi_n,\psi$ be partial objects of the corresponding types. 
We say that 
\[
\text{$\psi$ \emph{is computable in} $\phi_1 , \ldots , \phi_n$}
\]
if there is a term $s$ of type $\tau$,  with free variables $x_1 , \ldots , x_n$ of types $\sigma_1 , \ldots , \sigma_n$, and without any secondary constants, such that 
\[  
\psi \preceq_\tau [[s(x_1 , \ldots , x_n / \hat \phi_1 , \ldots , \hat \phi_n)]] .
\]
}\end{definition}
From now on, we shall drop all hats `\^{}' when discussing computability, unless this creates ambiguity.  We also follow the standard convention for writing iterated applications as $t_1t_2 \cdots t_n$, meaning $(\cdots (t_1t_2)t_3\cdots t_n)$.
\subsubsection{Computation trees and an operational-like semantics}\label{subseq2.3}
In this section, we introduce the crucial notion of \emph{computation tree} and associated semantics.

\smallskip

We let   $t$ be a closed term of type 0 and we assume that $[[t]] \in \N$.  The definition of $[[t]]$ in Definition \ref{intdef} is by recursion on the term $t$, and for some cases, by a transfinite sub-induction that is monotone. 
As a consequence, there is some kind of well-founded tree witnessing that $[[t]] = n$.  
We define one such `computation tree' below.  Our definition is inspired by Plotkin's operational semantics for PCF (\cite{plotje}), but since we are modelling infinitary computations, our modifications to Plotkin's approach are far-reaching. 

\smallskip

Another motivation for computation trees is that we want to recapture the qualities of computation trees defined for Kleene's S1-S9. 
In order to obtain a true analogue of the latter, we need to perform a little bit of coding of the elements in the tree, as will become clear below.  %  We will come back to this later.

\smallskip

For the definition of computation tree, we use the fact that all terms of type $0$ can be written as an iterated application $t = t_1 \cdots t_n$ where $t_1$ is not itself an application term.
%\marginpar{\footnotesize{Replaced `composition' with `application' twice.}}
For some terms $t$, the computation tree $T[t]$ will be assigned a value, namely an integer $a$. 
The definition of $T[t]$ is top-down, while the definition of the value is bottom-up. The whole construction can be viewed as a positive inductive definition, in (intended) analogy with the set of Kleene computations.
\begin{definition}[Computation tree]\label{definition12}{\em
Let $t$ be a closed term of type $0$ with parameters $\Phi_1, \ldots ,  \Phi_n$. We define the \emph{computation tree} $T[t]$ of $t$, consisting of sequences of terms, by recursion on the complexity of $t$ as follows. 
\begin{enumerate}
\renewcommand{\theenumi}{\roman{enumi}}
\item We let the empty sequence be the root of each $T[t]$, and we identify a term $t$ with the corresponding sequence of length 1.\label{kirst}
\item If $t = 0$, then $t$ is the only extra node in  $T[t]$ and $0$ is the value of $T[t]$.
\item If $t = \suc\; t_1$, then we concatenate $t$ with the sequences in  $T[t_1]$. If $T[t_1]$ has the value $a$, then $T[t]$ has the value $a+1$.
\item If $t = \pd \;t_1$, then we act in analogy with the case above.
\item If $t = \case \;t_1t_2t_3$, then $T[t]$ consists of $t$ concatenated with all sequences in $T[t_1]$ and, if $T[t_1]$ has a value $a$, the sequences in $T[t_2]$ or $T[t_3]$ depending on $a$. The value of $T[t]$ will then be the corresponding value of $T[t_2]$  or $T[t_3]$ if it exists.\label{kast}
\item If $t = \Phi_i t_2 \cdots t_n$, then $t_2$ is of type $\tau = (\delta_1 , \ldots , \delta_m \rightarrow 0)$.
Then $T[t]$ is $t$ concatenated with the sequences in the following trees $T[s]$ and $T[s']$.
 \begin{itemize}
  \item For $(\phi_1 , \ldots , \phi_m) \in \F(\delta_1) \times \cdots \times \F(\delta_m)$, we concatenate with the sequences in  $T[s]$ where $s = t_2\phi_1 \cdots \phi_m$.
 \item If each such  $T[s]$ has a value, these values define a functional $\Psi_2$. In this case, define $\xi: = \Phi_i(\Psi_2)$ and define $s'$ as the term $\xi t_3 \cdots t_n$. \label{bwist}
 \item We let $T[t]$ contain $t$ concatenated with all sequences in $T[s']$, and if $T[s']$ has a value, we let this be the value of $T[t]$.
 \end{itemize}
 \item If $t =( \lambda x t_1)t_2 \cdots t_n$ we first construct $T[t]$ via the trees $T[s]$,  as in the previous item, and if all sub-trees have values, we get the  total functional $\Psi_2$ as above. We then consider the term
 \[
 s' = t_1[x/t_2]t_3 \cdots t_n
 \] 
 and concatenate $t$ with all sequences in this $T[s']$ as well. The value of this $T[s']$ will then, if it exists, be the value of $T[t]$.\label{twist}
 \item If  $t = (\mu x t_1)t_2 \cdots t_n$, put $s = t_1[x/(\mu x t_1)]t_2 \cdots  t_n$. Then $T[t]$ consists of $t$ concatenated with all sequences in $T[s]$. If $T[s]$ has a value, then the latter is also the value of  $T[t]$.\label{twister}
\end{enumerate}
}\end{definition}
\begin{remark}[Conversion by any other name]\label{xx}{\em For item \eqref{twist} in Definition \ref{definition12}, we have made a twist to the standard format of $\beta$-conversion, that $(\lambda x t)s$ directly converts to $t[x/s]$. The reason is that since we can only have that an applicative term gives a value (different from $\bot$) if the argument is a total functional, classical $\beta$-conversion does not respect the semantics of the terms. We refer the conversion implicit in item \eqref{twist} as \emph{modified $\beta$-conversion}.
}\end{remark}
It is clear that for $T[t]$ as in Definition \ref{definition12} to have a value, the tree must be well-founded, since a value always depends on the values of the sub-trees, except for the ground term $0$. The converse is not always true, since an application involving partial functionals may yield a well-founded tree without a value.

\smallskip

The main result of this section is the following theorem, which we prove through a series of lemmas.
\begin{theorem}\label{thm.13} Let $t$ be a closed term of type $0$ as above and let $n \in \N$. The following are equivalent:
\begin{itemize}
\item the interpretation of $t$ satisfies $[[t]] = n$
\item the tree $T[t]$ is well-founded and with  value $n$.
\end{itemize}
\end{theorem}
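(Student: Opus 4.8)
The plan is to prove both implications by induction, but with the two directions interleaved through a simultaneous induction on the structure of the closed type-$0$ term $t$, following the recursion in Definition~\ref{definition12}. The key observation is that both $[[t]]$ (defined by recursion on terms, with a transfinite monotone sub-induction for $\mu$-terms) and the value of $T[t]$ (defined bottom-up over a well-founded tree) are ``computed'' by essentially the same inductive clauses, so the work is to match up the clauses of Definition~\ref{intdef} with those of Definition~\ref{definition12}. I would first record a structural lemma: any closed type-$0$ term can be written as $t = t_1 t_2 \cdots t_n$ with $t_1$ not an application, and $t_1$ is then one of $\hat 0$, $\suc$, $\pd$, $\case$, a secondary constant $\Phi_i$, a $\lambda$-term, or a $\mu$-term — this is exactly the case split driving the recursion, so the inductions are well-founded on term complexity (with the $\mu$-case handled via the fixed-point approximation, see below).

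For the direction ``$[[t]] = n \Rightarrow T[t]$ well-founded with value $n$'', I would go clause by clause. The base case $t = \hat 0$ is immediate. For $t = \suc\, t_1$ (and $\pd$, $\case$ analogously): if $[[t]] = n$ then by Definition~\ref{intdef} $[[t_1]]$ is defined, say $= m$, with $n = m+1$; the induction hypothesis gives $T[t_1]$ well-founded with value $m$, and clause~(iii) of Definition~\ref{definition12} assembles $T[t]$ with value $m+1 = n$. The application case $t = \Phi_i t_2 \cdots t_n$ is the heart of it: if $[[t]]$ is defined, then by the ``undefined unless the argument is total'' clause of Definition~\ref{intdef}, the subterm $t_2$ (after substituting its free variables, all of which are among the $\Phi_j$ here since $t$ is closed with parameters) must interpret to a \emph{total} functional $\Psi_2 \in \F(\tau)$; applying the induction hypothesis to each $s = t_2 \phi_1 \cdots \phi_m$ for $(\phi_1,\dots,\phi_m) \in \F(\delta_1)\times\cdots\times\F(\delta_m)$ shows every such $T[s]$ is well-founded with the value $\Psi_2(\phi_1,\dots,\phi_m)$; these values define $\Psi_2$ as in clause~(vi)\eqref{bwist}, $\xi := \Phi_i(\Psi_2)$ is defined, and $s' = \xi t_3 \cdots t_n$ has $[[s']] = [[t]] = n$, so a final application of the induction hypothesis to $s'$ (which is of strictly smaller complexity in the relevant ordering, or rather has one fewer outer application) closes the case. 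The $\lambda$-case~(vii) is the same but using modified $\beta$-conversion (Remark~\ref{xx}): $[[t]]$ defined forces $\Psi_2$ total, and then $[[t]] = [[t_1[x/t_2] t_3 \cdots t_n]]$ by the interpretation of $\lambda$-abstraction together with a substitution lemma, so the induction hypothesis on $s' = t_1[x/t_2] t_3 \cdots t_n$ applies. For the $\mu$-case~(viii), $[[(\mu x\, t_1) t_2 \cdots t_n]]$ equals $[[t_1[x/(\mu x\, t_1)] t_2 \cdots t_n]]$ by the fixed-point property of $\psi_\infty$, so again the induction hypothesis on the unfolded term $s$ gives the result — here the ``complexity'' is not term size but the ordinal rank of the monotone sub-induction producing $\psi_\infty$, so the overall induction must be on a lexicographic combination of (fixed-point approximation ordinal, term structure).

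For the converse ``$T[t]$ well-founded with value $n \Rightarrow [[t]] = n$'', I would do essentially the same case analysis, now reading the implications backwards: well-foundedness of $T[t]$ plus a value forces well-foundedness and values of all the sub-trees invoked in the relevant clause, the induction hypothesis converts these into defined interpretations, and the clauses of Definition~\ref{intdef} reassemble $[[t]] = n$. One genuinely needs the hypothesis that $[[t]] \in \N$ (equivalently that we are told $T[t]$ has a value) because, as noted right before Theorem~\ref{thm.13}, a well-founded tree need not have a value when partial functionals are involved; but since we are in the ``value $n$'' case this is not an obstruction.

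\textbf{Main obstacle.} I expect the real difficulty to be twofold. First, the $\mu$-case requires marrying the \emph{transfinite} Knaster--Tarski approximation $\psi_0 \preceq \psi_1 \preceq \cdots \preceq \psi_\alpha \preceq \cdots \to \psi_\infty$ underlying $[[\mu x\, t_1]]$ with the \emph{well-founded but possibly deeply nested} unfolding $s = t_1[x/(\mu x\, t_1)] t_2 \cdots t_n$ in clause~(viii): one must check that a well-founded computation tree for the unfolded term corresponds to the value being ``caught'' at some stage $\alpha$ of the approximation, and conversely, which forces the induction to be on a suitably chosen ordinal-indexed measure rather than on naive term size. Second, the bookkeeping around \emph{modified} $\beta$-conversion — that an application term only yields a value when its argument is total — has to be threaded consistently through every clause, in particular verifying that the functional $\Psi_2$ assembled in clauses~(vi)--(vii) from sub-tree values is genuinely the interpretation $[[t_2]]$ (resp.\ the relevant abstraction body), which needs a clean substitution/coincidence lemma stating that $[[s]](\vec\phi)$ depends only on $\vec\phi$ through the free variables of $s$. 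Once those two lemmas are in place, the remaining cases are routine bottom-up/top-down matching.
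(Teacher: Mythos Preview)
Your treatment of the upward direction (tree well-founded with value $n \Rightarrow [[t]] = n$) is essentially the paper's Lemma~\ref{lemma14}: the paper phrases it as induction on the ordinal rank of $T[t]$, which is exactly what ``reading the clauses backwards'' amounts to, since every sub-tree invoked in Definition~\ref{definition12} has strictly smaller rank. That direction is fine.

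The downward direction has a real gap. Your plan is a direct induction on term structure (possibly lexicographically refined by an approximation ordinal), but the terms you need the induction hypothesis for are \emph{not} structurally smaller: in case~(vii) you invoke the IH on $s' = t_1[x/t_2]\,t_3\cdots t_n$, and in case~(viii) on $s = t_1[x/(\mu x\,t_1)]\,t_2\cdots t_n$, both of which can be strictly larger than $t$. Your suggested lexicographic measure ``(approximation ordinal, term structure)'' does not obviously decrease either, since after one $\mu$-unfolding the same $\mu$-term reappears inside $s$ with no clear drop in ordinal. You correctly flag this as the main obstacle, but the proposal does not contain a fix.

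The paper's fix is the standard Tait-style move: it \emph{strengthens the induction hypothesis} to the property ``$t$ is normal'', which (for type~$0$) says $[[t]] \preceq [[t[\vec\phi/\vec s]]]_w$ for \emph{all} substitutions with $\phi_i \preceq [[s_i]]_w$, and (for higher types) that $t\,t_2\cdots t_k$ is normal whenever the $t_i$ are. This is proved by genuine structural induction on $t$ (Lemma~\ref{lemma16}), supported by a Substitution Lemma. The payoff is precisely in the $\lambda$- and $\mu$-cases: for $(\lambda x\,t_1)$ one uses normality of the strict subterm $t_1$ with the substitution $x \mapsto t_2$; for $(\mu x\,t_1)$ one runs a sub-induction on the approximation ordinal $\alpha$, using normality of $t_1$ to substitute the secondary constant $\Psi_\alpha$ by the term $(\mu x\,t_1)$ itself (legitimate because $\Psi_\alpha \preceq [[(\mu x\,t_1)]]_w$ by the sub-IH). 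Your ``substitution/coincidence lemma'' intuition is pointing at the right ingredient, but the missing idea is that this substitution has to be baked into the induction hypothesis from the start rather than invoked ad~hoc.
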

We first establish the `easy' direction. 
\begin{lemma}\label{lemma14}
The upward implication in Theorem \ref{thm.13} holds.
\end{lemma}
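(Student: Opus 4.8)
The plan is to prove the upward implication of Theorem \ref{thm.13}: assuming the computation tree $T[t]$ is well-founded with value $n$, we show that $[[t]] = n$. The natural strategy is induction on the well-founded tree $T[t]$ — equivalently, induction on the ordinal rank of $T[t]$ — following exactly the case distinction of Definition \ref{definition12}. For each shape of the type-$0$ term $t = t_1 \cdots t_n$ (with $t_1$ non-applicative), the value of $T[t]$ is defined in terms of the values of strictly smaller subtrees, so by the induction hypothesis those subtrees' values coincide with the relevant interpretations, and one checks that the interpretation clause of Definition \ref{intdef} produces the claimed value.

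The case work would proceed as follows. The base case $t = \hat 0$ is immediate: $[[\hat 0]] = 0$ matches the value of the one-node tree. For $t = \suc\, t_1$, $t = \pd\, t_1$ and $t = \case\, t_1 t_2 t_3$, the subtrees $T[t_1]$ (and $T[t_2]$ or $T[t_3]$) are well-founded with values, so by the induction hypothesis their interpretations are defined and equal to those values; then the semantics of $\suc$, $\pd$, $\case$ as total base functions (Definition \ref{intdef}, treating them like secondary constants) gives $[[t]] = n$. For the oracle case $t = \Phi_i t_2 \cdots t_n$: by well-foundedness, every $T[t_2 \phi_1 \cdots \phi_m]$ with total arguments has a value, so by the induction hypothesis $[[t_2]]$ applied to any total tuple is defined and agrees with $\Psi_2$; since interpretations of type $\leq 2$ terms on total inputs land in $\F$, we get $[[t_2]] = \Psi_2$ (extensionally), hence $\xi = \Phi_i(\Psi_2)$ is exactly $[[\Phi_i t_2]]$, and applying the induction hypothesis to $T[s']$ with $s' = \xi t_3 \cdots t_n$ yields $[[t]] = n$. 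The modified-$\beta$ case $t = (\lambda x\, t_1) t_2 \cdots t_n$ is handled the same way, additionally invoking the fact that once the $\Psi_2$-subtrees have values, substituting $t_2$ (which then denotes a total object) is semantically sound — this is precisely the content of Remark \ref{xx}. Finally, the fixed-point case $t = (\mu x\, t_1) t_2 \cdots t_n$ with $s = t_1[x/(\mu x\, t_1)] t_2 \cdots t_n$: the induction hypothesis gives $[[s]] = n$, and one uses that $[[\mu x\, t_1]] = \psi_\infty$ is a fixed point of $\Phi(\psi) = [[t_1]](\psi, \ldots)$, so $[[t]]$ equals $[[s]]$ by the unfolding $\psi_\infty = \Phi(\psi_\infty)$.

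I expect the main technical obstacle to be bookkeeping around the two places where the tree definition departs from naive operational semantics: the oracle/lambda clauses require, before producing a value, that \emph{all} subtrees $T[t_2 \phi_1 \cdots \phi_m]$ over total tuples have values, and one must argue that the family of these values is exactly the functional $[[t_2]]$ restricted to total inputs — not merely an extension of it. This uses the monotonicity from Definition \ref{floeperpoepie} together with the observation (from Definition \ref{intdef}) that application to a total argument of lower type is the only way to get a defined value, so no information is lost. A second, milder subtlety is the $\mu$-case, where the well-foundedness of $T[t]$ depends on $T[s]$ being well-founded for the \emph{one-step} unfolding $s$; here the induction is on tree rank rather than term complexity, and one must make sure the rank genuinely decreases — which it does, since $T[s]$ is by construction a proper subtree of $T[t]$. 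Once these points are pinned down, each case is a routine unwinding of the two definitions, so the lemma follows by well-founded induction on $T[t]$.
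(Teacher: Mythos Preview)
Your proposal is correct and follows essentially the same approach as the paper: induction on the ordinal rank of $T[t]$, case-splitting along Definition~\ref{definition12}, with the two nontrivial cases being the $\lambda$-abstraction clause (item~\eqref{twist}) and the $\mu$-clause (item~\eqref{twister}). Your treatment of both matches the paper's---using the induction hypothesis on the subtrees $T[t_2\phi_1\cdots\phi_m]$ to identify $\Psi_2$ with $[[t_2]]$, and invoking the fixed-point equation $[[(\mu x\,t_1)]] = [[t_1[x/(\mu x\,t_1)]]]$ for the recursion case.
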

\begin{proof}
We prove this by induction on the ordinal rank of $T[t]$. 
We provide a proof by cases following the items of Definition \ref{definition12}.  
All cases except items \eqref{twist} and \eqref{twister} are straightforward, and are left for the reader. 

\smallskip

For item \eqref{twist},  since each $T[s]$ is a genuine sub-tree of $T[t]$, we obtain from the induction hypothesis that $\Psi_2 = [[t_2]]$. We then apply the induction hypothesis to $s'$ to obtain the claim.

\smallskip

For item \eqref{twister}, let $t = (\mu x t_1)t_2 \cdots t_n$ and assume that $T[t]$ is well-founded, and has a value $a$. Then $T[s]$ is well-founded, where
$s = t_1[x/(\mu x t_1)]t_2 \cdots  t_n$, and also has value $a$.  By the induction hypothesis, we obtain
\[
[[t_1[x/(\mu x t_1)]t_2 \cdots  t_n]] = a.
\]
Since $[[(\mu x t_1)]] = [[t_1[x / (\mu x t_1)]]$, it follows that $[[(\mu x t_1)t_2 \cdots t_n]] = a$. 
\end{proof}
For the downward implication in Theorem \ref{thm.13}, we need some technical machinery, including a way of handling substitutions.   

\smallskip

Let $t$ be a term containing secondary constants $ \vec \phi$, let $\vec s$ be a sequence of closed terms with types matching those of  $\vec \phi$ and where each $\phi_i$  is a sub-function of $[[s_i]]$.  
We write $t[\vec \phi/\vec s]$ for the term where all occurrences  of each $\phi_i$ are replaced by $s_i$.
By abuse of notation, $\preceq_{\sigma}$ is also the canonical ``more defined than"-ordering on $\Pa(\sigma)$.
\begin{definition}{\em \label{definition15}~
\begin{itemize}
\item If $t$ is a closed term of type $0$ such that $T[t]$ is well-founded and has a value $a$, we let $[[t]]_w  = a$.

\item If $t$ is a closed term of type $\delta_1 , \ldots , \delta_m$ with $m \geq 1$, we put 
\[
[[t]]_w =( \lambda (\xi_1 , \ldots , \xi_m) \in \F(\delta_1) \times \cdots \times \F(\delta_m) ) [[t\xi_1 \cdots \xi_m]]_w.
\]
\end{itemize}}
\end{definition}
\noindent
The previous definition enables us to prove the following. 
\begin{lemma}{\bf (Substitution Lemma)}
Let $t$ be a term of type 0 with secondary constants among $\phi_1 , \ldots , \phi_n$ and such that $\phi_i \preceq [[s_i]]_w$ for each $i = 1 , \ldots , n$. Then the interpretations satisfy $[[t]]_w \preceq [[t[\vec \phi / \vec s]\/]]_w$. \end{lemma}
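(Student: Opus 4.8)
The plan is to prove the Substitution Lemma by induction on the ordinal rank of the computation tree $T[t]$, following the case analysis of Definition~\ref{definition12}, in close parallel with the proof of Lemma~\ref{lemma14}. The statement to prove is: if $t$ is a closed term of type $0$ with secondary constants among $\phi_1,\ldots,\phi_n$, and $\phi_i \preceq [[s_i]]_w$ for each $i$, and $T[t]$ is well-founded with value $a$ (so $[[t]]_w = a$), then $T[t[\vec\phi/\vec s]]$ is well-founded with value $a$, hence $[[t[\vec\phi/\vec s]]]_w = a \succeq [[t]]_w$. (Here I read $[[t]]_w \preceq [[t[\vec\phi/\vec s]]]_w$ as: whenever the left side is defined with value $a$, so is the right side, with the same value; for terms of higher type one argues pointwise using the second clause of Definition~\ref{definition15}.) The base case $t = \hat 0$ is immediate since substitution does not affect it. The cases $t = \suc\,t_1$, $t = \pd\,t_1$, $t = \case\,t_1t_2t_3$ are routine: substitution commutes with these head symbols, the relevant subterms have strictly smaller tree rank, and the value is computed from the subvalues in a way that substitution preserves, so the induction hypothesis applies directly.

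The interesting cases are the applicative ones, items \eqref{bwist}, \eqref{twist}, and \eqref{twister}. For item \eqref{bwist}, $t = \Phi_i t_2 \cdots t_n$ where $\Phi_i$ is one of the secondary constants $\phi_i$. After substitution the head becomes $s_i$, and $t[\vec\phi/\vec s] = s_i\,(t_2[\vec\phi/\vec s])\cdots(t_n[\vec\phi/\vec s])$. For each total tuple $(\phi'_1,\ldots,\phi'_m)$, the subtree $T[t_2\phi'_1\cdots\phi'_m]$ has smaller rank, so by the induction hypothesis $T[(t_2[\vec\phi/\vec s])\phi'_1\cdots\phi'_m]$ has the same value; hence the functional $\Psi_2$ assembled from these values is unchanged. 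Now $\xi = \phi_i(\Psi_2)$, and since $\phi_i \preceq [[s_i]]_w$ and $\xi$ is defined (the original tree had a value), we get $[[s_i]]_w(\Psi_2) = \xi$; feeding $\Psi_2$ to the head $s_i$ of the substituted term therefore produces a subtree computing exactly $\xi(t_3\cdots t_n)$, up to the already-handled substitutions in the $t_j$, and the induction hypothesis on $T[\xi t_3\cdots t_n]$ (again smaller rank) closes the case. Item \eqref{twist} (the $\lambda$-case, using modified $\beta$-conversion) is similar but cleaner, since the head is a $\lambda$-term rather than a constant: assemble $\Psi_2$ via the subtrees exactly as above, then apply the induction hypothesis to $T[t_1[x/t_2]t_3\cdots t_n]$, noting substitution of $\vec s$ commutes with the substitution $[x/t_2]$ since $x$ is not among the $\phi_i$. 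Item \eqref{twister} (the $\mu$-case) is handled by unfolding once: $s = t_1[x/(\mu x t_1)]t_2\cdots t_n$ has a strictly smaller tree rank, substitution of $\vec s$ commutes with the unfolding, and the induction hypothesis gives the value of the unfolded substituted term; since by Lemma~\ref{lemma14}'s argument the $\mu$-term and its one-step unfolding have the same interpretation in the $w$-sense, we conclude.

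The main obstacle is bookkeeping in item \eqref{bwist}: one must carefully verify that replacing the head constant $\phi_i$ by the (possibly strictly larger) term $s_i$ does not change the value even though it may enlarge domains. The point is that $\Psi_2$ is a \emph{total} functional — it is built from the values of well-founded subtrees over \emph{all} total inputs — so monotonicity of $[[s_i]]_w$ together with $\phi_i \preceq [[s_i]]_w$ forces $[[s_i]]_w(\Psi_2) = \phi_i(\Psi_2) = \xi$ once $\phi_i(\Psi_2)$ is known to be defined; there is no room for $s_i$ to return a different or "more defined" answer on a total argument where $\phi_i$ already answers. A secondary subtlety is the interaction of the outer substitution $[\vec\phi/\vec s]$ with the inner $\beta$/$\mu$ substitutions and with the freshness of bound variables, which is the standard (and routine) variable-hygiene argument. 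Finally, I would remark that the higher-type clause of Definition~\ref{definition15} lets us extend the conclusion from type-$0$ terms to arbitrary terms by applying the type-$0$ case to $t\xi_1\cdots\xi_m$ for each total tuple $\vec\xi$, since substitution commutes with this application.
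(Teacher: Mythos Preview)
Your tree-rank induction breaks down precisely in case \eqref{bwist}, where the head constant $\phi_i$ is replaced by the term $s_i$. You correctly observe that the subtrees $T[t_2\phi'_1\cdots\phi'_m]$ and $T[\xi t_3\cdots t_n]$ have smaller rank, so the induction hypothesis gives $[[t_2[\vec\phi/\vec s]]]_w=\Psi_2$ and $[[\xi\,(t_3[\vec\phi/\vec s])\cdots(t_n[\vec\phi/\vec s])]]_w=a$. You also correctly get $[[s_i]]_w(\Psi_2)=\xi$ from $\phi_i\preceq[[s_i]]_w$. But the sentence ``feeding $\Psi_2$ to the head $s_i$ of the substituted term therefore produces a subtree computing exactly $\xi(t_3\cdots t_n)$'' is not justified. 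The substituted term is $s_i\,u_2\cdots u_n$ with $u_j=t_j[\vec\phi/\vec s]$, and its computation tree is governed by the \emph{syntactic head of $s_i$}, not by the constant $\phi_i$. If, say, $s_i$ is itself an application $\Phi'r$, then $s_i u_2\cdots u_n=\Phi' r\,u_2\cdots u_n$ and Definition~\ref{definition12}(vi) first evaluates $r$, not $u_2$; if $s_i=(\lambda y\,r)$, the tree passes to $r[y/u_2]u_3\cdots u_n$, not to $\xi\,u_3\cdots u_n$. In every case the new tree has no a priori rank bound in terms of the rank of $T[t]$, so your induction hypothesis says nothing about it. Knowing $[[s_i\Psi_2]]_w=\xi$ is a semantic fact; it does not by itself yield $[[s_i u_2\cdots u_n]]_w=a$.

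The paper closes exactly this gap by layering a \emph{main} induction on the maximal type rank of the $\phi_i$ (with a sub-induction on how many $\phi_i$ have that rank) outside the tree-rank induction. At the critical step one has $[[s_i\xi_1\cdots\xi_l]]_w$ defined with $\xi_j=[[v_j]]_w$ total, and must pass to $[[s_i v_1\cdots v_l]]_w$. This is itself an instance of the Substitution Lemma, but now for the constants $\xi_j$, whose type ranks are \emph{strictly smaller} than that of $\phi_i$ (they are arguments to $\phi_i$); hence the main induction hypothesis applies and gives $[[s_i\xi_1\cdots\xi_l]]_w\preceq[[s_i v_1\cdots v_l]]_w$. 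Your argument is missing this outer induction, and without it the bridge from ``$[[s_i]]_w(\Psi_2)=\xi$'' to ``$T[s_i u_2\cdots u_n]$ has value $a$'' cannot be built.
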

\begin{proof}
The proof is by induction on the maximal rank of the $\phi_i$ and a sub-induction on the number of $\phi_i$ of this maximal rank.
We will substitute $s_i$ for $\phi_i$ in sub-terms of $t$ by a bottom-up procedure; the only nodes where we need to perform this substitution, are nodes of the form $\phi_i v_1 \ldots v_l$.  

\smallskip

As an  hypothesis in the sub-sub-induction on the rank of the computation tree, we can assume that each $v_j$ is already the result of the substitution, that is, they are without any occurrences of $\phi_i $. We can further assume, as a part of the same induction hypothesis,  that each $[[v_j]]_w$ is total. We must however show that we can replace $\phi_i$ with $s_i$ and still have a well-founded computation tree with a value.

\smallskip

By assumption we have that $\phi_i \xi_1  \cdots \xi_l = [[s_i \xi_1 \cdots \xi_l]]_w$ for all total $\xi_1 , \ldots , \xi_l$ of the appropriate types.
If we now fix $\xi_j = [[v_j]]_w$, then by the main induction hypothesis we have that  
\[ 
[[\phi_iv_1 \cdots v_l]]_w = [[\phi_i \xi_1 \cdots \xi_l]]_w \preceq [[s_i\xi_1 \ldots \xi_l]]_w \preceq [[s_i v_1 \cdots v_l]]_w,
\] 
since the ranks of the types of the $\xi_j$ are lower than the rank of $\phi_i$.  Hence, we are free to make the aforementioned substitution. 
\end{proof}
\begin{definition}[Normal terms]{\em 
 Let $t$ be a closed term with secondary constants $ \phi_1, \cdots ,\phi_n$, where the type of $\phi_i$ is $\tau_{i,1} , \ldots , \tau_{i,m_i}\rightarrow 0$.
\begin{itemize}
\item If $t$ is of type 0, we call $t$ \emph{normal} if \[  [[\;t\;]] \preceq [[\;t[\phi_1 , \ldots , \phi_n/s_1 , \ldots , s_n]\;]]_w \] whenever $\phi_i \preceq [[s_i]]_w$ for each $i = 1 , \ldots , n$. 
\item If $t$ is of type $\sigma_1, \ldots , \sigma_k \rightarrow 0$ we call $t$ \emph{normal} if the term $tt_1 \cdots t_k$ is normal whenever $t_1, \ldots , t_k$ are normal, and of types $\sigma_1 , \ldots , \sigma_k$.
\end{itemize}
}\end{definition}
\begin{lemma}\label{lemma16} All closed terms are normal. \end{lemma}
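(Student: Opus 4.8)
The plan is to prove Lemma~\ref{lemma16} by structural induction on the term $t$, following the clauses of Definition~\ref{cringe}, and for each applicative/$\mu$-case by a secondary induction on the complexity of the term together with the ordinal rank of the computation tree. The statement to be proved, unfolded via the definition of \emph{normal term}, is: for every closed term $t$ of type $\sigma_1,\dots,\sigma_k\to 0$ and all normal closed terms $t_1,\dots,t_k$ of the matching types, the type-$0$ term $tt_1\cdots t_k$ satisfies $[[tt_1\cdots t_k]]\preceq [[(tt_1\cdots t_k)[\vec\phi/\vec s]]]_w$ whenever $\phi_i\preceq[[s_i]]_w$ for all secondary constants $\phi_i$ occurring. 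Since the conclusion we are really after (via Theorem~\ref{thm.13}) is that $T[t]$ is well-founded with value $n$ whenever $[[t]]=n$, the substitution into the $s_i$ is a device to turn secondary constants into genuine terms; once all closed terms are normal, applying normality of $t$ itself (as a closed term with \emph{no} secondary constants in the sense required by Definition~\ref{def.comp}, or rather replacing each secondary constant by a term with the same interpretation) yields the downward implication.

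\textbf{Key steps in order.} First I would handle the primary constants $\hat0,\suc,\pd,\case$: these have no secondary constants, and for each, when fed normal arguments $t_1,\dots,t_k$ of type $0$, the computation tree is built by items (ii)--(v) of Definition~\ref{definition12} directly from the trees of the $t_i$, so normality of the $t_i$ plus the Substitution Lemma gives the inequality immediately. Second, a secondary constant $\hat\phi$: here normality is essentially the content of the hypothesis $\phi\preceq[[s]]_w$ together with the Substitution Lemma — when we apply $\hat\phi$ to normal arguments $t_1,\dots,t_k$ whose interpretations are total (which we must extract from their normality, as in the proof of the Substitution Lemma), we get $[[\phi t_1\cdots t_k]]=[[\phi\xi_1\cdots\xi_k]]_w\preceq[[s\xi_1\cdots\xi_k]]_w\preceq[[s t_1\cdots t_k]]_w$ with $\xi_j=[[t_j]]_w$. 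Third, the application case $t=(uv)$: reduce to the inductive hypotheses for $u$ and $v$, noting that $(uv)t_1\cdots t_k = u v t_1\cdots t_k$ and that $v$ is normal by hypothesis, so this is just the inductive hypothesis for $u$ applied to the enlarged argument list. Fourth, the $\lambda$-case $t=\lambda x_i\, r$: given normal arguments, item~\eqref{twist} of Definition~\ref{definition12} (the modified $\beta$-conversion) tells us $T[(\lambda x\,r)t_1\cdots t_k]$ passes through $T[r[x/t_1]t_2\cdots t_k]$ after first verifying $[[t_1]]$ is total; normality of $t_1$ gives totality and well-foundedness of $T[t_1]$, and then normality of $r$ (inductive hypothesis) with $t_1$ substituted gives the value — one must check that $r[x/t_1]$ is again normal, which follows because substituting a normal term for a free variable preserves normality (a small lemma, provable by the same structural induction or folded into the main one). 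Fifth, the $\mu$-case $t=(\mu x\,t_1)t_2\cdots t_k$: by item~\eqref{twister}, $T[t]$ passes through $T[t_1[x/(\mu x\,t_1)]t_2\cdots t_k]$; here the subtlety is that $(\mu x\,t_1)$ is \emph{not} a structurally smaller term, so the outer structural induction does not directly apply. The standard fix is to use the fact that $[[\mu x\,t_1]]=\sup_\alpha \psi_\alpha$ is built by a transfinite iteration of the monotone operator, so one runs an auxiliary transfinite induction: one shows that each stage $\psi_\alpha$ (a genuine partial functional, hence a secondary constant) is ``witnessed'' by a well-founded computation tree, and since $[[t]]=n$ already holds at some stage $\alpha$, unravelling that stage's finite approximation of the fixed point gives the tree.

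\textbf{The main obstacle} I expect is exactly this $\mu$-case: the literal recursion in Definition~\ref{definition12}\eqref{twister} unfolds $(\mu x\,t_1)$ into a term \emph{containing} $(\mu x\,t_1)$ again, so there is no structural decrease, and one cannot naively invoke the induction hypothesis. The fix requires carefully aligning the bottom-up ordinal rank of the computation tree $T[t]$ with the transfinite stages $\psi_\alpha$ of the Knaster--Tarski iteration, so that ``$T[t]$ has rank $<\beta$'' implies ``the value $n$ is already produced using only $\psi_\alpha$ for $\alpha<\beta$,'' and then one substitutes the (secondary constant for the) finite-stage approximant, which \emph{is} covered by the Substitution Lemma / secondary-constant case, in place of $\mu x\,t_1$. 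A secondary but genuine technical point is the bookkeeping for terms of positive type: the definition of ``normal'' at higher type quantifies over all normal arguments, so every case must be stated and discharged uniformly for arbitrary trailing argument lists $t_1,\dots,t_k$, not just for closed type-$0$ terms; this is routine but must be tracked consistently. I would also explicitly invoke the Substitution Lemma at each node of the form $\phi_i v_1\cdots v_l$, precisely as in its proof, to justify replacing $\phi_i$ by $s_i$ while preserving well-foundedness and the value.
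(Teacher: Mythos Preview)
Your proposal is correct and follows essentially the same approach as the paper: structural induction on $t$, with the $\mu$-case handled by a transfinite sub-induction on the fixed-point stages $\Psi_\alpha$, using normality of the body $t_1$ with $\Psi_\alpha$ treated as a fresh secondary constant whose witnessing term is the substituted $\mu$-term itself (valid by the sub-induction hypothesis $\Psi_\alpha \preceq [[t[\vec\phi/\vec s]]]_w$). One minor sharpening: the paper avoids your auxiliary ``substitution preserves normality'' lemma in the $\lambda$-case by directly invoking normality of $t_1$ with $[[t_2]]$ as an extra secondary constant witnessed by $t_2[\vec\phi/\vec s]$, and in the $\mu$-case the sub-induction runs purely on the stage $\alpha$, not on any rank of a computation tree (which is not yet known to be well-founded).
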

\begin{proof}We prove this by induction on the term $t$, and the proof is split into cases following the inductive definition of terms as in Definition \ref{cringe}. Throughout, we write $\vec \phi$ for $\phi_1 , \ldots , \phi_n$ and $\vec s$ for a corresponding sequence $s_1 , \ldots , s_n$ satisfying the assumption. This will be the convention whenever we need to prove that a term of type 0 is normal.  
The proof proceeds based on the following steps. 
\begin{itemize}
\item The case $t = 0$ is clearly trivial.
\item The cases $t = \suc$, $t = \pd$, and $t = \case$. These cases are trivial by the induction hypothesis. For the sake of completeness, consider $\suc$. We must prove that $\suc\;t$ is normal if $t$ is normal. By the assumption $[[t]] \preceq [[t[\vec \phi/\vec s]\;]]_w$, and the same relation will be preserved for $\suc\; t$.
\item The case $t = \Phi $ where $ \Phi$ is a secondary constant. We must prove that if $t_1 , \ldots , t_k$ are normal, then $\Phi t_1 \cdots t_k$ is normal, i.e.\ we must prove that if $\Phi \preceq [[s]]_w$, then $[[\Phi t_1 \cdots t_n]] \preceq [[s t_1[\vec \phi /\vec s]  \cdots t_k[\vec \phi / \vec s]\;]]_w$. If the first value is $\bot$, there is nothing to prove. If not, each $[[t_i]]$ is a total object $\xi_i$ and we can then use the substitution lemma on $\Phi \xi_1 \cdots \xi_n$, where we substitute $s$ for $\Phi$ and $t_i[\vec \phi/\vec s]$ for $\xi_i$.
\item The application case $t = (t_1t_2)$. By the induction hypothesis, both $t_1$ and $t_2$ are normal. Then, if $t_3 , \ldots , t_k$ are normal, we will have, since $t_1$ is normal, that $(t_1t_2)t_3 \cdots t_k = t_1t_2 \cdots t_k$ is normal. 

\item The abstraction case $t = (  \lambda x t_1)$. By the induction hypothesis, $t_1$ is normal.  Let $t$ be of type $\tau, \tau_2, \ldots , \tau_k \rightarrow 0$, that is, $x$ is of type $\tau$ and $t_1$ is of type $\tau_1  =  \tau_2 , \ldots , \tau_k \rightarrow 0$.
We must prove that whenever $t_2 , \ldots ,,t_k$ are normal, and  given $\vec \phi$ and $\vec s$ as above, we have that \[ [[tt_2\cdots t_k]]\preceq [[ t[\vec \phi/\vec s]t_2[\vec \phi/\vec s]  \cdots t_k[\vec \phi/\vec s]\;]]_w. \]
Let $\phi = [[t_2[\vec \phi/\vec s]\;]]$ and $s = t_2[\vec \phi/\vec s]$. We use that $t_1$ is normal, and obtain
\begin{align*}
[[tt_2\cdots t_k]] = [[t_1[x / [[t_2]]]t_3 \cdots t_k]] 
&\preceq [[t_1[x,\vec \phi / s,\vec s]]_w \\
& = [[t[\vec \phi/\vec s]t_2[\vec \phi/\vec s] \cdots t_k[\vec \phi/\vec s]\;]]_w.
\end{align*} 
The inequality follows from the induction hypothesis and the final equality follows from \emph{modified} $\beta$-conversion (see Remark \ref{xx}).
\item The case $t = (\mu x t_1)$. By the induction hypothesis, $t_1$ is normal. Let $\vec \phi$ and $\vec s$ be as before, let $t_1$ be of type $\tau_1 = \tau_2 , \ldots , \tau_k \rightarrow 0$, the same type as $t$, and assume $(\xi_2, \ldots , \xi_k) \in \F(\tau_2)\times \cdots \times \F(\tau_k)$. Since the least fixed point $[[t]]$ is defined by a monotone induction, all $(\xi_2 , \ldots , \xi_k)$ with $\big([[t]]\xi_2 \cdots \xi_k\big) \in \N$ will be ranked by an ordinal. We will prove by induction on this rank that: 
\begin{center}
if $[[tt_2 \cdots t_k]] \in \N$, then  $[[t[\vec \phi/\vec s]t_2[\vec \phi/\vec s] \cdots t_k[\vec \phi/\vec s]\;]]_w=[[tt_2 \cdots t_k]] $.
\end{center}
Hence, let $\Psi_\alpha$ be the $\alpha$-th iteration of $[[t_1]]$, and assume $[[t_1]]\Psi_\alpha [[t_2]] \cdots [[t_k]] \in \N$. 
Now assume as (an) induction hypothesis that: 
\begin{center}
if $(\Psi_\alpha \xi_2 \cdots \xi_k) \in \N$, then $[[t[\vec \phi/\vec s]\xi_2 \cdots \xi_k]]_w =\Psi_\alpha \xi_2 \cdots \xi_k $.
\end{center}
This means that $\Psi_\alpha \preceq [[t[\vec \phi/\vec s]\; ]]_w$, and since 
$t_1$ is normal, we have that 
\be\label{lhs}
[[t_1[x/ \Psi_\alpha]t_2 \cdots t_k]] \preceq [[t_1[x,\vec \phi / t[\vec \phi/\vec s],\vec s]t_2[\vec \phi/\vec s] \cdots t_k[\vec \phi/\vec s]]_w.
\ee
Now, the left-hand side of \eqref{lhs} is $\Psi_{\alpha + 1}[[t_2]] \cdots [[t_k]]$ and the right-hand side of \eqref{lhs} is, by item \eqref{twister} of Definition \ref{definition12}, the following:
\[
[[t[\vec \phi/\vec s]t_2[\vec \phi/\vec s] \cdots t_k[\vec \phi/\vec s]\;]]_w.
\]  
Thus, the induction may continue. That the required order is preserved through limit ordinals is trivial. 
\end{itemize}
All cases having been treated, we are done.
\end{proof}
We now also have a proof of Theorem \ref{thm.13} as the upward direction is proved in Lemma \ref{lemma14} and the other direction is a special case of Lemma \ref{lemma16}.
\begin{remark}{\em The argument above is an adaption of standard proofs for normalisation theorems in typed $\lambda$-calculus, that the value of a term of type $0$ can be found through a finite iterated use of conversion rules. 
We did not use the restriction to $\Ty(3)$ in the proof above, but this restriction will give us that the trees $T(t)$, modulo a finite list of parameters, will be objects of type level 2.}\end{remark}
\subsubsection{Equivalence}\label{sec3}
We show that higher-order computability defined via Kleene's S1-S9  is equivalent to the computability model defined in the previous section, whenever this comparison makes sense. 

\smallskip

We will not introduce S1-S9 but instead refer to \cite{longmann}.  In particular, we only state the equivalence between S1-S9 and the computability model from the previous section and sketch the proof. 
Recall that the notion of pure type is as follows.
\bdefi[Pure types]
The type $0$ is a pure type.  If $k$ is a pure type, then $k+1 := k \rightarrow 0$ is a pure type.
\edefi
We (often) use the same notation for a pure type and its rank.
\begin{theorem}[Equivalence]\label{prize}
Let $k,k_1 , \ldots , k_n$ be pure types $\leq 3$ and let $\Phi, \Phi_1 , \ldots , \Phi_n$ be total elements of the respective types.
Then the following are equivalent:
\begin{itemize}
\item the functional $\Phi$ is computable in $\Phi_1 , \ldots , \Phi_n$ in the sense of Kleene's \emph{S1-S9},
\item there is a term $t$ of type $k$ with secondary constants among $\Phi_1 , \ldots , \Phi_n$ such that $\Phi = [[t]]$.
\end{itemize}
\end{theorem}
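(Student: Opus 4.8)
The plan is to establish the two implications separately, in each case by induction on the structure of the witnessing object (a Kleene index in one direction, a term of $\mathcal L$ in the other). I would first dispense with the easier direction, namely that every term $t$ of type $k$ with secondary constants among $\Phi_1,\dots,\Phi_n$ yields a functional $[[t]]$ that is S1-S9-computable in $\Phi_1,\dots,\Phi_n$. The key tool here is Theorem~\ref{thm.13} together with the computation trees $T[t]$ from Definition~\ref{definition12}: since each node of $T[t]$ is (modulo the finite parameter list) an object of type level $\leq 2$, the tree $T[t]$ can itself be coded as a type-$2$ object relative to $\Phi_1,\dots,\Phi_n$, and the passage ``given total inputs $\vec\xi$, search the well-founded tree $T[t\vec\xi]$ for its value'' is exactly the kind of recursion on a well-founded higher-type tree that S1-S9 (via S8 for the sub-computations at items \eqref{bwist}, \eqref{twist}) and the recursion theorem S9 (for the $\mu$-terms, item \eqref{twister}) are designed to perform. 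Concretely, I would define by the Kleene recursion theorem an index that, on input a node of $T[t]$, inspects the outermost constant of the associated term and recursively calls itself on the children, mirroring the clauses of Definition~\ref{definition12}; the $(\mu x\,t_1)$-clause is precisely where S9 is invoked, since $T[(\mu x t_1)t_2\cdots t_n]$ unfolds to $T[t_1[x/(\mu x t_1)]t_2\cdots t_n]$, a self-reference. One must check that the $\lambda$-abstraction clause (item \eqref{twist}, modified $\beta$-conversion) causes no trouble: it does not, because the argument is required to be total before the conversion fires, matching the S1-S9 convention that functionals are applied only to total inputs.

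For the converse — every $\Phi$ that is S1-S9-computable in $\Phi_1,\dots,\Phi_n$ is $[[t]]$ for some term $t$ — I would proceed by induction on the Kleene index $e$ witnessing the computation, building a term $t_e$ of $\mathcal L$ with $[[t_e]]=\{e\}(\cdot,\Phi_1,\dots,\Phi_n)$ (as a functional of the remaining arguments). Schemes S1--S4 (successor, constants, projections, composition) translate directly into the primary constants and application/$\lambda$-abstraction of $\mathcal L$; primitive recursion (S5) and the $\mu$-operator implicit in S1-S9 are captured by a single use of a $(\mu x\,t_1)$-term, exploiting that $\mathcal L$'s least-fixed-point operator subsumes ordinary recursion (one builds the iterator $\mathsf{Rec}$ as a closed term in the standard PCF way). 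Scheme S8 (application of a higher-type functional argument) is just application in $\mathcal L$. The real content is S9, the enumeration/recursion scheme: here I would use the least-fixed-point operator $\mu$ to define, uniformly in the code of a sub-index, the term that ``runs'' that sub-index — i.e., internalise the index-evaluation function as a term $\mathsf{Eval}$ satisfying a fixed-point equation, and then set $t_e := \mathsf{Eval}\,\bar e$. That $[[\mathsf{Eval}\,\bar e]]$ actually equals $\{e\}$ is then a matter of comparing the monotone transfinite induction defining $[[\mu x\,t_1]]$ (Definition~\ref{intdef}) with the least-fixed-point characterisation of S1-S9-computability, which is standard (cf.\ \cite{longmann}*{Ch.\ 5} and \cite{YannisII}).

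The main obstacle I anticipate is not the high-level correspondence but the bookkeeping around \emph{partiality and totality}: our $\mathcal L$ forbids applying a functional to a non-total argument, whereas an S1-S9 computation may feed a not-yet-defined value into a sub-computation that then ignores it. One must verify that this mismatch is harmless on \emph{total} inputs — which is exactly the setting of the theorem (all of $\Phi,\Phi_1,\dots,\Phi_n$ total) — using the Substitution Lemma and Lemma~\ref{lemma16} (all closed terms are normal) to propagate definedness correctly through the translation, and using that the restriction to $\Ty(3)$ keeps all intermediate trees at type level $2$ so that the S1-S9 side can handle them. A secondary nuisance is the $\case$ constant's lazy semantics versus S1-S9's definition-by-cases: one checks these agree on total selectors, which is immediate from Definition~\ref{definition8}. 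I would organise the proof so that these partiality checks are localised to the two clauses (application and modified $\beta$) where they bite, and otherwise let the inductions run on autopilot; since the excerpt says the authors ``sketch the proof,'' I would present the translation in both directions at the level of the index/term clauses and defer the normalisation bookkeeping to the already-proved lemmas of Section~\ref{subseq2.3}.
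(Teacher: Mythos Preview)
Your proposal is correct and lands on the same two ideas the paper uses, but your packaging in each direction is heavier than the paper's sketch. For term $\Rightarrow$ S1-S9, the paper does not route through the computation trees $T[t]$ at all: it simply argues by induction on the term $t$, the only nontrivial case being $(\mu x\,t_1)$, which is handled directly by the S1-S9 recursion theorem. Your detour via coding $T[t]$ as a type-$2$ object and traversing it works, but it imports machinery (and implicitly the later Theorem~\ref{flaheo}) that is unnecessary here. For S1-S9 $\Rightarrow$ term, your phrase ``induction on the Kleene index $e$'' is misleading, since S9 breaks any structural induction on $e$; you correctly pivot to defining a global $\mathsf{Eval}$ by a single $\mu$-term, and that is exactly the paper's move: it observes that the entire relation $\{e\}(\Phi_1,\dots,\Phi_n,F,\vec f,\vec a)\simeq b$ is one monotone inductive definition and captures it by one application of $(\mu x\,t)$ to a term of type $0,2,1,0\to 0$, after coding $\vec f$ into $\N^\N$ and $\vec a$ into $\N$. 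So you may drop the per-scheme translation of S1--S8 as a separate phase and go straight to the global fixed point. Your partiality/totality worries are well-placed but, as you note, evaporate because the statement concerns only total $\Phi,\Phi_1,\dots,\Phi_n$; the paper does not even mention this, which is consistent with its one-paragraph sketch.
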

\begin{proof}(Sketch)
We prove the equivalence for $k = 3$; the other cases are proved in the same way. 
For the downward implication, we observe that the relation 
\[
\{e\}(\Phi_1 , \ldots , \Phi_n , F , \vec f , \vec a) \simeq b
\]
is defined via a monotone inductive definition. Coding sequences $\vec f$ in $\N^\N$ and sequences $\vec a$ in $\N$, we can see this as one definition of a partial functional $\Psi$ of type $0, 2,1,0 \rightarrow 0$ defined by one application of $(\mu x t)$ to a term $t$ with $\Phi_1 , \ldots , \Phi_n$ as parameters. From this, we can extract a term for $\Phi$ with $\Phi_1, \ldots , \Phi_n$ as parameters.  For the upward implication, we proceed by induction on the term $t$.   Coding sequences of arguments into one when needed, we can prove that $[[t]]$ is partially S1-S9-computable in the parameters. 
We then use the recursion theorem for S1-S9 to deal with $(\mu x t)$, and the rest of the cases are trivial. 
\end{proof}
We finish this section with another advantage of our newly minted computability model.  We will not explore this aspect in this paper (due to length isssues). 
\begin{rem}[Flexibility]\rm
One of the advantages of our alternative to Kleene's S1-S9 is that it is easy to restrict the model to natural sub-classes.  Indeed, one need only replace the fixed point operator $(\mu xt)$ by constants for some computable functionals. 
For instance, we may replace the fixed point operator by the `recursor constants' $\rec_\sigma$ of type $\sigma,(\sigma \rightarrow \sigma) \rightarrow (0 \rightarrow\sigma)$ and defined as follows:
\begin{itemize}
\item $\rec_\sigma(0,x,y) = x$,
\item $\rec_\sigma(k+1 , x , y) = y(\rec_\sigma(k,x,y))$.
\end{itemize}
Each $\rec_\sigma$ is readily seen to be computable and we obtain a computability-model based on the principles of G\"odel's $T$, but accepting partial arguments. 
\end{rem}

\subsubsection{Computation trees revisited}
In this section, we establish some technical results that make working with computation trees more straightforward.

\smallskip

In more detail, the trees $T[t]$ from Section \ref{subseq2.3} are essential for a finer analysis of computations.  A desirable feature of these trees is a representation as objects that are computable in the parameters involved. 
We can obtain such a result if we restrict attention to $\Ty(3)$, based on the observation that whenever we need to introduce new parameters in sub-computations, these parameters will be of type 0 or 1. %
For the rest of this section, we introduce the aforementioned desirable representation of computation trees (Definitions \ref{numb} and \ref{atrees}) and establish the required properties (Theorem \ref{flaheo}). 
\begin{definition}\label{numb}{\em
Let $t$ be a term with free variables, but without parameters. We let $\ll t\gg $ be the G\"odel-number of $t$ in some standard G\"odel-numbering. The number $\ll t \gg$ will contain information about  a set of variables $x_1 , \ldots , x_n$ containing all variables free or bounded in $t$, of their types, and how $t$ is syntactically built up.
 }\end{definition}

 The order in which we list variables or parameters is of course of no importance.  As a convention, the variables in $t[\vec \Phi,\vec f]$ are such that $\vec \Phi$ contains all the parameters of type rank two or three, while $\vec f$ consists of parameters of type rank  0 and 1. 
 
 \smallskip
 
 Given a term $t[\vec \Phi,\vec f]$ with a value in $\N$, we now define the \emph{alternative} {computation tree} $T^\ast[t;\vec \Phi]$ as a tree of sequences of objects of the form $\langle \ll s \gg , \vec g, a\rangle$ containing exactly the same information as $T\big[t\big[\vec \Phi,\vec f\big]\big ]$, now with the values given at each node.
 \bdefi[Computation trees bis]\label{atrees}
For $t[\vec \Phi , \vec g]$ a term of type 0 with a value, we define $T^*[t;\vec \Phi]$ by adopting items \eqref{kirst}-\eqref{kast} and \eqref{twister} from Definition \ref{definition12} together with the following alternative items.
 \begin{itemize}
 \item[(vi')] For the case $t[\vec \Phi , \vec g] = \Phi_i t_2[\vec \Phi , \vec g] \cdots t_n[\vec \Phi , \vec g]$, the term $t_2$ is of type $\tau = \delta_1 , \ldots , \delta_m \rightarrow 0$ 
 where each $\delta_j$ is of type ranks 0 or 1.
Then $T^*[t;\vec \Phi]$ consists of $\langle \ll t\gg, \vec g , [[t]] \rangle$ concatenated with the sequences in the trees $T^*[s;\vec \Phi]$ and $T^*[s';\Phi]$) as in the following steps.
 \begin{itemize}
  \item[-] Firstly, for each $(\phi_1 , \ldots , \phi_m) \in \F(\delta_1) \times \cdots \times \F(\delta_m)$, we concatenate with the sequences in  $T^*[s;\vec \Phi]$ where $s = t_2[\vec \Phi,\vec g]\phi_1 \cdots \phi_m$.  
  Note that  $\langle \ll t_2\gg, \vec g , \vec \phi,\Psi_2(\vec \phi)\rangle$ is a node in the tree, where $\Psi_2$ is as in item~\eqref{bwist} of Definition \ref{definition12}.
 \item[-] Secondly, put $\xi := \Phi_i(\Psi_2)$ and let $s'$ be the term $\xi t_3[\vec \Psi,\vec g] \cdots t_n[\vec \Phi , \vec g]$. 
 \item[-] Finally, we concatenate with all sequences in $T^*[s';\vec \Phi,\xi]$.
 \end{itemize}
 \item[(vii')] We adjust the construction from the previous item (vi') in analogy with the construction in Definition \ref{definition12}.
 \end{itemize}
 \edefi
We now establish the main result of this section, namely that our alternative computation trees are computable modulo Kleene's $\exists^{2}$.
\begin{theorem}\label{flaheo}
If $t[\vec \Phi,\vec g]$ is a term of type 0 with a value $a$, then $T^\ast[t;\vec \Phi]$ is computable in $\vec \Phi$, $\vec g$, and $\exists^2$.
\end{theorem}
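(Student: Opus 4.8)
The plan is to reduce the computation of $T^*[t;\vec\Phi]$ to a single application of the least fixed point operator, using $\exists^2$ to search Baire space whenever the construction of $T^*$ requires evaluating sub-functionals at all total arguments. First I would set up the datatype: a node of $T^*[t;\vec\Phi]$ is a finite sequence of triples $\langle \ll s\gg, \vec g, a\rangle$, and since (by the remark following Lemma~\ref{lemma16}) the auxiliary parameters $\vec g$ introduced in sub-computations are always of type rank $0$ or $1$, such a node is coded by an element of type rank at most $1$. Hence the whole tree $T^*[t;\vec\Phi]$ is naturally presented as a type-$2$ object (a set of finite sequences of type-$1$ objects), which is the target of the statement. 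I would then define a partial functional $\mathcal{T}$ by recursion on the term structure exactly mirroring Definition~\ref{atrees}, and observe this recursion is monotone, hence realised by a $(\mu x\, t)$-term in the sense of Definitions~\ref{intdef} and \ref{def.comp}, with $\vec\Phi$, $\vec g$ and $\exists^2$ as secondary constants.

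The key steps, in order: (1) Fix the coding $\ll\cdot\gg$ of Definition~\ref{numb} and a coding of finite sequences of type-$1$ objects; check these codings and the basic syntactic operations (reading off the head $t_1$ of an application term $t=t_1\cdots t_n$, performing the substitutions $[x/t_2]$ and $[x/(\mu x t_1)]$ at the term level) are primitive recursive, hence computable outright. (2) Treat the easy cases \eqref{kirst}--\eqref{kast} and \eqref{twister}: here $T^*$ is built by finite concatenation from the trees of strictly simpler sub-terms, so the recursion is immediate. (3) Treat case (vi'): given the node $\langle\ll t\gg,\vec g,[[t]]\rangle$ with $t=\Phi_i t_2\cdots t_n$, we must first, for every $(\phi_1,\dots,\phi_m)\in\F(\delta_1)\times\cdots\times\F(\delta_m)$ with each $\delta_j$ of rank $0$ or $1$, attach the subtree $T^*[t_2\phi_1\cdots\phi_m;\vec\Phi]$ — an application of the recursion with new rank-$\leq 1$ parameters — and, crucially, recover the functional $\Psi_2$ from the collection of values at the roots of these subtrees. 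Recovering $\Psi_2$ as a genuine type-$2$ object is where $\exists^2$ enters: using $\exists^2$ we can decide, for a putative code of $\Psi_2$, whether it is consistent with all the computed root-values, and more simply we just read $\Psi_2(\vec\phi)$ off the root of the already-constructed subtree $T^*[t_2\vec\phi;\vec\Phi]$; then $\xi:=\Phi_i(\Psi_2)$ is one application of the parameter $\Phi_i$, and we recurse on $s'=\xi t_3\cdots t_n$ with $\xi$ adjoined as a new rank-$\leq 2$ parameter. (4) Case (vii') is handled by the same device with an extra modified $\beta$-conversion step, as in Definition~\ref{atrees}. (5) Assemble: the above is a monotone inductive definition of a partial type-$2$-valued functional; wrap it in $(\mu x\,t)$, and invoke Theorem~\ref{thm.13} (together with the hypothesis that $t[\vec\Phi,\vec g]$ has a value $a$, so the induction is well-founded) to conclude that the least fixed point is exactly $T^*[t;\vec\Phi]$, and that it is computable in $\vec\Phi,\vec g,\exists^2$ in the sense of Definition~\ref{def.comp}.

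The main obstacle I expect is step (3): the passage from "we have, for each total $\vec\phi$, the value $\Psi_2(\vec\phi)$ encoded at the root of a subtree" to "we have $\Psi_2$ available as an honest total type-$2$ functional that we may feed to $\Phi_i$". One must check that $\lambda\vec\phi.\,(\text{root value of }T^*[t_2\vec\phi;\vec\Phi])$ is genuinely total whenever $t$ has a value — this is exactly the content that each such subtree is well-founded with a value, which follows from the hypothesis combined with Theorem~\ref{thm.13} applied to the sub-terms — and that feeding it to $\Phi_i$ and then recursing does not create circularity outside the scope of the single $(\mu x\,t)$. The role of $\exists^2$ here is precisely to make the quantification "$T^*[t_2\vec\phi;\vec\Phi]$ has a value for all total $\vec\phi$ of rank $\leq 1$" and the associated searches decidable, so that the whole construction stays inside one monotone inductive definition rather than requiring a further jump. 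Once this is in place, the rest is bookkeeping analogous to the proof of Lemma~\ref{lemma16}.
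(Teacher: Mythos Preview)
Your approach is workable but takes a genuinely different route from the paper. You build $T^*[t;\vec\Phi]$ \emph{directly}, as the least fixed point of an operator mirroring Definition~\ref{atrees}, so that the tree is produced top-down by one $(\mu x\,t)$-term. The paper instead computes the \emph{characteristic function} of the tree: given a putative finite sequence $\tau_1,\ldots,\tau_n$ of computation tuples, it decides membership by recursion on $n$. At each step one checks, using $\exists^2$, that $\tau_k^{-}$ (the tuple minus its value) is a syntactically valid child of $\tau_{k-1}$, and then verifies the claimed value simply by \emph{re-evaluating} the corresponding sub-term---this terminates because the original computation does. The paper's approach buys considerable simplicity: no fixed point is built, no subtrees are materialised, and the growing list of auxiliary parameters $\xi$ is reconstructed on the fly as one walks down the sequence. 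Your approach is closer to the actual definition of $T^*$ and would work, but is heavier.

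Two minor corrections. First, the parameter $\xi=\Phi_i(\Psi_2)$ that you adjoin in step~(3) need not have rank $\leq 2$: if $\Phi_i$ has several rank-$2$ argument slots, then $\xi$ can itself have rank $3$. This does not break your argument (since $\xi$ is computable from $\vec\Phi$), but it does mean your single fixed point must accommodate a dynamically growing list of rank-$3$ parameters, which is more bookkeeping than you indicate. Second, you locate the role of $\exists^2$ in deciding whether ``$T^*[t_2\vec\phi;\vec\Phi]$ has a value for all total $\vec\phi$''---but this is guaranteed by the hypothesis and Theorem~\ref{thm.13}, and never needs to be decided. In both approaches $\exists^2$ is doing something more mundane: deciding equality of type-$1$ parameters and the $\Sigma^0_1/\Pi^0_1$ predicates that arise when matching a candidate node against the syntactic cases of Definition~\ref{atrees}.
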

\begin{proof} %We need $\exists^2$ to check that the sequences of computation tuples have the right format, that there are matching sequences of objects of type-level 1 appearing in them. For the rest, we only have to check that the values of the computation tuples appearing in a sequence of such are the right ones, and this is computable by the assumption that we are only dealing with terminating computations.
The tree $T^\ast[t;\vec \Phi]$ consists of finite sequences $\tau_1 , \ldots , \tau_n$ of computation tuples. Given a sequence $\tau_1 , \ldots , \tau_n$ of alleged computation tuples, we can check if it belongs to $T^\ast[t;\vec \Phi]$ via recursion on the length as follows.  The procedure for $n = 0$ is trivial.  
If $n>0$ and $\tau_1 , \ldots , \tau_{n-1}$ is accepted, we use $\exists^2$ to check whether $\tau_n^-$, which is $\tau_n$ without the alleged value, codes the right index and the right parameters for the next sub-computation.  Then, using the fact that the computation is terminating, we check if the alleged value is the right one.
\end{proof}

\subsection{Countably based functionals and partiality}\label{kepl}
\subsubsection{Introduction}
The countably based functionals were originally suggested by Stan Wainer and then studied by John Hartley in \cite{hartleycountable,hartjeS}. 
%\marginpar{\footnotesize{You have the references to the two Hartley-papers in your bib-file, I think.}}
The original class of countably based functionals is a type-hierarchy of hereditarily total functionals. In this section, we extend this notion to partial objects appearing in some $\P(\tau)$ for $\tau \in \Ty$.  
We then establish the following properties of this extended notion.
\begin{itemize}
\item We show in Section \ref{pref} that the partial countably based functionals are closed under computability. 
\item We show that the partial countably based functional $\Omega_{1}$, mentioned in Section \ref{intro} and defined by Definition \ref{defomega1}, is `fundamentally partial' in nature, in that no total functional is equivalent to it (Section \ref{nosim}).
\item We show in Section \ref{cross} that $\Omega_{1}$ and the Suslin functional $\SS$ decide the halting problem for infinite time Turing machines (ITTMs for short), i.e.\ the former combination is thus stronger than ITTMs with type one oracles.
\end{itemize}
As to the naturalness of $\Omega_{1}$, we show in Sections \ref{maink2} that $\Omega_1$ is computationally equivalent to (many) functionals witnessing theorems from mainstream mathematics, thus giving rise to the $\Omega_{1}$-cluster. 

\subsubsection{Closure under computability}\label{pref}
In this section, we introduce the notion of partial countably based functional (Definition \ref{1998}) and show that it is closed under computability (Theorem \ref{denkors}).

\smallskip

First of all, the notion of countably based (total) functional is as follows. 
\begin{definition}[Countably based functional]\label{28}{\em 
Let $\Phi \in \F(\sigma)$ where $\sigma \in \Ty(3)$ and $\sigma = \tau_1 , \ldots , \tau_n \rightarrow 0$, and let $\tau_i = \delta_{i,1} , \ldots , \delta_{i,m_i} \rightarrow 0$ where each $\delta_{i,j}$ is in $\Ty(1)$.  
We say that $\Phi$ is \emph{countably based} if, whenever $\Phi(F_1 , \ldots , F_n) \in \N$, there are countable subsets $X_i \subseteq \F(\delta_{i,1}) \times \cdots \times \F(\delta_{i,m_i})$ such that $\Phi(F_1 , \ldots , F_n) = \Phi(G_1 , \ldots , G_n)$ whenever $F_i$ and $G_i$ are equal on $X_i$ for each $i = 1 , \ldots , n$.
A collection of sets $X_i$ as above is a \emph{support} for the value of the computation.
}\end{definition}
Definition \ref{28} is the classical definition; it is known that all functionals of types in $\Ty(2)$ are countably based and that if $\Phi$ is computable in the countably based \emph{total} functionals  $\Phi_1 , \ldots , \Phi_n$, then $\Phi$ is itself countably based.

\smallskip

Secondly, we extend the definition of countably based functionals to partial ones.
\begin{definition}\label{1998}{\em Let $\Phi \in \Pa(\sigma)$ where $\sigma$ and the $\tau_i$ are as in Definition \ref{28}.  
We say that $\Phi$ is \emph{countably based} if, whenever $\Phi(F_1 , \ldots , F_n) \in \N$, there are countable subsets $X_i \subseteq \F(\delta_{i,1}) \times \cdots \times \F(\delta_{i,m_i})$ such that $\Phi(F_1 , \ldots , F_n) = \Phi(G_1 , \ldots , G_n)$ whenever $\Phi(G_1 , \ldots , G_n) \in \N$ and  $F_i$ and $G_i$ are equal on $X_i$ for each $i = 1 , \ldots , n$.
}\end{definition}
Thirdly, we define the following central functional.  Recall that we identify a set $X \subseteq \N^\N$ with its characteristic function.
\begin{definition}[The $\Omega_{1}$-functional]\label{defomega1}{\em  
We let $\Omega_1(X)$ be defined exactly when $X = \{x\}$ for some $x^{1}$, and then $\Omega_1(X) = x$}
\end{definition}
The functional $\Omega_1$ is countably based because if we know that $X$ has exactly one element, then the only information about $X$ we need in order to find $\Omega_1(X)$ is $X(x)$. 
However, there is no countably based total extension $\Phi$ of $\Omega_1$, since there cannot be a countable support for $\Phi(\emptyset)$.

\smallskip

Finally, partial countably based functionals are closed under computability.
\begin{thm}\label{denkors}
Let $\Phi, \Phi_1 , \ldots , \Phi_n$ be partial functionals of types in $\Ty(3)$ such that $\Phi$ is computable in $\Phi_1 , \ldots , \Phi_n$ and such that all $\Phi_1 , \ldots , \Phi_n$ are countably based. Then $\Phi$ is countably based.
\end{thm}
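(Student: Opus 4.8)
The plan is to prove this by induction on the structure of the witnessing term, exploiting the explicit inductive definition of terms (Definition~\ref{cringe}) and of their interpretations (Definition~\ref{intdef}). Let $t$ be a term of type $0$ (or more generally a term of type $\sigma_1,\ldots,\sigma_k\rightarrow 0$) whose only secondary constants are among the countably based functionals $\Phi_1,\ldots,\Phi_n$; I want to show that $[[t]]$, viewed as a (partial) functional of its free variables and the $\Phi_i$, is countably based in the sense of Definition~\ref{1998}. The key reformulation is to track, whenever $[[t]](\vec F)\in\N$, a finite \emph{list} of countable supports: one countable subset of the relevant product $\F(\delta)\times\cdots$ for each type-$(\ge 1)$ free variable and for each oracle $\Phi_i$ that is actually consulted during the computation. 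Crucially, I would run this induction over the \emph{computation tree} $T[t]$ (Definition~\ref{definition12}) rather than over the raw term, or more precisely over the ordinal rank of that tree, since it is the tree that exposes exactly which arguments each oracle is applied to.

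The key steps, in order. First, handle the base cases: variables $x_i$ of type $\le 1$ need a trivial support (their value depends on finitely much of themselves, or on nothing at type $0$), and the primary constants $\hat 0,\suc,\pd,\case$ are countably based (in fact continuous) for the same reason. Second, the secondary-constant case $t = \Phi_i$: this is countably based by hypothesis, and I simply carry along the support guaranteed by Definition~\ref{1998} applied to $\Phi_i$. Third, the application case $t=(uv)$: if $[[t]](\vec F)\in\N$ then $[[v]](\vec F)$ is total (by Definition~\ref{intdef}, the value is $\bot$ otherwise), and $[[u]](\vec F)$ applied to it is in $\N$; take the support from the induction hypothesis for $u$ at the point $[[v]](\vec F)$, then pull back the part of it concerning $v$ through the (by induction) countably based behaviour of $v$ — the union of countably many countable sets is countable. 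Fourth, the abstraction case $t=(\lambda x_i s)$: a support for $s$ restricted to the free variables other than $x_i$ transfers directly, with the dummy-argument bookkeeping of Definition~\ref{intdef}. Fifth — the crucial case — the fixed-point term $t=(\mu x_i s)$: the value $[[t]](\vec F)$ is computed by a monotone transfinite induction producing approximants $\Psi_\alpha$, and whenever $[[t]]\vec F\cdots\in\N$ it already lies in some $\Psi_{\alpha+1}$ at a bounded ordinal; here I run a sub-induction on that ordinal, using that $\Psi_{\alpha+1}$ is obtained by one application of the (by main induction hypothesis) countably based functional $[[s]]$ to $\Psi_\alpha$, so that a support for the current value is assembled from a support for $[[s]]$ together with, recursively, supports for the finitely many values of $\Psi_\alpha$ that $[[s]]$ consults — again a countable union stays countable, and passage through limit ordinals is immediate since no new arguments are introduced there.

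The main obstacle I anticipate is precisely this fixed-point case: one must be careful that, across the transfinite iteration, the \emph{cumulative} set of arguments at which the oracles and free variables are interrogated stays countable even though the ordinal rank of the computation may be uncountable. The point to get right is that a \emph{single} value $[[t]](\vec F_1,\ldots)\in\N$ is witnessed by a \emph{well-founded} sub-tree of $T[t]$ (cf.\ Theorem~\ref{thm.13} and the discussion after Definition~\ref{definition12}), and a well-founded tree in which each node has at most countably many relevant children — which is the case here, since by the remark after Lemma~\ref{lemma16} the branching at oracle nodes is over the countable supports supplied by the $\Phi_i$ being countably based, modulo parameters of type level $\le 2$ — has only countably many nodes, hence only countably many arguments occur. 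Thus the union defining the support is a countable union of countable (indeed finite, at each node) sets. I would therefore phrase the induction as: ``for every node of $T[t]$ with a value, the set of type-$\ge 1$ arguments appearing in the subtree rooted there is contained in a countable support,'' and conclude by taking the root. The restriction to $\Ty(3)$ is used exactly here, to guarantee the newly introduced sub-computation parameters are of type $0$ or $1$ so that the notion of a countable subset of a product of $\F(\delta_{i,j})$'s is the right one.
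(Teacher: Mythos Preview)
Your proposal is correct and follows essentially the same route as the paper: recursion over the well-founded computation tree $T[t]$ (equivalently, induction on its ordinal rank), assembling at each node a countable support as a countable union of the supports coming from the relevant children, using at an oracle node $\Phi_i s_1$ the countable support $Y$ for $\Phi_i([[s_1]])$ to restrict which children matter.

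One simplification worth noting: you over-engineer the $(\mu x_i s)$ case. In the computation tree (Definition~\ref{definition12}, item~\eqref{twister}), the node $(\mu x t_1)t_2\cdots t_n$ has a \emph{single} immediate child $t_1[x/(\mu x t_1)]t_2\cdots t_n$, so in the paper's proof this falls under the default clause ``$X_s = X_{s'}$ where $s'$ is the one immediate child.'' The transfinite iteration of approximants $\Psi_\alpha$ is already absorbed into the well-foundedness of $T[t]$ (this is precisely what Theorem~\ref{thm.13} buys you), so no separate sub-induction on $\alpha$ is needed. Your worry about the cumulative support blowing up across the transfinite is therefore a non-issue once you commit to recursion on the tree rank rather than on the term: every node gets a countable $X_s$ built from countably many countable sets, full stop.
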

\begin{proof}
For the sake of notational simplicity, we assume that all functionals are of pure type 3.  
Let $t$ be a term with countably based parameters $\Phi_1, \ldots , \Phi_n$ 
such that for all $F$ such that $\Phi(F) \neq \bot$, we have that $\Phi(F)$ is the value of the tree $T[tF]$.  
By recursion on the  subnodes $s$ in this tree, we will find  countable sets $X_s$ such that for all total $G$, if $T[s[F/G]]$ is well-founded and $F$ and $G$ agrees on $X_s$, then the values of $T[s]$ and $T[s[F/G]]$ are the same. 
The case $s = 0$ is a trivial case, where $X_s = \emptyset$. We now consider four further cases, while for the other cases, we have $X_s = X_{s'}$ where  $s'$ is the one immediate child of $s$ in the tree.
\begin{itemize}
\item The case $s = \case s_1s_2s_3$. Then $X_s$ is the union of $X_{s_1}$ and the relevant $X_{s_i}$ for $i \in \{2,3\}$.
\item The case $s = Fs_1$. We then let $X_s$ be the union of all $X_{s_1a}$ for $a \in \N$ together with the singleton $\{[[s_1]]\}$.
\item The case $s = \Phi_is_1$. Let $Y$ be a countable support for $\Phi_i([[s_1]])$. We then define the following union: $X_s = \bigcup_{f \in Y}X_{s_1f}$.
\item The case $s =( \lambda x s_1)s_2 \ldots s_l$. Then $X_s = X_{s_1[x/s_2]s_3 \ldots s_l}$, which will give the only value we need to protect.
\end{itemize}
That this construction works is trivial by induction on the rank of $s$ in $T[tF]$. \end{proof}

\subsubsection{A no-simulation result}\label{nosim}
We show that total functionals cannot simulate the partial functional $\Omega_{1}$ from Definition \ref{defomega1}.

\begin{theorem}\label{31}
The functional $\Omega_1$ is not computable in any \textbf{total} countably based functional of rank 3.
\end{theorem}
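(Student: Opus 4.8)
The plan is to argue by contradiction: suppose $\Omega_1$ is computable in some total countably based functional $\Xi$ of rank $3$, say via a term $t$ without secondary constants and with $\Xi$ as its only parameter, so that $\Omega_1(X) = [[t\,\Xi\,X]]$ whenever $X = \{x\}$ is a singleton. The idea is to exploit the mismatch between the "local" nature of a countably based computation — by Theorem \ref{denkors}, $\Omega_1$ together with $\Xi$ generates only countably based functionals, and more to the point each convergent computation has a \emph{countable support} for its value — and the "global" information that $\Omega_1$ must extract, namely the unique point of an otherwise arbitrary singleton $X \subseteq \N^\N$. The key technical tool is the alternative computation tree $T^\ast[t\,\Xi\,X;\Xi]$ from Definition \ref{atrees}, together with the observation that the parameters introduced in subcomputations are of type rank $0$ or $1$ (so the tree, modulo $\Xi$ and $\exists^2$, lives at type level $2$), and that the value of each node depends only on the behaviour of the type-$2$ inputs — here $X$, viewed as a characteristic function on $\N^\N$ — on a countable set of arguments.

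First I would fix an arbitrary $x^1 \in \N^\N$, put $X := \{x\}$, and consider the (well-founded) computation tree witnessing $[[t\,\Xi\,X]] = x$. Walking through this tree, at every node where the oracle $X$ is consulted it is consulted at \emph{total} arguments $g^1$, and along the branches leading to the final value only countably many such $g$ are queried with answer $X(g) = 0$ (indeed only at $g = x$, if at all, since $X$ is the singleton), while countably many are queried with answer $1$; so there is a countable set $S \subseteq \N^\N$ such that the value $x$ of the computation is already determined by $X\!\restriction\! S$. This is precisely the support statement, and it can be extracted either by a direct induction on the rank of the tree (as in the proof of Theorem \ref{denkors}) or by invoking that theorem to see $\Omega_1$ is countably based and then unwinding what "countably based" gives at the level of a single convergent computation. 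The point is that $S$ depends on $X$, but since $X = \{x\}$ the only element of $S$ at which $X$ takes value $0$ is $x$ itself (if it lies in $S$ at all); and because the computation \emph{outputs} $x$, the value is not preserved if we move $x$ off $S$. Concretely: pick any $y \notin S$ with $y \ne x$ — possible since $S$ is countable and $\N^\N$ is not — and set $X' := \{y\}$. Then $X$ and $X'$ agree on $S$ (both are identically $0$ there, as $x,y\notin$ the "$0$-set inside $S$"... here one must be slightly careful, see below), so by the support property the computation $T[t\,\Xi\,X']$ is well-founded with the same value $x$; but $\Omega_1(X') = y \ne x$, contradicting $\Omega_1(X') = [[t\,\Xi\,X']]$.

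The step I expect to be the main obstacle is making the support argument genuinely \emph{uniform} in the moves available to us, i.e.\ ensuring that we can choose a single replacement singleton $X'$ that simultaneously (i) agrees with $X$ on the support $S$ and (ii) is itself a legitimate singleton so that $\Omega_1(X')$ is defined, and that the support property then forces the same output. The subtlety is that $S$ is extracted from the $X$-computation, and a priori $x$ could lie in $S$; if $x \in S$ then moving to any $X'$ disagrees with $X$ at $x \in S$, so the naive argument fails. The fix is to note that in that case we instead run the argument "the other way": a countably based computation outputting $x$ cannot have its output change when we \emph{shrink} away from $x$, but $\Omega_1$ demands exactly such sensitivity — formally, one compares the computation on $X = \{x\}$ with the one on $X'' := \{x\} \cup \{z\}$ for a fresh $z \notin S$, which agrees with $X$ on $S$; by the support property $T[t\,\Xi\,X'']$ still has value $x$, yet $X''$ is not a singleton so nothing is claimed about $\Omega_1(X'')$ — instead one observes that $\{x\}$ and $\{z\}$ also agree on $S$ away from their single $0$-points, iterate, and derive that the computation's value is insensitive to which of countably-many-avoidable points is the unique element, contradicting injectivity of $\Omega_1$ on singletons. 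Packaging this cleanly — probably by choosing $y \notin S \cup \{x\}$ and comparing the three computations on $\{x\}$, $\{x,y\}$, $\{y\}$ pairwise via the support property, each consecutive pair agreeing on $S$ — is the delicate bookkeeping, but conceptually it is just the observation that an uncountable set cannot be pinned down by a countable support, exactly as in the informal remark after Definition \ref{defomega1} that "there cannot be a countable support for $\Phi(\emptyset)$."
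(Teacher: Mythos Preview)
Your approach has a genuine gap in precisely the case you flag. If the support $S$ of the computation on $X = \{x\}$ contains $x$ --- which is exactly what one expects, since the computation must somehow locate $x$ --- then no singleton $\{y\}$ with $y \ne x$ agrees with $\{x\}$ on $S$: they differ at $x \in S$ (one takes value $1$ there, the other $0$). Your proposed chain $\{x\}$, $\{x,y\}$, $\{y\}$ does not repair this. First, $\{x,y\}$ is outside the domain of $\Omega_1$, so there is no guarantee that the computation on $\{x,y\}$ terminates, and the support property for partial countably based functionals (Definition~\ref{1998}) requires \emph{both} values to lie in $\N$ before it says anything. Second, even granting termination, $\{x,y\}$ and $\{y\}$ still disagree at $x \in S$, so the chain breaks at the second step regardless. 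There is no evident way to route around the point $x$ once it sits in its own support, and you give no argument that it ever fails to.

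The paper's proof sidesteps this by starting from the \emph{empty set} $0^2$ rather than from a singleton. The empty set agrees with any singleton $\{y\}$ on every set not containing $y$, so there is no distinguished point to worry about. The price is that $\Omega_1(\emptyset)$ is undefined, so one must split into two cases. If $[[t\,0^2\,0]]$ terminates, a countable support gives an immediate contradiction exactly as in your easy case. If it does not terminate, the tree $T[t\,0^2\,0]$ is ill-founded (all inputs and parameters being total, no branch ends in $\bot$), and one constructs an infinite branch together with a countable support $S$ for \emph{that branch remaining infinite} --- carefully handling the cases $\case$, application of $0^2$, application of $\Phi$, and $\lambda$-abstraction along the way --- and then proves that any $F$ agreeing with $0^2$ on $S$ still yields a computation without a value. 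Taking $F$ to be the characteristic function of a singleton $\{y\}$ with $y \notin S$ then contradicts the fact that the computation on $\{y\}$ must terminate. This ill-founded-branch analysis is the substantive part of the argument, and it is the idea your proposal is missing.
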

\begin{proof}
For the sake of notational simplicity, we again assume that $\Phi$ is of pure type 3.
Assume that $\Omega_1$ is computable in $\Phi$ where $\Phi$ is total and countably based, i.e.\ there is a term $t$ with $\Phi$ as a parameter such that $\Omega_1(F)(k)  \preceq [[tFk]]$ for all $F$ and $k$. 
We shall obtain a contradiction by considering what happens to this inequality for $F = 0^2$, the constant zero functional of type 2, and $k = 0$ as follows.  
\begin{itemize}
\item The case $[[t0^20]] \in \N$ yields a contradiction by considering the countable support $X$ for this value, i.e.\ a countable set $X$ such that if $F$ is constant zero on $X$ and $\Omega_1(F) \in \N^\N$ then $\Omega_1(F)(0) = [[t0^20]]$. 
Clearly, there is $F$ such that $F(x)=0$ for $x\in X$, $\Omega_1(F)$ is defined, and $\Omega_1(F)(0)\neq [[t0^20]]$.
%It is clear that we can find $F$ without such support.
\item The case $[[t0^20]] = \bot$ similarly yields a contradiction.  Indeed, the tree $T[t0^20]$ cannot have a value, but since all inputs and parameters are total no branch ends in $\bot$.  
Hence, the tree must be ill-founded and we choose an infinite branch in $T[t0^20]$ in such a way that there is a countable support $X$ for the branch being infinite. 
We do this by a top-down recursion following how the tree is defined.  Hence, assume that item no.\ $k$ in the branch is $t_k$, starting with $t_0 = t0^20$, and assume that the tree below $t_k$ is ill-founded. 
Sub-terms $s$ in the tree may have $\Phi$ and $0^2$ as parameters, but no other parameters with rank 2 or 3.  Hence, if $T[s]$ has a value, there is a countable support for this value.  We proceed via the following case-distinction. 
\begin{itemize}
\item The case $t_k = 0$ is impossible; for the cases $t_k = \suc t_{k+1}$, $t_k = \pd t_{k+1}$ and $t_k = (\mu x s)s_1 \cdots s_l$, there is only one option for the next item.
\item The case $t_k = \case s_1 s_2 s_3$. If $T[s_1]$ is ill-founded, we let $t_{k+1} = s_1$. If $T[s_1]$ is well-founded, and then with a value, we let $t_{k+1}$ be $s_2$ or $s_3$ according to the value. We then add a support for the value of $s_1$ to the support we are building up.

\item The case $t_k = 0^2s$. Then there must be an integer $a$ such that $T[sa]$ does not have a value, and we let $t_{k+1}$ be one such $sa$.
\item The case $t_k = \Phi s$. Then there is an $f \in \N^\N$ and an $a \in \N$ such that $T[sfa]$ does not have a value. Let $t_{k+1}$ be one such $sfa$.
 This is where we use the assumption that $\Phi$ is total, the existence of $f$ and $a$ as above actually depends on it.
\item The case $t_k = (\lambda x s_1)s_2 \cdots s_l$. There are two possibilities, that $[[s_2]]$ is not total and that $[[s_2]]$ is total while $T[s_1[x/s_2]s_3 \cdots s_l]$ does not have a value. In the first case we proceed as above, while in the second case we let $t_{k+1} = s_1[x/s_2]s_3 \cdots s_l$. Since $s_2$ does not have to be of pure type, we must let $t_{k+1} = s_2 \vec g$ for some list $\vec g$ of total arguments in the first case.
\end{itemize}
\end{itemize}
We only add a finite sequence to the support in the case of $\case$. The final support $X$ is therefore countable. 
Next, we prove the following claim.  
\begin{claim}\label{xkx}
If $F^{2}$ is constant 0 on $X$, then $T[tF0]$ is without a value. 
\end{claim}
\begin{proof}
If $s$ is a term with $0^2$ as a parameter, we let $s^* =  s[0^2/F]$ throughout.
If $t_0^*, t_1^* , t_2 ^*, \ldots$ is an infinite branch in $T[tF0]$, this tree is not well-founded, and cannot have a value. The other possibility is that there is $k$ such that $(t_0^* , \ldots , t_k^*) \in T[tF0]$ but $t_0 ^*, \ldots , t_{k+1}^*$ is not. There will be only two situations where this possibility can occur, which we study now. 
\begin{itemize}
\item The case $t_k = \case s_1 s_2 s_3$ where we have chosen $t_{k+1} = s_i$ for $i = 2$ or $i = 3$. 
If $T[s_1^*]$ is not well-founded, the tree $T[tF0]$ is not well-founded, and does not have a value. If the tree $T[s_1^*]$ has  a value it must, due to our choice of support, have the same value as $T[s_1]$.  Hence, our choice of $t_{k+1}$ transfers to $t^*_{k+1}$, contradicting the choice of $k$.
\item The case $t_k = (\lambda x s_1)s_2 \cdots s_l$. If we have chosen $t_{k+1} = s_2\vec g$, then $t^*_{k+1}$ is an extension of $t^*_k$ in $T[tF0]$, so, by the choice of $k$ we must have that $t_{k+1} = s_1[x/s_2]s_3 \cdots s_l$,  If $[[s_2^*]]_w$ is total, then $t^*_{k+1}$ would be a correct continuation of $t^*_k$ in $T[tF0]$, so we must have that $[[s_2^*]]_w$ is not total, and the tree $T[tF0]$ will not have a value.
\end{itemize}
The proof of Claim \ref{xkx} is now finished in light of the previous case distinction.  
\end{proof}
Finally, together with the above case distinction, Claim \ref{xkx} leads to a contradiction for any $F$ that is constant 0 on $X$, takes the value 1 at exactly one point outside $X$, and is zero elsewhere. 
Hence, the proof of Theorem \ref{31} is finished.
\end{proof}
Finally, Corollary \ref{kink} explains why the notion of countably based is useful: similar to Grilliot's trick (see \cite{kohlenbach2}), the former yields a `rough-and-ready' classification.  
\begin{corollary}\label{kink}
Let $\Psi$ be total and countably based and let $\Xi$ be total and of type rank $\leq 3$.  The pairs $\Psi,\Omega_1$ and $\Psi,\Xi$ are not computationally equivalent.
\end{corollary}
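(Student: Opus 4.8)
The plan is that I would prove slightly more than stated: to disprove computational equivalence of $\Psi,\Omega_1$ and $\Psi,\Xi$ it is enough to exhibit, for \emph{each} choice of $\Xi$, one member of one pair that is not computable in the other pair. I would split into two cases according to whether $\Xi$ is countably based in the sense of Definition~\ref{28}.

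\emph{Case 1: $\Xi$ is not countably based.} The functional $\Omega_1$ is countably based (as observed right after Definition~\ref{defomega1}), and $\Psi$ is countably based by hypothesis; both have type rank $\le 3$. Hence Theorem~\ref{denkors} applies and tells us that every functional computable in $\Psi,\Omega_1$ is countably based. Since $\Xi$ is not, $\Xi$ is not computable in $\Psi,\Omega_1$, so the pairs $\Psi,\Omega_1$ and $\Psi,\Xi$ are not computationally equivalent.

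\emph{Case 2: $\Xi$ is countably based.} Now $\Psi$ and $\Xi$ are both total, countably based, and of type rank $\le 3$. I would form a single total countably based functional $\Theta$ of type rank $3$ in which both $\Psi$ and $\Xi$ are computable and which is itself computable in the pair $\Psi,\Xi$: fixing a pairing of type-$\le 2$ objects, take $\Theta$ so that on the ``left'' input it returns $\Psi$ and on the ``right'' input it returns $\Xi$, padding with a dummy total countably based functional of rank $3$ if needed; totality and countable basedness of $\Theta$ follow from those of $\Psi$ and $\Xi$. Equivalently, one may skip the join and simply note that the proof of Theorem~\ref{31} goes through unchanged for a finite list of total countably based parameters of rank $\le 3$, the single case $t_k=\Phi s$ there splitting harmlessly into cases $t_k=\Phi_j s$, each treated identically using totality of $\Phi_j$. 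In either formulation, $\Omega_1$ is computable in $\Psi,\Xi$ if and only if it is computable in $\Theta$, and Theorem~\ref{31} says the latter is impossible. Hence $\Omega_1$ is not computable in $\Psi,\Xi$, and again the two pairs are not computationally equivalent.

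The conceptual content is carried entirely by Theorems~\ref{denkors} and~\ref{31}; the only point demanding care — and the one I expect to be the (minor) main obstacle — is the bookkeeping in Case~2: one must verify either that the join $\Theta$ is honestly total, countably based, and inter-computable with the pair $\Psi,\Xi$ using only total rank-$\le 2$ intermediaries, or, more cheaply, that nothing in the argument of Theorem~\ref{31} is sensitive to the number of total countably based parameters present.
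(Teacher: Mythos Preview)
Your proof is correct and follows essentially the same approach as the paper: the paper's two-line argument splits on whether $\Xi$ is computable in $\Psi+\Omega_1$ (if not, done; if so, $\Xi$ is countably based by Theorem~\ref{denkors}, and then $\Omega_1$ is not computable in $\Psi,\Xi$ by Theorem~\ref{31}), which is the same dichotomy as your Case~1/Case~2 up to a trivial reshuffling. You are in fact more careful than the paper in flagging that Theorem~\ref{31} is stated for a single total countably based parameter and must be extended (via a join or by inspection of the proof) to the pair $\Psi,\Xi$; the paper applies it to the pair without comment.
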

\begin{proof}
If $\Xi$ is computable in $\Psi+\Omega_1$, the former is countably based. However, then $\Omega_1$ is not computable in $\Xi$ and $\Psi$ by Theorem \ref{31}.
\end{proof}

\subsubsection{An explosive example}\label{cross}
In this section, we show that the combination of $\Omega_{1}$ and the Suslin functional $\SS^{2}$ is rather powerful in that it computes the Halting problem for ITTMs.
This is mainly a consequence of the Kondo-Addison uniformisation theorem for $\Pi^1_1$-sets, to be found in \cite{Rogers}*{Ch.\ 16, Thm.\ XLV}.
We note that $\Omega_1$ is rather weak by itself, at least when it comes to computing objects of type 1, even in combination with $\exists^2$ (see \cite{dagsamXI}*{\S4}).  % (see \cite{oneofourpapers?}). 

\smallskip

We first need the following lemma.
\begin{lemma}\label{lemmaKA} 
Let $A \subset \N^\N$ be a non-empty $\Pi^1_1$-set. Then $A$ has an element computable in $\Omega_1$ and $\SS$.
\end{lemma}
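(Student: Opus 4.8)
The plan is to exploit the Kondo--Addison $\Pi^1_1$-uniformisation theorem, which states that every $\Pi^1_1$ subset of $\N^\N \times \N^\N$ (or, after coding, a $\Pi^1_1$ subset of $\N^\N$ viewed as graphs) admits a uniformising $\Pi^1_1$ subset that is the graph of a partial function; moreover, the classical effective form of this theorem gives that the uniformising set is $\Pi^1_1$ in the same parameters, hence its (unique) member over a given slice is hyperarithmetic in those parameters, and in particular definable from $\SS$ (the Suslin functional, which decides $\Sigma^1_1$-formulas). So the strategy is: first, use $\SS$ to obtain, uniformly, a singleton-producing operation from the non-empty $\Pi^1_1$-set $A$; then feed that singleton to $\Omega_1$ to extract the actual element $x \in A$.

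More concretely, here is how I would carry it out. Since $A \subseteq \N^\N$ is non-empty and $\Pi^1_1$ (in some parameter $p \in \N^\N$, which we may absorb or ignore for the computability claim), apply $\Pi^1_1$-uniformisation to the $\Pi^1_1$ set $\{\langle \langle\rangle, x\rangle : x \in A\}$ — or more simply, use the classical fact (Kondo--Addison, e.g.\ \cite{Rogers}*{Ch.\ 16, Thm.\ XLV}) that a non-empty $\Pi^1_1$ set has a $\Pi^1_1$ singleton subset $\{x_0\}$, i.e.\ there is a distinguished element $x_0 \in A$ whose singleton $\{x_0\}$ is again $\Pi^1_1$. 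The key point is that membership in this singleton is given by a $\Pi^1_1$ formula, hence decidable by $\SS$: for each finite string $\sigma$, one can ask whether $\sigma$ is an initial segment of $x_0$, which is itself a $\Sigma^1_1$ question (``there exists $y \in A$ extending $\sigma$ that is the leftmost/distinguished branch'') that $\SS$ answers. Using $\SS$ together with $\exists^2$ (which $\SS$ computes), one thereby produces the characteristic function of the singleton $X := \{x_0\} \subseteq \N^\N$. Then $\Omega_1(X)$ is defined, equals $x_0$, and $x_0 \in A$, so $A$ has an element computable in $\Omega_1$ and $\SS$.

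Let me say a word about why $\SS$ suffices to build the characteristic function of $\{x_0\}$. The Kondo--Addison argument picks out $x_0$ as the ``leftmost'' branch through the tree associated to the $\Pi^1_1$ set, with ties broken by the Kleene--Brouwer ordering of the well-founded-ness witnesses; deciding, given a finite approximation $\sigma$, whether the distinguished branch extends $\sigma$ reduces to a comparison of $\Pi^1_1$-ranks, which is arithmetic in the Suslin functional (one can phrase it as a $\Delta^1_2$-in-the-sense-decidable-by-$\SS$, or more carefully as a $\Sigma^1_1$ question once the construction is set up correctly). So for $f \in \N^\N$, one computes $X(f)$ by checking, for every $n$, whether $f\!\upharpoonright\! n$ is an initial segment of the distinguished branch, each such check being uniformly decidable from $\SS$; then $X(f) = 1$ iff all these checks pass, and $\exists^2$ (computable from $\SS$) handles the ``for all $n$'' quantifier appropriately given that the branch is unique.

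The main obstacle I anticipate is the bookkeeping in the second paragraph: making precise that the predicate ``$\sigma$ lies on the distinguished branch of the $\Pi^1_1$ set $A$'' is genuinely decidable from $\SS$ (and not merely $\Pi^1_2$ or worse). This is exactly the content of the effective/lightface version of the Kondo--Addison theorem — that $\Pi^1_1$-uniformisation is effective, so the uniformising function's graph is $\Pi^1_1$, hence its values are $\Delta^1_2$ and in fact arithmetic in $\SS$. I would invoke \cite{Rogers}*{Ch.\ 16, Thm.\ XLV} (and its proof) directly for this, rather than re-deriving it: the cited theorem already gives the uniformising $\Pi^1_1$ set, and a uniformising $\Pi^1_1$ set that happens to be a partial-function graph has the property that, over any slice where it is non-empty, its unique element is the unique $x$ satisfying a $\Pi^1_1$ condition, and such an $x$ is computed coordinate-by-coordinate using $\SS$ exactly as one computes from a $\Pi^1_1$-singleton real. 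Once that is granted, everything else — using $\exists^2$ to assemble the characteristic function, and applying $\Omega_1$ — is routine.
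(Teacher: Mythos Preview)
Your proposal is correct and follows essentially the same approach as the paper: invoke Kondo--Addison uniformisation to extract a $\Pi^1_1$ singleton $B=\{x_0\}\subseteq A$, use $\SS$ to compute the characteristic function of $B$, and apply $\Omega_1$ to recover $x_0$. The paper's proof is a two-line version of this; your only excess is the detour through initial segments in the third paragraph, since once you know $B$ is $\Pi^1_1$, the predicate $f\in B$ is directly decidable by $\SS$ without any coordinate-by-coordinate reconstruction.
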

\begin{proof}
By uniformisation, there is a $\Pi^1_1$-set $B \subseteq A$ with exactly one element $x$, and since $B$ is computable in $\SS$ we have that $x$ is computable in $\Omega_1$ and $\SS$.
\end{proof}
\begin{theorem}\label{jable}~
\begin{itemize}
\item[(a)] If $A \subset \N$ is computable by an ITTM, then $A$ is computable from $\Omega_1$ and $\SS$.
\item[(b)] The halting problem for ITTMs is computable in $\Omega_1$ and $\SS$.
\end{itemize}
\end{theorem}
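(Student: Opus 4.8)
The plan is to reduce both items to Lemma \ref{lemmaKA} together with standard facts about the level of definability of ITTM-computations. First recall the key fact from the theory of infinite time Turing machines (due to Hamkins--Lewis, and refined by Welch): the relation ``the ITTM with program $e$ and input $x$ halts with output $y$'' is $\Delta^1_2$, and in fact the halting set and the graph of any ITTM-computable partial function are both $\Sigma^1_2$ and $\Pi^1_2$ with a single real parameter. Concretely, ``$\varphi_e(x)\!\downarrow = y$'' holds iff there exists a real coding a well-ordered sequence of machine snapshots (of length the clockable ordinal) that faithfully simulates the computation and ends in a halting state with output $y$; the existence of such a witness is $\Sigma^1_2$, and non-halting (or halting with a different output) is the $\Pi^1_2$ complement. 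I would begin the proof by quoting this as a known fact with a reference to \cite{longmann} or the ITTM literature.

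Next I would handle part (a). Let $A \subset \N$ be ITTM-computable, say by program $e$, so that $n \in A$ iff $\varphi_e(n)\!\downarrow = 1$ and $n \notin A$ iff $\varphi_e(n)\!\downarrow = 0$; in either case the computation halts. Fix $n$. The statement ``$\varphi_e(n)\!\downarrow$ with output $1$'' is $\Sigma^1_2$, hence of the form $(\exists g^1)\,P(n,g)$ with $P$ a $\Pi^1_1$-predicate (uniformly in $n$). Consider the set $A_n := \{ g \in \N^\N : P(n,g) \}$, which is $\Pi^1_1$ and, by the assumption that the ITTM halts on input $n$, is non-empty precisely when $n \in A$. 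Using $\SS^2$ (the Suslin functional) we can decide for each $n$ whether $A_n$ is non-empty: this is exactly a $\Sigma^1_1$-question once we have a witness, but more directly, ``$(\exists g)P(n,g)$'' is $\Sigma^1_2$, and we need a bit more care here — so instead I would argue as follows. Simulate the machine's halting directly: the statement ``$\varphi_e(n)\!\downarrow$'' asserts the existence of a real $g$ coding the (well-founded) computation; the predicate ``$g$ codes a halting computation of $e$ on input $n$ with output $i$'' is arithmetic in a real parameter, in particular $\Pi^1_1$, and the set of such $g$ is $\Pi^1_1$ and non-empty. By Lemma \ref{lemmaKA}, this set has an element $g_0$ computable in $\Omega_1$ and $\SS$. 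From $g_0$ we read off, using $\exists^2$ (which is computable in $\SS$), the output $i \in \{0,1\}$, thereby deciding whether $n \in A$. Since all of this is uniform in $n$, we conclude $A$ is computable in $\Omega_1$ and $\SS$, proving (a).

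For part (b), the halting problem $H = \{ e : \varphi_e(0)\!\downarrow \}$ is itself not ITTM-decidable, so (a) does not apply directly; but the same strategy works with one modification. For each $e$, the set $C_e := \{ g \in \N^\N : g \text{ codes a halting computation of } e \text{ on input } 0 \}$ is $\Pi^1_1$ uniformly in $e$, and now $C_e$ may be empty — namely exactly when $e \notin H$. The point is that emptiness of a $\Pi^1_1$-set is decidable by $\SS^2$ (it is a $\Sigma^1_1$-statement: ``there is a branch through the associated tree''; more precisely $C_e \neq \emptyset$ is equivalent to a $\Sigma^1_1$ formula which $\SS$ decides). Hence: given $e$, use $\SS$ to decide whether $C_e = \emptyset$; if so, output ``$e \notin H$''; if not, $C_e$ is a non-empty $\Pi^1_1$-set, so by Lemma \ref{lemmaKA} it has an element computable in $\Omega_1$ and $\SS$, and in particular (the mere existence of such an element suffices) output ``$e \in H$''. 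This decides $H$ and proves (b).

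The main obstacle I anticipate is getting the definability bookkeeping exactly right — specifically, ensuring that the predicate ``$g$ codes a halting ITTM-computation of $e$ on input $x$'' really is (uniformly) $\Pi^1_1$ (or at worst $\Pi^1_1$ after absorbing an arithmetic matrix), so that Lemma \ref{lemmaKA} applies with only $\Omega_1$ and $\SS$ and no further comprehension. A snapshot sequence of length a clockable ordinal is a well-ordered object, so asserting its well-foundedness is $\Pi^1_1$, and the local coherence conditions (limit stages take $\liminf$ of cell values, successor stages follow the transition table) are arithmetic in $g$; so the bound should hold, but this is where the careful argument is needed. A secondary point to get right is that ``$C_e = \emptyset$'' is genuinely decidable by $\SS$ and not merely by $\SS_2$; since non-emptiness of a $\Pi^1_1$-set (equivalently, ill-foundedness of a recursive-in-parameters tree) is $\Sigma^1_1$, this is fine, but it should be stated explicitly. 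Everything else is routine uniformity and the already-established Lemma \ref{lemmaKA}.
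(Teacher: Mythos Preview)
Your approach to part (a) is correct and a legitimate alternative to the paper's argument. The paper instead applies Lemma~\ref{lemmaKA} \emph{once}: by Welch's result there is an ordinal $\alpha$ such that every ITTM on every integer input has either halted by stage $\alpha$ or has visibly entered an eternal loop; the set of codes for such ordinals is $\Pi^1_1$ and non-empty, so Lemma~\ref{lemmaKA} produces one such code $x$, and then both (a) and (b) follow by simulating $\alpha$ many steps computably in $x$ and $\exists^2$. Your argument for (a) instead invokes Lemma~\ref{lemmaKA} once per input, uniformly, which is fine since Kondo--Addison uniformisation is effective in the $\Pi^1_1$ index.

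Part (b), however, has a genuine gap. You claim that non-emptiness of the $\Pi^1_1$ set $C_e$ is decidable by $\SS$, justifying this by ``ill-foundedness of a recursive-in-parameters tree is $\Sigma^1_1$''. That justification applies to $\Pi^0_1$ classes, not to $\Pi^1_1$ sets: a $\Pi^1_1$ set is $\{g : T_g \text{ is well-founded}\}$ for some tree $T_g$ recursive in $g$, and ``$(\exists g)(T_g$ well-founded$)$'' is $\Sigma^1_2$, not $\Sigma^1_1$. The Suslin functional cannot decide this. Indeed, ITTMs themselves compute $\SS$, so if $\SS$ decided ITTM halting you would contradict the usual diagonalisation. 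Your per-input strategy therefore breaks down for (b), because when $C_e$ may be empty you have no way to detect this with $\SS$ alone. The paper's route via a single uniform ordinal bound is exactly what circumvents this: once you possess a code for a sufficiently large $\alpha$, the question ``has $e$ halted by stage $\alpha$?'' is decidable by $\exists^2$, and no $\Sigma^1_2$ decision is required.
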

\begin{proof}
If $x \in \N^\N$ codes a well-ordering of length $\alpha$ and $M$ is an ITTM with number-code $m$, then we can simulate the calculation of $M$ through $\alpha$ steps computably in $x$ and $\exists^2$. There is an ordinal $\alpha$ such that for all ITTMs $M$ and all integer inputs $a$, $M$ with input $a$ has either come to a halt at stage $\alpha$ or been through sufficiently many loops to ensure that it will run forever; see \cite{welch2} for details. 
%\marginpar{\footnotesize{You have the reference to his paper in the Turing legacy volume in your big bibliography file, I think.}} 
The set of codes or ordinals $\alpha$ with this property is a $\Pi^1_1$-set and we can apply Lemma \ref{lemmaKA}.
\end{proof}

\subsection{Approximations and partiality}\label{kepl2}
\subsubsection{Introduction}
We have shown in Section \ref{nosim} that $\Omega_{1}$ is `fundamentally partial' in nature, in that no total functional (of rank $3$) is computationally equivalent to it. 
In this section, we establish the same result for $\Omega$ mentioned in Section \ref{intro}.
\bdefi[The functional $\Omega$]\label{defomega} 
The functional $\Omega(X)$ is defined if $|X|\leq 1$ and outputs a finite sequence that includes all elements of $X$.
\edefi
By Theorem \ref{fct}, the functional $\Omega$ can enumerate \emph{any} finite set, which is why we use the term `finiteness' functional in Section \ref{intro}.  
 %namely that the $\Omega_{\}$-cluster cannot be replaced by a class of total functionals \emph{in any way}.
In this section, we obtain the following fundamental results pertaining to $\Omega$. 
\begin{itemize}
\item We show that $\Omega$ is \emph{weak} when combined with $\exists^{2}$ (Section \ref{tris}).  To this end, we introduce another functional $\Omega_{\textup b}$ (Definition \ref{omegab}).
\item We obtain a rather general procedure for approximating computations in the sense of our new model   (Section \ref{sec2}).
\item We show that there is no total extension of the functional $\Omega_{\textup b}$ that is computable from $\Omega_{\textup b}$ and $\exists^{2}$.    (Section \ref{sec33}).
\item Based on the previous three items, we show that there is no total functional (of rank 3) that is computationally equivalent to $\Omega$.    (Section \ref{sec4}).
\end{itemize}
In conclusion, we may claim that $\Omega$ is (also) fundamentally partial in nature.

\subsubsection{The strength of finiteness functionals}\label{tris}
In this section, we show that $\Omega$ is \emph{weak}, even when combined with $\exists^{2}$, as in Theorem \ref{weakk}.
By contrast, the $\Sigma^1_2$-comprehension functional $\SS^2_2$ is computable in $\Omega$ and the Suslin functional $\SS^2_1$ (\cite{dagsamXI}*{\S4}). 
Moreover, assuming \textsf{V = L}, $\exists^3$ is computable in $\Omega$ and $\SS^2_1$. 
Since there is a $\Delta^1_2$-well-ordering of the continuum if \textsf{V = L}, this is also a consequence of Theorem~\ref{thmwo}.

\smallskip

Moreover, we have shown that $\Omega$ is close to $\exists^3$ in the sense that the latter is computable in $\Omega$ and some functional of type 2, namely a well-ordering of $[0,1]$; 
see Theorem~\ref{thmwo} or \cite{dagsamXII} for this result. 
%\marginpar{\footnotesize{Since this is a corollary of Theorem \ref{thmwo}, I think we can mention it.}} 
On the other hand, left with extra parameters of type 1 only, $\Omega$ supplies no extra computational power to $\exists^2$ by Theorem \ref{weakk}. 

\smallskip

In order to obtain the aforementioned results in a transparent way, we introduce another functional, namely $\Omega_{\rm b}$, where the sub-script `b' stands for \emph{basic}.
\bdefi[The functional $\Omega_{\textup b}$]\label{omegab} 
Let $\Omega_{\rm b}(X)$ be defined whenever $X \subset \N^\N$ has at most one element, and then $\Omega_{\rm b}(X) = 1$ if $X$ is nonempty and $\Omega_{\rm b}(\emptyset) = 0$.
\edefi
\noindent
Despite its basic nature, the functional $\Omega_{\textup b}$ computes the functional $\Omega$.
%In order to obtain these results in a transparent way, we introduce another structure functional, namely $\Omega_{\rm b}$, where the sub-script `b' stands for \emph{basic}.
\begin{lemma}\label{borkim2}
The functional $\Omega_{\rm b}$ is computationally equivalent to $\Omega$; the latter is not countably based.
\end{lemma}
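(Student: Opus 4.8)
The plan is to prove the two directions of the computational equivalence and then the non-countably-based claim. The direction $\Omega_{\rm b} \leq_{\rm c} \Omega$ is immediate: given $X \subset \N^\N$ with $|X| \leq 1$, apply $\Omega(X)$ to obtain a finite sequence $\langle x_0, \ldots, x_{k-1}\rangle$ listing all elements of $X$; then output $1$ if $k \geq 1$ and $0$ if $k = 0$. This uses only $\Omega$ and the primary constants, so $\Omega_{\rm b}$ is computable in $\Omega$. The substantial direction is $\Omega \leq_{\rm c} \Omega_{\rm b}$: we must reconstruct the actual element of a singleton $X = \{x\}$, not merely detect nonemptiness. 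The key idea is that $\Omega_{\rm b}$ lets us decide, for any clopen (basic) set $\sigma$ coded by a finite string, whether $X \cap [\sigma] $ is empty, since $X \cap [\sigma]$ again has at most one element (it is a subset of $X$, whose characteristic function is computable from that of $X$ using the primary constants, as intersecting with a clopen set is a finitary operation). So to compute $\Omega(X)$ when $|X| \leq 1$: first test $\Omega_{\rm b}(X)$; if it returns $0$, output $\langle\rangle_1$; if it returns $1$, build $x$ bit by bit by repeatedly asking $\Omega_{\rm b}(X \cap [\overline{x}n \ast \langle j\rangle])$ for $j = 0, 1, 2, \ldots$ until we find the (unique) $j$ for which the answer is $1$, then recurse. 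This is a straightforward recursion, but to phrase it inside our $\lambda$-calculus model one wraps the unbounded search for $j$ and the recursion on $n$ in a single $(\mu x\, t)$-term with $\Omega_{\rm b}$ as a parameter; the least fixed point yields exactly the map $n \mapsto x(n)$, and since $X$ is a genuine singleton each search terminates, so the resulting object is total and equals $x = \Omega(X)$.

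The main obstacle — more a bookkeeping point than a genuine difficulty — is the subtlety that $\Omega$ and $\Omega_{\rm b}$ are \emph{partial} functionals taking (total) set-parameters, so "computable in" must be read in the sense of Definition \ref{def.comp}: we need $\Omega(X) \preceq [[t X]]$ for all $X$, including the case where $|X| > 1$ and $\Omega(X)$ is undefined, where the inequality is vacuous. One should check that when $|X| = 0$ the term halts with $\langle\rangle_1$ and when $|X| = 1$ it halts with the correct element; we need not say anything about the $|X| > 1$ case. A second point to verify is that the clopen-restriction operation $X \mapsto X \cap [\sigma]$ is itself available as a term of our language with $X$ as the only parameter of rank $\geq 2$; this is clear since for $\sigma$ a code of a finite string, membership of $f$ in $[\sigma]$ is decidable from $f$ alone using ${\case}$ and the successor/predecessor constants, so the characteristic function of $X \cap [\sigma]$ is $\lambda f.\, {\case}(d_\sigma(f), \hat 0, X(f))$ for an appropriate decider $d_\sigma$.

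For the final clause, that $\Omega$ (equivalently $\Omega_{\rm b}$) is not countably based: suppose toward a contradiction that $\Omega_{\rm b}$ were countably based in the sense of Definition \ref{1998}. Apply this to the input $\emptyset$, where $\Omega_{\rm b}(\emptyset) = 0 \in \N$; there would then be a countable set $X \subseteq \F(0)$ such that $\Omega_{\rm b}(Y) = 0$ whenever $\Omega_{\rm b}(Y) \in \N$ and $Y$ agrees with $\emptyset$ on $X$. But pick any $g^1 \notin X$ (possible since $X$ is countable and $\N^\N$ is not) and set $Y = \{g\}$; then $Y$ has exactly one element, so $\Omega_{\rm b}(Y) = 1 \in \N$, and $Y$ agrees with $\emptyset$ on all of $X$ since $g \notin X$ forces $Y(h) = 0 = \emptyset(h)$ for every $h \in X$. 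This contradicts $\Omega_{\rm b}(Y) = 0$. Hence $\Omega_{\rm b}$ is not countably based, and since $\Omega$ computes $\Omega_{\rm b}$, Theorem \ref{denkors} shows $\Omega$ is not countably based either. \hfill$\qed$
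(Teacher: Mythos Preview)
Your argument is essentially correct and follows the same approach as the paper, but there is one small slip in the easy direction $\Omega_{\rm b}\leq_{\rm c}\Omega$. By Definition~\ref{defomega}, $\Omega(X)$ outputs a finite sequence that \emph{includes} all elements of $X$; it need not list \emph{exactly} the elements of $X$. In particular, when $X=\emptyset$ an instance of $\Omega$ may well return a nonempty sequence, so testing whether the output has length $\geq 1$ does not decide emptiness. The fix is immediate: given $\Omega(X)=\langle x_0,\ldots,x_{k-1}\rangle$, output $1$ iff $X(x_i)=1$ for some $i<k$, which is computable from $X$ and the sequence using only the primary constants.

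For the harder direction, your prefix-by-prefix search through clopen sets is correct and equivalent to the paper's slightly more direct formulation, which simply computes the single element $f$ coordinatewise via $\Omega_{\rm b}(\{g\in X:g(n)=k\})$ for each $n,k$. Your treatment of the non-countably-based claim spells out exactly what the paper compresses into one line (no countable support for $\Omega_{\rm b}(\emptyset)$), and your appeal to Theorem~\ref{denkors} to pass the conclusion from $\Omega_{\rm b}$ to $\Omega$ is fine.
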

%\marginpar{\footnotesize{This seems to be a good place to observe the second part of the lemma.}}
\begin{proof} The functional $\Omega_{\rm b}$ is clearly computable in $\Omega$. For the converse, if $\Omega_{\rm b}(X) = 0$ then $\Omega(X) = 0^\omega$ (or any  other specified value coding the empty sequence). If $\Omega_{\rm b}(X) = 1$ and $f$ is the single element in $X$ we can compute $f(n)$ from $\Omega_{\rm b}$ and $X$ by considering, for each $k$, $\Omega_{\rm b}(\{g \in X : g(n) = k\})$. 
For the second part of the lemma, it suffices to show that $\Omega_{\rm b}$ is not countably based. This is obvious, since there cannot be a countable support for $\Omega_{\rm b}(\emptyset)$.
\end{proof}
The functional $\Omega_{\rm b}$ is useful, because it is so simple. Indeed, when we prove that $\Omega$ is computable in a given object, it suffices to do so for $\Omega_{\rm b}$, and the argument is then often quite easy. Here, we show that $\Omega_{\rm b}$ has no computational power by itself.
\begin{theorem} \label{weakk}
For each $f \in \N^\N$ and  finite sequence $\vec g$ from $\N^\N$, if $f$ is computable in $\Omega_{\rm b}+\exists^2+\vec g$, then $f$ is computable in $\exists^2+\vec g$.
\end{theorem}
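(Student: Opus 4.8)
The statement says that $\Omega_{\rm b}$ together with $\exists^2$ and type-1 parameters $\vec g$ supplies no new type-1 objects beyond what $\exists^2 + \vec g$ already computes. The natural strategy is to take a term $t$ (with secondary constants $\Omega_{\rm b}$, $\exists^2$, $\vec g$, but no others) witnessing that $f$ is computable in $\Omega_{\rm b} + \exists^2 + \vec g$, so that $[[t\,k]] = f(k)$ for each $k$, and to argue that each such computation tree $T[t\,k]$ can be \emph{reconstructed} using only $\exists^2 + \vec g$. The point is that the only ``hard'' oracle calls in the tree are applications of $\Omega_{\rm b}$ to some set $X \subseteq \N^\N$ with $|X| \le 1$; every other node is handled by $\exists^2$ and $\vec g$ already (by the Equivalence theorem and Theorem~\ref{flaheo}, computation trees relative to type-2 parameters are computable in those parameters plus $\exists^2$). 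So everything reduces to: \emph{can we decide $\Omega_{\rm b}(X)$ using only $\exists^2 + \vec g$, for the particular sets $X$ that actually arise in this computation?}

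**The key step.** Here is where the structure of the computation is used. When the tree $T[t\,k]$ reaches a node $\Omega_{\rm b}\,s_1$, the argument is the functional $[[s_1]]$, which is the characteristic function of some $X \subseteq \N^\N$; termination of the computation forces $|X| \le 1$, and $[[s_1]]$ is itself computable in $\Omega_{\rm b} + \exists^2 + \vec g$ via a genuine subtree. I would argue, by induction on the rank of the node in $T[t\,k]$ (simultaneously over all such nodes), that the \emph{unique possible element} of $X$, if any, is itself computable in $\exists^2 + \vec g$ — indeed, one builds a candidate $x \in \N^\N$ level by level: for each coordinate $n$ and each value $j$, recursively evaluate $s_1$ applied to the finitely-described ``$g(n) = j$'' restriction, which is a lower-rank subcomputation and hence (by induction) already simulable with $\exists^2 + \vec g$; this pins down $x$ uniquely when $X$ is a singleton. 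Once we have this candidate $x$, we can check $x \in X$ by evaluating $[[s_1]](x)$, again a strictly-lower-rank subcomputation; this tells us whether $X = \{x\}$ or $X = \emptyset$, i.e.\ it computes $\Omega_{\rm b}(X) \in \{0,1\}$ using only $\exists^2 + \vec g$. Feeding that value back lets the simulation of $T[t\,k]$ continue. Since the tree is well-founded (it has value $f(k)$), this recursion terminates and yields $f(k)$ computably in $\exists^2 + \vec g$; as $k$ was arbitrary and the simulation is uniform, $f$ is computable in $\exists^2 + \vec g$.

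**The main obstacle.** The delicate point is the simultaneous induction: when we ``evaluate $s_1$ on the restriction $g(n)=j$'' we are running a hypothetical subcomputation that may \emph{itself} contain $\Omega_{\rm b}$-nodes, so we cannot simply invoke a previously-established fact — we must carry the inductive hypothesis that \emph{all} $\Omega_{\rm b}$-nodes occurring strictly below the current one in (the relevant reinterpretation of) the tree are already decidable with $\exists^2 + \vec g$. Making this precise requires being careful that the well-founded ordinal rank genuinely decreases along all these auxiliary subcomputations, and that the restricted arguments we plug in (the ``$g(n)=j$'' sets, and the candidate point $x$) are honestly \emph{total} type-1 (resp.\ type-2) objects computable in $\exists^2 + \vec g$, so that modified $\beta$-conversion and the application clause of Definition~\ref{intdef} behave as expected. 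A secondary subtlety is the empty-set case $\Omega_{\rm b}(\emptyset)$: the candidate-construction must be allowed to ``fail to produce a coherent $x$,'' in which case we must still correctly conclude $X = \emptyset$ and output $0$ — this is where one uses that a terminating computation cannot have genuinely ambiguous behaviour, together with the fact that $\exists^2$ can detect the incoherence. Once these points are handled, the rest is the routine bookkeeping of Theorem~\ref{flaheo} applied to the $\Omega_{\rm b}$-free reinterpreted trees.
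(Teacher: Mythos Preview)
Your overall framework matches the paper's: induct on the rank of nodes in the computation tree, and at each $\Omega_{\rm b}$-application node use the induction hypothesis to conclude that the argument $[[s_1]]=\chi_X$ is already computable in $\exists^2+\vec g$, then decide $\Omega_{\rm b}(X)$ without $\Omega_{\rm b}$ and continue. The difficulty is entirely in that last clause, and that is where your proposal has a genuine gap.

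Your candidate-construction step does not work as written. You say that for each coordinate $n$ and value $j$ you ``evaluate $s_1$ applied to the finitely-described `$g(n)=j$' restriction,'' and call this a lower-rank subcomputation. But $s_1$ is a type-2 term: it consumes a \emph{total} element of $\N^\N$, not a one-coordinate constraint. The genuine lower-rank subcomputations are the trees $T[s_1 y]$ for specific $y\in\N^\N$; these let you evaluate $\chi_X(y)$ for any given $y$, but they do not let you search over all $y$ with $y(n)=j$. Deciding whether $\{y\in X:y(n)=j\}$ is nonempty is again a $\Sigma^1_1$ question about a set computable in $\exists^2+\vec g$ with at most one element---exactly the problem you started with, and not a subcomputation of the original tree. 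So your bit-by-bit procedure is circular, and the phrase ``lower-rank subcomputation'' is doing work it cannot do.

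The paper closes this gap with a piece of classical hyperarithmetic theory rather than tree surgery. Once the induction gives that $X$ is hyperarithmetical in $\vec g$ and $|X|\le 1$, one observes
\[
X\ne\emptyset \;\leftrightarrow\; (\exists y)(y\in X)\;\leftrightarrow\;(\exists y\in\textsf{HYP}(\vec g))(y\in X).
\]
The first equivalence makes the statement $\Sigma^1_1(\vec g)$; the second (using that the unique element of a $\Delta^1_1(\vec g)$ singleton is itself $\Delta^1_1(\vec g)$, and that quantification over $\textsf{HYP}(\vec g)$ is $\Pi^1_1(\vec g)$) makes it $\Pi^1_1(\vec g)$. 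Hence $X\ne\emptyset$ is $\Delta^1_1(\vec g)$ and therefore decidable by $\exists^2+\vec g$. That single observation replaces your candidate-building entirely; the rest of the induction then proceeds exactly as you outlined.
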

\begin{proof}
If a set $X$ is hyperarithmetical in $\vec g$ and has at most one element, then 
\be\label{drif}
X \neq \emptyset \leftrightarrow \exists x(x \in X )\leftrightarrow \big(\exists y \in \textsf{HYP}(\vec g)\big)(y \in X).
\ee
In light of \eqref{drif}, $\Omega_{\rm b}$ restricted to sets computable from $\exists^2$ and some parameters of type 1 is actually computable in $\exists^2$ and the same parameters. We combine this with recursion over computation trees to eliminate the use of $\Omega_{\rm b}$ as long as all extra parameters are of type 1. 
%As we will not need this theorem in the below, we leave the further details for the reader.
\end{proof}

\subsubsection{An approximation theorem}  \label{sec2}
In this section, we obtain a rather general approximation result, namely Theorem \ref{thm.mol}, for our equivalent $\lambda$-calculus model of S1-S9 computability.  
The procedure of approximating a partial computable function is well-known in certain cases, as discussed first in Remark \ref{partapp}. 
\begin{rem}[Approximating computations]\label{partapp}\rm
First of all, a most basic example of approximating a partial computable function by total ones is the $n$-th approximation to a Turing machine (TM) calculation.  
Here, we output 0 or some other fixed value if the TM does not come to a halt within $n$ steps, in analogy with defining the primitive recursive function $\varphi_{e,n}$ using Kleene's $T$-predicate.  

\smallskip

Secondly, assuming all inputs are total, one defines the $n$-th approximation $\{e\}_n(\vec \Phi)$ to a Kleene-computation by a top-down evaluation: simply replace $n$ by $n-1$ for sub-computations and only use the correct values for all $n$ for computations of rank 0. Then $\{e\}_n(\vec \Phi)$ will be total and defined by recursion on $n$. If all inputs are \emph{continuous}, we can iterate Grilliot's trick to show that we get a correct approximation to all terminating computations as $n$ increases.
\end{rem}
Below, we obtain an approximation result for \emph{arbitrary} total inputs based on ordinals; we show that as long as the associated well-founded computation trees have rank bounded by an ordinal $\alpha$, the value of the $\alpha$-approximation is the true value.  In our construction, we make use of the following notations.
\begin{defi}\label{gank}\rm
 If $\alpha$ is an ordinal and $n_\beta \in \N$ for all $\beta < \alpha$, we define $\lim^*_{\beta \rightarrow \alpha}n_\beta$ as $n$ if there is some $\beta < \alpha$ such that $n = n_\gamma$ for all $\gamma$ with $\beta \leq \gamma < \alpha$; this limit is 0 if there is no such value $n$. 
\end{defi}
Note that if $\alpha = \beta + 1$, the limit from Definition \ref{gank} is $n_\beta$ by definition.
\begin{nota}\rm
Let $t$ be a term of type 0 with parameters $\vec \Phi$, $\vec F$, $\vec f,\vec a$  of pure types only. If $[[t(\vec \Phi,\vec F , \vec f , \vec a)]] \in \N$, there is a well-founded computation tree supporting this fact by Definition \ref{definition12}; we let $\rho(t(\vec \Phi,\vec F, \vec  f, \vec a)$ be the ordinal rank of this tree if the value is in $\N$, and $\infty$  otherwise. 
By abuse of notation, we also use  $t(\vec \Phi, \vec F , \vec f , \vec a)$ for the interpretation %of this term, when defined.
$[[t(\vec \Phi,\vec F , \vec f , \vec a)]] $ when there is no ambiguity.
\end{nota}
The following definition follows the cases of Definition \ref{definition12}.
\begin{definition}[Approximating terms]\label{def11}{\em 
Let $\alpha$ be an ordinal and let $t(\vec \Phi, \vec F , \vec f , \vec a)$ be a term with all parameters total and among $\vec \Phi, \vec F , \vec f , \vec a$.  
%The expression $\lim^*_{\beta \rightarrow \alpha}n_\beta$ is a value $n_\beta$ such that  $\beta < \alpha$ and $n_\beta = n_\gamma$ for all $\gamma$ with $\beta \leq \gamma < \alpha$, if there is such a value, and 0 otherwise. For $\alpha = \beta + 1$ this limit is simply $n_\beta$. 
By recursion on $\alpha$, we define $t_\alpha(\vec \Phi, \vec F , \vec f , \vec a)$ as 0 if $t$ is the constant 0 (item~(ii) in Def.\ \ref{definition12}) or if $\alpha = 0$ (item~(i) in Def.\ \ref{definition12}).  If there are one or two  immediate sub-terms $s(\vec \Psi , \vec G , \vec g , \vec b)$ in the computation tree, we use the values $\lim^*_{\beta \rightarrow \alpha}s_\beta(\vec \Psi , \vec G , \vec g , \vec b)$ instead of the true  values when defining the value of $t_\alpha(\vec \Phi, \vec F , \vec f , \vec a)$ (items (iii)-(v) and (viii) in Def.\ \ref{definition12}). We treat the two remaining cases in more detail, as follows.
\begin{itemize}
\item[(vi)] For $t = \Phi_i t_2 \cdots t_n$, the term $t_2$ is of type $\tau = (\delta_1 , \ldots , \delta_m \rightarrow 0)$. 
 \begin{itemize}
  \item For $(\phi_1 , \ldots , \phi_m) \in \F(\delta_1) \times \cdots \times \F(\delta_m)$, calculate $\lim^*_{\beta \rightarrow \alpha} s_\beta$,  where $s = t_2\phi_1 \cdots \phi_m$.  Then define $(\Psi_2)_\alpha(\phi_1 , \ldots , \phi_m)$ as this limit.
  % from the previous item
 \item Now define $\xi_\alpha: = \Phi_i((\Psi_2)_\alpha)$,  which is where we use that $\Phi_i$ is total
 %which is where we use that $\Phi$ is total. 
 Define $s'$ as the term $(\xi )_\alpha t_3 \cdots t_n$, as in item \eqref{bwist} of Def.\ \ref{definition12}.
 \item Finally, we define the approximating term $t_\alpha :=\lim^*_{\beta \rightarrow \alpha}(s')_\beta$ %$T[t]$ contain $t$ concatenated with all sequences in $T[s']$, and if $T[s']$ has a value, we let this be the value of $T[t]$.
 \end{itemize}
 \item[(vii)] For $t =( \lambda x t_1)t_2 \cdots t_n$, we  consider the term $s' = t_1[x/t_2]t_3 \cdots t_n$ and define $t_\alpha = \lim^*_{\beta \rightarrow \alpha}(s')_\beta$.
\end{itemize}
}\end{definition}
We have the following theorem regarding approximating terms. 
\begin{theorem}\label{thm.mol}
Let $t$ be a term as above, i.e.\ all parameters are total and among $\vec \Phi$, $\vec F$, $\vec f,\vec a$. %Let $x$ be a code for a well-ordering of $\N$ of rank $\alpha$. 
For any ordinal $\alpha$, the approximating term $t_{\alpha}$ is in $\N$ and
\begin{itemize} 
%\item $t_\alpha \in \N$ for all ordinals $\alpha$
\item if $\rho(t(\vec \Phi , \vec F , \vec  f, \vec a) )\leq \alpha$, then  $t_\alpha(\vec \Phi, \vec F , \vec f , \vec a) = t(\vec \Phi, \vec F , \vec f , \vec a)$.
\end{itemize}
For $x$ a code of a well-ordering of $\N$ and a countable ordinal $\beta$, we have that:
\begin{itemize}
\item we can compute $t_\beta(\vec \Phi, \vec F , \vec f , \vec a)$, uniformly in $x$, $\exists^{2}$,  and the other parameters.
\end{itemize}
%: The first bullet point is valid for all  ordinals alpha, while the second one is valid for countable ordinals and the codes.
\end{theorem}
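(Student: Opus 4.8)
The plan is to prove all three bullet points by a single transfinite induction on $\alpha$, running in parallel with a structural induction on the term $t$, exactly following the case division in Definition \ref{def11} (which in turn follows Definition \ref{definition12}). The statement that $t_\alpha \in \N$ is really a definitional sanity check: since every $\lim^*$ defaults to $0$ when no stabilising value exists, and since $\Phi_i$, $\vec F$ are \emph{total} by hypothesis, every clause of Definition \ref{def11} manifestly produces a natural number; this part needs only to be checked clause by clause, with the key observation that in case (vi) the approximant $(\Psi_2)_\alpha$ is a genuine \emph{total} functional (because each of its values is a $\lim^*$, hence in $\N$), so that $\xi_\alpha = \Phi_i((\Psi_2)_\alpha)$ is legitimately formed.

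For the correctness bullet — if $\rho(t(\vec\Phi,\vec F,\vec f,\vec a)) \leq \alpha$ then $t_\alpha = t$ — I would argue by induction on the ordinal rank $\rho = \rho(t(\cdots))$ of the true computation tree $T[t]$, with the ambient ordinal $\alpha \geq \rho$ arbitrary. The base cases ($t = 0$, or rank-$0$ leaves) are immediate. For the constructor cases (iii)–(v), (viii) and the application case, each immediate subterm $s$ has true rank $\rho(s) < \rho \leq \alpha$; moreover $\rho(s) < \gamma$ for every $\gamma$ with $\rho(s) \leq \gamma < \alpha$ as well, so by the induction hypothesis $s_\gamma = s$ for all such $\gamma$, whence $\lim^*_{\beta\to\alpha} s_\beta = s$ (the stabilising value is the true value). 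Plugging these true subvalues into the defining clause of $t_\alpha$ reproduces the defining clause of $[[t]]$, giving $t_\alpha = t$. The cases (vi) (oracle application $\Phi_i t_2 \cdots t_n$) and (vii) (modified $\beta$-conversion $(\lambda x\, t_1) t_2 \cdots t_n$) require the same reasoning but with an extra layer: in (vi) one first checks that $(\Psi_2)_\alpha = \Psi_2$ on the relevant arguments — each value $\Psi_2(\vec\phi)$ has a subtree of rank $<\rho$, so the induction hypothesis plus the $\lim^*$-stabilisation argument gives equality — hence $\xi_\alpha = \Phi_i(\Psi_2) = \xi$, and then a final application of the induction hypothesis to $s' = \xi\, t_3 \cdots t_n$ (true rank $< \rho$) closes the case; (vii) is the same with $s' = t_1[x/t_2] t_3 \cdots t_n$. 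Here one must be a little careful that, just as in Definition \ref{definition12}\eqref{bwist} and Remark \ref{xx}, a value is only produced once the argument has been certified total; but $(\Psi_2)_\alpha$ is total by construction, so this is automatic.

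For the final, \emph{uniform computability} bullet: given a code $x$ of a well-ordering of $\N$ of order type the countable ordinal $\beta$, I would define, by effective transfinite recursion along $x$, a functional that on input the (code of the) term $t$ and the point $\gamma \leq \beta$ in the field of $x$ outputs $t_\gamma(\vec\Phi,\vec F,\vec f,\vec a)$. At each stage this only requires: decoding the syntactic shape of $t$ (primitive recursive); for the $\lim^*$ at a limit stage $\gamma$, searching for a stabilisation point among the already-computed values $s_\delta$ for $\delta <_x \gamma$, which is where $\exists^2$ enters — we need to decide a $\Sigma^0_1$-in-$x$-and-the-recursion statement "there is $\delta <_x \gamma$ with $s_{\delta'} = s_\delta$ for all $\delta'$ with $\delta \leq_x \delta' <_x \gamma$"; and in case (vi), forming the total functional $(\Psi_2)_\gamma$ as a genuine type-$1\to\dots\to 1\to 0$ (really type-$2$, modulo the finite list of type-$\leq 1$ parameters) object and feeding it to the oracle $\Phi_i$ — this is exactly the kind of sub-computation that the computation-tree analysis of Section \ref{subseq2.3}, and Theorem \ref{flaheo} in particular, tells us stays within $\Ty(2)$ and hence can legitimately be handed to a rank-$3$ oracle. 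Assembling these pieces along the well-order $x$ by effective transfinite recursion (available from $\exists^2$ and $x$, since the whole recursion is arithmetic in the parameters at each stage) yields the uniform procedure.

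The main obstacle I anticipate is case (vi): making precise that the approximants $(\Psi_2)_\alpha$ really are total functionals of the right type rank that one is allowed to pass to $\Phi_i$, and then threading the $\lim^*$-stabilisation argument through \emph{two} nested transfinite recursions (the one defining $\Psi_2$ pointwise and the one over $\alpha$) without a rank mismatch. The bookkeeping of the parameter lists $\vec\Phi,\vec F,\vec f,\vec a$ as one descends into subterms — new parameters introduced in subcomputations being only of type rank $0$ or $1$, as noted after Theorem \ref{flaheo} — is exactly what keeps everything inside $\Ty(3)$ and must be invoked explicitly here; once that is set up, the correctness induction and the uniform-computability construction are routine.
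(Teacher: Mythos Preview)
Your proposal is correct and follows essentially the same approach as the paper: induction on $\alpha$ for totality of $t_\alpha$, induction on the rank of the computation tree for correctness (with the key observation that in case (vi) one gets $(\Psi_2)_\alpha = \Psi_2$ and hence $\xi_\alpha = \xi$), and effective transfinite recursion along the code $x$ using $\exists^2$ to compute the $\lim^*$ for the uniform computability claim. The paper's own proof is a terse three-sentence sketch of exactly this strategy; your version simply fills in the stabilisation argument for $\lim^*$ and the bookkeeping about parameter types that the paper leaves implicit.
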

\begin{proof}
Firstly, that $t_{\alpha}\in \N$ is immediate by induction on $\alpha$, while for the final item we observe that $\lim^*_{\beta \rightarrow \alpha}n_\beta$ is computable uniformly in $x$, $\exists^2$, and (a coding of) a sequence $\{n_\beta\}_{\beta < \alpha}$. 

\smallskip
  
For the remaining item, we may apply the induction hypothesis to all immediate sub-terms $s$ in the computation tree.   Then $\lim^*_{\beta \rightarrow \alpha}s_\beta$ provides the right value of all sub-terms as in Def.\ \ref{definition12}. 
If we are in item (vi) in Def.\ \ref{def11}, then $(\Psi_2)_\alpha = \Psi_2$ and consequently $\xi_\alpha = \xi$, as defined in the constructions. 
If we are in item (vii) in Def.~\ref{def11}, we do not need to consider modified $\beta$-conversion, because plain $\beta$-conversion will, under our assumptions, provide the right value.
\end{proof}

\subsubsection{Extensions of the functional $\Omega_{\rm b}$}\label{sec33}
In this section, we show that there is no total extension of $\Omega_{\rm b}$ that is computable in $\Omega_{\rm b}$ and $\exists^2$.
A crucial result to this end is Theorem \ref{weakk} about the weakness of $\Omega_{\rm b}$ in the presence of $\exists^{2}$.

\smallskip

First of all, let $0^2$ be the constant zero functional of type 2, also seen as the characteristic function of the empty set $\emptyset$.
\begin{lemma}\label{lemmaO} Let $t(\Omega_{\rm b} , 0^2 ,\exists^2, \vec f)$ be a term with a value, where each element of $\vec f$ either is a hyperarithmetical function or just an integer. Let $F$ be a total functional of pure type 2, computable in $\exists^2$, such that $F(g) = 0$ whenever $g$ is hyperarithmetical. 
If $t(\Omega_{\rm b} , F,{\exists^{2}},\vec f)$ has a value, then the value is that of $t(\Omega_{\rm b},0^2,\exists^{2},\vec f)$. \end{lemma}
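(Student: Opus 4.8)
The plan is to induct on the computation tree $T[t(\Omega_{\rm b},0^2,\exists^2,\vec f)]$, i.e.\ on the ordinal rank $\rho$ of that well-founded tree, proving simultaneously for every subterm $s$ occurring in the tree that if $T[s(\Omega_{\rm b},F,\exists^2,\vec f)]$ has a value then it equals the value of $T[s(\Omega_{\rm b},0^2,\exists^2,\vec f)]$. The key observation, which I would isolate first, is that every type-$2$ (or type-$3$) parameter appearing in any subterm is one of $\Omega_{\rm b}$, $0^2$ (resp.\ $F$), $\exists^2$; all other parameters introduced in sub-computations are of type rank $0$ or $1$. Moreover, by Theorem \ref{weakk} (weakness of $\Omega_{\rm b}$), any type-$1$ object produced by a sub-computation from $\Omega_{\rm b}$, $\exists^2$ and the hyperarithmetical/integer parameters $\vec f$ is already computable from $\exists^2$ and $\vec f$, hence is hyperarithmetical in $\vec f$. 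This is the fact that makes $F$ and $0^2$ interchangeable: at any node where we evaluate $0^2$ (resp.\ $F$) at an argument $g$, that $g$ is hyperarithmetical in $\vec f$, so $F(g)=0=0^2(g)$ by the hypothesis on $F$.

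Next I would run the case distinction following the items of Definition \ref{definition12}. The constant, $\suc$, $\pd$, $\case$, application, $\lambda$- and $\mu$-cases are all routine: the value at the node is determined by the values of immediate subtrees, to which the induction hypothesis applies, so equality propagates upward (for $\case$ one notes that the value of the guard $s_1$ is the same on both sides by induction, so the same branch $s_2$ or $s_3$ is selected). The only genuinely substantive cases are the ones where a type-$2$ parameter is applied, i.e.\ item (vi) with head $\Phi_i\in\{\Omega_{\rm b},0^2,F,\exists^2\}$. For head $\exists^2$: the functional $\Psi_2$ assembled from the subtrees is, by the induction hypothesis, the same total type-$1\to 0$ functional on both sides, so $\exists^2(\Psi_2)$ agrees, and we continue into $s'$. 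For head $\Omega_{\rm b}$: likewise $\Psi_2$ is the same characteristic function (of a set with at most one element) on both sides, and $\Omega_{\rm b}$ returns the same bit. For head $0^2$ versus $F$: here $\Psi_2$ is again the same function $g$ on both sides by induction; the point is that $g$ is computable in $\exists^2$ together with the lower-rank parameters available in this subcomputation — which by Theorem \ref{weakk} (applied with the type-$1$ ``oracle'' list being the hyperarithmetical $\vec f$, and absorbing $\Omega_{\rm b}$) are hyperarithmetical in $\vec f$ — hence $g$ is hyperarithmetical in $\vec f$; therefore $F(g)=0=0^2(g)$, the subsequent term $s'$ is literally the same on both sides, and the induction hypothesis finishes this node.

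The main obstacle is the second bullet of item (vi): one must argue rigorously that the argument $g=\Psi_2$ fed to $0^2$ (resp.\ $F$) is hyperarithmetical in $\vec f$, uniformly in the computation. The subtlety is that $\Psi_2$ is defined by a whole family of subcomputations $T[t_2\phi_1\cdots\phi_m]$ ranging over all total $\phi_j$ of type rank $\leq 1$, so one cannot simply quote Theorem \ref{weakk} for a single value; one needs that $t_2$, with its $0^2$/$F$-parameter itself already handled by the (lower-rank) induction hypothesis, defines a functional computable in $\Omega_{\rm b}+\exists^2$ plus type-$1$ parameters, and then invoke the effective content of Theorem \ref{weakk} — namely that such a functional is computable in $\exists^2$ plus those type-$1$ parameters — to conclude $\Psi_2$ is hyperarithmetical in $\vec f$ whenever its relevant input values are. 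I would set this up by strengthening the induction hypothesis to speak not only about type-$0$ subterms with values but about all subterms, recording that any type-$1$ object they compute from the ambient parameters is hyperarithmetical in $\vec f$; with that formulation in place, the $0^2$-versus-$F$ step becomes a direct application of the hypothesis on $F$, and all remaining cases are bookkeeping.
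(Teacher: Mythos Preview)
Your overall strategy---induct on the rank of the $0^2$-side computation tree with the lemma itself as induction hypothesis---is the paper's strategy, and your treatment of the $\exists^2$- and $0^2/F$-headed nodes is essentially right. The gap is in the $\Omega_{\rm b}$ case, and it is precisely the opposite of what you flag as the ``main obstacle.''

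For a head of type~$2$ (namely $\exists^2$, $0^2$, $F$) the argument $t_2$ has type~$1$, so the sub-computations building $\Psi_2$ take only \emph{integer} inputs $\phi_j$. The induction hypothesis therefore applies to every such sub-computation and gives $\Psi_2$ identical on both sides; that this $g=\Psi_2$ is hyperarithmetical in $\vec f$ (hence $F(g)=0$) is then a direct appeal to Theorem~\ref{weakk}, not a structural difficulty. Your ``subtlety'' about $\phi_j$ of type rank $\leq 1$ does not arise here.

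It arises instead for $\Omega_{\rm b}$, which has type~$3$: its argument $t_2$ has type~$2$, and the sub-computations defining $G_0$ and $G_F$ range over \emph{all} $g\in\N^\N$. Since the lemma restricts $\vec f$ to hyperarithmetical functions and integers, the induction hypothesis only yields $G_0(g)=G_F(g)$ for \emph{hyperarithmetical} $g$, not for arbitrary $g$. Your ``likewise $\Psi_2$ is the same characteristic function on both sides'' therefore does not follow, and your proposed strengthening of the IH (type-$1$ outputs are hyperarithmetical in $\vec f$) does not help, because under an $\Omega_{\rm b}$-node the ambient parameter list has been augmented by an arbitrary $g$. The paper closes this gap with an extra argument: by (the proof of) Theorem~\ref{weakk}, both $G_0$ and $G_F$ are themselves computable in $\exists^2$ plus hyperarithmetical parameters, hence hyperarithmetical; so if either is the characteristic function of a singleton $\{g_0\}$, that $g_0$ is hyperarithmetical; now the IH \emph{does} apply at $g_0$, and since both sets have at most one element, $G_0=G_F$.
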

\begin{proof}
The proof proceeds by induction on the rank of the computation tree for $t(\Omega_{\rm b},0^2,\exists^2,\vec f)$.  The induction step is trivial except for two cases, namely application of $0^2/F$ and application of $\Omega_{\rm b}$, which we consider as follows.
\begin{enumerate}
\item \textbf{Application of $0^2/F$.} Suppose $t(\Omega_{\rm b},0^2,\exists^2,\vec f) = 0^2(g)$ where we have $g(a) = s(\Omega_{\rm b}, 0^2 , \exists^2,\vec f , a)$. Then $g$ is hyperarithmetical and by the induction hypothesis, $g(a) = s(\Omega_{\rm b},F,\exists^2,\vec f  , a)$ for each $a$. Since, $F(g) = 0$ by assumption, the induction step follows.

\item \textbf{Application of $\Omega_{\rm b}$.} Suppose $t(\Omega_{\rm b},0^2,\exists^2,\vec f) = \Omega_{\rm b}(G_0)$, where we have $G_0(g) = s(\Omega_{\rm b} , 0^2 , \exists^2, \vec f , g)$, and define $t(\Omega_{\rm b},F,\exists^2,\vec f) = \Omega_{\rm b}(G_F)$ in the same way. Since both terms are assumed to have values, $G_0$ and $G_F$ are characteristic functions of sets that either are the empty set or singletons. 
Note that in the latter case, the elements of the singletons must be hyperarithmetical.  Indeed, both $G_0$ and $G_F$ are computable in $\exists^2$ by (the proof of) Theorem \ref{weakk}, and thus hyperarithmetical.
We use these observations and the induction hypothesis to show that $G_{0}=G_{F}$. If $G_0(g) = 1$ for some $g$, then, by the induction hypothesis, $G_{F}(g) = 1$, and since both functions are characteristic functions of singletons, they must be equal. The argument for the other direction is the same.
\end{enumerate}
Having established the two remaining cases, we are done.
\end{proof}
Theorem \ref{thm.totext} is interesting in its own right, but also essential for Section~\ref{sec4}.
\begin{theorem}\label{thm.totext} 
There is no total extension of $\Omega_{\rm b}$ that is computable in $\Omega_{\rm b}+\exists^2$.
\end{theorem}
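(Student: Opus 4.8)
The plan is to argue by contradiction using a diagonalization that exploits the weakness of $\Omega_{\rm b}$ established in Theorem \ref{weakk} together with the rigidity result of Lemma \ref{lemmaO}. Suppose toward a contradiction that $\Lambda$ is a total functional of type $3$, extending $\Omega_{\rm b}$, that is computable in $\Omega_{\rm b}+\exists^2$; fix a term $t$ with parameters among $\Omega_{\rm b}$ and $\exists^2$ (and possibly some parameters of type $1$) such that $[[t]]$ computes $\Lambda$. Since $\Lambda$ is total, $[[t \, X]]$ has a value for every total $X$ of type $2$. I would then consider the input $X = 0^2$, the characteristic function of the empty set, so that $\Lambda(0^2) = [[t\, 0^2]] = m$ for some fixed $m \in \N$.

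The core of the argument is to show that the value $m$ cannot in fact be the ``right'' value of $\Lambda$ at every empty-set-like input, by producing a total type-$2$ functional $F$ that agrees with $0^2$ on all hyperarithmetical functions but differs from $0^2$ in a way that $\Lambda$ ought to notice. More precisely, apply Lemma \ref{lemmaO}: since $t(\Omega_{\rm b}, 0^2, \exists^2, \vec f)$ has value $m$, and since $F$ is total of pure type $2$, computable in $\exists^2$, with $F(g)=0$ for all hyperarithmetical $g$, the term $t(\Omega_{\rm b}, F, \exists^2, \vec f)$ also has value $m$. Now I would choose $F$ to be (the characteristic function of) a singleton $\{h\}$ where $h$ is \emph{not} hyperarithmetical — e.g.\ $h$ computes a complete $\Pi^1_1$-set, or more simply $h$ is chosen so that $\{h\}$ is disjoint from $\mathsf{HYP}(\vec f)$. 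Such an $F$ is total, and it is the characteristic function of a set of size $\leq 1$, so it lies in the domain of $\Omega_{\rm b}$. But $F$ is not computable in $\exists^2$ together with hyperarithmetical parameters, so we cannot directly invoke Lemma \ref{lemmaO} with this $F$; instead the point is that $\Omega_{\rm b}(F) = 1 \neq 0 = \Omega_{\rm b}(0^2)$, whereas $\Lambda(F) = [[t\,F]]$ is forced to equal $m = \Lambda(0^2)$ by a further application of the Substitution/rigidity machinery restricted to the behaviour of $t$ below the level where it queries $0^2$ versus $F$.

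The key technical step — and the main obstacle — is establishing that $[[t\,F]] = m$ for the ``bad'' $F$ above: this requires tracking the computation tree $T[t\,F]$ and showing that, because every query made to the type-$2$ argument inside the computation is on a function that is hyperarithmetical in $\vec f$ and $\exists^2$ (by the proof of Theorem \ref{weakk}, all such queries land on $\exists^2$-computable, hence hyperarithmetical, functions), the value of $T[t\,F]$ coincides with that of $T[t\,0^2]$. Concretely, I would run an induction on the rank of $T[t\,0^2]$ parallel to the proof of Lemma \ref{lemmaO}, but now allowing $F$ to be an arbitrary total singleton-characteristic-function rather than one computable in $\exists^2$; the two nontrivial cases remain the application of $0^2/F$ and the application of $\Omega_{\rm b}$, and in both we use that the relevant type-$1$ objects being fed in are hyperarithmetical, so $F$ evaluates to $0$ on them exactly as $0^2$ does, and $\Omega_{\rm b}$ is fed the same singleton-or-empty set in both runs. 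Once this is in place, we have $\Lambda(F) = m$ while $F$ is a genuine singleton-characteristic-function on which $\Omega_{\rm b}$ is defined with value $1$; choosing (say) $F'$ a singleton with a different unique element but the same phenomenon gives $\Lambda(F') = m$ as well, and comparing with what any total extension of $\Omega_{\rm b}$ must do shows $m$ cannot simultaneously be a legitimate extension value. Hence no such $\Lambda$ exists, completing the proof.
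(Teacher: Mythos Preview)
Your proposed induction cannot succeed, because the conclusion you are trying to establish is false. You want to show that $\Lambda(F)=\Lambda(0^2)=0$ for $F=\mathbb{1}_{\{h\}}$ with $h$ non-hyperarithmetical; but $F$ is the characteristic function of a singleton, so it lies in the domain of $\Omega_{\rm b}$, and since $\Lambda$ \emph{extends} $\Omega_{\rm b}$ we must have $\Lambda(F)=\Omega_{\rm b}(F)=1$. Thus the very hypothesis of the theorem forces $\Lambda(F)\neq\Lambda(0^2)$, and your parallel-tree induction has to fail somewhere. It fails exactly where you flagged the ``main obstacle'': in the $\Omega_{\rm b}$-application case. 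The hypothesis ``$F$ computable in $\exists^2$'' in Lemma~\ref{lemmaO} is not a technical convenience but is essential there: it guarantees that the set $G_F$ handed to $\Omega_{\rm b}$ is itself hyperarithmetical, so that if $G_F$ is a singleton its unique element is hyperarithmetical and the induction hypothesis (which is only stated for hyperarithmetical extra parameters $\vec f$) can be applied to it. When $F=\mathbb{1}_{\{h\}}$ with $h\notin\mathsf{HYP}$, the corresponding $G_F$ can perfectly well be a singleton containing $h$ (indeed, for the sub-term that simply evaluates the input at $g$, you get $G_F=F$ and $G_0=0^2$), so $\Omega_{\rm b}(G_F)=1\neq 0=\Omega_{\rm b}(G_0)$ and the two computation trees diverge.

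The paper's proof avoids this by never leaving the hyperarithmetical world for the argument $F$: it applies Lemma~\ref{lemmaO} only to hyperarithmetical sets $X_a$ that happen to have no hyperarithmetical elements, namely the (arithmetically-defined) sets of jump-chains along a Harrison-style recursive pseudo-well-ordering $(A,<)$. For $a$ in the well-founded part $A_w$ the set $X_a$ is a genuine singleton, so $\Lambda(X_a)=1$; for $a\in A\setminus A_w$ the set $X_a$ is hyperarithmetical with no hyperarithmetical element, so Lemma~\ref{lemmaO} gives $\Lambda(X_a)=\Lambda(\emptyset)=0$. Hence $a\mapsto\Lambda(X_a)$ computes the characteristic function of $A_w$, a complete $\Pi^1_1$ set, from $\Omega_{\rm b}+\exists^2$; this contradicts Theorem~\ref{weakk}. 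The missing idea in your attempt is precisely this use of a pseudo-well-ordering to manufacture \emph{hyperarithmetical} sets that $\Omega_{\rm b}$ cannot see into, rather than a single non-hyperarithmetical singleton.
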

\begin{proof}
Assume that $\Phi$ is total, extends $\Omega_{\rm b}$, and is computable in $\Omega_{\rm b}+\exists^2$. 
%We identify a set and its characteristic function. 
By Lemma~\ref{lemmaO}, $\Phi(X) = 0$ if $X$ is hyperarithmetical, but without hyperarithmetical elements.
We can now use the following concepts and facts from the (original) proof of the Gandy-Spector theorem (see \cite{Gandy1960, Spector1959} and also \cite{Sacks.high}*{Section III.3}).
\begin{itemize}
\item If $( A , < )$ is a Turing-computable ordering, a corresponding \emph{jump-chain} is a sequence $\{C_a\}_{a \in A}$ such that or all $a \in A$, $C_a$ is the Turing jump of $\{\langle b , c\rangle : b < a \wedge c \in C_b\}$.
\item If $(A , < )$ is a well-ordering, there is a unique jump-chain, which is also hyperarithmetical. 
\end{itemize}
Recall that the set of computable well-orderings is complete $\Pi^1_1$, while the set of computable orderings with a jump chain and without hyperarithmetical infinite descending sequences,  is $\Sigma^1_1$.  
Hence, there is a computable ordering $(A,<)$ of the latter kind that is not well-ordered. Let $A_w$ be the well-ordered initial segment of $A$ and observe the folllowing.
\begin{enumerate}
\renewcommand{\theenumi}{\alph{enumi}}
\item If $a \in A$, then the set $X_a$ of jump-chains up to $a$ is arithmetically defined, uniformly in $a$, and non-empty.\label{hakker}
\item If $a \in A_w$ there is exactly one jump-chain up to $a$.
%\item There exists $a \in A \setminus A_w$ with a jump chain up to $a$.
\item If $a \in A \setminus A_w$, then there is no hyperarithmetical jump-chain up to $a$.\label{hakker4}
\item The initial segment $A_w$ is complete $\Pi^1_1$.\label{hakker5}
\end{enumerate} 
By Lemma \ref{lemmaO} and items \eqref{hakker}-\eqref{hakker4}, we have the following equality
\be\label{plap}
A_w = \{a \in A : \Phi(X_a) = 1\},
%O:=\{b \preceq^* a : b \in {\mathcal O}\} = \{b \preceq^* a : \Phi(X_b) = 1\},
\ee
implying that $A_w$ is computable in $\Phi$. By item \eqref{hakker5}, the set $A_w$ from \eqref{plap} is a complete $\Pi^1_1$-set, contradicting that all functions in $\N^\N$ computable in $\Phi$ (that is computable in $\Omega_{\rm b}$ and $\exists^2$) must be hyperarithmetical.
This ends the proof.
\end{proof}

\subsubsection{Extensions of the functional $\Omega$}\label{sec4}
In this section, we show that there is no total functional of type 3 that is computationally equivalent to $\Omega$.  We make essential use of the previous sections and a well-known theorem of descriptive set theory.

\smallskip

First of all, we establish some essential lemmas, where we now use the alternative Definition \ref{atrees} of computation trees.
\begin{lemma}\label{lemma4} 
Let $\Phi$ be a total functional of type 3 computable in $\Omega_{\rm b}$. Let $F:(\N^\N)^2 \rightarrow \N$ be computable in $\exists^2$, and let $t$  be a term such that $t(\Phi ,F _f,\exists^2)$ has a value for each $ \in \N^\N$, where $F_f(g) = (f,g)$ for each $f$ and $g$. Then the map sending $f$ to the well-founded computation tree of $t(\Phi,F_f, \exists^2)$ is computable in $\exists^2$. 
\end{lemma}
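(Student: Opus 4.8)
The plan is to combine Theorem \ref{flaheo} — which represents the alternative computation tree as an object computable in the parameters together with $\exists^2$ — with Theorem \ref{weakk}, which says that $\Omega_{\rm b}$ adds no computational power to $\exists^2$ in the presence of rank-$1$ parameters only. First I would make the dependence on $f$ explicit as a rank-$1$ parameter. Since $F$ is computable in $\exists^2$, the functional $F_f = \lambda g\, F(f,g)$ is computable in $\exists^2$ and $f$; substituting a term for it into $t$ produces a term $\tilde t$ with parameters $\Phi$, $\exists^2$, and $f$ (the last of rank $1$) such that $\tilde t(\Phi, \exists^2, f) = t(\Phi, F_f, \exists^2)$ has a value for every $f$. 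Applying Theorem \ref{flaheo} to $\tilde t$, the map sending $f$ to the computation tree $T^\ast$ of $t(\Phi, F_f, \exists^2)$ is computable in $\Phi$, $\exists^2$, and $f$, uniformly in all of these; and since $\Phi$ is computable in $\Omega_{\rm b}$ by hypothesis, a further substitution shows this map is computable in $\Omega_{\rm b}$, $\exists^2$, and $f$, still uniformly in $f$.

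Second, I would exploit that the tree thus produced is an object all of whose arguments have rank at most $1$. By Definition \ref{atrees}, a node of $T^\ast$ is a finite sequence of tuples $\langle \ll s\gg, \vec g, a\rangle$ in which $\ll s\gg$ and $a$ are natural numbers and $\vec g$ is a finite tuple of objects of type rank $0$ or $1$, namely the new parameters introduced in sub-computations — which under the restriction to $\Ty(3)$ are always of rank $\leq 1$. Hence deciding whether a candidate node $c$ lies in the tree and, if so, reading off its value amounts to computing a natural number from $\Omega_{\rm b}$, $\exists^2$, and a finite tuple of elements of $\N^\N$ (namely $f$ together with the rank-$1$ components of $c$). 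This is exactly the situation of Theorem \ref{weakk}, whose argument applies equally to computations whose output is a single natural number: every such value is in fact computable in $\exists^2$ and the same rank-$1$ parameters. Because the elimination of $\Omega_{\rm b}$ in the proof of Theorem \ref{weakk} is performed by a uniform recursion over computation trees, one term realises this for all candidate nodes $c$ and all $f$ simultaneously, so the map sending $f$ to the well-founded computation tree of $t(\Phi, F_f, \exists^2)$ is computable in $\exists^2$, as required.

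The step I expect to be the main obstacle is the second one: one must check that after the substitutions of the first step, every application of $\Omega_{\rm b}$ in the resulting computation is made to a set hyperarithmetical in the rank-$1$ parameters, so that the equivalence \eqref{drif} underpinning Theorem \ref{weakk} is genuinely available, and that the term transformation eliminating $\Omega_{\rm b}$ preserves uniformity in $f$. Both points are, however, internal features of (the proof of) Theorem \ref{weakk}, which treats precisely computations built from $\Omega_{\rm b}$, $\exists^2$, and rank-$1$ data; so the only genuine verification left is the claim that the nodes of $T^\ast$ carry only rank-$\leq 1$ information, and this is immediate from Definition \ref{atrees} together with the hypothesis that all functionals involved have types in $\Ty(3)$.
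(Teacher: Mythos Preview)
Your approach is correct and essentially the same as the paper's, only far more explicit. The paper's one-line proof simply says the tree is computable in $\Phi$, $f$, and $\exists^2$ (this is Theorem~\ref{flaheo}), and then invokes that $\Phi$ is computable in $\Omega_{\rm b}$; the further elimination of $\Omega_{\rm b}$ via Theorem~\ref{weakk} is left entirely implicit, whereas you have spelled it out, including the observation that the nodes of $T^\ast$ carry only rank~$\leq 1$ data so that Theorem~\ref{weakk} applies.
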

\begin{proof}
This is follows from the fact that the computation tree of $t(\Phi,F_f,\exists^2)$ is computable in $\Phi$, $f\in \N^{\N}$, and  $\exists^2$, plus the assumption that $\Omega_{\rm b}$ computes $\Phi$.
\end{proof}
\begin{lemma}\label{lemma5}
Let $\Phi$, $F$, and $t$ be as in Lemma \ref{lemma4}. There is a Turing-computable well-ordering $\prec$ of $\N$ such that the ordinal rank of $\prec$ exceeds the rank of the computation trees of $t(\Phi , F_f , \exists^2)$ for all $f\in \N^{\N}$.
\end{lemma}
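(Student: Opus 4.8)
The plan is to show that the ordinals $\rho\big(t(\Phi,F_f,\exists^{2})\big)$, as $f$ ranges over $\N^\N$, are uniformly bounded below $\omega_{1}^{\textup{CK}}$ (the supremum of the order types of Turing-computable well-orderings of $\N$), and then to take for $\prec$ any Turing-computable well-ordering of $\N$ of order type $\gamma+1$, where $\gamma$ is such a bound. Throughout, write $T_f$ for the (well-founded) computation tree of $t(\Phi,F_f,\exists^{2})$ in the sense of Definition \ref{definition12}, so that $\rho\big(t(\Phi,F_f,\exists^{2})\big)$ is its ordinal rank, which I denote $\textup{rk}(T_f)$. The main external tool is the classical $\Sigma_{1}^{1}$-boundedness theorem (see e.g.\ \cite{Sacks.high}).

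First, by Lemma \ref{lemma4} the assignment $f\mapsto T_f$ is computable in $\exists^{2}$; in particular the relation ``$\sigma$ is a node of $T_f$'' is $\Delta_{1}^{1}$, uniformly in $f$ and in the finitely many reals occurring in $\sigma$ (these reals playing the role of oracles). To each $x\in\N^\N$ coding a linear ordering $\prec_x$ of a subset of $\N$, associate the tree $T^{\prec_x}$ of finite $\prec_x$-descending sequences; this is a tree on $\N$, computable from $x$, which is well-founded precisely when $\prec_x$ is a well-ordering, and in that case $\textup{rk}(T^{\prec_x})=\textup{ot}(\prec_x)$. I would then define
\[
A:=\big\{\,x\in\N^\N : \prec_x \textup{ is a linear ordering and } (\exists f\in\N^\N)(\exists e\in\N^\N)\,[\,e \textup{ maps }T^{\prec_x}\textup{ into }T_f\textup{, preserving proper initial segments}\,]\,\big\}.
\]
Note that such an $e$ sends finite sequences of naturals to nodes of $T_f$ (finite sequences of reals), so it is coded by an element of $\N^\N$.

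Next I would verify the two properties of $A$ driving the argument. First, $A\subseteq\WO$: if $x\in A$, then a map $e$ preserving proper initial segments sends any infinite branch of $T^{\prec_x}$ to an infinite branch of the well-founded $T_f$, whence $T^{\prec_x}$ is well-founded and $\prec_x$ is a well-ordering. Secondly, $A$ is (lightface) $\Sigma_{1}^{1}$: the matrix ``$e$ maps $T^{\prec_x}$ into $T_f$ preserving proper initial segments'' is, via number quantifiers over the countably many nodes of $T^{\prec_x}$, a conjunction of conditions of the form ``$e(\sigma)\in T_f$'' and ``$e(\sigma)$ is a proper initial segment of $e(\tau)$'', hence $\Delta_{1}^{1}$ in $x,f,e$ by the previous paragraph; prefixing $\exists f\exists e$ yields a $\Sigma_{1}^{1}$ set, with no real parameters, since $\Phi$, $F$ and $t$ enter only through the fixed parameter-free $\exists^{2}$-computation of $f\mapsto T_f$. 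By the $\Sigma_{1}^{1}$-boundedness theorem, $\gamma:=\sup\{\textup{ot}(\prec_x):x\in A\}<\omega_{1}^{\textup{CK}}$. Finally, $\textup{rk}(T_f)\leq\gamma$ for every $f$: otherwise $\textup{rk}(T_f)\geq\gamma+1$, and since $\gamma+1<\omega_{1}$ there is $x\in\N^\N$ with $\textup{ot}(\prec_x)=\gamma+1$; then $\textup{rk}(T^{\prec_x})=\gamma+1\leq\textup{rk}(T_f)$, and by the standard fact that $\textup{rk}(S)\leq\textup{rk}(T)$ yields a map $S\to T$ preserving proper initial segments for well-founded trees $S,T$, we get $x\in A$, so $\gamma+1=\textup{ot}(\prec_x)\leq\gamma$, a contradiction. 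Hence any Turing-computable well-ordering $\prec$ of $\N$ of order type $\gamma+1$ (one exists since $\gamma+1<\omega_{1}^{\textup{CK}}$) satisfies $\textup{ot}(\prec)=\gamma+1>\gamma\geq\textup{rk}(T_f)=\rho\big(t(\Phi,F_f,\exists^{2})\big)$ for all $f$, as required.

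I expect the main obstacle to be the verification, in the third paragraph, that $A$ is \emph{lightface} $\Sigma_{1}^{1}$: this is exactly where Lemma \ref{lemma4} is indispensable, because it guarantees that the dependence of $T_f$ on the type-$3$ parameter $\Phi$ (hence on $\Omega_{\rm b}$) has been absorbed into a parameter-free $\exists^{2}$-computation, so that no real parameter sneaks into the definition of $A$ and the boundedness theorem delivers a genuinely Turing-computable---not merely oracle-computable---well-ordering. A secondary point demanding care is that $T_f$ has uncountable branching (over its type-$1$ sub-arguments), so one must confirm both that ``being a node of $T_f$'' remains effectively decidable in the relevant oracles and that an embedding of the countable tree $T^{\prec_x}$ into $T_f$ is coded by a real; both follow routinely from Lemma \ref{lemma4} and the shape of Definition \ref{definition12}.
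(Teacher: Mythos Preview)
Your argument is correct and follows essentially the same route as the paper's proof: both show that the set of (codes for) well-founded trees on $\N$ that order-embed into some $T_f$ is a lightface $\Sigma^1_1$ set of well-founded objects, and then invoke $\Sigma^1_1$-boundedness to obtain a uniform bound below $\omega_1^{\textup{CK}}$. Your use of linear orders and the associated trees of descending sequences in place of trees directly is a cosmetic variant, and you have correctly isolated the point where Lemma~\ref{lemma4} is essential, namely to eliminate the type-$3$ parameter $\Phi$ so that the resulting set is \emph{lightface} $\Sigma^1_1$.
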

\begin{proof}
The fact that these well-founded computation trees are hyperarithmetical suffices. For the unfamiliar reader, we observe that the set of trees on $\N$ that can be order-embedded into any of these computation trees, is a $\Sigma^1_1$-set of well-founded trees on $\N$; it is well-known that the ranks of the elements of such a set are (uniformly) bounded below $\omega_1^{\textsf{CK}}$.
\end{proof}
Finally, we can prove the main theorem of this section.  
\begin{theorem} 
No total $\Phi^{3}$ is computationally equivalent to $\Omega$, assuming $\exists^{2}$.
\end{theorem}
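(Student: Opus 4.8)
The plan is to derive a contradiction from Theorem~\ref{thm.totext} by converting a hypothetical total $\Phi^{3}$ equivalent to $\Omega$ into a total \emph{extension} of $\Omega_{\rm b}$ computable in $\Omega_{\rm b}+\exists^{2}$. So, assume $\Phi$ is total, of type $3$, and computationally equivalent to $\Omega$ over $\exists^{2}$. By Lemma~\ref{borkim2} this is the same as being computationally equivalent to $\Omega_{\rm b}$, so $\Phi$ is computable in $\Omega_{\rm b}+\exists^{2}$ and there is a term $s$, with $\Phi$ and $\exists^{2}$ among its parameters, such that $\Omega_{\rm b}(X)\preceq [[s(\Phi,\exists^{2},X)]]$ for every total $X\in\F(2)$; in particular $s(\Phi,\exists^{2},X)$ has value $\Omega_{\rm b}(X)$ whenever $|X|\le 1$, while it may well diverge when $|X|\ge 2$.

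Next I would establish a single ordinal bound covering all these `small' computations. For $f\in\N^{\N}$ the characteristic function of $\{f\}$ is computable from $\exists^{2}$ and the functional $F_{f}$ given by $F_{f}(g):=(f,g)$; hence there is one term $t$ with parameters among $\Phi,F_{f},\exists^{2}$ with $t(\Phi,F_{f},\exists^{2})=s(\Phi,\exists^{2},\{f\})$, and this has a value (namely $1$) for every $f$. By Lemmas~\ref{lemma4} and~\ref{lemma5} the associated computation trees are hyperarithmetical uniformly in $f$, and their ranks are bounded by a recursive ordinal. Together with the rank of the single computation $s(\Phi,\exists^{2},\emptyset)$, which is likewise $<\omega_{1}^{\textsf{CK}}$ since its computation tree is computable in $\exists^{2}$ by Theorem~\ref{weakk}, we may fix a Turing-computable well-ordering $\prec$ of $\N$ whose rank $\gamma$ exceeds $\rho(s(\Phi,\exists^{2},X))$ for every $X$ with $|X|\le 1$.

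Now set $\Psi(X):=s_{\gamma}(\Phi,\exists^{2},X)$, the $\gamma$-approximation of Definitions~\ref{gank} and~\ref{def11}. By Theorem~\ref{thm.mol}, $\Psi(X)\in\N$ for every total $X$, so $\Psi$ is total; whenever $|X|\le 1$ we have $\rho(s(\Phi,\exists^{2},X))\le\gamma$, so the same theorem gives $\Psi(X)=s(\Phi,\exists^{2},X)=\Omega_{\rm b}(X)$, i.e.\ $\Psi$ extends $\Omega_{\rm b}$; and since a code for $\prec$ is itself computable, Theorem~\ref{thm.mol} shows $\Psi$ is computable uniformly in $\exists^{2}$, $\Phi$ and the input $X$, hence in $\Omega_{\rm b}+\exists^{2}$ (recalling that $\Phi$ is computable in $\Omega_{\rm b}+\exists^{2}$). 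Thus $\Psi$ is a total extension of $\Omega_{\rm b}$ computable in $\Omega_{\rm b}+\exists^{2}$, contradicting Theorem~\ref{thm.totext}; as $\Omega\equiv\Omega_{\rm b}$ by Lemma~\ref{borkim2}, this rules out any total $\Phi^{3}$ computationally equivalent to $\Omega$, as claimed.

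The main obstacle is the uniform ordinal bound in the second step: one must see that although $\Omega_{\rm b}$ is being evaluated on singletons $\{f\}$ for arbitrary — possibly non-hyperarithmetical — reals $f$, the weakness of $\Phi$ (Theorem~\ref{weakk}: $\Omega_{\rm b}+\exists^{2}$ computes no type-$1$ object beyond those computable in $\exists^{2}$) forces the family of these computation trees to be $\Sigma^{1}_{1}$, so that $\Sigma^{1}_{1}$-boundedness caps their ranks below $\omega_{1}^{\textsf{CK}}$. Everything else is routine bookkeeping with the approximation theorem, whose role is precisely to exploit the totality of $\Phi$ (used in item~(vi) of Definition~\ref{def11}) so as to evaluate the $\gamma$-approximation uniformly even on those $X$ with $|X|\ge 2$ where $s(\Phi,\exists^{2},X)$ itself has no value.
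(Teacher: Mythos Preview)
Your proof is correct and follows essentially the same route as the paper: reduce to $\Omega_{\rm b}$ via Lemma~\ref{borkim2}, use Lemmas~\ref{lemma4}--\ref{lemma5} to bound the ranks of the terminating computations below some recursive $\gamma$, then invoke Theorem~\ref{thm.mol} to get a total extension of $\Omega_{\rm b}$ computable in $\Omega_{\rm b}+\exists^{2}$, contradicting Theorem~\ref{thm.totext}. The only cosmetic difference is that the paper parametrises \emph{all} sets with $|X|\le 1$ (including $\emptyset$) by a single family $F_{f}(g)=1\leftrightarrow(\forall a)(f(a)>0\wedge g(a)=f(a)-1)$, whereas you treat $\emptyset$ separately; also, your $F_{f}(g):=(f,g)$ does not literally match the type $\N^{\N}\to\N$ required by Lemma~\ref{lemma4} --- you want $F(f,g):=[f=_{1}g]$, which is computable in $\exists^{2}$ --- but this is a notational slip, not a gap.
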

\begin{proof}
Assume there is such a $\Phi$ and let $t$ be a term such that $\lambda G. t(\Phi, G , \exists^2)$ is a (possibly partial) extension of $\Omega_{\rm b}$.
Now define $F \leq 1^2$ as follows:
\[
F(f,g) = 1 \leftrightarrow (\forall a \in \N)(f(a) > 0  \wedge g(a) = f(a)-1).
\] 
All arguments for which $\Omega_{\rm b}$ is defined, are of the form $F_f$.
Following Lemma~\ref{lemma5}, let $x$ be a Turing-computable code of a well-ordering with a rank $\alpha$ that is larger than the rank of all computations   trees for $t(\Phi , F_f , \exists^2)$ for any $f \in \N^\N$. 
By Theorem~\ref{thm.mol}, the term $\lambda G.t_\alpha(\Phi , G, \exists^2)$ is total and computable.  By the choice of $F$ and $x$, we also have that this total functional extends $\Omega_{\rm b}$. 
By Theorem~\ref{thm.totext} this is impossible. 
\end{proof}
In conclusion, we observe that $\Omega$ is fundamentally partial in nature.

\section{Two robust computational clusters}\label{maink2}
\subsection{Introduction}
As discussed in Section \ref{intro}, many functionals stemming from theorems in mainstream mathematics are computationally equivalent to either the $\Omega$ or $\Omega_{1}$.  
We refer to the associated computational equivalence classes as the \emph{$\Omega$-cluster} and the \emph{$\Omega_{1}$-cluster}.   %In this section, we also styd the $\Omega$-functional (Section \ref{struc}) and similarly establish the $\Omega$-cluster.  
In this section, we populate these clusters with functionals stemming from the following topics.
\begin{itemize}
\item Finiteness and countability of subsets of $\R$ (Section \ref{struc}).
\item Regulated functions on the unit interval (Section \ref{sams}).
\item Functions of bounded variation on the unit interval (Sections \ref{sams}-\ref{separ}).
\item Absolutely continuous functions on the unit interval (Section \ref{maink3}).
\item Functions in the Sobolev space $W^{1,1}$ (Section \ref{flukkkk})
\item \emph{Caccioppoli} or \emph{finite perimeter} sets, which are essentially sets with characteristic function of bounded variation  (Section \ref{caccio})
\end{itemize}
On a conceptual note, $\Omega_1$ and $\Omega$ are examples of what we call \emph{a structure functional}, i.e.\ a functional that does not turn up as a result of some construction in mainstream mathematics, but which is nonetheless useful for calibrating the computational complexity of those that do. 

\subsection{Functionals related to finiteness and countability}\label{struc}
In this section, we connect the $\Omega$-functional to other functionals performing basic operations on finite or countable sets of reals, as in the following definition.  
\bdefi[Functionals witnessing properties of finite sets]\label{JDR0}
Let $X\subset [0,1]$ be a set of reals and let $n\in \N$.  
\begin{itemize}
\item A \emph{finiteness realiser} $\Omega_{\fin}$ is defined when the input $X$ is finite and outputs a finite sequence that includes all elements of $X$.
\item The functional $\Omega_n$ is defined if $|X|=n$ and outputs a finite sequence that includes all elements of $X$.  
\item  The functional $\Omega_{\leq n}$ is defined if $|X|\leq n$ and outputs a finite sequence that includes all elements of $X$.
\item The functional $\Omega_{\#,\fin}$ is defined when $X$ is finite and outputs $|X|$. 
\item The functional $\Omega_{\geq \#,\fin}$ is defined when $X$ is finite and outputs some $n\geq |X|$. 
\end{itemize}
\edefi
We note that $\Omega$ is actually $\Omega_{\leq 1}$.
While perhaps not clear at first glance, the previous `finiteness' functionals are intimately related to functionals witnessing basic properties of functions of bounded variation, as studied in Section \ref{sams}
%We refer to these `finiteness' functionals as `structure functionals', for reasons explained below. 
\bdefi[Clusters] 
The \emph{$\Omega$-cluster} is the class of partial functionals computationally equivalent to $\Omega$ modulo $\exists^2$.  The \emph{$\Omega_1$-cluster} is the class of partial functionals computationally equivalent to $\Omega_1$ modulo $\exists^2$.
% \marginpar{\footnotesize{Since we are ``defining" some functionals without being explicit, and since there may be functinals satisfying the specifications that are not computable even in $\exists^3$, I added a definition of when a class defined this way will be in the clusters.}}
\edefi
When we define a functional via an incomplete description, we are actually defining a \emph{class} of functionals, see e.g. $\Omega_{\geq \#,\fin}$ above. A class like this is in the $\Omega$-cluster if $\Omega$ is computable in all elements of the class modulo $\exists^2$ and at least one element of the class is in the $\Omega$-cluster. This extends to $\Omega_1$ in the same way.

\smallskip

Since $\exists^2$ is always assumed, we do not have to be specific about which of the domains $\N^\N$ , $2^\N$, $[0,1]$ or $\R$ we are working with when we define e.g.\ $\Omega$ and $\Omega_1$.  % or the other elements of these clusters.
\bdefi[Functionals related to countability]\label{JDR4}
\begin{itemize}
\item An \emph{enumeration functional} is a partial functional taking as input $A\subset [0,1]$ and $Y:[0,1]\di \N$ injective on $A$, outputting an enumeration of $A$. 
\item An \emph{$\Omega_{\BW}^{3}$-functional} is a partial functional taking as input $A\subset [0,1]$ and $Y:[0,1]\di \N$ injective on $A$, and outputting $\sup A$.
\item A \emph{\textbf{weak} enumeration functional} is an enumeration functional that only works if the second input is also surjective on the first. 
\item A \emph{\textbf{weak} $\Omega_{\BW}$-functional} is an $\Omega_{\BW}$ functional that only works if the second input is also surjective on the first. 
\end{itemize}
\edefi
\noindent
We note that `$\BW$' in the previous definition stands for `Bolzano-Weierstrass'. We have the following initial classification.  
\begin{theorem}[First cluster theorem]\label{fct}~
\begin{itemize}
\item The $\Omega_1$-cluster contains each $\Omega_n$ for $n \geq 1$, a weak enumeration functional, and the weak $\Omega_{\BW}$  functional.
\item The $\Omega$-cluster contains $\Omega_{\fin}$, $\Omega_{\leq n}$, $\Omega_{\#,\fin}$, $\Omega_{\geq \#,\fin}$, an enumeration functional, and $\Omega_{\BW}$.
\end{itemize}
\end{theorem}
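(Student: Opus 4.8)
The plan is to prove all stated memberships by exhibiting, for each functional $\Psi$ in the list, a pair of reductions: (i) $\Psi$ is computable in $\Omega$ (resp.\ $\Omega_1$) modulo $\exists^2$, and (ii) $\Omega$ (resp.\ $\Omega_1$) is computable in $\Psi$ modulo $\exists^2$. Throughout we freely use $\exists^2$, which lets us decide $\Sigma_1^0$-facts, form countable unions of sets given by characteristic functions, and (via Feferman's $\mu$) search through $\N$; we also use Lemma \ref{borkim2}, so that for the $\Omega$-cluster it suffices to work with the simpler $\Omega_{\rm b}$.

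First I would handle the $\Omega_1$-cluster. Given $n\geq 1$ and a set $X$ with $|X|=n$, one computes $\Omega_n(X)$ from $\Omega_1$ as follows: using $\exists^2$ and the ability to split $X$ into the subsets $\{x\in X: x(k)=j\}$ over $j\in\N$, one locates the $n$ disjoint ``branches'' of $X$ in Cantor/Baire space and peels off each element by repeatedly feeding singletons to $\Omega_1$; since $|X|=n$ is known, the search through finite prefixes terminates, and $\exists^2$ certifies when a subset has become a singleton (its complement-in-$X$ then being empty is a $\Pi_1^0$-in-$\Omega_1$-and-$\exists^2$ condition, which is actually decidable here because we know how many points remain). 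Conversely $\Omega_1=\Omega_1$ trivially, so each $\Omega_n$ lies in the $\Omega_1$-cluster. For the weak enumeration functional: given $A$ and a $Y:[0,1]\to\N$ that is injective \emph{and} surjective on $A$, for each $n$ the set $\{x\in A: Y(x)=n\}$ is a singleton, so applying $\Omega_1$ to it yields the $n$-th element, giving an enumeration; conversely, from such an enumeration one recovers $\Omega_1$ of any singleton by reading off the (unique) listed element. The weak $\Omega_{\BW}$ functional then follows: from the enumeration just produced one computes $\sup A$ using $\exists^2$; and conversely, given $\sup$ of all subsets $\{x\in A:Y(x)\le n\}$ one reconstructs the elements bit by bit, hence $\Omega_1$. (Here surjectivity of $Y$ is what makes every relevant fibre a genuine singleton, which is exactly what $\Omega_1$ can handle.)

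For the $\Omega$-cluster I would argue similarly but now each fibre has \emph{at most} one point, which is precisely the domain of $\Omega_{\rm b}$ and $\Omega$. From $\Omega$ (equivalently $\Omega_{\rm b}$, by Lemma \ref{borkim2}) one computes $\Omega_{\le n}$ by the same peeling procedure as above, except that now a branch may be empty, which $\Omega_{\rm b}$ detects by outputting $0$; one recurses on the (at most $n$) sub-branches and assembles the finite sequence. For $\Omega_{\fin}$: given that $X$ is finite but with no bound supplied, one searches, using $\exists^2$ together with $\Omega_{\rm b}$ applied to all the depth-$m$ sub-branches, for a depth $m$ at which every nonempty sub-branch is a singleton; finiteness guarantees such an $m$ exists, and $\Omega_{\rm b}$ on each singleton branch reads off the point, so $\Omega_{\fin}$ is computed, and the number of nonempty branches gives $\Omega_{\#,\fin}$, while $\Omega_{\geq\#,\fin}$ is then immediate. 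Conversely, each of $\Omega_{\fin},\Omega_{\le n}\,(n\ge1),\Omega_{\#,\fin},\Omega_{\geq\#,\fin}$ computes $\Omega=\Omega_{\le1}$: for $\Omega_{\fin}$ and $\Omega_{\le n}$ this is immediate since $\Omega$ is a restriction; for $\Omega_{\#,\fin}$ and $\Omega_{\geq\#,\fin}$ one uses the count (or upper bound on the count) of $\{x\in X: x(k)=j\}$-type subsets to locate the single point of $X$ bit by bit when $|X|\le1$, the count being $0$ or $1$ telling us whether to descend. The enumeration functional and $\Omega_{\BW}$ are treated as in the $\Omega_1$-case, dropping the surjectivity hypothesis: now the fibres $\{x\in A: Y(x)=n\}$ have at most one element, so $\Omega_{\rm b}$ (hence $\Omega$) suffices to list $A$ and thence to compute $\sup A$; conversely from an enumeration, or from $\sup$ applied to the truncations $\{x\in A:Y(x)\le n\}$, one recovers the at-most-one point of a putative singleton, i.e.\ $\Omega_{\rm b}$.

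The main obstacle — and the step I would spend the most care on — is the ``peeling'' and ``branch-search'' subroutine: one must verify that the recursion on finite prefixes in Baire space is genuinely $\exists^2$-effective and terminating. Termination for $\Omega_{\le n},\Omega_n$ follows from the known cardinality bound; for $\Omega_{\fin}$ one must argue that finiteness of $X$ alone forces a finite depth at which branches separate, and that $\exists^2$ plus $\Omega_{\rm b}$ can recognize this depth (each ``this branch has $\le 1$ point below depth $m$'' query is answered by $\Omega_{\rm b}$, and ``search for the least such $m$'' is a $\mu$-search certified to succeed). A secondary subtlety is bookkeeping with real numbers versus points of $2^\N$/$\N^\N$: since $\exists^2$ is assumed and, as remarked before Definition \ref{JDR4}, the choice of underlying space is immaterial, I would simply fix the convenient representation and note the translation is $\exists^2$-computable. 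None of these steps is deep; the content is that $\Omega_{\rm b}$/$\Omega_1$ is exactly the right primitive for ``decide emptiness of an at-most-singleton set and, if nonempty, read its element,'' and every functional in the two lists is a finitary combination of that primitive with $\exists^2$.
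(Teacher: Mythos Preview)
Your overall strategy is sound and several of the reductions (the enumeration functionals, $\Omega_{\BW}$, $\Omega_{\le n}$ with a known bound, $\Omega_{\#,\fin}$) go through essentially as you describe. However, two of your reductions contain genuine gaps that the paper resolves by quite different tricks.

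First, your argument for $\Omega_{\fin}$ from $\Omega$ (equivalently $\Omega_{\rm b}$) does not work. You propose to search for a depth $m$ at which every sub-branch $X_\sigma$ has at most one element, detecting this by applying $\Omega_{\rm b}$ to $X_\sigma$. But $\Omega_{\rm b}$ is a \emph{partial} functional whose domain is exactly the sets of size $\le 1$: on a set with two or more elements it is simply undefined, so it cannot answer the query ``does $X_\sigma$ have $\le 1$ element?''. At depth $m=0$ the single branch is $X$ itself, and if $|X|\ge 2$ the call $\Omega_{\rm b}(X)$ diverges; the search never reaches depth $1$. The paper instead uses the self-reference facility $(\mu xt)$ directly: with a fixed linear order $<$ on the domain, define $\Omega^\ast$ by $\Omega^\ast(X):=\Omega(Y)$ where $Y=\{x\in X:\Omega^\ast(\{y\in X:y<x\})\notin X\}$. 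By induction on $|X|$, the set $Y$ is always $\emptyset$ or a singleton (it isolates the $<$-least element of $X$), so $\Omega$ is only ever applied within its domain; one then enumerates $X$ from below using $\Omega^\ast$.

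Second, your reduction of $\Omega$ to $\Omega_{\ge\#,\fin}$ assumes the output ``being $0$ or $1$'' distinguishes empty from nonempty, but $\Omega_{\ge\#,\fin}$ returns only an \emph{upper bound} on the cardinality: any instance of this class may satisfy $\Omega_{\ge\#,\fin}(\emptyset)=17$, and then your bit-by-bit descent cannot tell which branch is inhabited. The paper's trick is amplification: given $|X|\le 1$ and $k:=\Omega_{\ge\#,\fin}(X)$, build $Y\subset[0,1]$ as $k{+}1$ disjoint affine copies of $X$. Then $|Y|\in\{0,k{+}1\}$, and by extensionality $Y=\emptyset$ (as a characteristic function) exactly when $X=\emptyset$, so $X\neq\emptyset\leftrightarrow\Omega_{\ge\#,\fin}(Y)>k$. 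This yields $\Omega_{\rm b}$, and interval-halving on $X\cap[q,r]$ then recovers the element.

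A smaller point: in your $\Omega_1\Rightarrow\Omega_n$ step you assert that $\exists^2$ ``certifies when a subset has become a singleton''; this is a statement with a universal real quantifier and is not obviously within reach of $\exists^2$ and $\Omega_1$. A cleaner route: for $|X|=n$, the set of strictly $<$-increasing $n$-tuples from $X$ is itself a singleton in $(\N^\N)^n$, so feed it to $\Omega_1$ directly.
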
 
%\marginpar{\footnotesize{I could not find any of the arguments in  \cite{dagsamXI}, and I do not see that we have defined these objects in material made public.}}
\begin{proof} 
Some of the results in the theorem are easy and are left for the reader.
% trivial or are established in \cite{dagsamXI}. 
In particular, it is straightforward to show that $\Omega_1$ and $\Omega$ are computable in most functionals mentioned in the theorem.  
We do establish that  $\Omega_{\fin}$ is computable in $\Omega$, based on the fact that all domains in question have natural arithmetically defined linear orderings. Note that $\Omega$ can decide if a set $X$ with at most one element is empty or not, simply by asking if $\Omega(X) \in X$ or not.  We use self-reference, i.e.\ the $(\mu xt)$-facility present in our $\lambda$-calculus approach to S1-S9, to define a functional $\Omega^\ast$ with the following properties for finite sets $X$:
\begin{itemize} 
\item $\Omega^\ast(\emptyset) $ is the special element $0^\omega$,
\item $\Omega^\ast(X)$ is the least element in $X$ when $X$ is non-empty.
\end{itemize}
Without loss of generality, we may assume that $\Omega(\emptyset) = 0^\omega$ is the least element in our domain. We then use the following algorithm: 
\begin{center}
given $X$, put $x \in Y$ if $x \in X$ and $\Omega^\ast(\{y \in X : y < x\}) \not \in X$ and define $\Omega^\ast(X)  := \Omega(Y)$. 
\end{center}
We see, by induction on the cardinality of the finite set $X$, that $\Omega^\ast(X)$ is well-defined and that it selects the least element of $X\ne \emptyset$. 
We can then use $\Omega^\ast$ to enumerate $X$ from below until there is nothing left, thus obtaining $\Omega_{\fin}$ as follows.

\smallskip

Finally, we show that $\Omega_{\geq \#, \fin}$ is part of the $\Omega$-cluster. 
To this end, let $X\subseteq [0,1]$ have at most one element and suppose $\Omega_{\geq\#, \fin}(X) = k$.
Let the closed intervals $ I_0 , ... , I_{k}$ form a partition of $[0,1]$ and let $f_i:I_i \di [0,1]$ be the associated canonical (affine) bijection.
Now define a set $Y\subset [0,1]$ as follows: $y \in Y$ if for some $i \leq k$ we have that $y \in I_i$ and $f_i(y) \in X$.
Then we have the following: 
\be\label{exti}
(\exists x \in [0,1])(x\in X)\asa \Omega_{\geq \#, \fin}(Y)> k,
\ee
which readily yields the functional $\Omega$.  Indeed, in case $X$ is not empty, we can repeat the previous for $X\cap [q,r]$ and $X\setminus [q,r]$ for any $q, r\in \Q\cap [0,1]$.
In this way, the usual interval-halving technique allows us to find the single element of $X$.
\end{proof}
We note that \eqref{exti} relies on the axiom of extensionality for functions (of relatively high type).
The idea behind the functional $\Omega^\ast$ from the previous proof can also be exploited as follows. 
\begin{theorem}\label{thmwo} 
There is a functional $\Omega_{\rm WO}$ in the $\Omega$-cluster such that $\Omega_{\rm WO}(X,Y,\prec)$ is defined whenever $\prec$ is a well-ordering of $X$ and $Y \subseteq X$, and if $Y \neq \emptyset$ then $\Omega_{\rm WO}(X,Y,\prec)$ is the $\prec$-least element of $Y$. 
\end{theorem}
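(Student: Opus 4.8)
The plan is to generalise the construction of the auxiliary functional $\Omega^{\ast}$ from the proof of Theorem \ref{fct}: there one recurses along the fixed arithmetical ordering of the domain, and here one simply recurses along the well-order parameter $\prec$, again exploiting the $(\mu x t)$-facility of our $\lambda$-calculus (Definition \ref{cringe}) to introduce the required self-reference. Writing $Y_{\prec x}:=\{\, y\in Y:y\prec x\,\}$ for the proper $\prec$-initial segment of $Y$ below $x$, I would define $\Omega_{\rm WO}$ by the self-referential equation
\[
\Omega_{\rm WO}(X,Y,\prec):=\Omega^{\circ}\big(Z_{X,Y,\prec}\big),\qquad Z_{X,Y,\prec}:=\{\, x\in Y:\Omega_{\rm WO}(X,Y_{\prec x},\prec)\notin Y_{\prec x}\,\},
\]
where $\Omega^{\circ}$ denotes the operation that, applied to a set $W$ with at most one element, returns the unique element of $W$ if $W\neq\emptyset$ and returns $0^{\omega}$ otherwise. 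The operation $\Omega^{\circ}$ is computable in $\Omega$ and $\exists^{2}$: as noted after Definition \ref{JDR0}, $\Omega$ decides whether such a $W$ is empty by testing whether some entry of the finite sequence $\Omega(W)$ lies in $W$, and if so $\exists^{2}$ reads that entry off. Since the body of the equation invokes only $\Omega$, $\exists^{2}$, the constant $\case$, arithmetical operations, and the variable standing for $\Omega_{\rm WO}$, the least fixed point exists in our model, and $\Omega_{\rm WO}$ is by construction computable in $\Omega$ modulo $\exists^{2}$.

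Two points need care when setting this up. First, the characteristic functions of $Y_{\prec x}$ and of $Z_{X,Y,\prec}$ must be \emph{total}; this is arranged using the laziness of $\case$ recorded in Definition \ref{definition8}, evaluating the clause ``$x\in Y$'' first so that the possibly-undefined recursive call $\Omega_{\rm WO}(X,Y_{\prec x},\prec)$ is only triggered when $x\in Y$. Second, whenever $\prec$ well-orders $X$ and $Y\subseteq X$, the same is true of $Y_{\prec x}\subseteq X$, so the recursive calls are on legitimate inputs.

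For correctness, I would prove by transfinite induction on the order type of $(Y,\prec)$ that, for all $X,Y,\prec$ with $\prec$ a well-ordering of $X$ and $Y\subseteq X$, the value $\Omega_{\rm WO}(X,Y,\prec)$ is defined, equals $0^{\omega}$ when $Y=\emptyset$, and equals the $\prec$-least element of $Y$ when $Y\neq\emptyset$; this is the analogue of the ``induction on the cardinality of $X$'' used for $\Omega^{\ast}$, and it simultaneously shows that the ordinal rank of the relevant least-fixed-point computation is bounded by the order type of $(Y,\prec)$, so that the fixed point is indeed total on these inputs. In the inductive step with $Y\neq\emptyset$, let $m$ be the $\prec$-least element of $Y$. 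If $x=m$ then $Y_{\prec x}=\emptyset$, so by the base case $\Omega_{\rm WO}(X,Y_{\prec x},\prec)=0^{\omega}\notin\emptyset=Y_{\prec x}$ and hence $m\in Z_{X,Y,\prec}$; if $x\neq m$ then $m\in Y_{\prec x}$, so $Y_{\prec x}\neq\emptyset$ and, by the induction hypothesis applied to the proper initial segment $Y_{\prec x}$, the value $\Omega_{\rm WO}(X,Y_{\prec x},\prec)$ is the $\prec$-least element of $Y_{\prec x}$, which is $m\in Y_{\prec x}$, so $x\notin Z_{X,Y,\prec}$. Hence $Z_{X,Y,\prec}=\{m\}$ and $\Omega_{\rm WO}(X,Y,\prec)=\Omega^{\circ}(\{m\})=m$.

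Finally, to place $\Omega_{\rm WO}$ in the $\Omega$-cluster it remains to compute $\Omega$ from $\Omega_{\rm WO}$ modulo $\exists^{2}$. For this, observe that every set of size at most one is trivially well-ordered by the empty binary relation, so for $|X|\leq 1$ we may set $\Omega(X):=\langle\,\Omega_{\rm WO}(X,X,\emptyset)\,\rangle$: if $X=\{x\}$ this is $\langle x\rangle$ and if $X=\emptyset$ this is $\langle 0^{\omega}\rangle$, which vacuously contains all elements of $X$, so in either case the output meets the specification of $\Omega=\Omega_{\leq 1}$ from Definition \ref{JDR0}. I expect the main obstacle to be the bookkeeping around the $(\mu x t)$-interpretation of Definition \ref{intdef}: verifying that the least fixed point is defined, and equal to the intended ``select the $\prec$-least element'' functional, on every legitimate triple $(X,Y,\prec)$ — which is precisely what the transfinite induction on $(Y,\prec)$, together with the totality discussion for $Z_{X,Y,\prec}$, is designed to handle. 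The order-theoretic content (selecting least elements) is entirely routine.
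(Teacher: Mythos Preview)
Your proof is correct and follows essentially the same self-referential construction as the paper: define $Z$ as the set of $x\in Y$ whose proper $\prec$-initial segment has no element selected by the recursive call, then apply $\Omega$ to the singleton $Z$. Your treatment is in fact more thorough than the paper's terse version: you use the condition $\Omega_{\rm WO}(X,Y_{\prec x},\prec)\notin Y_{\prec x}$ rather than the paper's $\notin Y$ (which lets you avoid the paper's WLOG assumption that $\Omega(\emptyset)\notin Y$), you explicitly address totality of the characteristic function of $Z$ via the laziness of $\case$, and you supply the easy reverse direction showing $\Omega$ is computable from $\Omega_{\rm WO}$, which the paper leaves implicit.
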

\begin{proof} Without loss of generality we may assume that $\Omega(\emptyset) \not \in Y$. We then define $\Omega_{\rm WO}$ via the following self-referential program:
\begin{center}
for $X$, $Y,\prec$, we define the set $Z$ by $y \in Z$ if $y \in Y$ and $\Omega_{\rm WO}(X, \{z \in Y : z \prec x\},\prec) \not \in Y$ and put $\Omega_{\rm WO}(X,Y,\prec) = \Omega(Z)$. 
\end{center}
By induction on the ordinal rank of $Y$ ordered by $\prec$, we observe that this definition makes sense.  The proof is now done. 
\end{proof}
%In conclusion, we have obtained some initial results for the $\Omega$-cluster.  
Finally, the following lemma is immediate from Lemma \ref{borkim2}. 
\begin{lemma}\label{borkim}
The functional $\Omega_{\rm b}$ is in the $\Omega$-cluster. Moreover, no functional in the $\Omega$-cluster is countably based.
\end{lemma}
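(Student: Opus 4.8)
The plan is to derive both assertions directly from Lemma \ref{borkim2}. First I would note that Lemma \ref{borkim2} already states that $\Omega_{\rm b}$ is computationally equivalent to $\Omega$; by the definition of the $\Omega$-cluster (partial functionals computationally equivalent to $\Omega$ modulo $\exists^2$), this is exactly the statement that $\Omega_{\rm b}$ lies in the $\Omega$-cluster, so the first sentence is immediate. One small point to record is that the equivalence proved in Lemma \ref{borkim2} goes through in the presence of $\exists^2$ (indeed it uses only interval-halving and the characteristic-function encoding), so it is an equivalence "modulo $\exists^2$" in the sense of the cluster definition.

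For the second assertion, suppose toward a contradiction that some $\Psi$ in the $\Omega$-cluster is countably based. Since $\Psi$ is in the $\Omega$-cluster, $\Omega$ is computable in $\Psi$ together with $\exists^2$; and by Lemma \ref{borkim2}, $\Omega$ computes $\Omega_{\rm b}$. Hence $\Omega_{\rm b}$ is computable in $\Psi$ and $\exists^2$. Now $\exists^2$ has type rank $2$, so it is countably based (this is the classical fact, recalled after Definition \ref{28}, that every functional of type in $\Ty(2)$ is countably based), and $\Psi$ is countably based by assumption. By Theorem \ref{denkors}, any functional computable in countably based partial functionals of types in $\Ty(3)$ is itself countably based; therefore $\Omega_{\rm b}$ would be countably based. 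But Lemma \ref{borkim2} explicitly states that $\Omega_{\rm b}$ is not countably based (because there is no countable support for the value $\Omega_{\rm b}(\emptyset)$). This contradiction shows no functional in the $\Omega$-cluster is countably based.

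The only mild subtlety — and the closest thing to an obstacle — is bookkeeping about types: Theorem \ref{denkors} is stated for functionals of types in $\Ty(3)$, so one should check that $\Psi$, $\exists^2$, and $\Omega_{\rm b}$ all have types of rank $\leq 3$ (they do: $\exists^2$ has rank $2$, while $\Omega_{\rm b}$ and any structure functional like $\Psi$ acting on subsets of $\N^\N$ have rank $3$), and that "computable in" in the cluster definition is the same notion (Definition \ref{def.comp}) used in Theorem \ref{denkors}. Once this is in place, the argument is just a two-step chain: cluster membership gives computability of $\Omega_{\rm b}$ from $\Psi+\exists^2$, and closure of countably based functionals under computability (Theorem \ref{denkors}) then clashes with the non-countably-based-ness of $\Omega_{\rm b}$ from Lemma \ref{borkim2}.
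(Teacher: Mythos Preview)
Your proposal is correct and follows the same route as the paper, which simply says the lemma is immediate from Lemma~\ref{borkim2}. You have merely made explicit the one step the paper leaves implicit: to pass from ``$\Omega_{\rm b}$ is not countably based'' to ``no member of the $\Omega$-cluster is countably based'' one needs the closure of (partial) countably based functionals under computability, i.e.\ Theorem~\ref{denkors}, applied to the computation of $\Omega_{\rm b}$ from $\Psi+\exists^2$.
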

We also note that Theorem \ref{weakk} establishes that functionals from the $\Omega$-cluster are weak in combination with $\exists^{2}$.

\subsection{Functionals related to bounded variation}\label{sams}
In this section, we identify a number of functionals in the $\Omega$ and $\Omega_{1}$-cluster based on basic properties of functions of \emph{bounded variation} introduced in Section \ref{defki}.  
We obtain similar results for the larger class of \emph{regulated} functions.  
We first introduce some required definitions (Section \ref{defzef}) and establish the associated equivalences (Section \ref{seccer}).

\subsubsection{Definitions}\label{defzef}
First of all, we introduce a notion of `realiser' for the Jordan decomposition theorem (Definition \ref{JDR}), as well as other functionals witnessing basic properties of functions of bounded variation (Definition \ref{JDR2}).  
Similar constructs exist in the literature: the proof of \cite{kreupel}*{Prop.\ 17} essentially shows that computing the total variation of a \emph{coded} $BV$-function amounts to computing the Turing jump.  
\begin{definition}[Functionals witnessing the Jordan decomposition theorem]\label{JDR}\rm~
\begin{itemize}
\item A \emph{Jordan realiser} is a partial functional $\J^{3}$ taking as input a function $f:[0,1] \rightarrow \R$ of bounded variation (item \eqref{donp} in Definition \ref{varvar}), and providing a pair $(g,h)$ of increasing functions $g$ and $h$ such that $f = g-h$ on $[0,1]$.
\item An \emph{intermediate Jordan realiser} is a partial functional $\J_{\ii}^{3}$ taking as inputs $f:[0,1] \rightarrow \R$ of bounded variation and an upper bound $k_{0}$ for the total variation (item \eqref{donp} in Definition \ref{varvar}), and providing a pair $(g,h)$ of increasing functions $g$ and $h$ such that $f = g-h$ on $[0,1]$.
\item A \emph{weak Jordan realiser} is a partial functional $\J_{\w}^{3}$ taking as inputs $f:[0,1] \rightarrow \R$ and its variation $V_{0}^{1}(f)$ (item \eqref{donp2} in Definition~\ref{varvar}), and providing a pair $(g,h)$ of increasing functions $g$ and $h$ such that $f = g-h$ on $[0,1]$.
\end{itemize}
\end{definition}
A weak Jordan realiser cannot compute a Jordan realiser in general; this remains true if we combine the former with an arbitrary type 2 functional (Corollary~\ref{frlom}).  

\smallskip

Secondly, we have shown in \cite{dagsamXII} that Jordan realisers are computationally equivalent to the following functionals. 
\begin{definition}[Functionals related to bounded variation]\label{JDR2}\rm~
\begin{itemize}
\item A \emph{$\SUP$-realiser} is a partial functional $\mathcal{S}^{3}$ taking as input a function $f:[0,1] \rightarrow \R$ which has bounded variation (item \eqref{donp} in Definition \ref{varvar}), and providing the supremum $\SUP_{x\in [0,1]}f(x)$. 
\item A \emph{continuity-realiser} is a partial functional $\mathcal{L}^{3}$ taking as input a function $f:[0,1] \rightarrow \R$ of bounded variation (item \eqref{donp} in Definition~\ref{varvar}), and providing an enumeration $(x_{n})_{n\in \N}$ of all points of discontinuity of $f$ on $[0,1]$. 
 \end{itemize}
\end{definition}
We note that continuity realisers are slightly different from e.g.\ Jordan realisers: while the latter always produce an output for \emph{any} $BV$-function, a continuity realiser has nothing to output in case of a \emph{continuous} input function (in $BV$).  
In this case, the enumeration of the empty set is of course the output (see Notation \ref{defornota}). 

\smallskip

Fourth, regulated functions yield various interesting functionals, as follows.
\bdefi[Functionals related to regularity]
\begin{itemize}
\item A \emph{Banach realiser} is a partial functional taking as input a regulated $f:[0,1] \rightarrow \R$ and providing the Banach indicatrix $N(f)$ as in \eqref{indix} as output. 
\item A \emph{Sierpi\'{n}ski realiser} is a partial functional $I^{3}$ which on input a regulated $f:[0,1]\di \R$ produces $I(f):=(g, h)$ such that $f=g\circ h$ with $g$ continuous and $h$ strictly increasing on their respective (interval) domains. 
\item A \emph{Baire-1-realiser} is a partial functional taking as input regulated $f:[0,1] \rightarrow \R$ and providing a sequence $(f_{n})_{n\in \N}$ of continuous $[0,1]\di \R$-functions that converges to $f$ on $[0,1]$. 
\end{itemize}
\edefi
\noindent
Recall that regulated functions are in the Baire class $1$, explaining the final item.

\smallskip

Finally, we shall need the following lemmas. 
The use of $\exists^{2}$ is perhaps superfluous in light of the constructive proof in \cite{tognog}, but the latter seems to make essential use of the Axiom of (countable) Choice.   
\begin{lemma}\label{korf} 
There is a functional $\D$, computable in $\exists^2$, such that if $f:[0,1] \rightarrow \R$ is monotone, then $\D(f)$ enumerates all points of discontinuity of $f$ on $[0,1]$. 
\end{lemma}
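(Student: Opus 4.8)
The plan is to recover the discontinuities of a monotone $f$ by \emph{localising} its jumps, using throughout that $\exists^2$ decides every arithmetical statement about (the type-$1$ data coding) $f$ and computes suprema and infima of bounded sequences of reals. First I would reduce to $f$ non-decreasing (apply the argument to $-f$ otherwise; which case holds is readable from $\exists^2$). For any real $x$ the one-sided limits $f(x-):=\sup\{f(p):p\in\Q\cap[0,x)\}$ and $f(x+):=\inf\{f(p):p\in\Q\cap(x,1]\}$ exist by monotonicity and are computable from $f$ and $\exists^2$; set $f(0-):=f(0)$ and $f(1+):=f(1)$. Then $x$ is a point of discontinuity exactly when the \emph{jump} $j(x):=f(x+)-f(x-)$ is positive, and the telescoping chain $f(0)\le f(x_1-)\le f(x_1+)\le\cdots\le f(x_k+)\le f(1)$ for discontinuities $x_1<\cdots<x_k$ gives $\sum_x j(x)\le f(1)-f(0)$; hence for each $n$ at most $K_n$ points have $j(x)\ge 1/n$, where $K_n\in\N$ is any integer above $n(f(1)-f(0))$, itself computable from $f$ and $\exists^2$.

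The crux is a localisation lemma: for rationals $a<b$ and $N\in\N$, the statement ``$(a,b)$ contains a point $x$ with $j(x)\ge 1/N$'' is arithmetical in $f$, hence decidable from $\exists^2$. I would prove it equivalent to the disjunction of (i) ``some rational $r\in(a,b)$ has $f(r+)-f(r-)\ge 1/N$'' and (ii) ``there are rationals $a<q<q'<b$ such that every finite rational partition $q=r_0<\cdots<r_L=q'$ has some increment $f(r_i)-f(r_{i-1})\ge 1/N$''. One direction is easy: a discontinuity $x\in(a,b)$ that is rational gives (i), and an irrational one gives (ii) because, being strictly interior to every rational cell of every partition of a suitable $[q,q']\ni x$, it forces that cell's increment to be $\ge j(x)$. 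The converse direction for (ii) is the only real work: if $[q,q']$ contained no point with $j\ge 1/N$, then since only finitely many jumps exceed any fixed positive threshold and the continuous part of $f$ is uniformly continuous on $[0,1]$, one could build a rational partition of $[q,q']$ with all increments $<1/N$, a contradiction; the split between clause (i) and clause (ii) is exactly what is needed, because a partition point may sit on a discontinuity and split its jump. Both clauses are arithmetical in $f$, using that $f(r\pm)$ is computable from $\exists^2$ and that ``$f(r_i)-f(r_{i-1})\ge 1/N$'' is $\Pi^0_1$.

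With the lemma available I would, for each $n$, effectively list the at most $K_n$ points with $j(x)\ge 1/n$: the predicate ``$[a,b]$ contains a point with $j\ge 1/n$'' (decidable from $\exists^2$ via the lemma applied to $(a,b)$ together with direct checks of the jumps at the rational endpoints $a,b$) is monotone under shrinking the interval, so a left-to-right sweep combined with repeated bisection of $[0,1]$ --- keeping a half active when it tests positive, and recording a midpoint outright when its own jump is $\ge 1/n$ --- isolates each of the at most $K_n$ relevant points inside arbitrarily short dyadic intervals, hence yields them as fast-converging Cauchy sequences; this is routine if slightly tedious bookkeeping. Concatenating these finite lists over all $n$, and adjoining $0$ and/or $1$ according to the values $f(0+)-f(0)$ and $f(1)-f(1-)$, produces one sequence whose range is exactly the discontinuity set of $f$ (every discontinuity has $j\ge 1/n$ for some $n$ and so appears; every listed point genuinely has positive jump, so nothing spurious appears), returning the null sequence when $f$ is continuous (see Notation~\ref{defornota}). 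Everything is uniform in $f$ and uses nothing beyond $\exists^2$, since each search, comparison and limit above is arithmetical in $f$.

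The main obstacle is the localisation lemma --- in particular the compactness argument for its converse direction and the need to treat discontinuities landing on rational partition points separately via clause (i). Granting that $\exists^2$ decides arithmetical predicates of $f$ and computes suprema and infima of bounded real sequences, the remaining ingredients --- the jump bound, the bisection search, and the final assembly of the enumeration --- are routine.
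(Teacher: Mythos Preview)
Your argument is correct. The localisation lemma holds as stated; for the converse of clause~(ii), your sketch can be completed either via the decomposition you indicate (choose $\epsilon$ so small that the jumps below $\epsilon$ sum to less than $1/(4N)$, isolate the finitely many jumps of size at least $\epsilon$ in short rational intervals, and partition the complement with mesh governed by the uniform continuity of the continuous part) or, more directly, by a finite-subcover argument: for each $x\in[q,q']$ choose rationals $p<x<p'$ with $f(p')-f(p)<1/N$, extract a finite subcover of $[q,q']$, and read off a rational partition from the endpoints. The bisection and bookkeeping are, as you say, routine.

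The paper does not give a proof here but simply cites an earlier lemma, so there is no detailed argument to compare against. That said, a considerably shorter route bypasses your localisation lemma entirely: for each rational $r>f(0)$ put $a_r:=\sup\{q\in\Q\cap[0,1]:f(q)<r\}$, which is computable from $\exists^2$, and output $a_r$ whenever $f(a_r+)>f(a_r-)$. Every discontinuity $x$ satisfies $x=a_r$ for any rational $r\in(f(x-),f(x+))$, so a single pass over $\Q$ already enumerates all discontinuities. Your two-level scheme (threshold $n$, then bisection) has the virtue of making the finiteness at each jump threshold explicit, but it is heavier than what the bare lemma requires.
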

\begin{proof}
Immediate from \cite{dagsamXII}*{Lemma 7}.
\end{proof}
A set $C$ as in the following lemma is also called `RM-closed' as it is given by the coding of closed sets used in reverse mathematics (see \cite{simpson2}*{I-II}).
\begin{lemma}\label{korf2} 
There is a functional $\mathcal{E}$, computable in $\exists^2$, such that for any sequences $(a_{n})_{n\in \N}$, $(b_{n})_{n\in \N}$, if the closed set $C=[0,1]\setminus \cup_{n\in \N}(a_{n}, b_{n}) $ is countable, then $\mathcal{E}(\lambda n.(a_{n}, b_{n}))$ enumerates the points in $C$.
\end{lemma}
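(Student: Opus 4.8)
The plan is to run the Cantor--Bendixson analysis of $C$, iterating the ``derived set'' operation transfinitely, and to observe that each step is computable in $\exists^{2}$ while the transfinite iteration as a whole stays within $\exists^{2}$-computability via the $(\mu x t)$-facility of Section~\ref{kleedef} (equivalently, Kleene's recursion theorem, by the Equivalence Theorem of Section~\ref{sec3}). Throughout, $U:=\lambda n.(a_{n},b_{n})$ is viewed as a single type-$1$ oracle, and every closed set that occurs is presented, like $C$, as $[0,1]$ minus an enumerated union of open intervals.

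First I would record the ``one-step'' facts, all of which follow from the fact that $\exists^{2}$ provides arbitrary finite jumps relative to the given data. For a closed $K\subseteq[0,1]$ presented in this way: $\exists^{2}$ decides ``$x\in K$''; it decides, for rationals $p<q$, whether $K\cap[p,q]$ is finite --- by compactness this is the same as deciding whether $K\cap[p,q]$ has a limit point --- and when finite it computes the elements; hence $\exists^{2}$ enumerates the isolated points of $K$ by running through rational pairs $(p,q)$ and outputting the unique element whenever $K\cap(p,q)$ is a singleton. Finally, writing $K'$ for the derived set, one has the open-set identity $[0,1]\setminus K'=\big([0,1]\setminus K\big)\cup\{\text{isolated points of }K\}$, and for a rational $q$ and a real $x<q$ the condition ``$(x,q]\cap K'=\emptyset$'' is equivalent to ``$K\cap[x+\tfrac1k,q]$ is finite for all $k$''; thus $\exists^{2}$ converts a presentation of $K$ into a presentation of $K'$ by locating, for each rational $q\notin K'$, the endpoints of its component of $[0,1]\setminus K'$.

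Next I would iterate transfinitely. Using the $(\mu x t)$-facility one defines a functional that, uniformly in $U$, $\exists^{2}$ and an ordinal notation $a$, outputs a presentation of the $|a|$-th Cantor--Bendixson derivative $C^{(|a|)}$: successor notations invoke the derived-set step above, while limit notations are handled by presenting the intersection of the earlier stages (merge their interval-enumerations). This is a relativized hyperarithmetic recursion, hence within $\exists^{2}$-computability relative to $U$. Then let $\mathcal{E}(U)$ be the sequence whose value at $\langle a,k\rangle$ is the $k$-th isolated point of $C^{(|a|)}$ when $a$ is a notation and such a point exists (and some fixed default otherwise); this $\mathcal{E}$ is computable in $\exists^{2}$. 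Note that a procedure using $\exists^{2}$ alone, without the recursion, would recover only the isolated points of $C$ itself, which is why the transfinite iteration is essential.

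For correctness, suppose $C$ is countable, hence scattered; then the Cantor--Bendixson derivatives strictly decrease until some stage is empty, the least such ordinal $\lambda$ being the Cantor--Bendixson rank of $C$, and since $C$ is effectively closed in $U$ and countable we have $\lambda<\omega_{1}^{\textsf{CK}}$ relative to $U$. Every $c\in C$ is isolated in exactly one derivative $C^{(\alpha)}$ with $\alpha<\lambda$, and such an $\alpha$ possesses an ordinal notation recursive in $U$; hence $c$ appears as $\mathcal{E}(U)(\langle a,k\rangle)$ for a suitable notation $a$ of $\alpha$ and suitable $k$, so $\mathcal{E}(U)$ enumerates all points of $C$ (it is harmless if $\mathcal{E}(U)$ is only partial, taking $\bot$ on pairs whose first coordinate is not a notation, since all we need is that $C$ lies among its values). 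The hard part is the third step: making rigorous the claim that the transfinite Cantor--Bendixson recursion can be carried out ``computably in $\exists^{2}$''. This rests on the (standard but non-elementary) fact that S1--S9-computability in $\exists^{2}$ --- equivalently, our $\lambda$-calculus with least fixed points --- captures exactly the hyperarithmetic functions and is closed under transfinite recursion along notations below $\omega_{1}^{\textsf{CK}}$; the effective-topology bookkeeping of the second step, passing uniformly between a closed set and its derived set, is routine but needs care.
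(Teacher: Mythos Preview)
Your approach is correct but genuinely different from the paper's. The paper gives a three-sentence argument: by Harrison's theorem, if $C$ is countable then every element of $C$ is hyperarithmetical in the parameters $(a_n),(b_n)$; then ``Gandy selection'' yields an enumeration computable in $\exists^2$ and the parameters. The paper explicitly notes that closedness of $C$ is not used---only that $C$ is countable and hyperarithmetical in the parameters. Your Cantor--Bendixson construction, by contrast, \emph{does} use closedness in an essential way (the derived-set operation and the compactness arguments in your ``one-step'' facts), so the paper's argument is strictly more general in scope while yours is more explicit and constructive. Both rest on the same effective descriptive-set-theoretic fact (that countable $\Pi^0_1$ classes have recursive Cantor--Bendixson rank, equivalently that their members are hyperarithmetic), but the paper invokes it as a black box whereas you unwind it.

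One point deserves more care than you give it. You acknowledge that $\mathcal{E}(U)$ may be undefined on pairs $\langle a,k\rangle$ with $a\notin\mathcal{O}^U$, and you say this is ``harmless.'' In the paper's partial-object framework (Section~\ref{kleedef}) this is indeed acceptable---$\mathcal{E}(U)\in\Pa(0\to 0)$ may take value $\bot$ on some inputs---but you should check that the downstream use of the lemma (in the proof of the second cluster theorem, where one enumerates each $C_n$ and concatenates) tolerates partial enumerations. It does, since one only needs that every element of $C$ appears among the defined values and membership in $C$ is decidable by $\exists^2$; but this is worth stating rather than hand-waving, especially since Notation~\ref{defornota} speaks of total sequences.
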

\begin{proof}
%\marginpar{\footnotesize{Added a proof. Need to find the correct reference to Joseph Harrison and his result.}}
By \cite{harry}*{Theorem 2.12}, if the set $C$ is countable, then, all elements in $C$ are hyperarithmetical in  $(a_{n})_{n\in \N}$ and $(b_{n})_{n\in \N}$. 
Using Gandy selection, we can then find an enumeration of $C$ computable in $\exists^2$ and the parameters.  
We do not use that $C$ is closed, only that it is countable and hyperarithmetical in the parameters.
\end{proof}
Finally, we note that Dirichlet shows in \cite{didi1} that the Fourier series of certain \textbf{piecewise} continuous $f:[0,1]\di \R$ converge to $\frac{f(x+)+f(x-)}{2}$.  
As mentioned above, Jordan generalises this result to $BV$-functions in \cite{jordel}.  As expected in light of $\Omega_{\fin}$, many operations on piecewise continuous functions, like finding a maximum of supremum, are
part of the $\Omega$-cluster.  

\subsubsection{Computational equivalences}\label{seccer}
%We introduce functionals witnessing basic properties of functions of bounded variation, including the Jordan decomposition theorem (Theorem \ref{drd}).
%We show that many of these are computationally equivalent given $\exists^2$.  
%
We identify a large number of inhabitants of the $\Omega$ and $\Omega_{1}$-clusters, including the functionals from Section \ref{defzef}.

\smallskip

In particular, we prove the following theorem where we note that we can define `intermediate' versions of all functionals pertaining to $BV$-functions, not just Jordan realisers. 
Similarly, `$BV$' can often be replaced by the weaker property `regulated', as is clear from items \eqref{contje} and \eqref{contje2} and Remark \ref{essenti}.  
As explored in Section \ref{flukkkk}, we may also often replace `$BV$' by the smaller Sobolev space $W^{1,1}$ or the pseudo-monotone functions.  
\begin{thm}[Second cluster theorem]\label{thm.surprise}
Assuming $\exists^{2}$, the following are computationally equivalent:
\begin{enumerate} 
\renewcommand{\theenumi}{\roman{enumi}}
\item a Jordan realiser, \label{jordje}
\item a $\SUP$-realiser,  \label{supje}
\item a continuity realiser,\label{contje}
\item an intermediate Jordan realiser, \label{jordje2}
\item an enumeration functional,\label{enu}
\item the functional $\Omega$,\label{fije}
\item a \textbf{quasi-}continuity realiser, i.e.\ a continuity realiser that only lists points of \emph{non-quasi-continuity} \(see Definition \ref{flung}\),\label{qcontje}
\item a \textbf{lower-semi-}continuity realiser, i.e.\ a continuity realiser that only lists points of \emph{non-lower-semi-continuity} \(see Definition \ref{flung}\),\label{qcontje2}
\item a functional $V^{3}$ such that $V(f, c, d)=V_{c}^{d}(f)$ for $[c, d]\subset [0,1]$ and $f:[0,1]\di \R$ of bounded variation \(item \eqref{donp} in Definition~\ref{varvar}\).\label{waard}
\item a functional $W^{3}$ such that $V_{c}^{d}(f)\leq W(f, c, d)$ for any $[c, d]\subset [0,1]$ and $f:[0,1]\di \R$ of bounded variation \(item \eqref{donp} in Definition~\ref{varvar}\).\label{waard2}
\item a functional $F^{3}$ such that $F(f)=(a_{n, m}, b_{n, m})_{n,m\in \N}$ for $f:[0,1]\di \R$ of bounded variation \(item \eqref{donp} in Definition~\ref{varvar}\) and set of discontinuities given by the $\textbf{\textup{F}}_{\sigma}$-set $\cup_{n\in \N} ( [0,1]\setminus \cup_{m\in \N} (a_{n,m}, b_{n,m}))$,\label{foefje}
\item the distance functional $d:(\R\times (\R\di \R)\times(\R\di \R))\di \R$, such that $d(x, A, Y)=\sup_{a\in A} |x-a|$, for $A\subset [0,1]$ and $Y$ injective on $A$, \label{katono}
\item a functional taking inputs $A\subset [0,1]$ and $Y:[0,1]\di \N$ injective on $A$ and outputting increasing $f:[0,1]\di \R$ discontinuous exactly at each $a\in A$.\label{rohim}
\item a continuity realiser for \emph{regulated} functions,\label{contje2}
\item a $\SUP$-realiser for \emph{regulated} functions,  \label{supje2}
\item a functional that on input any regulated $f:[0,1]\di \R$ outputs the pair $(\underline{f}, \overline{f})$ as in \eqref{confaged}, in case these envelopes are different, and $(f)$ otherwise.\label{envy}
\item a Sierpi\'nski realiser,\label{sierje2}
\item a Banach realiser, \label{baje2}
\item a Baire-$1$-realiser.  \label{B1}
\item a well-order realiser $\mathcal{Q}^{3}$ taking as input $B\subset A\subset [0,1]$, $Y:[0,1]\di \N$ injective on $A$, and $\preceq$ a well-ordering of $A$, and where $\mathcal{Q}(A, B, \preceq, Y)$ is the least element in $B$ relative to $F$, \label{QC}
\item a functional taking as input a regulated and upper semi-continuous $f:[0,1]\di \R$ and outputting $x\in [0,1]$ with $(\forall y\in [0,1])(|f(y)|\leq |f(x)|)$,\label{onetoomany}
\item \(Bolzano-Weierstrass\) a functional taking as input $A\subset \R$ without limit points and outputting $0$ \(resp.\ 1\) in case $A$ is finite \(resp.\ unbounded\).\label{twotoomany}%NEW
%\item NEW item based on \cite{piri}% NEW item based on paper `The Derivative of a Monotonic Discontinuous Function'
%\item NEW item based on strict max % NEW item as in WoLLiC22 paper.  
\end{enumerate}
\end{thm}
\begin{proof}
We establish the equivalences in the theorem following increasing item numbers.  
The equivalence between items \eqref{jordje}-\eqref{contje} may be found in \cite{dagsamXII}*{Theorem 3.4}.  
By the proof of \cite{dagsamXII}*{Theorem 3.9}, an intermediate Jordan realiser  computes an enumeration functional.   
Theorem \ref{fct} now yields $\Omega$, i.e.\ we have $\eqref{contje}\di \eqref{jordje2}\di \eqref{enu}\di \eqref{fije}$.
For \eqref{fije} $\di $ \eqref{contje}, Theorem \ref{fct} provides $\Omega_{\fin}$ and let $f:[0,1]\di \R$ be of bounded variation. 
If $x\in [0,1]$ is a point of discontinuity of $f$, the value $f(x)$ contributes a non-trivial amount to the variation, and we can measure `how much' by considering how $f(x)$ relates to the left and right limits $f(x-)$ and $f(x+)$.
This can be done in a computable way, in terms of $x, f$ and $ \exists^2$.
For each $k\in \N$, we may define $X_{k}$, the sets of points of discontinuity that provide a value larger than $\frac{1}{2^{k}}$ to the variation.  
Using $\Omega_{\fin}$, we can of course enumerate $X_{k}$, and taking the union, we may enumerate all points of discontinuity of $f$, i.e.\ a continuity realiser is obtained. 
Hence, we have already established the equivalences between items \eqref{jordje}-\eqref{fije}.

\smallskip

For \eqref{qcontje}$\di \eqref{fije}$, the indicator function $\mathbb{1}_{X}$ has bounded variation (with upper bound $|X|+1$) in case $X\subset [0,1]$ is finite. 
Any element of $X$ is a point where $\mathbb{1}_{X}$ is not quasi-continuous, i.e.\ $\Omega_{\fin}$ follows. 
For $\eqref{contje}\di \eqref{qcontje}$,  $f(x+)$ and $f(x-)$ are available at any $x\in (0,1)$.  Since $BV$-functions only have removable or jump continuities, we can use $\exists^{2}$ to check whether or not a
given point of discontinuity is also a point of non-quasi-continuity.  An analogous proof establishes the equivalence involving \eqref{qcontje2}. 
Hence, the equivalences between items \eqref{jordje}-\eqref{qcontje2} are ready.
%We can now `read off' the elements in $X$ from the Jordan decomposition $\J(\mathbb{1}_{X})=(g, h)$ using Lemma \ref{korf}.

\smallskip

For \eqref{waard} $\di $ \eqref{jordje}, $g(x):=\lambda x. V(f, 0, x)$ is well-defined and non-decreasing in case $f:[0,1]\di \R$ has bounded variation.   One proves that $h(x)=\lambda x. [V(f, 0, x)-f(x)]$ is monotone and then clearly $f=g-h$ on $[0,1]$. 
Hence, item \eqref{waard} yields a Jordan realiser.  If we have a Jordan realiser $\J$, then for $f:[0,1]\di \R$ of bounded variation with $\J(f)=(g, h)$ and $0\leq c< d\leq 1$, we have $V_{c}^{d}(f)=g(d)-g(c)+h(c)-h(d)$, using the usual `telescoping sum' trick.  A functional as in item \eqref{waard} is now immediate, i.e.\ $\eqref{jordje}\asa \eqref{waard}$ follows.    That $\eqref{waard}\di \eqref{waard2}$ is trivial, while $W$ as in the latter computes $\Omega_{\geq \#, \fin}$ and Theorem \ref{fct} yields $\Omega$.   
For the former claim, $\mathbb{1}_{X}$ has bounded variation for finite $X\subset [0,1]$ and $|X|=V_{0}^{1}(\mathbb{1}_{X})\leq W(\mathbb{1}_{X}, 0, 1)$.

\smallskip

For \eqref{contje} $\di $ \eqref{foefje}, a continuity realiser $\mathcal{L}$ outputs a sequence $\mathcal{L}(f)=(x_{n})_{n\in\N}$ which lists the points of discontinuity of $f:[0,1]\di \R$ of bounded variation.  
Each $\{x_{n}\}$ is trivially RM-closed as the complement of the open set $[ 0, x_{n})\cup (x_{n}, 1 ]$, readily yielding item \eqref{foefje}.  
To establish \eqref{foefje} $\di $ \eqref{contje} , we can use Lemma \ref{korf2}.  
Indeed, $C_{n}:= [0,1]\setminus \cup_{m\in \N} (a_{n,m}, b_{n,m})$ is an RM-closed and countable set if the double sequence is the output of item \eqref{foefje}.  
Hence, we may enumerate each $C_{n}$, and join all these together to obtain the output of a continuity realiser. 

\smallskip

Assuming item \eqref{katono}, note that for $A\subset [0,1]$ and $Y:[0,1]\di \N$ injective on $A$:
\be\label{basik}\textstyle
(\forall x\in (\frac12, 1])(x\not \in A)\asa (\forall q\in \Q\cap (\frac12, 1])( d(q, A, Y)> |q-\frac12|),
\ee
and similar for any interval replacing $(\frac12, 1]$.  Hence, the usual `interval-halving' technique finds the supremum of $A$ using $\exists^{2}$ and \eqref{basik}, yielding $\Omega_{\BW}$ and hence $\Omega$ by Theorem~\ref{fct}.  To obtain item \eqref{katono} from an enumeration functional, the latter converts the supremum in the former to one over $\N$.
The first twelve items of the theorems have now been shown to be equivalent.  

\smallskip

For \eqref{enu}$\di$\eqref{rohim}$\di$\eqref{contje}, use Lemma \ref{korf} for the second implication while the first implication follows from considering $f(x):= \sum_{x_{n}\leq x}\frac{1}{2^{n}}$ where $(x_{n})_{n\in \N}$ is any enumeration of the countable set $A\subset[0,1]$.

\smallskip

For item \eqref{contje2}, the latter implies \eqref{contje} as $BV$-functions are regulated.   For the equivalence, we obtain item \eqref{contje2} from $\Omega_{\fin}$.  To this end, define 
\be\label{cuntila}\textstyle
X_{k}:=\big\{x\in (0,1): |f(x+)- f(x)|>\frac1{2^{k}} \vee |f(x-)- f(x)|>\frac1{2^{k}}\big\} 
\ee
which collects the finitely many\footnote{Note that if some $X_{k}$ is not finite, it has a cluster point $x_{0}\in[0,1]$; however then either $f(x_{0}+)$ or $f(x_{0}-)$ does not exist, a contradiction.} points of discontinuity of the regulated function $f$ where the jump is in excess of $1/2^{k}$.  
The union over $n\in \N$ of all finite sequences $\Omega_{\fin}(D_{n})$ then enumerates the points of discontinuity of $f$, as required for item \eqref{contje2}. 
Then $\eqref{contje}\asa \eqref{contje2}\asa\eqref{supje}\asa \eqref{supje2}$ is immediate. 

\smallskip

For item \eqref{envy}, consider the definition of upper and lower envelope in \eqref{confaged} and note that the (inner) supremum and infimum over $\R$ can be replaced 
by a supremum and infimum over $\N$ (and $\Q$), if we have access to a sequence listing all points of discontinuity of a regulated function $f:[0,1]\di \R$.  
Moreover, continuity of $f$ at a point $x\in [0,1]$ implies that $\overline{f}(x)=\underline{f}(x)$. 
Hence, we can check whether $\underline{f}=\overline{f}$ using $\exists^{2}$, and \eqref{contje2}$\di $ \eqref{envy} follows. 
Now assume \eqref{envy} and consider finite $X\subset \N$.  Then for $f=\mathbb{1}_{X}$, we have that $\underline{f}=\overline{f}$ everywhere if and only if $X=\emptyset$.  
In case $X\ne \emptyset$, the usual interval-halving technique can locate a point therein, i.e.\ $\Omega$ follows.  

\smallskip

For \eqref{sierje2} $\di$ \eqref{contje2}, given a Sierpi\'nski realiser, the second component of the output $I(f)=(g,h)$ is (strictly) monotone and we can enumerate the points of discontinuity of $h$ by Lemma \ref{korf}.
Now exclude all points $x\in [0,1]$ from this sequence for which $f(x+)=f(x-)$, which can be done using $\exists^{2}$.   
In this way, we obtain a continuity realiser as in \eqref{contje2}. 
To derive a Sierpi\'nski realiser from a continuity realiser, i.e.\ \eqref{contje2} $\di $ \eqref{sierje2}, we fix regulated $f:[0,1]\di \R$ and consider the proof of \cite{voordedorst}*{Theorem 0.36, p.\ 28}, going back to \cite{voordesier}. 
This proof establishes the existence of $g, h$ such that $f=g\circ h$ with $g$ continuous and $h$ strictly increasing. 
Moreover, one finds an \emph{explicit construction} (modulo $\exists^{2}$) of the function $h$ required, assuming a sequence listing all points of discontinuity of $f$ on $[0,1]$.  
The function $g$ is then defined as $\lambda y.f(h^{-1}(y))$ where $h^{-1}$ is the inverse of $h$, definable using $\exists^{2}$. 
In this light, a continuity realiser plus $\exists^{2}$ yields a a Sierpi\'nski realiser. 

\smallskip

For \eqref{sierje2} $\di$ \eqref{baje2}, Banach's proof of \cite{banach1}*{Theorems 1 and 2} essentially establishes that $\exists^{2}$ computes a Banach realiser in case the function at hand is \emph{additionally} {continuous} on $[0,1]$.  
Banach's results from \cite{banach1} are also published in English in e.g.\ \cite{naatjenaaien}*{p.\ 225}.
For the general case, as discussed in \cite{voordedorst}*{p.~44}, the Banach indicatrix $N(f)$ equals $N(g)$ if $g$ is continuous and satisfies $f=g\circ h$ for some strictly increasing $h$.   
Hence, a Sierpi\'nski realiser readily yields a Banach realiser.  % and Theorem~\ref{thm.surprise} finishes this case. 

\smallskip

Next, we show how to compute an enumeration functional from a Banach realiser.  Hence, fix $A\subset [0,1]$ and let $Y:[0,1]\di \N$ be injective on $A$.  
Now define the following function using $\exists^{2}$:
\be\label{klamdake}
f_{q}(x):=
\begin{cases}
\frac{1}{2^{Y(x)+1}} & x\in A\wedge x>_{\R}q \\
0 &\textup{ otherwise }
\end{cases}.
\ee
%Now define $f:([0,1]\times \Q)\di \R$ as in \eqref{klamda}, b
The following equivalence is then readily proved, for any $n\in \N$ and $q\in\Q\cap [0,1]$:
\be\label{south}\textstyle
(\exists x\in A)(Y(x)=n \wedge x>_{\R}q)\asa [1=N(f_{q})(\frac{1}{2^{n+1}})].
\ee
Using \eqref{south} and $\exists^{2}$, we can decide if for $n\in \N$ there is (unique) $x_{0}\in A$ such that $Y(x)=n$.  
If such $x_{0}$ exists, we can successively approximate it using the usual interval-halving technique, using again $\exists^{2}$ and \eqref{south}.  
In this way, we can obtain a sequence $(x_{n})_{n\in \N}$ listing all elements of $A$, i.e.\ \eqref{baje2} $\di $ \eqref{enu} is complete. 
Hence, the first 18 items are equivalent. 

\smallskip

For \eqref{QC}, an enumeration functional can enumerate a countable $A\subset [0,1]$ given some $Y:[0,1]\di \N$ injective on $A$.  
Moreover, given an enumeration of $A$, an unbounded search readily yields the least element of any $B\subset A$, as required by a well-order realiser, i.e.\ \eqref{enu} $\di $ \eqref{QC} follows. 
Now assume a well-order realiser $\mathcal{Q}$ is given and fix a countable $A\subset [0,1]$ and $Y:[0,1]\di \N$ injective on $A$.  
Define a well-order on $A$ by $x\preceq y$ if and only if $Y(x)\leq Y(y)$.  
Define $B_{n}:=\{x\in A : Y(x)\geq n\}$, $x_{0}:=\mathcal{Q}(A, B_{0}, \preceq, Y)$, and $x_{m+1}:=\mathcal{Q}(A, B_{Y(x_{m})}, \preceq, Y)$.  Clearly, this sequence readily yields an enumeration of $A$, i.e.\ \eqref{QC} $\di$ \eqref{enu}.

\smallskip

Regarding item \eqref{B1}, $\exists^{2}$ readily computes the supremum (or maximum) of any continuous function on an interval by \cite{kohlenbach2}*{\S3}.  
Hence, the approximation provided by item \eqref{B1} readily yields the supremum required by item \eqref{supje2}.  Now item \eqref{contje}, provides the points of discontinuity of regulated functions.  
Using $\exists^{2}$, one then readily defines the sequence $(f_{n})_{n\in \N}$ required by item \eqref{B1}.  

\smallskip

Assuming item \eqref{onetoomany}, for finite $X\subset [0,1]$, the function$\mathbb{1}_{X}$ is regulated and upper semi-continuous.  
The maximum provided by item \eqref{onetoomany} allows us to decide if $X=\emptyset$ or not.  In the latter case, we also
find an element of $X$, yielding $\Omega$ as in item \eqref{fije}.  Assumie \eqref{contje2} and note that a maximum of $f$ -as required by \eqref{onetoomany}- 
is either a point of continuity of $f$ or in the sequence provided by \eqref{contje2}.  
In the former case, we may approximate it using rationals, i.e.\ we have $\eqref{contje2}\di \eqref{onetoomany}$, and we are done.

\smallskip

To obtain item \eqref{twotoomany} from $\Omega$, let $A\subset \R$ be a set without limit points.  
Then $A\cap [-n, n]$ is finite for any $n\in \N$, i.e.\ $\Omega_{\fin}$ (see Theorem \ref{fct}) can enumerate $A$, allowing us to decide whether this set is unbounded or finite.  
To show that item~\eqref{twotoomany} computes $\Omega_{\textsf{b}}$, let $X\subset[0,1]$ be finite.  Now define $Y\subset \R$ as $Y:=\{ y\in \R : (\exists n\in \N)(|y-n|\in X  )\}$.
In case $X$ is empty (resp.\ a singleton), the set $Y$ must be finite (resp.\ unbounded), i.e.\ $\Omega_{\textsf{b}}$ readily follows. 
\end{proof}
Regarding item \eqref{twotoomany}, Weierstrass formulates the `Bolzano-Weierstrass theorem' around 1860 in \cite{weihimself}*{p.~77} as follows, while Bolzano \cite{russke}*{p.~174} states the existence of suprema rather than just limit points.  
\begin{quote}
If a function has a definite property infinitely often within a finite domain, then there is a point such that in any neighbourhood of this point there are infinitely many points with the property.
\end{quote}
We note that item \eqref{twotoomany} witnesses the contraposition of Weierstrass' theorem.  

\smallskip

Next, the equivalences in the theorem are robust in the following sense: we could replace `$BV$-function' in Theorem \ref{thm.surprise} by `$BV$-function that comes with a second-order code as in \cite{kreupel}*{Def.\ 1}' and 
the proof would still go through.  In particular, it seems the \emph{presence} of codes (in the sense of \cite{kreupel}) for third-order $BV$-functions does not change the computational properties as listed in Theorem \ref{thm.surprise}.
This is no surprise as a code in the sense of \cite{kreupel} only describes an ($L_{1}$-)equivalence class of $BV$-functions, not an individual $BV$-function.

\smallskip

Next, in light of the equivalence between items \eqref{contje} and \eqref{contje2}, one can replace `$BV$' or `regulated' in these items by \textbf{any} intermediate class, discussed next.  
\begin{rem}[Between bounded variation and regulated]\label{essenti}\rm 
The following spaces are intermediate between $BV$ and regulated; all details may be found in \cite{voordedorst}.  

\smallskip

Wiener spaces from mathematical physics (\cite{wiener1}) are based on \emph{$p$-variation}, which amounts to replacing `$ |f(x_{i})-f(x_{i+1})|$' by `$ |f(x_{i})-f(x_{i+1})|^{p}$' in the definition of variation \eqref{tomb}. 
Young (\cite{youngboung}) generalises this to \emph{$\phi$-variation} which instead involves $\phi( |f(x_{i})-f(x_{i+1})|)$ for so-called Young functions $\phi$, yielding the Wiener-Young spaces.  
Perhaps a simpler construct is the Waterman variation (\cite{waterdragen}), which involves $ \lambda_{i}|f(x_{i})-f(x_{i+1})|$ and where $(\lambda_{n})_{n\in \N}$ is a Waterman sequence (of reals with nice properties); in contrast to $BV$, any continuous function is included in the Waterman space (\cite{voordedorst}*{Prop.\ 2.23}).  Combining ideas from the above, the \emph{Schramm variation} involves $\phi_{i}( |f(x_{i})-f(x_{i+1})|)$ for a sequence $(\phi_{n})_{n\in \N}$ of well-behaved `gauge' functions (\cite{schrammer}).  
As to generality, the union (resp.\ intersection) of all Schramm spaces yields the space of regulated (resp.\ $BV$) functions, while all other aforementioned spaces are Schramm spaces (\cite{voordedorst}*{Prop.\ 2.43 and 2.46}).
In contrast to $BV$ and the Jordan decomposition theorem, these generalised notions of variation have no known `nice' decomposition theorem.  The notion of \emph{Korenblum variation} (\cite{koren}) does have such a theorem (see \cite{voordedorst}*{Prop.\ 2.68}) and involves a distortion function acting on the \emph{partition}, not on the function values (see \cite{voordedorst}*{Def.\ 2.60}).  
%The same holds for Watermann spaces, Young spaces, and Schramm spaces (see in particular \cite{voordedorst}*{Prop.\ 2.43 and 2.46}).  The $\Omega$-cluster contains functionals defined using `regulated' replaced with any of those. 
\end{rem}
It is no exaggeration to say that there are \emph{many} natural spaces between the regulated and $BV$-functions, all of which yield natural functionals for the $\Omega$-cluster.  
A non-trivial example is any functional that on input a regulated $[0,1]\di \R$-function outputs the associated Waterman sequence and upper bound for the Waterman variation.  
By the proof of \cite{voordedorst}*{Prop.\ 2.24} (and associated lemmas), a Sierpi\'nski realiser readily computes such a functional.  

\smallskip

Finally, the $\Omega$-cluster is defined in terms of (an equivalent formulation of) S1-S9-computability.  The following results show that one can get by with a weaker notion of computability.   
In particular, the following theorem shows how to obtain an injection for the countable set of discontinuities of regulated functions. 
\begin{thm}
There is a term $t$ of G\"odel's $T$ such that for any regulated $f:[0,1]\di \R$, the mapping $\lambda x.t(x, f, \exists^{2}, \Omega_{\geq \#, \fin})$ is a $[0,1]\di \N$-function injective on 
\[
D_{f}:=\{ x\in [0,1]:f(x+)\ne f(x) \vee f(x)\ne f(x-) \}, 
\] 
which is the countable set of discontinuities of $f$.
\end{thm}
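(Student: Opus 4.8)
I want a single term $t$ of Gödel's $T$ (so: primitive-recursion only, no fixed-point operator) so that, given a regulated $f$, the function $\lambda x. t(x,f,\exists^2,\Omega_{\geq\#,\fin})$ is injective on the discontinuity set $D(f)=\{x\in[0,1]:f(x+)\neq f(x-)\}$. Note first that, as observed in the proof of Theorem \ref{thm.surprise} (see the passage establishing \eqref{contje} $\asa$ \eqref{contje2}), the one-sided limits $f(x+),f(x-)$ are computable in $x$, $f$, and $\exists^2$, so $\exists^2$ decides whether a given $x$ lies in $D(f)$, and for each $k$ the set
\[
D_k(f):=\Big\{x\in[0,1]: |f(x+)-f(x-)|>\tfrac{1}{2^{k}}\Big\}
\]
is finite (footnote in that proof: a cluster point would destroy a one-sided limit) and its membership is decidable from $f$ and $\exists^2$.

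**Plan.** The idea is to build the injection level by level over $k$. Put $N_k:=\Omega_{\geq\#,\fin}(D_k(f))$, an integer $\geq |D_k(f)|$, computable from $f$, $\exists^2$, $\Omega_{\geq\#,\fin}$. Given $x\in D(f)$, let $k(x)$ be the least $k$ with $x\in D_k(f)$; this is found by an unbounded search, which is fine — it terminates for $x\in D(f)$ and we do not care what happens off $D(f)$. Inside the finite set $D_{k(x)}(f)$, we must assign $x$ a distinct index in $\{0,\dots,N_{k(x)}-1\}$: enumerate the finitely many dyadic rationals of a fixed denominator, and for each such rational $q$ ask (using $\exists^2$) whether $D_{k(x)}(f)\cap[0,q]$ has at least one more element than $D_{k(x)}(f)\cap[0,q']$ for the previous rational $q'$ — this counts, in order, how many elements of $D_{k(x)}(f)$ lie to the left of $x$ (refining the denominator until $x$ is separated from the other $\leq N_{k(x)}$ points). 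Call that count $r(x)<N_{k(x)}$. Then set
\[
t(x,f,\exists^2,\Omega_{\geq\#,\fin}):=\langle k(x),\, r(x)\rangle
\]
using a standard pairing on $\N$. All of this — the finite searches bounded by $N_k$, the counting via rational subdivision, the one application of $\exists^2$ at each comparison, the unbounded (but primitive-recursive-in-oracles) search for $k(x)$ — is expressible by a term of Gödel's $T$ with $\exists^2$ and $\Omega_{\geq\#,\fin}$ as the only higher-type constants; no self-reference is needed, so we stay within $T$ rather than the full $\mu$-calculus.

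**Injectivity.** Suppose $x\neq y$ both lie in $D(f)$ and $t$ assigns them the same value. Then $k(x)=k(y)=:k$, so both lie in the finite set $D_k(f)$, and $r(x)=r(y)$. But $r(\cdot)$ is, by construction, the number of elements of $D_k(f)$ strictly below the argument (once the rational subdivision is fine enough to separate all $\leq N_k$ points of $D_k(f)$, which it eventually is since they are finitely many distinct reals), and this is a strictly increasing function of position within $D_k(f)$; hence $r(x)=r(y)$ forces $x=y$, a contradiction. The only subtlety is that $r$ is well-defined: we must subdivide finely enough, and "finely enough" can be detected — keep halving the denominator until every occupied subinterval contains exactly one point of $D_k(f)$, which $\exists^2$ can check since $D_k(f)$ has $\leq N_k$ points and membership is decidable; this loop terminates.

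**Main obstacle.** The one genuinely delicate point is that $\Omega_{\geq\#,\fin}(D_k(f))$ gives only an \emph{upper} bound $N_k$ on $|D_k(f)|$, not the exact cardinality, and we are forbidden from using $\Omega$ itself (which would let us actually list $D_k(f)$) — we must live with a $T$-term, not an S1-S9 term. But the construction above never needs the exact count: $N_k$ serves only as a known termination bound for the finite searches and for the "are we fine enough yet" subdivision loop, and any over-count is harmless because unused index values simply never get assigned. So the plan is to be careful that every search and every recursion in the term is explicitly bounded by the available $N_k$'s (or is a terminating unbounded search justified by $x\in D(f)$), thereby certifying that $t$ can be written in Gödel's $T$; the rest is the routine verification sketched above that $\lambda x.t$ is injective on $D(f)$.
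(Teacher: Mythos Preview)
Your argument has a genuine gap at the construction of $r(x)$. You write that $\exists^{2}$ can check whether ``$D_{k}(f)\cap[0,q]$ has at least one more element than $D_{k}(f)\cap[0,q']$'' and, in the subdivision loop, that $\exists^{2}$ can check whether ``every occupied subinterval contains exactly one point of $D_{k}(f)$''. Neither is true: these statements involve a universal quantifier over \emph{reals} (e.g.\ $(\forall x,y\in I)(x,y\in D_{k}(f)\rightarrow x=y)$), and $\exists^{2}$ only decides arithmetical formulas with type-$1$ parameters. Membership in $D_{k}(f)$ is decidable \emph{pointwise} from $\exists^{2}$, but that does not let you decide emptiness, compare cardinalities, or detect when the subdivision has become fine enough. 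Your only other tool is $N_{k}=\Omega_{\geq\#,\fin}(D_{k}(f))$, which is merely an \emph{upper bound} on $|D_{k}(f)|$; applying $\Omega_{\geq\#,\fin}$ to subintervals again yields only upper bounds, which carry no information about whether a set is empty or a singleton. So the loop ``keep halving until separated'' has no computable termination test with the resources you have invoked.

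The paper's proof avoids exactly this problem by applying $\Omega_{\geq\#,\fin}$ not to $D_{k}(f)$ but to the auxiliary finite set
\[
Y_{n}:=\big\{(x,y,k)\in\R^{2}\times\N: x,y\in X_{n}\wedge 0<|x-y|\leq 2^{-k}\big\},
\]
where $X_{n}$ is your $D_{n}(f)$. The point is that if two distinct elements of $X_{n}$ were within $2^{-g(n)}$ of each other, where $g(n):=\Omega_{\geq\#,\fin}(Y_{n})+1$, then $Y_{n}$ would contain at least $g(n)+1$ triples, contradicting $g(n)>|Y_{n}|$. Thus $g(n)$ is a computable \emph{separation exponent}: every $x\in X_{n}$ is isolated in $B(x,2^{-g(n)-2})$. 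Now one simply assigns to $x$ the index of a rational in that ball (e.g.\ a Cauchy approximant of $x$ at precision $g(n)+3$), and distinct $x,y\in X_{n}$ get distinct rationals because their balls are disjoint. No counting, no cardinality comparison, and no undecidable termination test is needed. Your overall strategy --- stratify by $k(x)$ and pair $k(x)$ with a within-level tag --- is the right shape; what is missing is precisely this use of $\Omega_{\geq\#,\fin}$ to extract a separation bound rather than a cardinality bound.
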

\begin{proof}
As noted in the proof of Theorem \ref{thm.surprise}, the set $X_{n}$ as in \eqref{cuntila} must be finite for fixed $n\in \N$.
Hence, the following set is also finite:  
\[\textstyle
Y_{n}:=\{  (x, y, k)\in \R^{2}\times \N:  x, y\in X_{n}\wedge 0< |x-y|\leq\frac{1}{2^{k}} \}. 
\]
% \marginpar{\footnotesize{Should we say $x \neq y$ as well, when defining $Y_n$?}}
Then $g(n):=\Omega_{\geq\#, \fin}(Y_{n})+1$ is such that $(\forall x, y\in X_{n})( |x-y|>\frac{1}{2^{g(n)}}   )$.
In other words, we may not know the elements of $X_{n}$, but we do know how much they must be apart (at least). 
Hence, for $x\in X_{n}$, the ball $B(x, \frac{1}{2^{g(n)+2}})$ does not contain any other elements of $X_{n}$.  
Let $(q_{m})_{m\in \N}$ be an enumeration of the rationals in $[0,1]$.
Then we may choose a rational, say with index $Z_{n}(x)$,  in $B(x, \frac{1}{2^{g(n)+2}})$ such that $Z_{n}(x)\ne Z_{n}(y)$ for $x, y\in X_{n}$ and $x\ne y$.
Now define $Y:[0,1]\di \N$ as follows:
\[
Y(x):= 
\begin{cases}
2^{n} \times p_{(1+Z_{n}(x)) }& x \in X_{n} \textup{ and $n$ is the least such number}\\
0 & \textup{otherwise}
\end{cases},
\]
where $p_{n}$ is the $n$-th prime number.  
%\marginpar{\footnotesize{$p_n$ is a more common notation for the $n$'th prime than $p(n)$.}}
This mapping is injective on $\cup_{n\in \N}X_{n}$ and is definable as required by the theorem.  
\end{proof}
\begin{cor}\label{poil}
There is a term $s$ of G\"odel's $T$ such that for regulated $f:[0,1]\di \R$,  $s( f, \exists^{2}, \Omega_{\geq \#, \fin})$ provides an enumeration of the points of discontinuity of $f$.
\end{cor}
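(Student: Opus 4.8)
The plan is to \emph{invert} the injection supplied by the preceding theorem. Write $D:=\{x\in[0,1]:f(x+)\ne f(x-)\}$ for the countable set of (jump) discontinuities of $f$, and let $Y:=\lambda x.\,t(x,f,\exists^{2},\Omega_{\geq\#,\fin})$ be the $[0,1]\di\N$-function from that theorem, so that $Y$ is injective on $D$ and is a term of G\"odel's $T$ in the indicated parameters. Two facts drive the argument. First, membership in $D$ is decidable from $f$ and $\exists^{2}$, since $f(x+),f(x-)$ are computable from $x,f,\exists^{2}$ and `$\ne_{\R}$' is $\Sigma_{1}^{0}$; hence for each $m\in\N$ the set $D_{m}:=\{x\in[0,1]:x\in D\wedge Y(x)=m\}$ has a decidable membership relation and, by injectivity of $Y$ on $D$, has at most one element. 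Second, a set $X\subseteq[0,1]$ with $|X|\le1$ is nonempty if and only if $\Omega_{\geq\#,\fin}(Y_{k})>k$ for every $k\in\N$, where $Y_{k}$ is obtained by spreading $X$ over $k+1$ affine copies of $[0,1]$ inside $[0,1]$, exactly as in the derivation of \eqref{exti}: if $X\ne\emptyset$ then $|Y_{k}|=k+1$, while if $X=\emptyset$ then $Y_{k}=\emptyset$ and $\Omega_{\geq\#,\fin}(Y_{k})$ is a fixed constant. Given the values $\Omega_{\geq\#,\fin}(Y_{k})$ this is a $\Pi_{1}^{0}$ condition, hence decidable from $\exists^{2}$, and the whole emptiness test --- building the $Y_{k}$ as $\lambda$-terms, applying $\Omega_{\geq\#,\fin}$, and searching over $k$ with $\exists^{2}$ --- is a term of G\"odel's $T$ in $f,\exists^{2},\Omega_{\geq\#,\fin}$ whenever $X$ is presented by such a term with decidable membership.

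Given this, I would define $s$ as follows. For each $m$, apply the emptiness test to $D_{m}$. If $D_{m}=\emptyset$, set $z_{m}$ to a fixed default value (say $0^{\omega}$). If $D_{m}\ne\emptyset$, run interval-halving: starting from $[0,1]$, at each stage split the current interval into its two closed halves, use the emptiness test on $D_{m}$ intersected with each half (again a $\le1$-element set, so the test applies), keep a half meeting $D_{m}$, and record its midpoint; since the intervals shrink geometrically this produces a fast Cauchy sequence converging to the unique point of $D_{m}$, which we take as $z_{m}$. The sequence $(z_{m})_{m\in\N}$ then lists every point of $D$, since each $d\in D$ equals $z_{Y(d)}$. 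Finally, because `$x\in D$' is decidable from $f$ and $\exists^{2}$, I apply the standard post-processing of Notation \ref{defornota} to $(z_{m})_{m}$ --- discard every $z_{m}\notin D$, and output the null sequence if nothing remains --- obtaining a genuine enumeration of $D$ in the sense of Definition \ref{eni}. All ingredients (the applications of the $T$-term $t$, the primitive recursions building the halving sequences, and the searches realised through $\exists^{2}$) compose into a single term $s$ of G\"odel's $T$ in $f,\exists^{2},\Omega_{\geq\#,\fin}$. If one wishes to include the removable discontinuities $x$ with $f(x)\ne f(x+)=f(x-)$, the same method applied to the sets $\{x:|f(x)-f(x+)|>2^{-n}\}$, each finite by the usual cluster-point argument, suffices.

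The crucial structural point --- and the only place the argument could break --- is that no self-reference is needed: this works precisely because the preceding theorem delivers $Y$ as a term of G\"odel's $T$ rather than via the $(\mu x t)$-facility, so the inversion stays inside G\"odel's $T$. The two remaining subtleties are routine given the setup: that $\Omega_{\geq\#,\fin}$ returns only an upper bound on cardinality, not an exact count, is exactly what forces the $k+1$-copies trick together with the $\exists^{2}$-search over $k$; and convergence of the interval-halving is automatic because $Y$ is injective on $D$, so every subinterval meets $D_{m}$ in at most one point and the emptiness test remains applicable at every stage.
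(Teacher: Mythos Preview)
Your proposal is correct and follows essentially the same route as the paper: invert the injection $Y$ from the preceding theorem by applying (a Gödel's-$T$ version of) $\Omega$ to the at-most-singleton sets $D_{m}=\{x\in D:Y(x)=m\}$, where $\Omega$ is obtained from $\Omega_{\geq\#,\fin}$ via the copies trick of \eqref{exti} followed by interval-halving. The paper states this in two sentences by pointing to the ``rather effective'' last part of the proof of Theorem~\ref{fct} and then applying $\Omega(\{x\in A:Y(x)=n\})$; you have simply unpacked those two steps. One small simplification you could adopt from the paper: rather than testing the $\Pi_{1}^{0}$ condition ``$\Omega_{\geq\#,\fin}(Y_{k})>k$ for all $k$'' with $\exists^{2}$, you can take $k_{0}:=\Omega_{\geq\#,\fin}(X)$ once and check the single inequality $\Omega_{\geq\#,\fin}(Y_{k_{0}})>k_{0}$, which by extensionality decides emptiness directly.
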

\begin{proof}
By the (rather effective) last part of the proof of Theorem \ref{fct}, we can compute $\Omega$ in the restricted sense of the corollary.  The functional $\Omega$ readily (and similarly effectively) yields an enumeration functional by considering $\Omega(  \{ x\in [0,1]: x \in A\wedge Y(x)=n  \} )$ for $A\subset [0,1]$ and $Y:[0,1]\di \N$ injective on $A$.
\end{proof}
In conclusion, despite Corollary \ref{poil}, we believe that the correct definition of the $\Omega$-cluster involves the generality of S1-S9.  
We also believe there to be natural functionals in the $\Omega$-cluster for which the analogue of Corollary \ref{poil} is not possible.  

\subsection{On the weakness of weak Jordan realisers}\label{separ}
In this section, we show that weak Jordan realisers are deserving of their name: we show that the latter cannot, in general, compute a Jordan realiser, even when combined  with an arbitrary type 2 functional.  
To this end, we establish the following theorem. 
\begin{thm}\label{jordomega1}
The class of weak Jordan realisers is in the $\Omega_1$-cluster.
\end{thm}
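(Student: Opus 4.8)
The statement asserts membership in the $\Omega_1$-cluster, which by definition requires two things: (a) $\Omega_1$ is computable in $\exists^2$ together with \emph{every} weak Jordan realiser $\J_{\w}^{3}$, and (b) \emph{some} weak Jordan realiser is computable in $\exists^2+\Omega_1$. I would prove these separately.

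For (a), the plan is to run $\J_{\w}^{3}$ on the indicator of a singleton. Given $X=\{x\}$ — using the standing assumption of $\exists^2$ to arrange, without loss of generality, that $X\subseteq[0,1]$ with $x\in(0,1)$ — the function $\mathbb{1}_{X}:[0,1]\to\R$ has bounded variation (with bound $2$) and $V_{0}^{1}(\mathbb{1}_{X})=2$, so $(\mathbb{1}_{X},2)$ is a legitimate input for $\J_{\w}^{3}$. Let $(g,h)=\J_{\w}^{3}(\mathbb{1}_{X},2)$, so that $g,h$ are increasing and $g-h=\mathbb{1}_{X}$. Evaluating at $y\ne x$ gives $g(y)=h(y)$; at $x$ we get $g(x)-h(x)=1$ and, taking left limits (which exist since $g,h$ are monotone), $g(x-)=h(x-)$, whence $g(x)-g(x-)=1+(h(x)-h(x-))\geq 1$, so $x$ is a point of discontinuity of the monotone function $g$. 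By Lemma \ref{korf}, $\D(g)$ enumerates all discontinuities of $g$; searching this list for the index $n$ with $g(y_{n})\ne h(y_{n})$ — a $\Sigma_{1}^{0}$ condition recognisable with $\exists^2$, which exists since $x$ is a discontinuity of $g$ and is unique since $g=h$ off $x$ — recovers $x=y_{n}$. Since this uses only the defining properties of the output of $\J_{\w}^{3}$, it works uniformly for every weak Jordan realiser.

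For (b), the plan is to produce the decomposition directly. Write a given $f:[0,1]\to\R$ of bounded variation as $f=f_{c}+f_{j}$, where $f_{j}$ is the jump (saltus) function and $f_{c}$ is continuous. The continuous part is harmless: for continuous $f_{c}$ the map $x\mapsto V_{0}^{x}(f_{c})$ equals the supremum over \emph{rational} partitions, which is computable in $\exists^2$, and this yields increasing $g_{c},h_{c}$ with $f_{c}=g_{c}-h_{c}$. The jump part, together with its Jordan decomposition into increasing jump functions, is computable from an \emph{enumeration} $(d_{n})_{n}$ of the discontinuities of $f$ equipped with the one-sided limits $f(d_{n}\pm)$ and the values $f(d_{n})$ — the relevant series converge because $\sum_{x}(\text{jump of }f\text{ at }x)\leq V_{0}^{1}(f)$. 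Adding the two decompositions gives a weak Jordan realiser, so (b) reduces to: from $f$, the number $v=V_{0}^{1}(f)$, $\exists^2$ and $\Omega_1$, compute an enumeration of the discontinuities of $f$ together with the one-sided limit data.

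This last step is the main obstacle. The natural approach is to stratify by scale: $D_{k}:=\{x\in[0,1]: |f(x)-f(x-)|+|f(x+)-f(x)|>2^{-k}\}$ is finite (each element contributes more than $2^{-k}$ to $v$), membership in $D_{k}$ is decidable in $\exists^2$ since one-sided limits of a regulated function are $\exists^2$-computable, and $\bigcup_{k}D_{k}$ is the discontinuity set. The delicate point is to resolve each finite layer using only the singleton-reading power of $\Omega_1$: where a layer (or a suitable sub-piece of it, cut out by rational value- and position-windows) is a singleton, $\Omega_1$ reads it off; where the exact cardinality $n$ of such a piece can be determined, one invokes $\Omega_{n}$, which lies in the $\Omega_1$-cluster by Theorem \ref{fct}; and the \emph{exact} value of $v$ — crucially more than a mere upper bound on $V_{0}^{1}(f)$, which would only afford $\Omega_{\fin}$ and thus land in the $\Omega$-cluster — serves both to pin down these cardinalities and as the stopping criterion certifying that no discontinuity has been missed. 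This is exactly the phenomenon already visible for $f=\mathbb{1}_{X}$ with $X\subseteq[0,1]$ finite, where $V_{0}^{1}(\mathbb{1}_{X})=2|X|$ reveals $|X|$; carrying it out for arbitrary $f$ of bounded variation is the technical heart of the argument, and proceeds along the lines of the analysis in \cite{dagsamXII} together with the equivalences of Theorem \ref{thm.surprise}.
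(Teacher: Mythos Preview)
Your part (a) is correct and essentially matches the paper's argument: apply the realiser to $\mathbb{1}_{\{x\}}$ with variation $2$, then read $x$ off the discontinuities of the monotone output via Lemma~\ref{korf}.

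Your part (b), however, has a genuine gap at exactly the place you flag as ``delicate.'' You propose to enumerate the discontinuity set by stratifying into finite layers $D_k$ and then applying $\Omega_n$ to each, with the exact variation $v$ ``pinning down'' the cardinality $n$. But you give no mechanism for extracting $|D_k|$ from $v$, and there does not appear to be one: the single real number $v$ does not encode the number of jumps at each scale. Your motivating example $\mathbb{1}_X$ with $v=2|X|$ is misleadingly special; for a general $f\in BV$ the variation mixes a continuous contribution with jump contributions of arbitrary sizes, and determining $|D_k|$ from $f$ and $\exists^2$ alone is essentially $\Omega_{\#,\fin}$, which lives in the $\Omega$-cluster, not the $\Omega_1$-cluster. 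The appeal to \cite{dagsamXII} does not rescue this, since the relevant arguments there produce Jordan realisers from $\Omega_{\fin}$, not from $\Omega_1$.

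The paper sidesteps this obstacle with a completely different idea: a fixed-point construction. One considers sequences $(x_n)$ containing all rationals and satisfying $V(f,(x_n))=v$; any such sequence necessarily contains every ``spike'' discontinuity (where $f(x)$ lies outside the interval between $f(x-)$ and $f(x+)$), because omitting such a point would strictly lower the witnessed variation below $v$. From such a sequence one builds the canonical (minimal-growth) decomposition $g^*,h^*$, whose remaining discontinuities are then recoverable via Lemma~\ref{korf}. The upshot is a computable-in-$\exists^2$ map from sequences to sequences whose unique fixed point is a canonical enumeration of all discontinuities (plus rationals); one then applies $\Omega_1$ to this singleton. The point is that $\Omega_1$ is used once, on a set engineered to have exactly one element, rather than on the layers $D_k$ whose cardinalities are unknown.
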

\begin{proof}
We first prove that $\Omega_1$ is computable in any weak Jordan realiser. Let $X \subseteq [0,1]$ have exactly one element and let $f$ be the characteristic function of $X$. 
Then exactly one of three will be the case: $0 \in X$, $1 \in X$ or $V_0^1(f) = 2$.
In the latter case, we use the decomposition $f = g - h$ obtained from the weak Jordan realiser, enumerate all points of discontinuity of $g$ and $h$ using $\exists^2$, and search for the one element in $X$ among those points of discontinuity.

\smallskip

Next, we prove that there is a weak Jordan realiser computable in $\Omega_1$ and $\exists^2$.  Let $f:[0,1]\di \R$ be given with known variation $a\in \R$. 
If $(x_n)_{n \in \N}$ is a sequence in the unit interval, we can define $V(f,(x_n)_{n \in \N})$ as the supremum  we obtain by restricting all partitions to elements in the sequence (using $\exists^{2}$). 
We then let $X$ be the set of sequences $(x_n)_{n \in \N}$ such that $V(f,(x_n)_{n \in \N}) = a$ and such that all rational numbers in $[0,1]$  appear in the sequence.

\smallskip

Now, if $(x_n)_{n \in \N} \in X$, all discontinuity points $x$ of $f$, where $f(x)$ is not in the closed interval between the left- and right limits of $f$ at $x$, will be in the sequence. 
We can then use partitions from any sequence in $X$ to construct a decomposition of $f$ into the increasing  $g^*$ and $h^*$. These will be correct except possibly for points of discontinuity where $f$ takes a value in the gap between the left and right limits. These points are however points of discontinuity of $g^*$ or $h^*$, and we can use $\exists^2$ to check if there are any by Lemma \ref{korf}.

\smallskip

We now let $Y$ be the set of enumerations $(x_n)_{n \in \N}$ where no such points of discontinuity are missing. For $(x_n)_{n \in \N} \in Y$, we will have that $g^*, h^*$ is the best possible decomposition, i.e. the decomposition where the growth of the increasing functions is the least possible. In this way, $g^{*}, h^{*}$ are independent of the choice of $(x_n)_{n \in \N}$.  Thus, there is an enumeration $(y_n)_{n\in N}$ of all points of discontinuity of $g^*$ and $h^*$, together with the rationals, computable from $(x_n)_{n \in \N} \in Y$ and $\exists^2$, but independent of the actual choice of sequence. 
Hence, the set $Z$ of sequences in $Y$ producing itself this way will contain \emph{exactly one} element, and we can use $\Omega_1$ to find it.
We then get the weak Jordan realiser by using $g^*,h^*$ constructed from this unique sequence.
\end{proof}
\begin{cor}\label{frlom}
A weak Jordan realiser cannot compute a Jordan realiser in general, even when combined with with an arbitrary type 2 functional. 
\end{cor}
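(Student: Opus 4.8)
The key point is that a weak Jordan realiser is countably based (up to $\exists^2$), whereas a Jordan realiser is not. Suppose, towards a contradiction, that some Jordan realiser $\J$ is computable in $\J_{\w}+\Phi$, where $\J_{\w}$ is a weak Jordan realiser and $\Phi$ is an arbitrary functional of type rank $2$. By Theorem \ref{jordomega1}, the class of weak Jordan realisers lies in the $\Omega_1$-cluster, so $\J_{\w}$ is computable in $\Omega_1+\exists^2$; combining this with our assumption, $\J$ is computable in $\Omega_1+\exists^2+\Phi$.

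Now observe that all the oracles here are countably based: $\Omega_1$ is countably based by the remark following Definition \ref{defomega1}, while $\exists^2$ and $\Phi$, being of type rank $\leq 2$, are countably based as recalled after Definition \ref{28}. All of these functionals have type rank $\leq 3$, so Theorem \ref{denkors} applies and yields that $\J$ is itself countably based.

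On the other hand, $\J$ is a Jordan realiser, hence by the Second cluster theorem (Theorem \ref{thm.surprise}, equivalence of items \eqref{jordje} and \eqref{fije}) it is computationally equivalent to $\Omega$ modulo $\exists^2$, i.e.\ it belongs to the $\Omega$-cluster. By Lemma \ref{borkim}, no functional in the $\Omega$-cluster is countably based. This contradicts the previous paragraph, so no Jordan realiser is computable in $\J_{\w}+\Phi$.

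There is no real obstacle here beyond assembling the cited results; the only points requiring (trivial) care are that $\exists^2$ is available throughout and that every functional involved has type rank $\leq 3$, so that the closure result Theorem \ref{denkors} is applicable. (Alternatively, once $\J$ is seen to be countably based one could invoke Corollary \ref{kink} in place of Lemma \ref{borkim}, after noting that $\J$ together with $\exists^2$ computes $\Omega$, which is not countably based.)
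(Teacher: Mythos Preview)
Your proof is correct and follows the same route as the paper: reduce to the fact that $\Omega_1$ (hence a suitable weak Jordan realiser) is countably based, whereas anything computing $\Omega$ is not, and apply closure of countably based functionals under computability (Theorem~\ref{denkors}); the paper compresses this into a citation of Lemma~\ref{borkim} and Corollary~\ref{kink}. One small quantifier slip: by the paper's convention, ``the class of weak Jordan realisers is in the $\Omega_1$-cluster'' only guarantees that \emph{some} $\J_{\w}$ is computable in $\Omega_1+\exists^2$ (and dually, an arbitrary Jordan realiser need not itself lie in the $\Omega$-cluster), so you should fix $\J_{\w}$ to be that specific realiser and then use, as in your parenthetical alternative, that \emph{every} Jordan realiser computes $\Omega$ modulo $\exists^2$ to reach the contradiction with Lemma~\ref{borkim}.
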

\begin{proof}
As noted under Definition \ref{defomega1}, $\Omega_1$ is countably based, while $\Omega$ is not by Lemma \ref{borkim}.  Corollary \ref{kink} now finishes the proof. 
\end{proof}
The following theorem is now proved in analogy with the proof of  Theorem \ref{thm.surprise}.
Following Definition \ref{JDR}, any functional defined on $BV$ has a `weak' counterpart which has the variation \eqref{tomb} as an additional input.  
\begin{thm}[Third cluster theorem]\label{thm.surprise3}
Assuming $\exists^{2}$, the following are computationally equivalent:
\begin{itemize} 
\item the functional $\Omega_1$
\item a weak Jordan realiser, 
\item a weak continuity realiser,
\item a weak Sierpi\'nski realiser, that is, a Sierpi\'nski realiser restricted to $BV$-functions with known variation, 
\item a weak Banach realiser, that is, a Banach realiser restricted to $BV$-functions with known variation.
%\item a functional $\Omega_{\leq n}$,
\item a weak enumeration functional,
\item a weak $\Omega_{\BW}$-functional.
\end{itemize}
\end{thm}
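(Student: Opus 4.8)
The plan is to establish the cycle of computational equivalences by reducing everything to the already-proven machinery. The core observation is that each ``weak'' realiser in the list has the total variation \eqref{tomb} supplied as an extra input, so the arguments are near-copies of those in Theorem~\ref{thm.surprise}, but now with the variation available for free. I would first show that $\Omega_1$ is computable in each of the listed functionals (the ``lower bound'' direction), and then show that each is computable in $\Omega_1$ plus $\exists^2$ (the ``upper bound'' direction), closing the loop. Since Theorem~\ref{jordomega1} already proves that the class of weak Jordan realisers is in the $\Omega_1$-cluster, that item serves as an anchor: it suffices to prove each other item is equivalent to a weak Jordan realiser modulo $\exists^2$.

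For the lower bounds, I would re-run the characteristic-function trick of Theorem~\ref{jordomega1}: given $X \subseteq [0,1]$ with exactly one element, put $f = \mathbb{1}_X$; then either $0 \in X$, or $1 \in X$, or $V_0^1(f) = 2$, and in the last case the variation is a computable constant we can feed to any weak realiser. From the resulting decomposition/continuity data/Sierpi\'nski factorisation/Banach indicatrix one locates the unique point of $X$ exactly as in the corresponding paragraph of the proof of Theorem~\ref{thm.surprise} (for the weak enumeration and weak $\Omega_{\BW}$ functionals this is even more direct, using the interval-halving arguments around \eqref{basik} and \eqref{south}, restricted to inputs where $Y$ is also surjective). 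Thus each listed functional computes $\Omega_1$ modulo $\exists^2$.

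For the upper bounds, the key point is that with the variation $a = V_0^1(f)$ given as input, the set-based construction in the proof of Theorem~\ref{jordomega1} applies verbatim to produce a weak Jordan realiser from $\Omega_1$ and $\exists^2$: one forms the set $X$ of rational-dense enumerations $(x_n)_{n}$ with $V(f,(x_n)_n) = a$, refines to $Y$ and then to $Z$ so that $Z$ is a singleton, and applies $\Omega_1$. Having a weak Jordan realiser, one obtains the weak continuity realiser by enumerating discontinuities of $g,h$ via Lemma~\ref{korf} (discarding via $\exists^2$ the points where $f(x) \in [f(x-),f(x+)]$), exactly as in items \eqref{jordje}--\eqref{contje} of Theorem~\ref{thm.surprise}. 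The weak Sierpi\'nski and weak Banach realisers then follow from the weak continuity realiser by the same explicit constructions used for items \eqref{sierje2} and \eqref{baje2} there (the factorisation $f = g \circ h$ with $h$ built from the discontinuity list, and $N(f) = N(g)$). Finally, a weak enumeration functional and a weak $\Omega_{\BW}$-functional are obtained from $\Omega_1$ just as their non-weak counterparts were obtained from $\Omega$ in Theorem~\ref{fct}, since $\Omega_1$ is precisely $\Omega$ restricted to singletons and the weak versions only ask for the realiser to work when the injection $Y$ is additionally surjective.

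The main obstacle I anticipate is bookkeeping around the ``best possible decomposition'' in the upper-bound direction: one must check that the pair $g^*, h^*$ extracted from any sequence in $Y$ is genuinely independent of the choice of sequence, so that the self-referential set $Z$ really is a singleton and $\Omega_1$ applies. This is exactly the delicate step already handled in the proof of Theorem~\ref{jordomega1}, so I would cite that argument rather than reprove it, noting only that supplying the variation as input is what makes the condition ``$V(f,(x_n)_n) = a$'' decidable relative to $\exists^2$. Everything else is a routine transcription of the corresponding paragraphs of the proof of Theorem~\ref{thm.surprise}, with ``known variation'' inserted wherever that proof invoked the $\Omega$-cluster machinery.
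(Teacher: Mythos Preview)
Your plan is essentially the paper's own: anchor on Theorem~\ref{jordomega1} for $\Omega_1\leftrightarrow$ weak Jordan realiser, derive the weak continuity/Sierpi\'nski/Banach equivalences by rerunning the relevant paragraphs of Theorem~\ref{thm.surprise}, and handle the weak enumeration and weak $\Omega_{\BW}$ functionals separately. The upper-bound direction is fine throughout.

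There is one concrete slip in your lower-bound paragraph. Feeding the single pair $(\mathbb{1}_X,2)$ to a weak Banach realiser returns $N(\mathbb{1}_X)$, which only records that exactly one point maps to $1$; it does not locate $x_0$. The argument around \eqref{south} that you cite uses a \emph{family} $f_q$ of test functions, and for the weak realiser each such $f_q$ must come with its variation --- but the natural choice $f_q=\mathbb{1}_{X\cap(q,1]}$ has variation $0$ or $2$ depending on the unknown position of $x_0$. The fix is easy (e.g.\ set $f_q(x)=1$ if $x\in X,\,x>q$, $f_q(x)=-1$ if $x\in X,\,x\le q$, else $0$, so the variation is always $2$), but it is a genuine extra step not present in the non-weak proof. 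A parallel issue affects your remark that the weak enumeration and weak $\Omega_{\BW}$ cases are ``even more direct'': a singleton $X$ admits no bijection to $\N$, so it cannot be fed to these functionals as-is; you must first manufacture a strongly countable set from $X$ (e.g.\ $A=\{x(1-2^{-n-1}):x\in X,\,n\in\N\}$ with the obvious $Y$, whose supremum is exactly $x_0$). Once you note these two adjustments, your sketch goes through and matches the paper's (equally terse) proof.
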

\begin{proof}
The computational equivalence of $\Omega_1$, a weak Jordan realiser, and a weak continuity realiser modulo $\exists^2$ follows from (the proof of) Theorem \ref{jordomega1}.
Moreover, it is straightforward to prove that $\Omega_1$, the weak $\Omega_{\BW}$-functional, and a weak enumeration functional (computing the inverse of a surjection from a set $X$ to $\N$) are computationally equivalent modulo $\exists^2$.
Finally, the computational equivalence of a weak Sierpi\'nski realiser, a weak Banach realiser, and a weak continuity realiser, 
can be proved in the same way as for the `non-weak' case, namely as in the proof of Theorem \ref{thm.surprise}.
\end{proof}
%NEW
Finally, we identify one functional we believe to be strictly intermediate between the $\Omega$ and $\Omega_{1}$-functionals, based on K\"onig's (original) lemmas\footnote{The names \emph{K\"onig's infinity lemma} and \emph{K\"onig's tree lemma} are used in \cite{wever} which contains a historical account of these lemmas, as well 
as the observation that they are equivalent; the formulation of K\"onig's lemma involving trees apparently goes back to Beth around 1955 in \cite{bethweter}, as also discussed in detail in \cite{wever}.} (see \cite{koning147, koning26, koning16}). 
%The $\Omega$-functional computes a K\"onig's realiser, which computes a weak Cantor realiser. 
\begin{princ}[K\"onig's lemma $\KL$ for real numbers]\label{KIL2real}
Let $(E_{n})_{n\in \N}$ be a sequence of sets in $\R$ and let $R$ a binary relation on reals such that for all $n\in\N$ we have:
\begin{itemize}
\item the set $E_{n}$ is finite and non-empty,
\item for any $x\in E_{n+1}$, there is at least one $y\in E_{n}$ such that $yRx$.
\end{itemize}
% finite non-empty sets and a binary relation $R$ such that for any $x\in E_{n+1}$, there is at least one $y\in E_{n}$ such that $yRx$.
Then there is a sequence $(x_{n})_{n\in \N}$ such that for all $n\in \N$, $x_{n}\in E_{n}$ and $x_{n}Rx_{n+1}$.
% Es sei EitE2,E3 ... eine abz\UTF{00E4}hibarunendliche Folge end-
%licher, nicht leerer Mengen und, R eine bin\UTF{00E4}re Relation, die so beschaffen ist, dass zu jedem Element x\UTF{201E}+i von E\UTF{201E}+1 mindestens ein solches Element x\UTF{201E} von E\UTF{201E} geh\UTF{00F6}rt, welches zu xn+1 in der Relation R steht, was wir durch x\UTF{201E}Rxn+l ausdr\UTF{00FC}cken wollen. Dann kann man in jeder der Mengen E\UTF{201E} je ein Element a\UTF{201E} derart bestimmen, dass f\UTF{00FC}r die unendliche Folge a,. a2, a3,... stets a\UTF{201E} R a\UTF{201E}+x bestehe
\end{princ}
\bdefi
A \emph{K\"onig realiser} takes as input a sequence of sets $(E_{n})_{n\in \N}$ and relation $R$ as in $\KL$ and outputs a sequence as in the latter.  
\edefi
\noindent
Clearly, the functional $\Omega$ computes a K\"onig realiser, which in turns computes $\Omega_{1}$.

\subsection{Functionals related to absolute continuity}\label{maink3}
We identify a number of interesting elements in the $\Omega$-cluster based on the notion of \emph{absolute continuity}, introduced in 
Section \ref{KLM}.  As is clear from Sections \ref{hu1}-\ref{flukkkk}, our study deals with both basic notions (arc length and constancy) and fundamental results (fundamental theorem of calculus and Sobolev spaces).
\subsubsection{Introduction}\label{KLM}
In this section, we introduce some required notions, like absolute continuity, and how differentiability applies to $BV$-functions. 
We tacitly assume all these notions pertain to $[0,1]$, unless explicitly stated otherwise. 

\smallskip

First of all, we make our notion of differentiability precise, as the latter is intimately related to $BV$.
Indeed, combining Jordan's decomposition theorem (Theorem \ref{drd}) and Lebesgue's theorem on the differentiability of monotone functions (see \cite{botsko2}*{Theorem~1} for an elementary proof), a $BV$-function is differentiable almost everywhere (a.e.\ for short).  
Moreover, a function that has a bounded derivative on $[0,1]$, is also a $BV$-function; the latter fact is already mentioned by Jordan right after introducing $BV$-functions (see \cite{jordel}*{p.\ 229}).  
We will bestow the following meaning on `derivative of a $BV$-function'.  
\begin{defi}\label{kood}
A \emph{derivative} of $f \in BV$ is any function $g:[0,1]\di \R$ such that $\lim_{h\di 0}\frac{f(x+h)-f(x)}{h}=g(x)$ for almost all $x\in [0,1]$.
\end{defi}
Since our notion of derivative is `almost unique', we shall abuse notation and refer to `the' derivative $f'$ of $f\in BV$.
Definition \ref{kood} also provides a motivation for focusing on $BV$ as the notion of derivative is automatically well-defined.  

\smallskip

Next, we define an interesting subclass $AC$ of $BV$ which additionally satisfies $AC\subset [C\cap BV]$ where $C$ is the class of all continuous functions on $[0,1]$.
The reader will know the existence of continuous functions\footnote{The standard example is $f:[0,1]\di \R$ defined as $0$ if $x=0$ and $x \cdot \sin(1/x)$ otherwise;  
Weierstrass' `monster' function is also a natural example of a continuous function not in $BV$.} not in $BV$.
\begin{defi}
A function $f:[0,1]\di \R$ is \emph{absolutely continuous} \(`$f\in AC$' for short\) if for every $k\in \N$, there is $N\in \N$  such that if a finite sequence of pairwise disjoint sub-intervals $(x_{i},y_{i})$ of $[0,1]$ with 
${ x_{i}<y_{i}\in [0,1]}$ satisfies ${ \sum _{i}(y_{i}-x_{i})<\frac{1}{2^{N}} }$, then we have ${ \sum _{i}|f(y_{i})-f(x_{i})|<\frac{1}{2^{k}}}$.
\end{defi}
Next, to avoid confusion (but perhaps not pedantry), we briefly recall a technicality related to continuity realisers.  
As noted below Definition \ref{JDR2}, Jordan realisers are defined for any $BV$-function while continuity realisers only make direct sense for discontinuous $BV$-functions; for continuous $BV$-functions, the output of a continuity realiser is the enumeration of the empty set as in Notation \ref{defornota}.

\smallskip

Finally, the following results will be used often.  Interestingly, $\exists^{3}$ is equivalent to a functional that decides whether \emph{arbitrary} $[0,1]\di \R$-functions are continuous. 
\begin{thm}\label{flagrant}
The following is in the $\Omega$-cluster: a functional deciding whether $f\in BV$ is continuous on any $[a,b]\subset [0,1]$.
\end{thm}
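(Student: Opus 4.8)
The plan is to show two directions: that the continuity-deciding functional (call it $\mathcal{C}$) computes $\Omega$ modulo $\exists^2$, and that $\Omega$ (equivalently $\Omega_{\fin}$ or a continuity realiser, by Theorem \ref{thm.surprise}) computes $\mathcal{C}$ modulo $\exists^2$. For the first direction, I would use the standard trick: given a finite set $X\subset[0,1]$, the indicator function $\mathbb{1}_X$ has bounded variation (with $|X|+1$ as an upper bound for the total variation). Now $\mathbb{1}_X$ is continuous on $[0,1]$ if and only if $X=\emptyset$, and more generally $\mathbb{1}_X$ is continuous on $[a,b]$ iff $X\cap(a,b)=\emptyset$ (being careful at endpoints). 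Feeding $\mathcal{C}$ the function $\mathbb{1}_X$ together with rational sub-intervals and applying the usual interval-halving technique, one decides emptiness of $X$ and, when $X\neq\emptyset$, locates its single element; this yields $\Omega$ (via $\Omega_{\leq 1}$), invoking Theorem \ref{fct} as needed. This direction is routine.

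For the converse direction, fix $f\in BV$ and $[a,b]\subseteq[0,1]$; I want to decide continuity of $f$ on $[a,b]$ using a continuity realiser $\mathcal{L}$ (item \eqref{contje} of Theorem \ref{thm.surprise}) and $\exists^2$. Apply $\mathcal{L}$ to $f$ to get an enumeration $(x_n)_{n\in\N}$ of all points of discontinuity of $f$ on $[0,1]$. Then $f$ is continuous on $[a,b]$ precisely when none of the $x_n$ lies in $[a,b]$ — with the mild subtlety that at the endpoints $a,b$ one only needs the appropriate one-sided limit to match $f$'s value, which is checkable using $\exists^2$ since $f(x+)$ and $f(x-)$ are computable from $f$ and $\exists^2$ at any point. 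Thus the decision reduces to: is there $n$ with $x_n\in(a,b)$, or is there a discontinuity at $a$ (from the right) or at $b$ (from the left)? The membership question $x_n\in(a,b)$ for fixed $n$ is decidable using $\exists^2$ (comparing reals), and the unbounded search over $n$ is handled by $\exists^2$ applied to the sequence. Putting these pieces together gives $\mathcal{C}$ computably in $\mathcal{L}$ and $\exists^2$.

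The main obstacle — really the only delicate point — is the handling of the \emph{endpoints} $a$ and $b$ of the sub-interval, since "continuous on $[a,b]$" involves one-sided continuity there rather than full continuity; one must take care that a point $x_n$ equal to $a$ or $b$ is only a genuine obstruction to continuity on $[a,b]$ when the relevant one-sided limit fails to equal $f(a)$ (resp.\ $f(b)$). This is resolved by explicitly computing the one-sided limits $f(a+)$ and $f(b-)$ with $\exists^2$ and comparing them to $f(a)$ and $f(b)$, which works because for $BV$-functions (indeed regulated functions) these one-sided limits always exist and are computable in $f$ and $\exists^2$. With that caveat dispatched, both reductions go through and the theorem follows.
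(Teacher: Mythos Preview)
Your proposal is correct and follows essentially the same approach as the paper: both directions use a continuity realiser (from Theorem \ref{thm.surprise}) for one reduction and the indicator function $\mathbb{1}_X$ of a finite set for the other, with interval-halving to locate points of $X$. Your treatment is in fact slightly more careful than the paper's own proof: you explicitly handle the general sub-interval $[a,b]$ by searching the enumeration $(x_n)_{n\in\N}$ for points in $(a,b)$ and separately checking one-sided continuity at the endpoints via $f(a+)$ and $f(b-)$, whereas the paper's forward direction only tests whether the enumeration is the null sequence (which strictly speaking decides continuity on all of $[0,1]$). One minor expository wrinkle: you begin the reverse reduction with ``a finite set $X$'' but then speak of ``its single element''; since you are computing $\Omega=\Omega_{\leq 1}$ directly, you should simply start with $|X|\leq 1$ (the paper instead iterates to recover $\Omega_{\fin}$, which is equivalent by Theorem \ref{fct}).
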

\begin{proof}
A continuity realiser (see Definition \ref{JDR2}) can obviously decide whether $f\in BV$ is continuous or not, namely by using $\exists^{2}$ to check whether the output of the former is the sequence $\langle\rangle*\langle \rangle*\dots$ or not.
For the reverse direction, a finite set $X\subset [0,1]$ has characteristic function $\mathbb{1}_{X}$ in $BV$.  Hence, the latter is continuous
on $[a,b]\subset [0,1]$ if and only $X\cap [a, b]=\emptyset$.  In this way, the usual interval-halving technique allows us to find the left-most point of $X$.  
Now remove this point from $X$ and repeat the same procedure until $X$ is empty.  
\end{proof}
The reader will verify that Theorem \ref{flagrant} remains true for `continuous' replaced by `quasi-continuous' or `lower semi-continuous' and/or `$BV$' replaced by `regulated'. 
Since any regulated function is cliquish, we cannot go `much below' quasi-continuity in Theorem \ref{flagrant}. 
We also have the related following theorem. 
\begin{thm}\label{flagrant2}
The following is in the $\Omega$-cluster: a functional deciding whether a regulated $f:[0,1]\di \R$ has bounded variation on any $[a,b]\subset [0,1]$.
\end{thm}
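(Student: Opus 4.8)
The plan is to prove the two reductions witnessing membership in the $\Omega$-cluster: that such a ``$BV$-deciding'' functional $T$ (taking a regulated $f:[0,1]\to\R$ and an interval $[a,b]\subseteq[0,1]$, and deciding whether $f$ has bounded variation on $[a,b]$, item~\eqref{donp} of Definition~\ref{varvar}) computes $\Omega$ modulo $\exists^2$, and conversely that $T$ is computable in $\Omega$ and $\exists^2$.

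For the first reduction, the characteristic-function trick used for Theorem~\ref{flagrant} is not available here, since $\mathbb{1}_X$ is $BV$ on every subinterval whenever $X$ is finite; so I would instead attach the variation to the element of $X$. Given $X\subseteq[0,1]$ with $|X|\leq 1$ presented as a characteristic function, I would define $f_X:[0,1]\to\R$ by setting $f_X(y)=0$ if $y\in X$, and otherwise $f_X(y)=1/n$ for the least $n\geq 1$ with $y-2^{-n}\in X$ or $y+2^{-n}\in X$, and $f_X(y)=0$ if no such $n$ exists. For a fixed real $y$ this only consults $\chi_X$ at the reals $y\pm 2^{-n}$ and performs a $\Sigma^0_1$ search, so $f_X$ is computable in $\chi_X$ and $\exists^2$. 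If $X=\emptyset$ then $f_X\equiv 0$; if $X=\{x_0\}$ then $f_X$ is supported exactly on $\{x_0\pm 2^{-n}:n\geq 1\}$ with value $1/n$ there, whence $f_X$ is regulated (continuous with value $0$ at $x_0$, with only removable discontinuities at the isolated support points), has unbounded variation on every interval containing $x_0$ (a partition through the support points on one side gives sums $\geq\sum_{n\geq N}2/n$), and bounded variation on every closed interval avoiding $x_0$ (which meets the support in a finite set). Hence $T(f_X,0,1)$ decides whether $X=\emptyset$, i.e.\ yields $\Omega_{\rm b}$, and Lemma~\ref{borkim2} then gives $\Omega$; an interval-halving search over subintervals even recovers the element of $X$.

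For the second reduction, I would use that $\Omega$ and $\exists^2$ compute a continuity realiser and a $\SUP$-realiser for regulated functions (Theorem~\ref{thm.surprise}, items~\eqref{contje2} and~\eqref{supje2}). Given regulated $f$ and $[a,b]$, first extract an enumeration $(c_k)_{k\in\N}$ of the discontinuities of $f$ lying in $[a,b]$ and a bound $M\geq\sup_{[a,b]}|f|$. For each $n$ I would then compute the $n$-th partial variation $v_n$: the supremum of $\sum_i|u_{i+1}-u_i|$ over all finite increasing ``schemes'' of at most $n$ nodes, where a node is a rational in $[a,b]$ (valued $f(q)$), an endpoint (valued $f(a)$ or $f(b)$), or a slot $(c_k,\epsilon)$ with $\epsilon\in\{-,0,+\}$ (valued $f(c_k-)$, $f(c_k)$, $f(c_k+)$, all computable from $f$, $c_k$ and $\exists^2$). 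Regulatedness gives $\sup_n v_n=V_a^b(f)$: any real partition sum is approximated from above by a scheme sum after moving continuity points to nearby rationals, and conversely every scheme sum is a limit of real partition sums (realising $(c_k,-),(c_k,0),(c_k,+)$ by $c_k-\eta,c_k,c_k+\eta$). Each $v_n\leq 2nM$ is the supremum of an $\exists^2$-computable bounded sequence of reals, hence computable in $\exists^2$, and $f$ is $BV$ on $[a,b]$ iff $\sup_n v_n<\infty$, which is an arithmetical property of the sequence $(v_n)_{n\in\N}$ and so decidable using $\exists^2$.

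The main obstacle is the computation of the partial variations $v_n$ in the second reduction: reducing the supremum over all real partitions to a supremum over the countable, $\exists^2$-enumerable scheme space built from rationals and the already-enumerated discontinuity points (together with their one-sided limits). This uses regulatedness in an essential way — one-sided limits exist everywhere, and off the countable discontinuity set the function is continuous and hence well-approximated at rationals. The construction and analysis of $f_X$ and the concluding arithmetical decision are otherwise routine.
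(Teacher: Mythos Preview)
Your proposal is correct and follows essentially the same approach as the paper. For the reverse direction, both you and the paper build a regulated function with spikes of height $1/n$ whose variation diverges exactly when $X\neq\emptyset$ (the paper places these spikes in the intervals $I_n=[\tfrac{1}{2n+1},\tfrac{1}{2n}]$ via affine bijections, while you place them at $x_0\pm 2^{-n}$); for the forward direction, both arguments enumerate the discontinuities via Theorem~\ref{thm.surprise} and then observe that the variation supremum reduces to a countable supremum decidable by $\exists^2$ --- the paper states this in one line, whereas your scheme with one-sided limits is a more explicit (and somewhat more elaborate than necessary) version of the same idea.
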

\begin{proof}
Given a regulated function $f:[0,1]\di \R$, we use $\Omega$ and $\exists^2$ to enumerate the points of discontinuity of $f$ (Theorem \ref{thm.surprise}).  In this case, the supremum in \eqref{tomb} can be replaced with a supremum over $\N$ (and $\Q$).  Hence, we can use $\exists^2$ to decide if the variation of $f$ is finite or infinite.  
To prove the other direction, we consider $\Omega_{\rm b}$ from the $\Omega$-cluster (see Section \ref{struc}).  
Let $I_n = [\frac{1}{2n+1},\frac{1}{2n}]$ and let $\phi_n$ be the affine bijection from $I_n$ to $[0,1]$.
Let $X  \subseteq [0,1]$ have at most one element. Define $f(x) :=  \frac{1}{n}$ if $x \in I_n$ and $\phi_n(x) \in X$, and 0 otherwise. Then $f$ is always regulated, and it is additionally of bounded variation if and only if $X$ is empty.
Hence, we obtain the functional $\Omega_{\rm b}$ and we are done. 
\end{proof}

\subsubsection{Connecting differentiability and constancy}\label{hu1}
In this section, we investigate the computational properties of well-known results pertaining to differentiability almost everywhere. 

\smallskip

It is hard to overstate the importance and central nature of derivatives to mathematics, physics, and engineering.  
In particular, derivatives provide essential qualitative information about the graphs of functions.  
For example, \emph{Fermat's theorem} implies that the local extremum of a differentiable function has zero derivative.  
Moreover, a {continuously} differentiable function is constant if and only if if it has zero derivative everywhere.  

\smallskip

However, the notion of a.e.\ differentiability, while seemingly close to the `full' notion, behaves quite differently.  Indeed, a function is called \emph{singular} (see e.g.\ \cite{gordonkordon}) if 
it has zero derivative a.e.\ and a continuous and singular function need not\footnote{Cantor's singular function, aka the \emph{Devil's staircase}, is a famous counterexample (see \cite{voordedorst, gordonkordon})} be constant.  
In particular, plain continuity is too weak to guarantee constancy for singular functions, while $AC$ suffices as follows. 
\begin{thm}[\cite{voordedorst}*{Prop.\ 3.33}]
A function on the unit interval is constant if and only if it is in $AC$ and singular. 
\end{thm}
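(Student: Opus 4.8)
The plan is to prove the two implications separately; the forward one is immediate, so all the content sits in the converse, which I would handle by a Vitali covering argument. For the direction \emph{constant $\Rightarrow$ $AC$ and singular}: if $f$ is constant on $[0,1]$, then for every finite family of pairwise disjoint sub-intervals $(x_{i},y_{i})$ we have $\sum_{i}|f(y_{i})-f(x_{i})|=0$, so the defining inequality for $AC$ holds with \emph{any} choice of $N$; moreover $\frac{f(x+h)-f(x)}{h}=0$ for all $x$ and all $h\ne 0$, so $f'\equiv 0$, and in particular $f'=0$ a.e., i.e.\ $f$ is singular.

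For the direction \emph{$AC$ and singular $\Rightarrow$ constant}, note first that $f\in AC\subseteq BV$, so its a.e.-derivative in the sense of Definition \ref{kood} is well-defined and singularity means $f'=0$ a.e.; also, applying the $AC$ condition to a single interval shows $f$ is (uniformly) continuous, so it suffices to prove $f(c)=f(0)$ for an arbitrary $c\in(0,1]$. Fix such a $c$ and $\eps>0$, and use $f\in AC$ to choose $N\in\N$, with $\delta:=2^{-N}$, such that $\sum_{i}|f(y_{i})-f(x_{i})|<\eps$ whenever the $(x_{i},y_{i})\subseteq[0,1]$ are pairwise disjoint with $\sum_{i}(y_{i}-x_{i})<\delta$. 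Put $E:=\{x\in(0,c):f'(x)=0\}$, so $|E|=c$ by singularity. For each $x\in E$ there are arbitrarily small $h>0$ with $x+h<c$ and $|f(x+h)-f(x)|\le\eps h$, hence the collection of all such closed intervals $[x,x+h]$ is a Vitali cover of $E$; by the Vitali covering theorem, select finitely many pairwise disjoint members $[a_{1},b_{1}],\dots,[a_{n},b_{n}]$ of it, listed in increasing order, with $|E\setminus\bigcup_{j}[a_{j},b_{j}]|<\delta$, whence $\big|(0,c)\setminus\bigcup_{j}[a_{j},b_{j}]\big|<\delta$ because $|(0,c)\setminus E|=0$. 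The complementary ``gap'' intervals $(0,a_{1}),(b_{1},a_{2}),\dots,(b_{n},c)$ then form a finite pairwise disjoint family of total length $<\delta$, so by the choice of $N$ the sum of the absolute increments of $f$ over these gaps is $<\eps$. Telescoping $f(c)-f(0)$ along $0<a_{1}<b_{1}<\cdots<b_{n}<c$ and using the triangle inequality,
\[
|f(c)-f(0)|\;\le\;\sum_{j=1}^{n}|f(b_{j})-f(a_{j})|\;+\;\sum_{\text{gaps}}|f(v)-f(u)|\;\le\;\eps\sum_{j=1}^{n}(b_{j}-a_{j})\,+\,\eps\;\le\;\eps\,(c+1).
\]
Letting $\eps\to 0$ gives $f(c)=f(0)$, and since $c\in(0,1]$ was arbitrary, $f$ is constant.

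The main obstacle is this converse direction, and concretely two bookkeeping points in the Vitali argument: verifying that $\{[x,x+h]:x\in E,\ h>0\text{ small},\ |f(x+h)-f(x)|\le\eps h\}$ genuinely is a \emph{Vitali} cover of $E$ — i.e.\ that arbitrarily short such intervals exist at each point of $E$, which holds precisely because $f'(x)=0$ there — and arranging the finitely many leftover gap intervals into one disjoint family of total length $<\delta$ so that the absolute-continuity estimate can be applied to them at once. An alternative route is to invoke the Lebesgue form of the fundamental theorem of calculus, $f\in AC\Rightarrow f(x)=f(0)+\int_{0}^{x}f'$, and then conclude immediately from $f'=0$ a.e.; but since the proof of that theorem contains exactly the Vitali argument above, I would prefer to present the self-contained version.
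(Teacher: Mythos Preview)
Your proof is correct and is the standard Vitali-covering argument for this classical fact. However, the paper does not actually prove this statement: it is quoted verbatim as \cite{voordedorst}*{Prop.~3.33} and used as a black box (by contraposition) to motivate the functional studied in the subsequent Theorem~\ref{iop3}. So there is no ``paper's own proof'' to compare against --- your write-up simply supplies what the paper deliberately delegates to the literature. Your closing remark is also apt: the alternative route via the Lebesgue FTC (Theorem~\ref{liko} in the paper, likewise only cited) is circular here in exactly the way you describe.
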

By contraposition, we obtain the functional as in the following theorem. 
\begin{thm}\label{iop3}
The following is in the $\Omega$-cluster:  a functional that on input singular $f \in BV$ outputs $x, y\in [0,1]$, which satisfy $f(x)\ne f(y)$ in case $f\not \in AC$.
\end{thm}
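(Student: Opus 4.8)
The plan is to verify the two conditions that place a class of functionals in the $\Omega$-cluster: first, that $\Omega$ is computable (modulo $\exists^{2}$) in \emph{every} functional of the kind described; second, that \emph{some} functional of this kind is itself computable in $\Omega$ modulo $\exists^{2}$. Both directions exploit the equivalences of Theorem \ref{thm.surprise}, via the continuity realiser, and the fact that the characteristic function of a finite set has bounded variation.

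For the second condition I would argue as follows. Since $\Omega$ computes a continuity realiser for $BV$-functions by Theorem \ref{thm.surprise}, I can, given singular $f\in BV$, set $c:=f(0)$ (noting $0\in\Q$) and use $\exists^{2}$ to search for a rational $q\in\Q\cap[0,1]$ with $f(q)\ne_{\R}c$, outputting $(0,q)$ if one is found. Failing that, I would apply the continuity realiser to obtain an enumeration $(d_{n})_{n\in\N}$ of the points of discontinuity of $f$ and, using $\exists^{2}$, search for $n$ with $f(d_{n})\ne_{\R}c$, outputting $(0,d_{n})$ if found; if both searches fail, I output $(0,0)$. The correctness claim I need is: if $f\notin AC$ then one of the two searches succeeds. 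Suppose not; then $f$ is constant, with value $c$, on $\Q\cap[0,1]$ and at every $d_{n}$. Since a $BV$-function is continuous off the countable set $\{d_{n}:n\in\N\}$, every point of continuity $x$ of $f$ satisfies $f(x)=\lim_{q\to x,\,q\in\Q}f(q)=c$ by density of $\Q$, and every point of discontinuity equals some $d_{n}$ and so also has value $c$; hence $f\equiv c$ is constant, which makes it absolutely continuous and contradicts $f\notin AC$. So one search succeeds, and the returned pair $(x,y)$ has $f(x)=c\ne f(y)$, as required.

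For the first condition I would take an arbitrary functional $T$ as in the statement and show that $T$ together with $\exists^{2}$ computes $\Omega$. Given $X\subseteq[0,1]$ with $|X|\le 1$, I feed $T$ the characteristic function $f:=\mathbb{1}_{X}$, which is a legitimate input: it is extensional since $X$ respects $=_{\R}$, its total variation is at most $2$ so $f\in BV$ in the sense of item \eqref{donp} of Definition \ref{varvar}, and $f'=0$ almost everywhere so $f$ is singular. If $X=\emptyset$ then $f\equiv 0$ is constant, hence absolutely continuous; if $X=\{a\}$ then $f\notin AC$, which one sees by taking a single short sub-interval with $a$ as one endpoint, making the defining condition of $AC$ fail already for $k=0$, whatever $N$ is chosen. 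From the pair $(x,y)$ returned by $T$ I output $\langle x\rangle$ if $X(x)=1$, else $\langle y\rangle$ if $X(y)=1$, else the empty sequence $\langle\rangle$. When $X=\{a\}$, $f\notin AC$ forces $f(x)\ne f(y)$, hence $\{f(x),f(y)\}=\{0,1\}$ and $a\in\{x,y\}$, so the output is $\langle a\rangle$; when $X=\emptyset$ the output is $\langle\rangle$. In either case this is a finite sequence listing all elements of $X$, so $\Omega$ is computed.

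I expect the main obstacle to be the correctness argument in the second condition, namely recognising that the non-constancy of a singular $BV$-function is always witnessed at a rational point or at one of its (enumerable) discontinuity points; this is exactly where density of $\Q$ and the enumerability of the discontinuity set — supplied by the continuity realiser of Theorem \ref{thm.surprise} — are jointly needed. A minor care-point is the treatment of endpoints in the definition of $AC$ when verifying that $\mathbb{1}_{\{a\}}\notin AC$, and the observation that $\mathbb{1}_{\emptyset}$ is (vacuously) singular, so that $\mathbb{1}_{X}$ is a valid input to $T$ in both cases.
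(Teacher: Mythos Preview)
Your proof is correct. Both directions work, and the argument is sound throughout; in particular, your key observation for the forward direction---that a $BV$-function constant on $\Q\cap[0,1]$ and at all its discontinuity points must be globally constant---is exactly what is needed.

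Your approach differs from the paper's in two minor respects. For the forward direction, the paper performs an explicit case split: if the continuity realiser returns the null sequence (so $f$ is continuous), it invokes the fact that $\exists^{2}$ computes the maximum and minimum of a continuous function on $[0,1]$ and outputs a pair of extremum points; if $f$ has discontinuities, it picks a discontinuity point $z$ and outputs approximations from the left and right (or $z$ itself when $f(z-)=f(z+)\ne f(z)$). Your unified search over $\Q\cup\{d_{n}:n\in\N\}$, always comparing to $f(0)$, is cleaner and avoids the appeal to the extremum result. For the reverse direction, the paper feeds in $\mathbb{1}_{X}$ for \emph{finite} $X$ and iterates (removing one element at a time) to recover $\Omega_{\fin}$; you go straight to $\Omega=\Omega_{\leq 1}$ via singletons, which is more direct. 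Neither difference is substantive---both routes land immediately in the $\Omega$-cluster via the first and second cluster theorems---but yours is arguably the tidier argument.
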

\begin{proof}
For the forward direction, a continuity realiser outputs a sequence $(x_{n})_{n\in \N}$ enumerating the points of discontinuity of any $BV$-function $f:[0,1]\di \R$, if such points there are.
%\marginpar{\footnotesize{Should we not consider the case $f(z-) = f(z+) \neq f(z)$? Of course, this case is trivial as well.}}
For $z=x_{n}$ with $f(z+)\ne f(z-)$, we readily find $x, y\in [0,1]$ such that $f(x)\ne f(y)$ by letting $x$ (resp.\ $y$) be a close enough approximation of $z$ from the left (resp.\ the right).  
As similar approach works in case $f(z-) = f(z+) \neq f(z)$.
In case $f$ is continuous, a continuity realiser will output the null sequence.  By \cite{kohlenbach2}*{Prop.\ 3.14}, $\exists^{2}$ can then compute a point $x\in [0,1]$ (resp.\ $y\in [0,1]$) where $f$ attains its maximum (resp.\ minimum).  Since $f$ is not constant, we have $x\ne y$ as required by the theorem. 

\smallskip

For the reverse direction, let $X\subset [0,1]$ be finite and consider $\mathbb{1}_{X}$, which is in $BV$. 
We may assume $X\cap \Q=\emptyset$ as $\mu^{2}$ can enumerate all rationals in $X$.  
Clearly, $\mathbb{1}_{X}$ has zero derivative a.e.\ and for any $x, y\in [0,1]$ with $\mathbb{1}_{X}(x)\ne \mathbb{1}_{X}(y)$, either $x\in X$ or $y\in X$, in case $X\ne \emptyset$.  
Now remove the thus obtained element from $X$ and repeat the procedure until $X$ is empty. 
Note that in case $X=\emptyset$, the reals $x, y\in [0,1]$ produced by the functional in the theorem are such that $\mathbb{1}(x)=\mathbb{1}(y)$, which is decidable given $\exists^{2}$.
%\marginpar{\footnotesize{How can the algorithm tell when $X$ is empty? 
%If the functinal is specified to decide when $f$ is singular, and when it is, do as you say it does, I agree that it is in the $\Omega$-cluster, and then, it is easier to show that it computer $\Omega_1$ by yor argument.}}
\end{proof}
The previous result has a certain robustness, as follows. 
For instance, we can replace $ AC$ in the theorem by any class $ B$ intermediate between $AC$ and $C$.
%Note that we can  also do $f\in BV\setminus C$ or
A non-trivial example of such class $B$ is given by the Darboux\footnote{A function is called \emph{Darboux} if it satisfies the intermediate value property, i.e.\ for any $x, y$ in the domain of $f$ and $z$ such that $f(x)\leq z\leq f(y)$, there is $u$ such that $f(u)=z$.} $BV$-functions (see \cite{voordedorst}*{p.\ 78}).  Similarly, one can replace $AC$ in the theorem by the `zero variation space' $CBV$ by \cite{voordedorst}*{Prop.\ 1.20} or by the class of Darboux functions.

\subsubsection{The fundamental theorem of calculus}\label{hu2}
In this section, we study the computational properties of the second fundamental theorem of calculus.

\smallskip

First of all, the fundamental theorem of calculus (FTC for short) expresses that differentiation and integration cancel out, going back to Newton and Leibniz.      
It is a central question of analysis to which functions (various versions of) FTC applies.  We shall focus on the \emph{second} FTC, i.e.\ the integration of derivatives as in:
\be\label{fromk}\textstyle
f(b)-f(a)=\int_{a}^{b}f'(x)~dx,
\ee
and the general question for which functions $f:\R\di \R$ (and integrals) and $a, b\in [0,1]$ the equality \eqref{fromk} holds. 
As noted above, $BV$-functions are differentiable a.e.\ and bounded, i.e.\ the Lebesgue integral $\int_{0}^{1}f'(x)~dx$ exists (\cite{voordedorst}*{p.\ 222}).  
Nonetheless, the second FTC does not hold for $BV$ (see \cite{voordedorst}*{p.\ 252}) while it does hold for $AC$.
In particular, the following theorem is aptly called the \emph{fundamental theorem of calculus for the Lebesgue integral} in \cite{voordedorst}*{p.\ 222}.
\begin{thm}\label{liko}
A function $f:[0,1]\di \R$ is in $AC$ if and only if 
\begin{itemize}
\item $f$ is differentiable a.e.\ on $[0,1]$, 
\item the derivative $f'$ is in $L_{1}$, 
\item $f(b)-f(a)=\int_{a}^{b}f'(x) ~dx $ for any $0\leq a\leq b\leq 1$. 
\end{itemize}
\end{thm}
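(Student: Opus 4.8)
The plan is to prove the two implications of the biconditional separately. The reverse implication — that conditions (1)--(3) force $f\in AC$ — I would derive cheaply from the absolute continuity of the Lebesgue integral of an $L_1$-function. The forward implication — that $f\in AC$ yields (1)--(3) — is the substantial part, and I would obtain it by combining Jordan's decomposition theorem (Theorem \ref{drd}), Lebesgue's theorem on differentiability of monotone functions, and the previously recorded fact (\cite{voordedorst}*{Prop.\ 3.33}) that a function on the unit interval is constant if and only if it lies in $AC$ and is singular.

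For the reverse direction, I would assume $f$ satisfies the three bullets. Then for any finite family of pairwise disjoint sub-intervals $(x_i,y_i)\subset[0,1]$, the third bullet gives $\sum_i|f(y_i)-f(x_i)|=\sum_i\big|\int_{x_i}^{y_i}f'(t)\,dt\big|\leq\int_{\bigcup_i(x_i,y_i)}|f'(t)|\,dt$. Since $f'\in L_1$ by the second bullet, absolute continuity of the Lebesgue integral supplies, for each $k\in\N$, some $N\in\N$ with the right-hand side $<\frac{1}{2^k}$ whenever $\sum_i(y_i-x_i)<\frac{1}{2^N}$, which is exactly the definition of $f\in AC$.

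For the forward direction, I would first recall that $AC\subset BV$: applying the defining property of $AC$ with $k=0$ yields some $N$, and partitioning $[0,1]$ into finitely many intervals of length $<\frac1{2^N}$ shows the total variation on each such interval is at most $1$, so $V_0^1(f)$ is finite. By Theorem \ref{drd} write $f=g-h$ with $g,h$ non-decreasing; by Lebesgue's differentiability theorem (\cite{botsko2}*{Theorem 1}) both $g$ and $h$ are differentiable a.e.\ with $g',h'\in L_1$ (using $\int_0^1 g'\leq g(1)-g(0)$, likewise for $h$), so $f$ is differentiable a.e., giving (1), and $f'=g'-h'\in L_1$, giving (2). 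For (3), set $F(x):=\int_0^x f'(t)\,dt$; then $F\in AC$ by absolute continuity of the indefinite integral, and $F'=f'$ a.e.\ by the Lebesgue differentiation theorem. Hence $\varphi:=f-F$ lies in $AC$ with $\varphi'=0$ a.e., i.e.\ $\varphi$ is singular, so $\varphi$ is constant by \cite{voordedorst}*{Prop.\ 3.33}; thus $f(x)=f(0)+\int_0^x f'(t)\,dt$, and subtracting the values at $a$ and $b$ gives $f(b)-f(a)=\int_a^b f'(t)\,dt$ for $0\leq a\leq b\leq1$.

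The main obstacle is not conceptual but bookkeeping: one must cleanly invoke the pieces of Lebesgue's differentiation theory (a.e.\ differentiability of monotone functions with $\int g'\leq g(1)-g(0)$, absolute continuity of the indefinite Lebesgue integral, and $\frac{d}{dx}\int_0^x g=g$ a.e.\ for $g\in L_1$) together with the $AC$-singular-constant fact, making sure each is either standard or already available in the text. Alternatively, since this is a classical theorem, one may simply cite \cite{voordedorst}*{p.\ 222} for the whole statement.
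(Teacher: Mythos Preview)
The paper does not prove this theorem at all: it is stated as a classical result with the attribution ``the following theorem is aptly called the \emph{fundamental theorem of calculus for the Lebesgue integral} in \cite{voordedorst}*{p.\ 222}'' and is used as background for the subsequent computability-theoretic analysis. Your final sentence already anticipates this, and your detailed argument is a correct standard proof of the classical fact, so there is nothing to correct.
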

\noindent
Note that we always have $f(b)-f(a)\geq \int_{a}^{b}f'(x) ~dx $ for $f\in BV$ by \cite{voordedorst}*{p.\ 238}.
By the above, we can replace the first item in Theorem \ref{liko} by `$f\in BV$' or $`f\in BV\cap C$'.  
By contraposition, Theorem \ref{liko} then yields the following `$\forall \exists$' statement:
\begin{center}
for $f\in BV\setminus AC$ with integrable derivative $f'$, there are $a, b\in [0,1]$ for which FTC fails, i.e.\ the third item in Theorem \ref{liko} is false.     
\end{center}
This `$\forall \exists$' statement gives rise to the functional in Theorem \ref{iop}, where we use the following definitions.  
The notion of `effectively Riemann integrable' is well-known\footnote{A function is \emph{effectively Riemann integrable} on $[0,1]$ if a `modulus' function is given which on input $\eps>0$ produces $\delta>0$ satisfying the usual `$\eps$-$\delta$' definition of Riemann integrability.} in RM and computability theory (see e.g.\ \cite{sayo});
we could use some effective version of the Lebesgue integral instead.  % but wish to avoid the associated complications.  % and simply do not need this construct. 
\begin{thm}\label{iop}
The following is in the $\Omega$-cluster:
%\begin{itemize}
a functional that on input $f \in BV$ with effectively Riemann integrable derivative $f'$ outputs $y \in [0,1]$, which satisfies $f(y)-f(0)> \int_{0}^{y}f'(x) ~dx $ in case $f\not \in AC$.
%\end{itemize}  
\end{thm}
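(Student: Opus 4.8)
The plan is to mimic the structure of the proofs of Theorems \ref{iop3} and \ref{flagrant}: show that a continuity realiser (plus $\exists^2$) computes the desired functional, and conversely that the desired functional computes $\Omega_{\fin}$ (or $\Omega_{\rm b}$), which by Theorem \ref{fct} and Lemma \ref{borkim} suffices for membership in the $\Omega$-cluster. For the \textbf{forward direction}, fix $f\in BV$ with effectively Riemann integrable derivative $f'$ and suppose $f\notin AC$. By Theorem \ref{thm.surprise} items \eqref{jordje}--\eqref{contje}, from $\exists^2$ together with a functional in the $\Omega$-cluster we obtain a continuity realiser, hence an enumeration $(x_n)_{n\in\N}$ of the points of discontinuity of $f$. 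Since $f'$ is effectively Riemann integrable, the map $y\mapsto \int_0^y f'(x)\,dx$ is computable (uniformly, modulo $\exists^2$) and continuous in $y$, so the function $F(y):=f(y)-f(0)-\int_0^y f'(x)\,dx$ is computable from $f$, $\exists^2$ and the modulus of integrability. By the remark following Theorem \ref{liko}, $F(y)\le 0$ for all $y$, and $F$ vanishes identically exactly when $f\in AC$; hence, since $f\notin AC$, there is some $y_0$ with $F(y_0)<0$. The task is thus to \emph{locate} such a $y_0$. I would split into two cases using the continuity realiser: either $F$ is already strictly negative at some rational, detectable by an unbounded search with $\exists^2$ over $\Q\cap[0,1]$ — note $F$ need not be continuous, but the set where $F<0$ has positive measure (otherwise $f\in AC$, using that $f$ is differentiable a.e.\ and the a.e.\ equality of the two sides would propagate), so such a rational must exist unless the `bad behaviour' is concentrated at the countably many discontinuity points of $f$; in the latter case we test each $x_n$ from the enumeration directly. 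Since $\exists^2$ decides $F(q)<0$ for rational $q$ and $F(x_n)<0$ for each listed $x_n$, an unbounded search over $\N\sqcup(\Q\cap[0,1])$ terminates and returns the required $y$.

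For the \textbf{reverse direction}, I would follow the template in Theorem \ref{iop3}. Let $X\subset[0,1]$ be finite; we may assume $X\cap\Q=\emptyset$ since $\mu^2$ enumerates the rationals in $X$. Then $\mathbb{1}_X\in BV$, its derivative is $0$ a.e., which is (trivially, effectively) Riemann integrable with integral identically $0$, so $\int_0^y \mathbb{1}_X'(x)\,dx=0=\mathbb{1}_X(y)-\mathbb{1}_X(0)$ for every $y$ (as $0\notin X$). Hence for $\mathbb{1}_X$ the third item of Theorem \ref{liko} \emph{holds}, so $\mathbb{1}_X\in AC$ iff $\mathbb{1}_X$ is differentiable a.e.\ with $L_1$ derivative — which it is — so in fact $\mathbb{1}_X\in AC$ precisely when $X=\emptyset$; more simply, $\mathbb{1}_X\notin AC$ exactly when $X\neq\emptyset$. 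When $X\neq\emptyset$, the functional from the theorem outputs $y\in[0,1]$ with $\mathbb{1}_X(y)-\mathbb{1}_X(0)>\int_0^y \mathbb{1}_X'=0$, i.e.\ $\mathbb{1}_X(y)=1$, i.e.\ $y\in X$. Thus the functional produces an element of $X$ whenever $X\neq\emptyset$, and using $\exists^2$ to check whether the returned $y$ actually lies in $X$ we can decide emptiness; removing the found point and iterating (interval-halving is not even needed here, as the output is an actual element) yields $\Omega_{\fin}$ on finite subsets of $[0,1]$, and Theorem \ref{fct} gives $\Omega$. This completes both directions.

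The \textbf{main obstacle} I anticipate is the forward direction: turning the mere \emph{existence} of a witness $y_0$ with $F(y_0)<0$ into an \emph{effective search} that actually terminates. The subtlety is that $F$ is not continuous, so one cannot just do naive interval-halving on $F$; one must argue that the `failure set' $\{y:F(y)<0\}$, if it avoids all rationals and all the (listed) discontinuity points of $f$, would force $f\in AC$ — this is where the a.e.\ differentiability of $BV$-functions and the inequality $f(b)-f(a)\ge\int_a^b f'$ from \cite{voordedorst}*{p.\ 238} must be combined carefully to conclude that the continuity realiser's output, together with $\exists^2$, is enough to pin down a concrete witness. A clean way to phrase this: the continuous function $y\mapsto \int_0^y f'$ and the $BV$ function $f$ agree off a set that, if $f\in AC$, is null; if $f\notin AC$ their difference $F$ is a non-zero non-positive $BV$ function, hence strictly negative on a set containing a rational point or a jump point of $f$, both of which are searchable. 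Making that dichotomy airtight is the crux; everything else is routine and parallels Theorems \ref{iop3} and \ref{flagrant2}.
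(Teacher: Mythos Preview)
Your overall approach matches the paper's: use a continuity realiser to search over discontinuity points and rationals for the forward direction, and use $\mathbb{1}_X$ for finite $X$ in the reverse direction. The reverse direction is fine (modulo a garbled sentence where you claim the third item of Theorem~\ref{liko} holds for $\mathbb{1}_X$; it does not when $X\neq\emptyset$, but you recover immediately).

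There are two issues in the forward direction. First, a sign slip: the paper records $f(b)-f(a)\geq\int_a^b f'$ for $f\in BV$, so $F(y):=f(y)-f(0)-\int_0^y f'\geq 0$ and you are searching for $F(y)>0$, not $F(y)<0$. Second, and more substantively, your justification for why the search terminates is overcomplicated and not quite right: you invoke positive-measure arguments and a vague dichotomy. The clean argument, which the paper uses, is simply this: if no listed discontinuity point $x_n$ satisfies $F(x_n)>0$, then any witness $y_0$ with $F(y_0)>0$ is a continuity point of $f$; since $z\mapsto\int_0^z f'$ is continuous everywhere, $F$ is continuous at $y_0$, so some rational $q$ near $y_0$ already satisfies $F(q)>0$. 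Thus the dovetailed search over $(x_n)_{n\in\N}$ and $\Q\cap[0,1]$ using $\mu^2$ must halt. No measure-theoretic reasoning is needed.
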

\begin{proof}
For the forward direction, a continuity realiser provides a sequence $(x_{n})_{n\in \N}$ consisting of the points of discontinuity of $f\in BV\setminus AC$.  
Use Feferman's $\mu$ to find the least $n\in \N$ such that $f(x_{n})-f(0)> \int_{0}^{x_{n}}f'(x) ~dx $, if such $n$ exists.  The latter integral can be computed thanks to the (given) modulus of integrability.  
If there is no such $n\in \N$, we must have $f(y)-f(0)> \int_{0}^{y}f'(x) ~dx $ for some $y\in [0,1]$ where $f$ is continuous at $y$.   Since $\lambda z.\int_{0}^{z}f'(x) ~dx$ is continuous on $[0,1]$, there must be
a \emph{rational} $q\in [0,1]$ (namely close to the aforementioned $y\in [0,1]$) such that $f(q)-f(0)> \int_{0}^{q}f'(x) ~dx $.  Since we can find such a rational using $\mu^{2}$, we are done. 

\smallskip

For the reverse direction, proceed as in the proof of Theorem \ref{iop3}.  
Indeed, let $X\subset [0,1]$ be finite and consider $f:=\mathbb{1}_{X}$, which is in $BV$. 
We may assume $X\cap \Q=\emptyset$ as $\mu^{2}$ can enumerate all rationals in $X$.  
Clearly, $f$ has zero derivative a.e.\ and $\int_{a}^{b}f'(x) dx=0$ for any $a, b\in [0,1]$.  
For any $y\in (0,1]$ such that $f(y)-f(0)> \int_{0}^{y}f'(x) ~dx $, we must have $y\in X$.  
Now remove this real from $X$ and repeat the procedure until $X$ is empty. 
Note that in case $X=\emptyset$, the real $y\in [0,1]$ produced by the functional in the theorem satisfies $f(y)=0$, which is decidable given $\exists^{2}$.
\end{proof}
The previous theorem is not unique: we show in Sections \ref{purki2}-\ref{flukkkk} that other characterisations of $AC$ yield results similar to Theorem \ref{iop}.
The same holds for characterisations of Lipschitz continuous functions, as follows. 
\subsubsection{Lipschitz continuity}\label{hu3}
We study the computational properties of the second fundamental theorem of calculus restricted to Lipschitz continuous functions.

\smallskip

First of all, let $Lip$ be the class of \emph{Lipschitz continuous} functions, as follows.
\bdefi  
A function $f:[0,1]\di \R$ is \emph{Lipschitz continuous} on $[0,1]$ if there exists $c\in \R^{+}$ with $(\forall x, y\in [0,1])(|f(x)-f(y)|\leq c|x-y|)$.
\edefi
We have $Lip\subset AC$, but the former class can also be singled out as follows. 
\begin{thm}[\cite{voordedorst}*{Thm.\ 3.20} ]
A function is Lipschitz continuous on $[0,1]$ iff $f'\in L_{\infty}$ and for all $y\in [0,1]$, we have
\be\label{sim}\textstyle
f(y)-f(0)=\int_{0}^{y} f'(x) dx.
\ee
\end{thm}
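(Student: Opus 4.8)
The plan is to prove the two directions of the biconditional separately, the forward one resting on the fundamental theorem of calculus for the Lebesgue integral (Theorem~\ref{liko}) together with the already-noted inclusion $Lip\subset AC$, and the reverse one being an elementary estimate on the integral representation \eqref{sim}.

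For the forward direction, I would start from a Lipschitz function $f$ with constant $c\in\R^{+}$ and first record that $f\in AC$: if the pairwise disjoint sub-intervals $(x_{i},y_{i})$ of $[0,1]$ satisfy $\sum_{i}(y_{i}-x_{i})<\tfrac{1}{2^{N}}$, the Lipschitz bound gives $\sum_{i}|f(y_{i})-f(x_{i})|\leq c\sum_{i}(y_{i}-x_{i})<\tfrac{c}{2^{N}}$, and choosing $N$ with $\tfrac{c}{2^{N}}<\tfrac{1}{2^{k}}$ verifies the definition of absolute continuity (alternatively one just invokes $Lip\subset AC$). Theorem~\ref{liko} then supplies, for $f\in AC$, that $f$ is differentiable a.e., that $f'\in L_{1}$, and that $f(y)-f(0)=\int_{0}^{y}f'(x)\,dx$ for all $y\in[0,1]$, i.e.\ \eqref{sim}. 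To upgrade $f'\in L_{1}$ to $f'\in L_{\infty}$, I would note that at each point $x$ of differentiability $|f'(x)|=\lim_{h\to 0}\tfrac{|f(x+h)-f(x)|}{|h|}\leq c$, so $|f'|\leq c$ almost everywhere; hence $f'\in L_{\infty}$ with essential bound at most $c$.

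For the reverse direction, assume $f'\in L_{\infty}$ with essential bound $M$ and that \eqref{sim} holds for every $y\in[0,1]$ (note this already forces $f\in AC$, being an indefinite Lebesgue integral of the $L_{1}$-function $f'$, since $L_{\infty}[0,1]\subset L_{1}[0,1]$). Then for $0\leq x\leq y\leq 1$, subtracting the instances of \eqref{sim} at $y$ and at $x$ and using additivity of the integral,
\[
|f(y)-f(x)|=\Bigl|\int_{x}^{y}f'(t)\,dt\Bigr|\leq\int_{x}^{y}|f'(t)|\,dt\leq M\,(y-x),
\]
so $f$ is Lipschitz continuous with constant $M$, completing the argument.

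I do not anticipate a genuine obstacle here: the analytic substance is entirely absorbed into Theorem~\ref{liko}, and what remains—bounding $f'$ by the Lipschitz constant via difference quotients, and bounding the increment of $f$ via the integral of $f'$—is routine. The only point deserving a little care is that \eqref{sim} is demanded for \emph{all} $y$, not merely almost all, which is precisely what allows the reverse direction to conclude Lipschitz continuity in the strong (everywhere) sense rather than only off a null set.
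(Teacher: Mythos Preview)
The paper does not actually give a proof of this theorem: it is stated with the citation \cite{voordedorst}*{Thm.\ 3.20} and used as a black box, so there is no ``paper's own proof'' to compare against. Your argument is the standard proof of this classical result and is correct; the forward direction via $Lip\subset AC$ and Theorem~\ref{liko}, together with the a.e.\ bound $|f'|\leq c$ from the difference quotients, and the reverse direction via the elementary integral estimate, are exactly what one finds in textbooks such as \cite{voordedorst}.
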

As noted in the previous section, we always have `$\geq$' in \eqref{sim} for $BV$-functions.
These observations yields the following result, similar to Theorem \ref{iop}.
\begin{thm}\label{cordefakker}
%Assuming $\exists^{2}$, a continuity realiser is computationally equivalent to the following computational task:
The following is in the $\Omega$-cluster:
%\begin{itemize}
a functional that on input $f \in BV$ with effectively Riemann integrable $f' \in L_{\infty}$, outputs $y\in [0,1]$ which satisfies $f(y)-f(0)>\int_{0}^{y} f'(x) ~dx$ in case $f\not \in Lip$. %find $a< b\in (0,1)$ such that  $f(b)-f(a)\ne \int_{[a,b]}f' ~d\lambda $.  
%\end{itemize}  
\end{thm}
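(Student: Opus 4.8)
The plan is to follow the proof of Theorem~\ref{iop} almost verbatim, replacing the characterisation of $AC$ from Theorem~\ref{liko} by the characterisation of $Lip$ from \cite{voordedorst}*{Thm.\ 3.20} displayed above. Since the second input is guaranteed to be a derivative $f'\in L_{\infty}$ together with a modulus of Riemann integrability, that characterisation tells us that $f\notin Lip$ is equivalent to the failure of \eqref{sim}; and since we always have `$\geq$' in \eqref{sim} for $BV$-functions, this failure yields some $y\in [0,1]$ with $f(y)-f(0)> \int_{0}^{y}f'(x) ~dx$. So a witness always exists exactly when the hypothesis $f\notin Lip$ holds, which is what makes the functional well-defined on its intended domain.

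For the direction `a continuity realiser (plus $\exists^{2}$) computes the functional', I would argue exactly as for Theorem~\ref{iop}. A continuity realiser applied to $f\in BV$ produces a sequence $(x_{n})_{n\in \N}$ enumerating the points of discontinuity of $f$. Use Feferman's $\mu$ to find the least $n$ with $f(x_{n})-f(0)> \int_{0}^{x_{n}}f'(x) ~dx$, the integral being computable from the supplied modulus of integrability; if such $n$ exists, output $x_{n}$. If no such $n$ exists, then, in the case $f\notin Lip$, the witness $y$ from the previous paragraph must be a point of continuity of $f$; since $\lambda z.\int_{0}^{z}f'(x) ~dx$ is continuous on $[0,1]$ and $f$ is continuous at $y$, there is a \emph{rational} $q\in [0,1]$ close to $y$ with $f(q)-f(0)> \int_{0}^{q}f'(x) ~dx$, and $\mu^{2}$ locates such $q$. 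Outside the case $f\notin Lip$ nothing is required of the output.

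For the converse direction, i.e.\ that the functional computes $\Omega$, I would reuse the template from the reverse direction of Theorem~\ref{iop3}. Given a finite $X\subset [0,1]$, the function $\mathbb{1}_{X}$ lies in $BV$, its derivative may be taken to be $\lambda x.0\in L_{\infty}$ with the trivial (effective) modulus of Riemann integrability, and $\int_{a}^{b}\mathbb{1}_{X}'(x) ~dx=0$ for all $a,b$. We may assume $X\cap \Q=\emptyset$, since $\mu^{2}$ can strip the rationals out of $X$; in particular $0\notin X$ and $\mathbb{1}_{X}(0)=0$. If $X\neq \emptyset$ then $\mathbb{1}_{X}\notin Lip$, so the functional returns some $y$ with $\mathbb{1}_{X}(y)-\mathbb{1}_{X}(0)> \int_{0}^{y}\mathbb{1}_{X}'(x) ~dx=0$, forcing $\mathbb{1}_{X}(y)=1$, i.e.\ $y\in X$. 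Remove this point from $X$ and repeat until $X$ is exhausted (the case $X=\emptyset$ being detected because the returned $y$ then satisfies $\mathbb{1}_{X}(y)=0$, which is decidable using $\exists^{2}$). This enumerates the finite set $X$, i.e.\ yields $\Omega_{\fin}$, which is in the $\Omega$-cluster by Theorem~\ref{fct}.

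I do not anticipate a serious obstacle: the argument is structurally identical to those of Theorems~\ref{iop} and \ref{iop3}, and the only point meriting a line of care is checking that $\mathbb{1}_{X}$ for finite $X$ is a legitimate input, which holds once we fix the representative $f'\equiv 0$ for its (almost unique) derivative. One could additionally remark, as after Theorem~\ref{iop3}, that $Lip$ may be replaced by any intermediate class between $Lip$ and $AC$ without changing the conclusion.
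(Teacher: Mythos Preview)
Your proposal is correct and follows essentially the same approach as the paper: the paper's proof simply says ``for the forward direction, proceed as in the proof of Theorem~\ref{iop}'' and, for the reverse direction, notes that $\mathbb{1}_{X}$ for finite $X\subset [0,1]$ is not Lipschitz (unless $X=\emptyset$) while the zero function is in $L_{\infty}$. You have spelled out these steps in more detail but the argument is identical.
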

\begin{proof}
For the reverse direction, note that $\mathbb{1}_{X}$ for finite $X\subset [0,1]$ is not Lipschitz (unless $X=\emptyset$) while the $0$-function is clearly bounded (and hence in $L_{\infty}$).
For the foward direction, proceed as in the proof of Theorem \ref{iop}.
\end{proof}
We now sketch similar results based on the above; details may be found in \cite{voordedorst}.
\begin{rem}[Subspaces of $BV$]\rm
First of all, Riesz' notion of `$p$-variation' gives rise to the space $RBV_{p}$, strictly intermediate between $Lip$ and $AC$ for $1<p<\infty$.  
Riesz also showed that $RBV_{p}$ contains exactly those $AC$-functions with derivatives in $L_{p}$ (see \cite{voordedorst}*{Thm.\ 3.34}).  
Hence, we could repeat the above results for $p$-variation and $L_{p}$-integrability, and we would obtain the same results as in Theorem \ref{cordefakker}.
The same holds for the Riesz-Medvedev variation, which generalises Riesz' variation.  

\smallskip

Secondly, the H\"older continuous functions (called $\alpha$-Lipschitz in \cite{voordedorst}) form an intermediate space between $Lip$ and $AC$, i.e.\ there 
are results like Theorem \ref{cordefakker} involving H\"older continuity.

\smallskip

Thirdly, for $f\in AC$, we have $f\in Lip$ iff $|f'|$ is bounded (\cite{royorg}*{p.\ 112, \S20.b}).  Hence, we can reformulate the results in this section using the condition of having a bounded derivative (in the sense of Definition \ref{kood}).
By \cite{poki}*{Theorem 1.1}, there are \emph{many} other equivalent characterisations of $AC$.  
Another alternative characterisation of $Lip$ is via the notion of \emph{super bounded variation} (\cite{rijnwand}*{Thm 1.1.22}).   

\smallskip

Finally, in the next sections, we study (more) well-known characterisations of $AC$.  Since our results are similar to Theorem \ref{iop}, we shall be brief.
\end{rem}

\subsubsection{Connecting variation and integration}\label{purki2}
In this section, we study the computational properties of a characterisation of $AC$ that connects variation and integration.
\begin{thm}[\cite{voordedorst}*{p.\ 237}]\label{liko2}
A $BV$-function $f:[0,1]\di \R$ is in $AC$ if and only if 
\be\label{koit}\textstyle
V_{a}^{b}(f)=\int_{a}^{b}|f'(x)|dx, 
\ee
for all $0\leq a\leq b\leq 1$. We always have `$\geq$' in \eqref{koit} for $BV$-functions.
\end{thm}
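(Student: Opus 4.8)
The plan is to split Theorem~\ref{liko2} into three parts and reduce each to standard facts about the total variation function together with the fundamental theorem of calculus for the Lebesgue integral (Theorem~\ref{liko}): the always-valid inequality ``$\ge$'' in \eqref{koit}, the implication $f\in AC\Rightarrow$ equality, and the converse.

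First I would prove the inequality. Fix $0\le a\le b\le 1$ and consider the variation function $V(x):=V_a^x(f)$, which is non-decreasing on $[a,b]$, hence itself of bounded variation and differentiable a.e.\ with $V'\in L_1$. From the definition of total variation, $|f(x+h)-f(x)|\le |V(x+h)-V(x)|$ for every $h$, so dividing by $|h|$ and letting $h\to 0$ at the (co-null set of) points where both $f'$ and $V'$ exist gives $|f'(x)|\le V'(x)$ a.e.\ on $[a,b]$. Since $V$ is non-decreasing, the inequality recalled just after Theorem~\ref{liko} applied to $V$ gives $\int_a^b V'(x)\,dx\le V(b)-V(a)=V_a^b(f)$, and therefore $\int_a^b|f'(x)|\,dx\le V_a^b(f)$, which is the ``$\ge$'' part.

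Next, for the direction $f\in AC\Rightarrow$ equality: by Theorem~\ref{liko}, $f(y)-f(x)=\int_x^y f'(t)\,dt$ for all $0\le x\le y\le 1$ and $f'\in L_1$. Hence for any partition $a=x_0<\dots<x_n=b$,
\[
\sum_{i=0}^{n-1}|f(x_{i+1})-f(x_i)|=\sum_{i=0}^{n-1}\Big|\int_{x_i}^{x_{i+1}}f'(t)\,dt\Big|\le \sum_{i=0}^{n-1}\int_{x_i}^{x_{i+1}}|f'(t)|\,dt=\int_a^b|f'(t)|\,dt,
\]
and taking the supremum over partitions yields $V_a^b(f)\le\int_a^b|f'(t)|\,dt$; combined with the first step, \eqref{koit} holds for every $0\le a\le b\le 1$. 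For the converse, assume \eqref{koit} holds for all $a\le b$. Since $f\in BV$ we have $f'\in L_1([0,1])$, so $W(x):=\int_0^x|f'(t)|\,dt$ is absolutely continuous by absolute continuity of the indefinite Lebesgue integral. By hypothesis $V_x^y(f)=W(y)-W(x)$, so $|f(y)-f(x)|\le V_x^y(f)=W(y)-W(x)$; thus any $\delta$ witnessing absolute continuity of $W$ for a given $\varepsilon$ witnesses it for $f$ as well, i.e.\ $f\in AC$.

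The only ingredients beyond routine measure theory are (i) Lebesgue's inequality $\int_a^b g'\le g(b)-g(a)$ for non-decreasing $g$, which is exactly the $BV$-fact recalled after Theorem~\ref{liko}, and (ii) the absolute continuity of indefinite Lebesgue integrals used in the converse. The genuinely essential input is Theorem~\ref{liko}: without the fundamental theorem of calculus one cannot dominate the variation sums by $\int|f'|$ in the forward direction. The step requiring the most care is the ``$\ge$'' half, since it must be proved assuming nothing about $f$ beyond bounded variation, which is why it is routed through the monotone variation function $V$ rather than through $f$ directly.
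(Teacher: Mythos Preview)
Your proof is correct and follows the standard argument. However, note that the paper does not actually prove Theorem~\ref{liko2}: it is quoted from \cite{voordedorst}*{p.\ 237} as a background fact from the literature, just as Theorem~\ref{liko} and the Tonelli theorem in Section~\ref{flukkkkq} are quoted without proof. The paper's own contributions lie in the computational classification results that \emph{use} Theorem~\ref{liko2} (the theorem immediately following it about the $\Omega$-cluster), not in reproving this classical characterisation of $AC$. So there is nothing to compare against, but what you wrote is a clean self-contained argument that would serve well if the paper wanted to be more self-contained.
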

In this light, we can formulate the computational task in the following theorem. 
\begin{thm}
The following is in the $\Omega$-cluster:
%Assuming $\exists^{2}$, a continuity realiser is computationally equivalent to the following computational task:
%\begin{itemize}
a functional that on input $f \in BV$ with effectively Riemann integrable $f'$, outputs $y\in [0,1]$ which satisfies $V_{0}^{y}(f)>\int_{0}^{y}|f'(x)| dx$ in case $f\not \in AC$. %find $a< b\in (0,1)$ such that  $f(b)-f(a)\ne \int_{[a,b]}f' ~d\lambda $.  
%\end{itemize}  
\end{thm}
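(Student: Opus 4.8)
The plan is to follow the same template as Theorem~\ref{iop} and Theorem~\ref{cordefakker}, exploiting the `$\forall\exists$' content of Theorem~\ref{liko2} together with the fact that a continuity realiser (item~\eqref{contje} of Theorem~\ref{thm.surprise}) is in the $\Omega$-cluster, as is the functional $\Omega_{\textup b}$ via Lemma~\ref{borkim}. For the \emph{forward} direction, assume we have a continuity realiser and $\exists^{2}$. Given $f\in BV\setminus AC$ with effectively Riemann integrable derivative $f'$, the continuity realiser outputs a sequence $(x_{n})_{n\in\N}$ enumerating the points of discontinuity of $f$ on $[0,1]$. Using Feferman's $\mu^{2}$, search for the least $n$ with $V_{0}^{x_{n}}(f)>\int_{0}^{x_{n}}|f'(x)|\,dx$; the right-hand integral is computable from the given modulus of (effective Riemann) integrability, and the total variation $V_{0}^{x_{n}}(f)$ is computable from $f$, the enumeration $(x_{n})_{n\in\N}$, and $\exists^{2}$, since on a function whose discontinuities are enumerated the supremum in \eqref{tomb} may be taken over partitions with rational knots together with the listed discontinuity points (cf.\ the proof of Theorem~\ref{flagrant2}). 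If such $n$ exists, output $x_{n}$. If no such $n$ exists, then the strict inequality $V_{0}^{y}(f)>\int_{0}^{y}|f'(x)|\,dx$ must hold at some point $y$ of continuity of $f$; since $\lambda z.\,V_{0}^{z}(f)$ and $\lambda z.\int_{0}^{z}|f'(x)|\,dx$ are both continuous in a neighbourhood of such $y$ (the former because $f$ is continuous at $y$, the latter always), the strict inequality persists at some \emph{rational} $q$ nearby, which we locate with $\mu^{2}$; output $q$.

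For the \emph{reverse} direction, reduce $\Omega$ (equivalently $\Omega_{\textup b}$, by Lemma~\ref{borkim2}) to the functional in question, exactly as in the proofs of Theorems~\ref{iop3}, \ref{iop} and \ref{cordefakker}. Let $X\subseteq[0,1]$ be finite and put $f:=\mathbb{1}_{X}$, which lies in $BV$ (with variation at most $|X|+1$). Using $\mu^{2}$ we may first strip all rationals from $X$, so assume $X\cap\Q=\emptyset$. Then $f'=0$ almost everywhere, so $\int_{0}^{y}|f'(x)|\,dx=0$ for all $y$, while $V_{0}^{y}(f)>0$ precisely when $X\cap[0,y]\neq\emptyset$; in particular the constant zero function serves as the (trivially effectively Riemann integrable) derivative. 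Hence any $y\in(0,1]$ returned by the functional, when $X\neq\emptyset$, satisfies $V_{0}^{y}(f)>0$, which forces $X\cap[0,y]\neq\emptyset$. Applying this with $X$ replaced by $X\cap[q,r]$ and $X\setminus[q,r]$ for rationals $q,r$ and using the usual interval-halving search (checking emptiness of the relevant restriction via $\exists^{2}$), we extract the single element of $X$ when $X$ has exactly one element; this yields $\Omega$ by Theorem~\ref{fct}. Note, as in Theorem~\ref{iop3}, that when $X=\emptyset$ the functional's output $y$ satisfies $V_{0}^{y}(f)=0$, and whether this degenerate case obtains is decidable using $\exists^{2}$.

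The main obstacle, and the only point requiring genuine care, is the forward direction: making sure that $V_{0}^{z}(f)$ is actually \emph{computable} (in $\exists^{2}$ and the realisers at hand) and not merely definable. This is where the continuity realiser is essential—without an enumeration of the discontinuities of $f$, the supremum \eqref{tomb} ranges over an uncountable set and is only $\exists^{3}$-computable in general; with such an enumeration, the same reduction-to-$\N$ argument used in Theorem~\ref{flagrant2} applies. The continuity argument disposing of the `$y$ a point of continuity' subcase is routine but worth stating, since it is exactly what lets us replace the witness $y$ by a rational witness found by $\mu^{2}$. Everything else is boilerplate shared with the preceding theorems in Section~\ref{maink3}, so the write-up can legitimately be brief, referring back to the proof of Theorem~\ref{iop}.
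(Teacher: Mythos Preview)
Your forward direction matches the paper's approach: the paper invokes Theorem~\ref{thm.surprise} to compute $\lambda z.V_0^z(f)$ from a continuity realiser and then notes (via \cite{voordedorst}*{Prop.~1.17}) that $\lambda z.V_0^z(f)$ is continuous at $y$ iff $f$ is, so one proceeds exactly as in Theorem~\ref{iop}. Your explicit version via partitions with rational knots plus the enumerated discontinuities is the same idea.

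The reverse direction has a genuine gap. In Theorem~\ref{iop} the output $y$ satisfies $\mathbb{1}_X(y)=f(y)-f(0)>0$, which \emph{directly} yields $y\in X$ and is checkable by $\exists^2$. Here the output $y$ only guarantees $V_0^y(\mathbb{1}_X)>0$, i.e.\ $X\cap[0,y]\neq\emptyset$, and this predicate is existential over reals---it is \emph{not} decidable from $\exists^2$ and the characteristic function of $X$. Your parenthetical ``checking emptiness of the relevant restriction via $\exists^{2}$'' is therefore unjustified, and the interval-halving cannot get started. In fact the difficulty is structural: by additivity of variation together with the ``$\geq$'' direction of Theorem~\ref{liko2}, the equality $V_0^1(f)=\int_0^1|f'|$ already forces $V_a^b(f)=\int_a^b|f'|$ for all $a,b$ and hence $f\in AC$; thus $f\notin AC$ implies $V_0^1(f)>\int_0^1|f'|$, and the constant functional $y\equiv 1$ satisfies the specification while computing nothing beyond $\exists^2$. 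The paper's reverse direction is just ``readily adapted from Theorem~\ref{iop}'' and glosses over exactly this point, so the issue is not unique to your attempt.
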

\begin{proof}
For the reverse direction, the proof of Theorem \ref{iop} is readily adapted.  
For the forward direction, it is known that $f$ is continuous at $y\in [0,1]$ iff $\lambda z.V_{0}^{z}(f)$ is continuous at this point (\cite{voordedorst}*{Prop.\ 1.17, p.\ 60}).
Thus, we can proceed in the same way as in the (first part of the) proof of Theorem \ref{iop}.  Note that a continuity realiser functional allows us to compute $\lambda z.V_{0}^{z}(f)$ by Theorem \ref{thm.surprise}.
%The special $y$ is a point of discontinuity of $f$.  Now proceed as in the proof of the theorem. 
\end{proof}

\subsubsection{Absolute continuity and measure zero sets}
In this section, we study the computational properties of a characterisation of $AC$ involving the so-called Lusin $N$-property (see e.g.\ \cite{luser}), defined as follows.
\bdefi
A function $f: [a, b]\di \R$ has the \emph{Lusin $N$-property} if for all measure zero sets ${\displaystyle N\subset [a,b]}$, the set $f(N)=\{ f(x): x\in N \}$ is also measure zero. 
\edefi
%Theorem 3.9 (Vitali\UTF{2013}Banach\UTF{2013}Zaretskij). A function \CID{2390} : [\CID{2385}, \CID{2386}] ? R is absolutely con\UTF{00AD} tinuous if and only if \CID{2390} is continuous, has bounded variation, and satisfies the Luzin condition.
\begin{thm}[Vitali-Banach-Zaretskij; \cite{voordedorst}*{Thm.\ 3.9}]
We have that $f\in AC$ is equivalent to the combination of:
\begin{itemize}
\item $f$ is continuous and BV on $[0,1]$,
\item $f$ satisfies the Lusin $N$-property on $[0,1]$.
\end{itemize}
\end{thm}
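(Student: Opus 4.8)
The plan is to establish the two implications separately; the forward one is routine, and the converse is the genuine Banach--Zaretskij content, where the work lies. For the forward implication, assume $f\in AC$. Continuity is immediate: applying the defining condition to the single interval $(x,y)$ shows that $|x-y|<\tfrac{1}{2^{N}}$ implies $|f(x)-f(y)|<\tfrac{1}{2^{k}}$. For bounded variation, fix $N$ witnessing the case $k=0$, split $[0,1]$ into $M:=2^{N}+1$ equal subintervals (each of length $<\tfrac{1}{2^{N}}$), and, refining an arbitrary partition by the block endpoints, bound the partition sum block by block: within each block the relevant partition pieces form a finite pairwise-disjoint family of total length $<\tfrac{1}{2^{N}}$, so their contribution is $<1$, and summing over the $M$ blocks gives $V_{0}^{1}(f)\le M$. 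For the Lusin $N$-property, let $|E|=0$, fix $k$, choose $N$ for the bound $\tfrac{1}{2^{k}}$ and an open $U\supseteq E$ with $|U|<\tfrac{1}{2^{N}}$, and write $U\cap(0,1)=\bigsqcup_{i}(a_{i},b_{i})$. On each $[a_{i},b_{i}]$ the continuous function $f$ attains its oscillation, say as $|f(c_{i})-f(d_{i})|$; since $\sum_{i}(b_{i}-a_{i})<\tfrac{1}{2^{N}}$, absolute continuity bounds every finite subsum $\sum|f(c_{i})-f(d_{i})|$ by $\tfrac{1}{2^{k}}$, hence $\sum_{i}\operatorname{osc}(f;[a_{i},b_{i}])\le\tfrac{1}{2^{k}}$. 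As $f([a_{i},b_{i}])$ is an interval of length $\operatorname{osc}(f;[a_{i},b_{i}])$, we obtain $|f(E)|\le\tfrac{1}{2^{k}}$, and letting $k\to\infty$ gives $|f(E)|=0$.

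For the converse, assume $f$ is continuous, of bounded variation, and has the Lusin $N$-property. By Theorem~\ref{liko2} it suffices to prove $V_{a}^{b}(f)=\int_{a}^{b}|f'(x)|\,dx$ for all $0\le a\le b\le 1$, and it is enough to treat $[a,b]=[0,1]$, the general case being identical. Here I would invoke Banach's results on the indicatrix. First, $V_{0}^{1}(f)=\int_{\R}N(f)(y)\,dy$ with $N(f)(y)=\#\{x\in[0,1]:f(x)=y\}$, as in \eqref{indix}. Second, the one-dimensional area (change-of-variables) formula for a continuous function that is differentiable at every point of a measurable set $D$ gives $\int_{0}^{1}|f'(x)|\,dx=\int_{\R}\#\big(D\cap f^{-1}(y)\big)\,dy$, taking $D$ to be the full-measure differentiability set of the $BV$-function $f$ (recall $f'\in L_{1}$). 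Subtracting these identities and using additivity of the counting function along $[0,1]=D\sqcup([0,1]\setminus D)$ yields $V_{0}^{1}(f)-\int_{0}^{1}|f'|=\int_{\R}\#\big(([0,1]\setminus D)\cap f^{-1}(y)\big)\,dy$. Since $[0,1]\setminus D$ is null, the Lusin $N$-property forces $f([0,1]\setminus D)$ to be null, so the nonnegative measurable integrand on the right is $0$ for almost every $y$ and the integral vanishes. Hence $V_{0}^{1}(f)=\int_{0}^{1}|f'|$, and likewise on every $[a,b]$, so $f\in AC$ by Theorem~\ref{liko2}.

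The main obstacle is this converse, and within it the point that the defect $V_{0}^{1}(f)-\int_{0}^{1}|f'|$ is, through the indicatrix, carried entirely by preimages lying in the measure-zero non-differentiability set, so that Lusin's condition $(N)$ is precisely what kills it. The more ``elementary'' alternative -- setting $g(x)=f(0)+\int_{0}^{x}f'$, so that $h:=f-g$ is continuous with $h'=0$ a.e., and trying to conclude that $h$ is constant because $h([0,1])$ is a measure-zero interval -- does not go through naively: one would need property $(N)$ for $h$, and the hypotheses ``$f$ has $(N)$'' and ``$g\in AC$'' do not formally yield it, since the image of a null set $E$ under $h$ need only lie in $f(E)-g(E)$, which can be large (for instance $C-C=[-1,1]$ for the Cantor set $C$). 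The indicatrix route avoids this because it never forms such a difference. All remaining ingredients -- Lebesgue differentiability of $BV$-functions with derivative in $L_{1}$, and Banach's indicatrix and area formulas -- are classical and are available in the references cited in Section~\ref{maink3}.
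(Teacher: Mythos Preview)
The paper does not supply its own proof of this statement: it is quoted verbatim as a classical result from \cite{voordedorst}*{Thm.\ 3.9} and used only as background motivation for the subsequent $\Omega$-cluster theorem. So there is no paper-proof to compare against.

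That said, your argument is a correct and standard route to Vitali--Banach--Zaretskij. The forward direction is fine. For the converse, your indicatrix/area-formula strategy is exactly the efficient one: you reduce to the equality $V_{0}^{1}(f)=\int_{0}^{1}|f'|$ of Theorem~\ref{liko2}, express the defect as $\int_{\R}\#\big(([0,1]\setminus D)\cap f^{-1}(y)\big)\,dy$, and kill it with Lusin's condition $(N)$ applied to the null set $[0,1]\setminus D$. Your side remark that the naive ``$h=f-g$ has $h'=0$ a.e.'' approach fails because $(N)$ is not preserved under differences is also correct and worth keeping. The only external inputs you lean on---Banach's indicatrix identity \eqref{indix}, the one-dimensional area formula on the differentiability set, and Theorem~\ref{liko2}---are themselves cited results in the paper, so the argument is self-contained relative to the paper's declared background.
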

We could add the Lusin $N$-property to the comptutational task in Theorem \ref{iop}.  
We also have the following more interesting theorem. 
\begin{thm}
The following is in the $\Omega$-cluster:
%Assuming $\exists^{2}$, a continuity functional is computationally equivalent to the following computational task:
%\begin{itemize}
a functional that for $f \in BV$ with the Lusin $N$-property, outputs $x\in [0,1]$ where $f$ is discontinuous if $f\not \in AC$. %find $a< b\in (0,1)$ such that  $f(b)-f(a)\ne \int_{[a,b]}f' ~d\lambda $.  
%\end{itemize}  
\end{thm}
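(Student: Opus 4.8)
The plan is to prove the theorem in analogy with Theorem~\ref{iop3} and Theorem~\ref{iop}, using the Vitali-Banach-Zaretskij characterisation of $AC$ just stated, namely that $f\in AC$ iff $f$ is continuous, $BV$, and has the Lusin $N$-property. Since the hypothesis already grants us that $f\in BV$ has the Lusin $N$-property, the only remaining obstruction to membership in $AC$ is \emph{continuity}. Thus, by contraposition of the cited theorem, if $f\notin AC$ then $f$ fails to be continuous, i.e.\ there is some $x\in[0,1]$ where $f$ is discontinuous, which is exactly the output required of our functional.

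For the forward direction, I would argue that a continuity realiser (see Definition~\ref{JDR2}) on input $f\in BV$ produces a sequence $(x_{n})_{n\in\N}$ enumerating the points of discontinuity of $f$. Using $\exists^{2}$ one checks whether this sequence is the null sequence $\langle\rangle*\langle\rangle*\cdots$ (i.e.\ whether $f$ is continuous) or not. If $f$ is discontinuous, output $x_{0}$, which is a point of discontinuity as required; if $f$ happens to be continuous, then by the Vitali-Banach-Zaretskij theorem and the standing assumption that $f$ has the Lusin $N$-property, we would have $f\in AC$, so the premise ``$f\notin AC$'' is false and the functional may output any value (say $0$, decidable via $\exists^{2}$). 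Hence the desired functional is computable from a continuity realiser plus $\exists^{2}$, which is in the $\Omega$-cluster by Theorem~\ref{thm.surprise}.

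For the reverse direction, I would run the same interval-halving template used repeatedly above (e.g.\ in Theorems~\ref{flagrant}, \ref{iop3}, \ref{iop}): given finite $X\subset[0,1]$, the characteristic function $\mathbb{1}_{X}$ is in $BV$ and, since $X$ is finite (hence measure zero) and $\mathbb{1}_{X}$ is piecewise constant, $\mathbb{1}_{X}$ trivially has the Lusin $N$-property: any measure-zero $N$ maps into the two-point set $\{0,1\}$, which has measure zero. Moreover $\mathbb{1}_{X}\in AC$ iff $X=\emptyset$ (a nonempty finite set gives jump discontinuities, violating continuity and hence $AC$). So if $X\ne\emptyset$, the point $x\in[0,1]$ produced by the functional is a point of discontinuity of $\mathbb{1}_{X}$, whence $x\in X$. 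Remove this point from $X$ (using $\exists^{2}$) and repeat until $X$ is empty; in the case $X=\emptyset$ the functional's output $x$ satisfies $\mathbb{1}_{\emptyset}(x)=0$, which is decidable given $\exists^{2}$. This recovers $\Omega_{\fin}$ and hence $\Omega$ via Theorem~\ref{fct}.

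The only mildly delicate point—and the one I would be most careful about—is the claim that $\mathbb{1}_{X}$ genuinely has the Lusin $N$-property so that $\mathbb{1}_{X}$ is a \emph{legitimate input} to the functional we are trying to compute; but as noted above this is immediate since the image of any set under $\mathbb{1}_{X}$ is contained in $\{0,1\}$. Everything else is routine and directly parallels the earlier proofs, so I would simply write ``For the reverse direction, proceed as in the proof of Theorem~\ref{iop3}'' and ``For the forward direction, use a continuity realiser and $\exists^{2}$ as in the proof of Theorem~\ref{flagrant}'', keeping the argument brief in line with the stated convention that results of this kind are similar to Theorem~\ref{iop}.
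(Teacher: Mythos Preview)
Your proposal is correct and follows essentially the same approach as the paper: the forward direction uses a continuity realiser (the paper simply says this is ``immediate''), and the reverse direction uses $\mathbb{1}_{X}$ for finite $X$, noting its range is contained in $\{0,1\}$ so the Lusin $N$-property holds, then proceeds as in Theorem~\ref{iop3}. Your write-up is slightly more detailed than the paper's, but the argument is the same.
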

\begin{proof}
The forward direction is immediate as a continuity realiser enumerates all points of discontinuity of $BV$-functions.  
For the reverse direction, $\mathbb{1}_{X}$ for finite $X\subset [0,1]$ has the Lusin $N$-property as its range is included in $\{0,1\}$. 
Now proceed as in the (second part of the) proof of Theorem \ref{iop3}.  
\end{proof}
\subsubsection{Absolute continuity and arc length}\label{flukkkkq}
In this section, we study the computational properties of a characterisation of $AC$ that involves the fundamental notion of `arc length' or `length of a curve'.
The (modern) notion of arc length as in \eqref{puhe} was already studied for discontinuous regulated functions in 1884, namely in \cite{scheeffer}*{\S1-2}, where it is also claimed to be essentially equivalent to Duhamel's 1866 approach from \cite{duhamel}*{Ch.\ VI}.  Around 1833, Dirksen, the PhD supervisor of Jacobi and Heine, already provides a definition of arc length that is (very) similar to \eqref{puhe} (see \cite{dirksen}*{\S2, p.\ 128}), but with some conceptual problems as discussed in \cite{coolitman}*{\S3}.

\smallskip

First of all, Jordan proves in \cite{jordel3}*{\S105} that $BV$-functions are exactly those for which the notion of `length of the graph of the function' makes sense.  In particular, $f\in BV$ if and only if the `length of the graph of $f$', defined as follows:
\be\label{puhe}\textstyle
L(f, [0,1]):=\sup_{0=t_{0}<t_{1}<\dots <t_{m}=1} \sum_{i=0}^{m-1} \sqrt{(t_{i}-t_{i+1})^{2}+(f(t_{i})-f(t_{i+1}))^{2}  }
\ee
exists and is finite by \cite{voordedorst}*{Thm.\ 3.28.(c)}.  In case the supremum in \eqref{puhe} exists (and is finite), $f$ is also called \emph{rectifiable}.  
We note that \cite{voordedorst}*{Thm.\ 3.28} contains another interesting property, namely that 
\be\label{flunk}
V_{a}^{b}(f)\leq L(f, [a, b])\leq V_{a}^{b}(f)+b-a, 
\ee
which implies that $ \lambda x.L(f, [a, x])$ is continuous at a point  $y\in (a,b)$ iff $\lambda x. V_{a}^{x}(f)$ is continuous at this point. 
As noted above, the latter is equivalent to $f$ being continuous at $y\in (a, b)$.  
Moreover, \eqref{flunk} suggests that computing $V_{a}^{b}(f)$ or computing $L(f, [a,b])$ amounts to the same, modulo say $\exists^{2}$.  

\smallskip

Secondly, we mention the following theorem that characterises $AC$ as those functions for which the arc length equals the well-known integral formula as in \eqref{flingo}.
\begin{thm}[Tonelli; \cite{voordedorst}*{p.\ 238}]
A $BV$-function $f:[0,1]\di \R$ is in $ AC$ iff 
\be\label{flingo}\textstyle
L(f, [0,1])=\int_{0}^{1} \sqrt{1+(f'(x))^{2}}dx.
\ee
We always have `$\geq$' for $BV$-functions in \eqref{flingo}.
\end{thm}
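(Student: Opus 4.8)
The plan is to reduce the whole statement to the behaviour of \emph{dyadic difference quotients}. For $f\in BV$ and $n\in\N$, let $0=t_{0}<t_{1}<\cdots<t_{2^{n}}=1$ be the partition of $[0,1]$ into intervals of length $2^{-n}$, and let $g_{n}:[0,1]\di\R$ be the step function equal to the slope $(f(t_{i+1})-f(t_{i}))/(t_{i+1}-t_{i})$ on each $[t_{i},t_{i+1})$. A direct computation gives
\be\label{tonplan1}\textstyle
\int_{0}^{1}\sqrt{1+g_{n}(x)^{2}}\,dx=\sum_{i}\sqrt{(t_{i+1}-t_{i})^{2}+(f(t_{i+1})-f(t_{i}))^{2}}\leq L(f,[0,1]),
\ee
as well as $\int_{0}^{1}|g_{n}(x)|\,dx=\sum_{i}|f(t_{i+1})-f(t_{i})|$. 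The main analytic input is that $g_{n}\to f'$ almost everywhere: decomposing $f$ (via Theorem~\ref{drd} and Lebesgue's differentiation theorem for monotone functions) into its absolutely continuous, saltus, and continuous-singular parts, the dyadic difference quotients of the first part converge a.e.\ to $f'$, while those of the other two converge to $0$ a.e.\ — the saltus part because its jumps form a Lebesgue-null set, and the continuous-singular part by the Lebesgue differentiation theorem for its (singular) Lebesgue--Stieltjes measure.

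With this in hand, the inequality `$\geq$' in \eqref{flingo} is immediate: letting $n\to\infty$ in \eqref{tonplan1}, Fatou's lemma gives $\int_{0}^{1}\sqrt{1+(f'(x))^{2}}\,dx\leq\liminf_{n}\int_{0}^{1}\sqrt{1+g_{n}^{2}}\leq L(f,[0,1])$ (this is also recorded in \cite{voordedorst}*{Thm.\ 3.28}). For the forward implication, let $f\in AC$. By the fundamental theorem of calculus for the Lebesgue integral (Theorem~\ref{liko}) we have $f(t_{i+1})-f(t_{i})=\int_{t_{i}}^{t_{i+1}}f'$ on each interval of an arbitrary partition, so the vector-valued triangle inequality applied to $x\mapsto(1,f'(x))$ on that interval yields $\sqrt{(t_{i+1}-t_{i})^{2}+(f(t_{i+1})-f(t_{i}))^{2}}\leq\int_{t_{i}}^{t_{i+1}}\sqrt{1+(f'(x))^{2}}\,dx$; summing and taking the supremum over all partitions gives $L(f,[0,1])\leq\int_{0}^{1}\sqrt{1+(f'(x))^{2}}\,dx$, and together with the previous inequality this gives equality.

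Finally I would show that equality in \eqref{flingo} forces $f\in AC$. Since $AC\subseteq C$ and a non-trivial saltus part $f_{j}$ satisfies $(f-f_{j})'=f'$ a.e.\ while contributing a strictly positive amount of extra length (one easily checks $L(f,[0,1])\geq L(f-f_{j},[0,1])+\text{(sum of jump contributions)}$), the `$\geq$' inequality applied to the continuous function $f-f_{j}$ forces that sum to be $0$; hence we may assume $f$ continuous. Then $\int_{0}^{1}|g_{n}|=\sum_{i}|f(t_{i+1})-f(t_{i})|\to V_{0}^{1}(f)$ along the dyadic partitions, by uniform continuity of $f$ and the usual comparison with a near-optimal partition. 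Write $\sqrt{1+g_{n}^{2}}=|g_{n}|+\eta_{n}$ with $\eta_{n}=(\sqrt{1+g_{n}^{2}}+|g_{n}|)^{-1}\in(0,1]$; since $\eta_{n}\to\sqrt{1+(f')^{2}}-|f'|$ a.e.\ and $0<\eta_{n}\leq1$ on $[0,1]$, dominated convergence gives $\int_{0}^{1}\eta_{n}\to\int_{0}^{1}(\sqrt{1+(f'(x))^{2}}-|f'(x)|)\,dx$. Under the equality hypothesis, \eqref{tonplan1} together with the Fatou bound above forces $\int_{0}^{1}\sqrt{1+g_{n}^{2}}\to L(f,[0,1])=\int_{0}^{1}\sqrt{1+(f'(x))^{2}}\,dx$; passing to the limit in $\int_{0}^{1}\sqrt{1+g_{n}^{2}}=\int_{0}^{1}|g_{n}|+\int_{0}^{1}\eta_{n}$ then yields $\int_{0}^{1}\sqrt{1+(f'(x))^{2}}\,dx=V_{0}^{1}(f)+\int_{0}^{1}(\sqrt{1+(f'(x))^{2}}-|f'(x)|)\,dx$, i.e.\ $V_{0}^{1}(f)=\int_{0}^{1}|f'(x)|\,dx$. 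Because $V_{a}^{b}(f)\geq\int_{a}^{b}|f'|$ always holds for $f\in BV$ and the variation is additive over subintervals, this forces $V_{a}^{b}(f)=\int_{a}^{b}|f'|$ for \emph{every} $[a,b]\subseteq[0,1]$, whence $f\in AC$ by Theorem~\ref{liko2}.

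I expect the main obstacle to be the almost-everywhere convergence $g_{n}\to f'$ for a general $BV$ function — in particular the vanishing a.e.\ of the dyadic derivative of the singular part — together with the bookkeeping needed to handle jump discontinuities in the reduction of the last paragraph. Once these points are settled, the remaining steps are routine and use only facts already available in the excerpt, namely Theorems~\ref{liko} and \ref{liko2} and the Jordan decomposition of Theorem~\ref{drd}.
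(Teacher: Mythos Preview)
The paper does not prove this statement: it is quoted as a classical result of Tonelli from \cite{voordedorst}*{p.\ 238} and then \emph{used} in the next theorem about the $\Omega$-cluster. So there is no ``paper's own proof'' to compare against; your proposal is a self-contained argument for a result the authors simply cite.

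That said, your outline is essentially the standard proof and is largely sound. The `$\geq$' direction via Fatou and the forward implication via Theorem~\ref{liko} plus the triangle inequality for $x\mapsto(1,f'(x))$ are correct and routine. In the reverse implication your limiting argument is fine once $f$ is continuous: from $\int\sqrt{1+g_n^2}\leq L(f,[0,1])$ and Fatou plus the equality hypothesis you correctly get $\int\sqrt{1+g_n^2}\to L(f,[0,1])$, and the $|g_n|+\eta_n$ decomposition with dominated convergence then yields $V_0^1(f)=\int_0^1|f'|$, after which Theorem~\ref{liko2} finishes. The two points you flag as obstacles are indeed the real work. For the saltus reduction, your inequality $L(f,[0,1])\geq L(f-f_j,[0,1])+\text{(jump sum)}$ is not obvious as written; a cleaner route is to localise: at a jump point $x_0$ the contribution to $L$ from an interval $[x_0-\eps,x_0+\eps]$ stays bounded below by the jump size as $\eps\to 0$, while $\int_{x_0-\eps}^{x_0+\eps}\sqrt{1+(f')^2}\to 0$, contradicting equality in \eqref{flingo}. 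For $g_n\to f'$ a.e., your plan via the decomposition of $f$ is the right one, but you will want to invoke a genuine differentiation theorem (e.g.\ that the derivative of a monotone singular function vanishes a.e.) rather than leave it as an assertion.
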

\noindent
We now have the following theorem. 
\begin{thm}
The following functionals are in the $\Omega$-cluster:
\begin{itemize}
\item a functional that for $f \in BV$ with effectively integrable $f'$, outputs $y\in [0,1]$ which satisfies $L(f, [0,y])>\int_{0}^{y} \sqrt{1+(f'(x))^{2}}dx$ if $f\not \in AC$,
\item a functional that for input $f \in BV$ and $y\in [0,1]$, outputs $L(f, [0,y])$,
\item a functional that for input $f \in BV$, outputs $n\in \N$ such that $L(f, [0,1])\leq n$.
%find $a< b\in (0,1)$ such that  $f(b)-f(a)\ne \int_{[a,b]}f' ~d\lambda $.  
\end{itemize}  
\end{thm}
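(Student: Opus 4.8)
The plan is to show, assuming $\exists^{2}$, that each of the three displayed functionals both computes $\Omega$ and is computed by $\Omega$, using Tonelli's theorem, the arc-length estimate \eqref{flunk}, and the machinery of Theorems \ref{fct} and \ref{thm.surprise}. The first functional is handled exactly as Theorem \ref{iop}, with Tonelli's theorem replacing Theorem \ref{liko}. To compute $\Omega$ from it, let $X\subset[0,1]$ be finite; we may assume $X\cap\Q=\emptyset$, as $\mu^{2}$ enumerates $X\cap\Q$. Then $f:=\mathbb{1}_{X}\in BV$ has derivative $0$ a.e., which is (trivially effectively) Riemann integrable, so that $\int_{0}^{y}\sqrt{1+(f'(x))^{2}}\,dx=y$; moreover one computes that $L(\mathbb{1}_{X},[0,y])>y$ holds exactly when $X\cap[0,y]\neq\emptyset$. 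Since $\mathbb{1}_{X}\notin AC$ whenever $X\neq\emptyset$, applying the functional to $\mathbb{1}_{X}$ and analysing the output with $\exists^{2}$ — exactly as in Theorem \ref{iop}, the case $X=\emptyset$ being $\exists^{2}$-decidable — lets the usual interval-halving procedure find an element of $X$, hence $\Omega$. Conversely, $\Omega$ and $\exists^{2}$ yield a continuity realiser by Theorem \ref{thm.surprise}, hence an enumeration $(x_{n})_{n}$ of the discontinuities of $f\in BV$; by \eqref{flunk} the map $\lambda z.\,L(f,[0,z])$ is continuous at every continuity point of $f$, so a witness $y$ may be sought among the $x_{n}$ (computing $L(f,[0,x_{n}])$ from the continuity realiser and $\exists^{2}$, as for the second functional below, and $\int_{0}^{x_{n}}\sqrt{1+(f')^{2}}$ via the given modulus) and, failing that, among the rationals, just as in Theorem \ref{iop}.

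For the second functional, the direction ``computable from $\Omega$ and $\exists^{2}$'' is the substantive one. Using a continuity realiser, enumerate the discontinuities $(x_{n})_{n}$ of $f\in BV$; I claim
\[
L(f,[0,y])=\sup\Big\{\textstyle\sum_{i=0}^{m-1}\sqrt{(t_{i}-t_{i+1})^{2}+(f(t_{i})-f(t_{i+1}))^{2}}\Big\},
\]
where the supremum ranges over all partitions $0=t_{0}<\dots<t_{m}=y$ whose points lie in $\{0,y\}\cup(\Q\cap[0,1])\cup\{x_{n}:n\in\N\}$. The inequality ``$\le$'' is immediate from the definition \eqref{puhe} of $L$; for ``$\ge$'', given an arbitrary partition and $\eps>0$, perturb each node that is neither an endpoint nor a discontinuity of $f$ to a nearby rational, using continuity of $f$ at such nodes to bound the resulting change in the sum by $\eps$. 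The right-hand side is a supremum over a countable set of reals, hence computable from $\exists^{2}$ and $(x_{n})_{n}$ uniformly in $f$ and $y$ — the same device used to compute $V_{0}^{z}(f)$ in the proof of Theorem \ref{thm.surprise}. For the converse, apply the functional to $f=\mathbb{1}_{X}$ for finite $X$ with $X\cap\Q=\emptyset$: since $L(\mathbb{1}_{X},[0,q])>q$ iff $X\cap[0,q]\neq\emptyset$, a binary search over rationals $q$ comparing $L(\mathbb{1}_{X},[0,q])$ with $q$ locates $\min X$ (after first testing whether $L(\mathbb{1}_{X},[0,1])>1$ to decide whether $X=\emptyset$), which yields $\Omega$.

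Finally, the third functional is computable from $\Omega$ and $\exists^{2}$ by rounding $L(f,[0,1])$ up with $\mu^{2}$ using the second functional, or, alternatively, by $L(f,[0,1])\le V_{0}^{1}(f)+1$ from \eqref{flunk} together with an upper-bound-for-total-variation functional as in item \eqref{waard2} of Theorem \ref{thm.surprise}, which is already in the $\Omega$-cluster. Conversely, for finite $X\subset[0,1]$ we have $L(\mathbb{1}_{X},[0,1])\ge V_{0}^{1}(\mathbb{1}_{X})\ge 2\,|X\cap(0,1)|\ge|X|-2$, so any integer $n\ge L(\mathbb{1}_{X},[0,1])$ gives $|X|\le n+2$; thus the functional computes $\Omega_{\geq\#,\fin}$, hence $\Omega$ by Theorem \ref{fct}. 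The step that will require the most care is the inequality ``$\ge$'' in the displayed formula for $L(f,[0,y])$ — that replacing partition points by rationals and the enumerated discontinuities does not lose arc length — and checking that the ensuing countable supremum is genuinely $\exists^{2}$-computable, uniformly in the parameters.
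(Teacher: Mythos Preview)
Your argument tracks the paper's closely: for the second and third items you (like the paper) use the two-sided estimate \eqref{flunk} to pass between $L$ and $V$, reducing to item~\eqref{waard} and item~\eqref{waard2} of Theorem~\ref{thm.surprise}; your explicit reduction of $L(f,[0,y])$ to a countable supremum over $\Q$-plus-discontinuities is the natural elaboration of what the paper leaves implicit. For the first item, both you and the paper handle the forward direction by noting that $\lambda z.\,L(f,[0,z])$ is continuous wherever $f$ is (again via \eqref{flunk}) and then following the template of Theorem~\ref{iop}.

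There is, however, one step where your argument is too quick, and the paper's ``readily modified'' is equally elliptical: the reverse direction for the first item. In Theorem~\ref{iop} the crucial point is that for $f=\mathbb{1}_X$ the inequality at the output, namely $\mathbb{1}_X(y)-\mathbb{1}_X(0)>0$, is a \emph{point evaluation} and hence $\exists^2$-decidable; this is what lets one detect $X=\emptyset$. For the Tonelli version the corresponding inequality is $L(\mathbb{1}_X,[0,y])>y$, which is equivalent to $X\cap(0,y]\neq\emptyset$ and is \emph{not} decidable from $\exists^2$ and the characteristic function alone---indeed, deciding it is essentially $\Omega_{\textup b}$ again. So ``exactly as in Theorem~\ref{iop}, the case $X=\emptyset$ being $\exists^2$-decidable'' does not go through verbatim: the output $y$ need not lie in $X$, and you cannot check whether it witnesses the strict inequality. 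You should either supply an explicit modification (a different test function, or a recursive scheme that provably terminates) or acknowledge that this direction needs more than a pointer to Theorem~\ref{iop}.
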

\begin{proof}
In light of \eqref{flunk} and item \eqref{waard2} from Theorem \ref{thm.surprise}, the second and third item clearly belong to the $\Omega$-cluster.
For the remaining item, recall that $f$ is continuous at $y\in (0,1)$ iff $L(f, [0,y])$ is continuous at $y$, as noted below \eqref{flunk}.
Hence, we may proceed as in the proof of Theorem \ref{iop} to obtain the first item.  
To obtain $\Omega$ from the first item, the proof of Theorem \ref{iop} is readily modified, as in the above.
\end{proof}

\subsection{Functionals related to subspaces of $\BV$}\label{flukkkk}
%NEW
In this section, we (briefly) study the computational properties of twp subclasses of $BV$, namely the \emph{Sobolev space} $W^{1, 1}$ (Section \ref{juik}) and the pseudo-monotone functions (Section \ref{juik2}).  
It is well-known that $AC\subsetneq W^{1, 1}\subsetneq BV$, with natural counterexamples, as also studied in second-order RM (see \cite{kreupel}).

\subsubsection{Functional related to Sobolev spaces}\label{juik}
First of all, as discussed in Section~\ref{KLM}, $BV$-functions are differentiable a.e., i.e.\ it makes sense to talk about `the derivative $f'$' of $f\in BV$, as $f'$ is unique up to a set of measure zero.
However, we may have $f'\not \in L_{1}$ for $f\in BV$, and $W^{1, 1}$ is essentially the subspace of $BV$-functions with \emph{integrable derivatives}, with a very specific technical meaning for the latter.  
In particular, $W^{1, 1}$ collects those $L_{1}$-functions with \emph{weak derivative} in $L_{1}$ (see \cite{ziemer}*{Def.\ 2.1.1}), though there is a connection to `classical' derivatives by \cite{ziemer}*{Thm.\ 2.1.4}.   Sobolev spaces play an important role in PDEs and have their origin in mathematical physics (\cite{sobolev1, sobolev2}).

\smallskip

Secondly, the following theorem implies that we may replace `$BV$' by `$W^{1,1}$' in Theorem \ref{thm.surprise} and still obtain computational equivalences.  
To be absolutely clear, our notion `$f\in W^{1, 1}$' refers to the usual\footnote{For $u\in L_{1}$, any $v \in L_{1}$ is a \emph{weak derivative} of $u$ if ${\textstyle \int _{0}^{1}u(t)\varphi '(t)\,dt=-\int _{0}^{1}v(t)\varphi (t)\,dt}$
for all infinitely differentiable functions $\varphi:[0,1]\di \R$ with $\varphi(0)=\varphi(1)=0$.} definition found in the literature.  
\begin{thm}\label{cordefakkers}
The following computational tasks are in the $\Omega$-cluster. 
\begin{itemize} 
\item For $f \in W^{1, 1}$, provide an enumeration of all points in $[0,1]$ where $f$ is discontinuous. %find $a< b\in (0,1)$ such that  $f(b)-f(a)\ne \int_{[a,b]}f' ~d\lambda $.  
\item For $f \in W^{1, 1}$, find $\sup_{x\in [0,1]}f(x)$. %find $a< b\in (0,1)$ such that  $f(b)-f(a)\ne \int_{[a,b]}f' ~d\lambda $.  
\end{itemize}  
\end{thm}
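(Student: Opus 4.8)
The plan is to obtain this theorem almost immediately from the Second Cluster Theorem (Theorem \ref{thm.surprise}), once we observe how indicator functions of finite sets sit inside $W^{1,1}$. In the sense fixed in Section \ref{juik}, a function $f\colon[0,1]\to\R$ belongs to $W^{1,1}$ when $f\in BV$ and the $L_{1}$-equivalence class of $f$ has a weak derivative in $L_{1}$. For a finite (hence Lebesgue-null) set $X\subset[0,1]$, the function $\mathbb{1}_{X}$ equals $0$ almost everywhere, so its $L_{1}$-class is that of the zero function, whose weak derivative is $0\in L_{1}$; moreover $\mathbb{1}_{X}\in BV$. Hence $\mathbb{1}_{X}\in W^{1,1}$ for every finite $X\subset[0,1]$, while $\mathbb{1}_{X}$ is discontinuous at each point of $X$ and $\sup_{x\in[0,1]}\mathbb{1}_{X}(x)=1$ exactly when $X\neq\emptyset$ (and equals $0$ otherwise).

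For the direction ``each task is computable in $\Omega$ modulo $\exists^{2}$'': since $W^{1,1}\subseteq BV$, a continuity realiser for $BV$ (item \eqref{contje} of Theorem \ref{thm.surprise}) applied to $f\in W^{1,1}$ already enumerates the discontinuities of $f$, and a $\SUP$-realiser for $BV$ (item \eqref{supje}) applied to $f\in W^{1,1}$ already outputs $\sup_{x\in[0,1]}f(x)$. Both realisers are in the $\Omega$-cluster by Theorem \ref{thm.surprise}, so their restrictions to the subdomain $W^{1,1}$ are again computable in $\Omega$ and $\exists^{2}$ (via the same term); this exhibits, for each of the two tasks, a functional of the required kind that lies in the $\Omega$-cluster.

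For the converse — that $\Omega$ is computable in \emph{any} functional carrying out either task — I would use the standard interval-halving argument, exactly as in the reverse directions of Theorems \ref{thm.surprise} and \ref{iop3}. Given $X\subseteq[0,1]$ with $|X|\le 1$ (presented as a characteristic function) and rationals $q<r$, the set $X\cap[q,r]$ can be formed using $\exists^{2}$, and $\mathbb{1}_{X\cap[q,r]}\in W^{1,1}$ by the observation above; since $\mathbb{1}_{X\cap[q,r]}$ is discontinuous precisely when $X\cap[q,r]\neq\emptyset$ and has supremum $1$ precisely then, feeding it to the given functional and testing the output with $\exists^{2}$ (whether the returned enumeration is the null sequence, resp.\ whether the returned supremum exceeds $\tfrac12$) decides whether $X\cap[q,r]=\emptyset$. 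Iterating the halving locates the single element of $X\neq\emptyset$, while $X=\emptyset$ is detected at the first step, which yields $\Omega$. Combining the two directions places both tasks in the $\Omega$-cluster. The only point needing genuine care — and hence the ``main obstacle'', such as it is — is confirming that the intended reading of ``$f\in W^{1,1}$'' really does admit $\mathbb{1}_{X}$, i.e.\ that weak differentiability is demanded of the $L_{1}$-class rather than of the pointwise function and that we tacitly remain inside $BV$; granting this, the statement is a corollary of Theorem \ref{thm.surprise} plus routine interval-halving, and the same recipe applies to any $\Omega$-cluster functional of Theorem \ref{thm.surprise} with ``$BV$'' replaced by ``$W^{1,1}$''.
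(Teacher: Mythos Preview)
Your proposal is correct and follows essentially the same approach as the paper: both directions hinge on the observation that $\mathbb{1}_{X}\in W^{1,1}$ for finite $X$ (with weak derivative zero), together with $W^{1,1}\subset BV$ for the forward direction. The only cosmetic difference is that the paper extracts $\Omega_{\fin}$ directly from the enumeration of discontinuities (since those are exactly the points of $X$), whereas you compute $\Omega$ via interval-halving; both routes are standard and interchangeable via the First Cluster Theorem.
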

\begin{proof}
Since $W^{1, 1}\subset BV$, we only need to show that the items compute e.g.\ $\Omega_{\fin}$.  
Now, the function $f:=\mathbb{1}_{X}$ for finite $X\subset [0,1]$ is clearly in $W^{1, 1}$, since the weak derivative can be taken to be the zero everywhere function.  % which is also effectively Riemann integrable (as well as $f$).  
In this way, the first item yields $\Omega_{\fin}$, and the same readily follows for the second item. 
\end{proof}
Finally, there are other classes related to $BV$ and $W^{1,1}$ which seem worth exploring, following Theorem \ref{cordefakkers}.
\begin{rem}\rm
First of all, $SBV$ is the class of \emph{special functions of bounded variation} with associated textbook references \cite{langebaard2} and \cite{ambrozijn}*{Ch.\ 4}.
This space is of importance in the study of the \emph{Mumford-Shah functional}, defined in the previous references.  
Since $SBV$ is intermediate between $W^{1, 1}$ and $BV$ (see \cite{ambrozijn}*{p.\ 212}), we immediately obtain Theorem \ref{cordefakkers} for $W^{1,1}$ replaced by $SBV$.  
In light of the rather technical definition of the latter, we only mention this result in passing. 

\smallskip

Secondly, so-called \emph{fractional} Sobolev spaces (see \cite{nezzahitch} for an overview) generalise Sobolev spaces to non-integer indices.  
For instance, we have $W^{1,1}\subsetneq W^{s, 1}$ for $0<s<1$ by \cite{nezzahitch}*{Prop.\ 2.2}.
We believe this field to have a lot of potential, but cannot offer more than that. 
\end{rem}
\subsubsection{Functionals related to pseudo-monotone functions}\label{juik2}
%NEW
We show that we may weaken `$BV$' in the second cluster theorem to the subclass of \emph{pseudo-monotone functions}, which is intermediate between monotone and $BV$ and which was introduced in \cite{heiligejozef} under a different name.  %We have the following corollary. 
\begin{defi}[\cite{voordedorst}*{Def.\ 1.14}]\label{okra}
A function $f:[0,1 ]\di \R$ is \emph{pseudo-monotone} if it is bounded and if there is $n\in \N$ such that for any $[c, d]\subset \R$, the set $f^{-1}([c, d])$ is a union of $n$ \(closed, open, half-open, or singleton\) intervals.
\edefi
The fundamental result here is that $f\circ g$ is in $BV$ for all $f$ in $BV$ iff  $g$ is pseudo-monotone (\cite{heiligejozef}*{Theorem 3}).  The following basic $BV$-function 
\[
f(x):=
\begin{cases}
x^{2}\sin(\frac{1}{x}) & 0<x\leq 1\\
0 & x=0 
\end{cases},
\]
is not pseudo-monotone by \cite{voordedorst}*{Ex.\ 1.16}.  The following extension of the second cluster theorem implies that we may replace `$BV$' by `pseudo-monotone' therein.
\begin{thm}
The following computational tasks belong to the $\Omega$-cluster.
\begin{itemize}
\item For pseudo-monotone $f:[0,1]\di \R$, find $\sup_{x\in [0,1]}f(x)$.
\item For pseudo-monotone $f:[0,1]\di \R$, find $n\in \N$ such that $V_{0}^{1}(f)\leq n$.
\item For pseudo-monotone $f:[0,1]\di \R$, find $n\in \N$ such that for any $[c, d]\subset \R$, the set $f^{-1}([c, d])$ is a union of at most $n$ \(closed, open, half-open, or singleton\) intervals.
\end{itemize}
\end{thm}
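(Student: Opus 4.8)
The plan is to establish, modulo $\exists^{2}$, two-way computability between each of the three tasks and $\Omega$, following the template of the proofs of Theorem~\ref{thm.surprise} and Theorem~\ref{flagrant2}. The direction ``the task is computable in $\Omega$'' is immediate for the first two tasks because pseudo-monotone functions form a subclass of the $BV$-functions (item~\eqref{donp} of Definition~\ref{varvar} applies to any bounded, boundedly-piecewise-monotone function): the supremum is delivered by a $\SUP$-realiser (item~\eqref{supje} of Theorem~\ref{thm.surprise}), and a bound $n$ with $V_{0}^{1}(f)\leq n$ is obtained by rounding up $W(f,0,1)$, where $W$ is the functional of item~\eqref{waard2}. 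For the third task one searches: for $n=1,2,\dots$ one decides whether $n$ is a valid parameter — i.e.\ whether $f^{-1}([c,d])$ is a union of at most $n$ intervals for every $[c,d]\subseteq\R$ — and outputs the first $n$ that works, which exists by the very definition of pseudo-monotonicity (Definition~\ref{okra}). The point is that this predicate becomes decidable relative to $\exists^{2}$ once $\Omega$ has been used to produce a continuity realiser (item~\eqref{contje} of Theorem~\ref{thm.surprise}) enumerating the discontinuities $(x_{k})_{k\in\N}$ of the regulated function $f$: the parameter $n$ fails precisely when there are points $y_{0}<y_{1}<\dots<y_{2n}$ in $[0,1]$ with every $f(y_{2i+1})$ lying strictly outside $[\min_{j}f(y_{2j}),\max_{j}f(y_{2j})]$ (take $[c,d]$ to be this interval), and — this is the delicate step — such an alternating witness may always be realised with the $y_{i}$ ranging over the countable set consisting of the rationals together with the $x_{k}$, so the search over witnesses is effectively a search over $\N$.

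For the converse direction — $\Omega$ is computable in each task together with $\exists^{2}$ — one feeds in indicator functions, as in the treatment of $\Omega_{\geq\#,\fin}$ in Theorem~\ref{fct}. Given $X\subseteq[0,1]$ with $|X|\leq1$, the indicator $\mathbb{1}_{X}$ is pseudo-monotone; more generally, partitioning $[0,1]$ into $m$ subintervals $J_{1},\dots,J_{m}$ with affine bijections $\phi_{i}\colon J_{i}\di[0,1]$ and setting $g_{m}(x):=\mathbb{1}_{X}(\phi_{i}(x))$ for $x\in J_{i}$ yields a $\{0,1\}$-valued pseudo-monotone function whose least valid parameter, and whose total variation, are both at least $m$ as soon as $X\neq\emptyset$, whereas $g_{m}$ is the constant-$0$ function — independently of $m$ — when $X=\emptyset$. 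Hence, writing $c_{0}$ for the value the given task-functional returns on the constant-$0$ function, $X$ is empty if and only if the task-functional returns $c_{0}$ on $g_{c_{0}+1}$; for the supremum task this is even more direct, since $\sup\mathbb{1}_{X}=1$ exactly when $X\neq\emptyset$. Running this emptiness test on $X\cap[q,r]$ for rationals $q<r$, together with the usual interval-halving, recovers the unique element of a non-empty $X$, i.e.\ computes $\Omega$. This gives one half of the cluster requirement for each of the (incompletely specified) tasks, and the forward direction above exhibits a concrete representative of each task that is in fact computable in $\Omega$; together this places all three tasks in the $\Omega$-cluster.

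I expect the genuine obstacle to be the claim used in the forward direction of the third task: that an alternating (``zig-zag'') witness to the failure of a candidate parameter can always be chosen with rational or discontinuity-point coordinates. This rests on two facts about a regulated pseudo-monotone $f$, namely that it is monotone on each of finitely many intervals (the structure underlying Definition~\ref{okra}), so that it has only finitely many ``turns'', and that a strict alternating pattern persists under replacing an irrational local extremum of $f$ by a nearby rational, since only strict inequalities between finitely many values $f(y_{i})$ are involved and $f$ is continuous at such a point once it is not listed by the continuity realiser. Making the density reduction fully precise — in particular handling turns that sit at boundary points of the rational subintervals used in the search, and checking that the reduction is uniform enough to keep the whole decision procedure computable in $\Omega$ and $\exists^{2}$ — is where the real work lies; the remaining ingredients follow the patterns already established in Theorems~\ref{thm.surprise}, \ref{flagrant2}, and~\ref{fct}.
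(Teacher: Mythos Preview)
Your approach is correct. For the forward direction of the third task the paper takes a different, terser route: rather than your zig-zag reformulation (which eliminates the $(c,d)$-quantifier altogether), it simply asserts that once a continuity realiser has listed the discontinuities of $f$, the quantifier over real $c,d$ in Definition~\ref{okra} may equivalently be taken over~$\Q$, after which $\mu^{2}$ locates the least valid~$n$. Both arguments ultimately rest on a density reduction of the kind you honestly flag as the delicate step; the paper does not spell it out. Where you take a genuine detour is the reverse direction for tasks two and three: the paper just feeds $\mathbb{1}_{X}$ for \emph{finite} $X\subset[0,1]$ and observes that any returned upper bound on $V_{0}^{1}(\mathbb{1}_{X})$, or on the structural parameter of Definition~\ref{okra}, is already an upper bound on~$|X|$, yielding $\Omega_{\geq\#,\fin}$ directly (and hence $\Omega$ by Theorem~\ref{fct}). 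Your tiling construction $g_{m}$ and the extensionality comparison against the output $c_{0}$ on the zero function are correct but unnecessary.
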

\begin{proof}
That $\Omega$ can perform the first two itemised operations is immediate from the second cluster theorem (Theorem \ref{thm.surprise}).
For the third item, since pseudo-monotone functions are in $BV$, the first cluster theorem yields a sequence enumerating all points of discontinuity. 
With this is in place, we may (equivalently) replace the quantifier over the real numbers $c, d$ in Definition \ref{okra} by a quantifier over $\Q$.
Hence, $\mu^{2}$ can now find the number $n\in \N$ as in Definition \ref{okra}.

\smallskip

Now let $X\subset [0,1]$ be finite and note that $\mathbb{1}_{X}$ is pseudo-monotone.  The supremum of $\mathbb{1}_{X}$ lets us 
decide whether $X$ is empty (yielding $\Omega_{\textbf{b}}$) and an upper bound for $V_{0}^{1}(\mathbb{1}_{X})$ or for the number $n$ from Definition \ref{okra} provides an upper bound
on the number of elements in $X$ (yielding $\Omega_{\geq \#\fin}$).  The first cluster theorem and Lemma~\ref{borkim} thus yield $\Omega$ in each case, establishing membership of the $\Omega$-cluster. 
%The proof that pseudo-monotone functions are in $BV$ from \cite{voordedorst}*{Prop.\ 1.15}, amounts to nothing more than finite combinatorics, immediately formalised in the base theory of the theorem. 
\end{proof}

\subsection{Functionals related to Caccioppoli sets}\label{caccio}
In this section, we study the computational properties of \emph{Caccioppoli sets}, aka \emph{finite perimeter} sets, which are essentially sets with characteristic function of bounded variation.    
This concept was pioneered by Caccioppoli himself in \cites{casino, casino2} while textbook references are \cite{maggi}, \cite{withmoregusto}*{Ch.\ 1}, and \cite{ambrozijn}*{\S3.3, p.\ 143}.
A historical sketch of the topic may be found in \cite{miranda}, including the seminal contributions of E.\ de Giorgio.  A novel characterisation of perimeter,  \emph{independent} of the theory of distributions, is presented in \cite{ambrozijn2}.

\smallskip

First of all, we need to discuss the exact definition of bounded variation in this context.  
Indeed, Caccioppoli sets are (always) defined in terms of the total variation \emph{of functions of several variables} as in e.g.\ \cite{ambrozijn}*{Def.\ 3.4}.  
The latter notion is (heavily) based on the theory of distributions, but there fortunately is a rather intimate 
connection to the so-called pointwise variation as in \eqref{tomb} for the one-dimensional case.  
In particular, an $[0,1]\di \R$-function that has bounded variation in the sense of item~\eqref{donp} of Definition \ref{varvar}, also has finite total variation in the generalised/distributional sense of \cite{ambrozijn}*{Def.\ 3.4}, which is immediate by \cite{ambrozijn}*{Thm~3.27}.  In this light, we can use our above definition of bounded variation as follows.  
\bdefi[Caccioppoli set]
A (measurable) set $E\subset \R$ is a \emph{classical Caccioppoli set} if its characteristic function $\mathbb{1}_{E}$ has bounded variation as in item \eqref{donp} of Definition \ref{varvar} on any $[a,b]\subset \R$.
\edefi
The word `classical' in the previous definition of course refers to the use of the `classical' definition of $BV$ due to Jordan.  
We will often omit this adjective.   

\smallskip

Secondly, regarding the place of Caccioppoli sets in the mathematical pantheon, we recall the following two facts.
\begin{itemize}
\item A subset of the reals is closed if and only if its characteristic function is upper semi-continuous.
\item A closed set in the sense of second-order RM (\cite{simpson2}*{II.5.6}) has a (code for a) continuous characteristic function (\cite{simpson2}*{II.7.1}).  
\end{itemize}
In this light, the study of closed Caccioppoli sets is rather close to the study of RM-closed sets, as $BV$-functions only have countably many points of discontinuity.  
Nonetheless, we have Theorem \ref{seal} where RM-closed sets are defined in \cite{simpson2}*{II.5.6}. 
\begin{thm}\label{seal}
The following functionals belong to the $\Omega$-cluster. 
\begin{enumerate}
 \renewcommand{\theenumi}{\roman{enumi}}
\item A functional that takes as input a \(closed\) Caccioppoli set $C\subset \R$, an upper bound on $C$, and returns $\sup C$.  \label{naartoegooien}
\item A functional that takes as input a closed Caccioppoli set $C\subset \R$ and returns an RM-code for $C$.  \label{klak}
%\item A functional that takes as input a Caccioppoli set $C\subset [0,1]$ and returns its closure $\overline{C}$.  \label{naartoegooien2}
%\item A functional that takes as input   a Caccioppoli set $C\subset [0,1]$ and returns $\lambda(\overline{C})$, i.e.\ the Lebesgue measure of the closure.  \label{naartoegooien3}
\item A \(Tietze\) functional that takes as input a closed Caccioppoli set $C\subset [0,1]$ and $f:[0,1]\di \R$ continuous on $C$, and outputs a continuous $g:[0,1]\di \R$ such that $f=g$ on $C$ and $\sup_{C} f=\sup_{[0,1]} g$.  \label{flam}
\item A \(Urysohn\) functional that takes as input disjoint closed Caccioppoli sets $C_{i}\subset \R$ for $i=0,1$ and outputs continuous $f:\R\di \R$ with $f=i$ on $C_{i}$.  \label{flam4}
\item A \(Weierstrass\) functional that takes as input a closed Caccioppoli set $C\subset [0,1]$ and $f:[0,1]\di \R$ continuous on $C$, and outputs $x\in C$ such that $(\forall y\in C)(|f(y)|\leq |f(x)|)$.\label{flam2}
%a continuous $g:[0,1]\di \R$ such that $f=g$ on $C$ and $\sup_{C} f=\sup_{[0,1]} g$.  
%\item A \(Vitali\) functional that takes as input a closed Caccioppoli set $C\subset [0,1]$ and a Vitali covering $\mathcal{I}$ of $C$, 
%and outputs a sequence of disjoint intervals $(I_{n})_{n\in \N}$ in $\mathcal{I}$ such that $C\setminus\big( \cup_{i\in \N}I_{i}\big)$ has measure zero.\label{flam5}
\end{enumerate}
\end{thm}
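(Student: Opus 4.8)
The plan is to handle each of the five functionals by the familiar two-step scheme: show it is computable in $\Omega$ together with $\exists^{2}$, and show it computes $\Omega$ modulo $\exists^{2}$, in the spirit of Theorems~\ref{thm.surprise}, \ref{flagrant} and \ref{flagrant2}. The observation that unlocks the first half is that if $C$ is a closed Caccioppoli set then $\mathbb{1}_{C}$ is an upper semi-continuous function of bounded variation (in the sense of item~\eqref{donp} of Definition~\ref{varvar}, by the very definition of a classical Caccioppoli set) whose set of discontinuities is exactly the boundary $\partial C$, and $\partial C\subseteq C$ since $C$ is closed. Hence a continuity realiser, available from $\Omega$ and $\exists^{2}$ by Theorem~\ref{thm.surprise}, applied to $\mathbb{1}_{C}$ (after restricting to $[-n,n]$ and rescaling, when $C$ is unbounded) enumerates $\partial C$; finiteness of the variation in fact forces $\partial C$ to be discrete, so a bounded closed Caccioppoli set is a finite union of closed intervals, some possibly degenerate. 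From this enumeration, the oracle $\mathbb{1}_{C}$, and $\exists^{2}$, one next computes an RM-code for $C$: a rational interval $(p,q)$ is disjoint from $C$ exactly when no enumerated boundary point lies in $(p,q)$ \emph{and} $\mathbb{1}_{C}(r)=0$ for all rationals $r\in(p,q)$; closedness is used here, since a point of $C$ inside $(p,q)$ is either interior to $C$, forcing a rational witness, or a boundary point, hence enumerated. Both clauses are arithmetic in the data and so decidable by $\exists^{2}$, and we thus enumerate rational intervals exhausting $\R\setminus C$.

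Given this, the five forward implications are routine. For \eqref{naartoegooien}, $\sup C$ is itself a boundary point of $C$, hence equals the supremum of the enumerated, and (by hypothesis) bounded, sequence of points of $\partial C$, which $\exists^{2}$ computes; emptiness of $C$ is read off $\mathbb{1}_{C}$ and the enumeration. Item~\eqref{klak} is precisely the RM-code just built. For the Tietze functional~\eqref{flam} we carry out the classical one-dimensional extension: set $g:=f$ on $C$ and, on each bounded complement component $(a,b)\subseteq[0,1]$ with $a,b\in C$, let $g$ be the affine interpolant of $f(a)$ and $f(b)$, extending $f$ constantly on the at most two half-open end components; continuity of $g$ follows from continuity of $f$ on $C$, and the interpolant never exceeds $\max(|f(a)|,|f(b)|)$, so $\sup_{[0,1]}g=\sup_{C}f$, and all of this is computable from the RM-code, $f$, $\mathbb{1}_{C}$ and $\exists^{2}$. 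The Urysohn functional~\eqref{flam4} is then $f(x):=d(x,C_{0})/\big(d(x,C_{0})+d(x,C_{1})\big)$, where the distances to the RM-closed sets $C_{i}$ are computable in $\exists^{2}$ (locating the maximal complement interval around $x$ is an arithmetic search) and the denominator is positive by disjointness. For the Weierstrass functional~\eqref{flam2}, apply the sup-preserving Tietze extension to $|f|$, obtaining a continuous $g$ on $[0,1]$ with $g=|f|$ on $C$ and $\max_{[0,1]}g=\max_{C}|f|$; find a maximum point $x^{\ast}$ of $g$ with $\exists^{2}$, and if $x^{\ast}\notin C$ replace it by the endpoint of its complement component that lies in $C$ and attains the same value, which the affine shape of $g$ on that component guarantees exists.

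For the converse direction we feed characteristic functions of finite sets $X$: $\mathbb{1}_{X}$ is upper semi-continuous and of bounded variation, so $X$ -- and likewise $X\cup\{q_{0}\}$ for a rational $q_{0}$, which we may assume lies outside $X$ after one use of $\mu^{2}$ -- is a closed Caccioppoli set. For \eqref{naartoegooien}, the returned value $\sup(X\cap[0,q])$ with interval-halving enumerates $X$; for \eqref{klak}, the returned RM-code is passed to the functional $\mathcal{E}$ of Lemma~\ref{korf2} to enumerate $X$; for \eqref{flam2}, feeding $C=X\cup\{q_{0}\}$ with $f=\mathbb{1}_{X}$ returns some $x\in C$ maximising $|f|$ over $C$, so either $f(x)=1$ and $x$ is the sought element, or $f(x)=0$ and $X=\emptyset$; for \eqref{flam}, the attained supremum $\sup_{[0,1]}g=\sup_{C}\mathbb{1}_{X}$ detects whether $X\cap[0,q]$ is empty for each rational $q$, again permitting a binary search. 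Either way one obtains $\Omega$, possibly through $\Omega_{\rm b}$, $\Omega_{\fin}$ or $\Omega_{\geq\#,\fin}$ and then Theorem~\ref{fct} with Lemma~\ref{borkim}. The delicate point is the converse for the Urysohn functional~\eqref{flam4}: its output is pinned down only on the two sets supplied, so nothing can be read off it directly. The remedy is the classical Urysohn-implies-Tietze argument -- writing a continuous function on $C$ as a uniformly convergent dyadic series of Urysohn functions separating its sublevel and superlevel sets -- which is legitimate here because, for a \emph{finite} input $C$, those sublevel and superlevel sets are finite and hence again closed Caccioppoli sets; this yields a sup-preserving Tietze functional on finite sets from the Urysohn functional, after which we proceed as for \eqref{flam}. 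Verifying that this detour, and the various distance-function and extension computations in the forward direction, really go through with only $\exists^{2}$ as additional power is where the bulk of the (elementary but technical) work sits.
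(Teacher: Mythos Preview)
Your argument is correct and follows the same overall architecture as the paper: enumerate $\partial C$ via a continuity realiser applied to $\mathbb{1}_{C}$, build an RM-code from this, and handle the reversals by feeding finite sets $X$ and reading off an $\Omega$-variant. The paper's forward direction is less explicit than yours: rather than writing out the affine Tietze interpolant or the distance-quotient Urysohn function, it simply observes that once one has an RM-code for $C$, the second-order versions of Tietze, Urysohn, and the Weierstrass maximum theorem all live in $\ACA_{0}$ and can therefore be run by $\exists^{2}$. Your explicit constructions are fine and arguably more transparent, though they require you to check details (e.g.\ computability of $d(x,C_{i})$ from the RM-code) that the paper offloads onto the literature.

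The one place where your proof is genuinely more careful than the paper's is the reversal for the Urysohn functional \eqref{flam4}. The paper dispatches this in a single phrase (``in the same way''), but as you correctly note, the Urysohn output is only constrained on $C_{0}\cup C_{1}$, so a naive emptiness test via a single call can be defeated by an adversarial realiser. Your detour through the classical Urysohn-implies-Tietze construction (using that sublevel sets of a function on a finite $C$ are again finite, hence Caccioppoli) is a legitimate and complete fix. You are also more careful than the paper about the degenerate case $C=\emptyset$, adjoining a rational witness $q_{0}$ so that the Tietze and Weierstrass functionals always receive nonempty input; the paper's reversal for \eqref{flam} tacitly assumes some convention for $\sup_{\emptyset}f$ that it does not spell out.
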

\begin{proof}
We first prove that a continuity realiser computes the items in the theorem.  
Given such a realiser and a set $C$ as in item \eqref{naartoegooien}, the supremum $\sup C$ equals a supremum over $\N$ (and $\Q$).  Indeed, the points of discontinuity of $\mathbb{1}_{C}$ are given 
by the continuity realiser, while the continuity points only have to be considered (for the supremum) if they are rational.  One similarly obtains an RM-code as in item~\eqref{klak} for $C$: for $x\in [0,1]\setminus C$, we can find $N\in \N$ such that $B(x, \frac{1}{2^{N}})\subset \big([0,1]\setminus C\big)$ as the quantifier over $\R$ in the latter formula may be replaced by a quantifier over $\N$ (and $\Q$).
%Similarly, we may replace the underline quantifier in $x\in \overline{C}\equiv (\forall k\in \N)\underline{(\exists y\in C)}(|x-y|<\frac{1}{2^{k}}  )$ by a quantifier over $\N$ (and $\Q$), yielding item \eqref{naartoegooien2}; 
%item~\eqref{naartoegooien3} then follows  from items \eqref{klak}-\eqref{naartoegooien2} and the fact that $\exists^{2}$ computes the Lebesgue measure of RM-closed sets (see \cite{simpson2}*{X.1}).  
For item \eqref{flam}, the various (second-order) versions of Tietze's extension theorem can all be proved in $\ACA_{0}$ (\cite{withgusto}) and $\exists^{2}$ readily computes an RM-code for the function $f$ restricted to the RM-code of $C$, as well as the extension function $g:[0,1]\di \R$ from the aforementioned codes.  The same approach works for item~\eqref{flam4} by \cite{simpson2}*{II.7.3}.  
Item \eqref{flam2} is proved in the same way as $\exists^{2}$ computes maxima of continuous functions by \cite{kohlenbach2}*{Prop.\ 3.14}.
%Item \eqref{flam5} is also proved in the same way as weak weak K\"onig's lemma ($\WWKL$) suffices to prove the Vitali covering theorem for coded sets by \cite{simpson2}*{X.1.13}.
%As noted under \cite{simpson2}*{X.1.2}, $\ACA_{0}$ suffices to show that RM-closed sets are (Lebesgue) measurable. 

\smallskip

For the reversals, fix a finite set $X\subset [0,1]$, which is closed and for which $\mathbb{1}_{X}$ has bounded variation. 
Using item \eqref{naartoegooien}, the real $\sup X$ yields an element of $X$, and repeating this procedure we may enumerate $X$, as required for $\Omega_{\fin}$, which is in the $\Omega$-cluster by Theorem \ref{thm.surprise}.  Similarly, $f(x):=\mathbb{1}_{X}(x)$ is continuous on $X$, and the continuous extension $g:[0,1]\di \R$ provided by item \eqref{flam} allows us to decide if $X$ is empty or not (by checking whether $(\exists q\in \Q\cap [0,1])(g(q)>0)$), i.e.\ we obtain $\Omega_{\textup{b}}$ and Lemma \ref{borkim} finishes the proof.  Items \eqref{flam4} and \eqref{flam2} yield $\Omega_{\textup{b}}$ in the same way.  
%For items \eqref{naartoegooien2} and \eqref{naartoegooien3}, define $Y\subset $   
%For item \eqref{flam5}, 
\end{proof}
We finish this section with a remark on related topics.  
\begin{rem}\rm
Regarding item \eqref{flam}, such functionals have been studied for about fifty years under the name \emph{simultaneous extenders}, starting -it seems- with \cites{lutzer}.  
The notion of Caccioppoli set is also central to the \emph{Pfeffer integral}, which is intermediate between the Lebesgue and gauge integral when restricted to the real line (see \cite{peper, peper2}).  
We have studied the logical properties of the gauge integral and associated Cousin's lemma/Heine-Borel theorem in \cite{dagsamIII}.
A related type of integral is defined in \cite{capo, houhethurz} based on \emph{partitions of unity} involving $BV$-functions.  Such partitions (involving continuous functions) have been studied in both second- and higher-order RM (see \cite{simpson2}*{p.\ 89} and \cite{sahotop}).
%We can obtain similar results for the Urysohn lemma.  
\end{rem}

\begin{ack}\rm
%We thank Anil Nerode for his valuable advice.
%We also thank the anonymous referee for the helpful suggestions.    
Our research was supported by the \emph{Deutsche Forschungsgemeinschaft} via the DFG grant SA3418/1-1.
Initial results were obtained during the stimulating MFO workshop (ID 2046) on proof theory and constructive mathematics in Oberwolfach in early Nov.\ 2020.  
We express our gratitude towards the aforementioned institutions.    %Results in Appendix \ref{AAA} are due to myself and Dag Normann. 
Finally, we thank John Longley for spotting the error in our lambda calculus and his advise regarding the correction published in \cite{dagsamXV}.
\end{ack}

\begin{bibdiv}
\begin{biblist}
\bibselect{allkeida}
\end{biblist}
\end{bibdiv}

\bye